\theoremstyle{plain}
\newtheorem{theorem}{Theorem}[section]
\newtheorem{corollary}[theorem]{Corollary}
\newtheorem{prop}[theorem]{Proposition}
\newtheorem{lemma}[theorem]{Lemma}
\theoremstyle{definition}
\newtheorem{definition}[theorem]{Definition}
\newtheorem{remark}[theorem]{Remark}
\newtheorem{problem}[theorem]{Problem}
\newenvironment{claimproof}[1]{\par\noindent\underline{Proof:}\space#1}{\hfill $\blacksquare$}
\newcommand{\Z}{\mathbb{Z}}
\newcommand{\Q}{\mathbb{Q}}
\newcommand{\N}{\mathbb{N}}
\newcommand{\R}{\mathbb{R}}
\newcommand{\E}{\mathbb{E}}
\newcommand{\ep}{\epsilon}
\newcommand{\de}{\delta}
\newcommand{\sig}{\sigma}
\newcommand{\eps}{\epsilon}
\newcommand{\Rd}{\mathbb{R}^4_\uparrow}
\newcommand{\Qd}{\mathbb{Q}^4_\uparrow}
\newcommand{\cA}{\mathcal{A}}
\newcommand{\cB}{\mathcal{B}}
\newcommand{\cE}{\mathcal{E}}
\newcommand{\cF}{\mathcal{F}}
\newcommand{\cG}{\mathcal{G}}
\newcommand{\scH}{\mathscr{H}}
\newcommand{\cK}{\mathcal{K}}
\newcommand{\cL}{\mathcal{L}}
\newcommand{\cM}{\mathcal{M}}
\newcommand{\cQ}{\mathcal{Q}}
\newcommand{\cR}{\mathcal{R}}
\newcommand{\cS}{\mathcal{S}}
\newcommand{\cT}{\mathcal{T}}
\newcommand{\supp}{\text{supp}}
\DeclareMathOperator*{\argmax}{arg\,max}
\newcommand{\eqd}{\stackrel{d}{=}}
\newcommand{\cvgd}{\stackrel{d}{\to}}
\newcommand{\cvgp}{\stackrel{\mathbb P}{\to}}
\newcommand{\cvgup}{\uparrow}
\newcommand{\cvgdown}{\downarrow}
\newenvironment{claim}[1]{\par\noindent\underline{Claim:}\space#1}{}
\DeclareMathOperator{\SRW}{SRW}
\DeclareMathOperator{\NW}{NW}
\renewcommand{\P}{\mathbb{P}}
\newcommand{\fh}{\mathfrak{h}}
\newcommand{\UC}{\operatorname{UC}}
\title{Characterization of the directed landscape from the KPZ fixed point}
\author{
	Duncan Dauvergne
	\thanks{Department of Mathematics, University of Toronto, Canada. e-mail: duncan.dauvergne@utoronto.ca}
	\and
	Lingfu Zhang
	\thanks{The Division of Physics, Mathematics and Astronomy, Caltech, Pasadena, CA, USA. e-mail: lfzhang@caltech.edu}
}
\begin{document}
	
	\maketitle
	\begin{abstract}
		We show that the directed landscape is the unique coupling of the KPZ fixed point from all initial conditions satisfying three natural properties: independent increments, monotonicity, and shift commutativity. Equivalently, we show that the directed landscape is the unique directed metric on $\R^2$ with independent increments and KPZ fixed point marginals. This unifies the two central objects in the KPZ universality class.
        
        Our main theorem also provides a general framework for proving convergence to the directed landscape given convergence to the KPZ fixed point. We apply this framework to prove landscape convergence in a range of models: exotic couplings of ASEP and TASEP, the random walk and Brownian web distances, and a class of non-integrable asymmetric exclusion processes with the basic coupling that perturb off of TASEP (this final class requires random initial data). All of our convergence theorems are new except for colored TASEP, where we provide a short alternative proof. 
	\end{abstract}
	
	\setcounter{tocdepth}{1}
	\tableofcontents
	
	
	\section{Introduction}

    The KPZ (Kardar-Parisi-Zhang) universality class is a broad family of one-dimensional random growth models and two-dimensional random metric or polymer models that exhibit the same behaviour under rescaling. The past twenty-five years have seen a period of 
intense and fruitful research on this class, propelled by the discovery of a handful of exactly solvable models, including certain exclusion processes, directed polymers, last passage percolation, and the KPZ equation itself. For background, we refer the reader to 
\cite{quastel2011introduction, corwin2012kardar, romik2015surprising, borodin2016lectures, zygouras2022some, ganguly2021random} and references therein.

    The KPZ fixed point is the universal scaling limit for growth models in the KPZ class, and the directed landscape is the universal scaling limit for metrics and polymer models. The KPZ fixed point can be constructed as a marginal of the directed landscape, and one may optimistically hope the reverse is also true: that the directed landscape can be constructed from the KPZ fixed point.
 This is the goal of the present paper.
	
	\medskip

    \noindent\textbf{Random growth and the KPZ fixed point.} 
    
	To describe the KPZ fixed point, we first consider one particular growth model: the asymmetric simple exclusion process (ASEP). ASEP is an interacting particle system on $\Z$, but for our purposes here it will be more convenient to define it directly as a height function evolution (see Section \ref{S:exclusion} for the connection to the particle system). Define
    $$
    \operatorname{SRW} = \{ h:\Z \to \Z: h(i) - h(i-1) = \pm 1 \text{ for all } i\}.
    $$ 
    This is the space of two-sided simple random walk paths on $\Z$. ASEP is a Markov process $(h_t,t \ge 0)$ on $\operatorname{SRW}$ where 
    $
h_{t^-}(x) \mapsto h_t(x) + 2
    $
    at rate $p$ if $x \in \Z$ is a local maximum of $h_{t^-}$ and  $
h_{t^-}(x) \mapsto h_t(x) - 2
    $ at rate $p + 1$ if $x \in \Z$ is a local minimum of $h_{t^-}$ (here $h_{t^-} = \lim_{s \to t^-} h_s$). This is a natural interface growth model, where there is an asymmetry (depending on $p$) causing the interface to drift downwards at constant rate over time. We have normalized our rates the downward drift is speed $1/2$ for any $p$.
    
    The special case when $p = 0$ is referred to as TASEP, the totally asymmetric simple exclusion process. TASEP is one of the most tractable models in the KPZ universality class, and Matetski, Quastel, and Remenik \cite{matetski2016kpz} showed that
	\begin{equation}
    \label{E:tasep-scaling}
	    h^\ep_t(x) := \ep^{1/2} h_{2 \ep^{-3/2} t} (2 \ep^{-1} x) + \ep^{-1} t 
	\end{equation}
	converges as $\ep \to 0$ to a limiting Markov process $(\mathfrak h_t, t \ge 0)$ known as the \textbf{KPZ fixed point}. The state space for the KPZ fixed point we use in this paper is
    \begin{equation}
    \label{E:UC0}
    \UC_0 := \Big\{f: \R \to \R \cup \{-\infty\} : f \text{ upper semicontinuous}, f \not \equiv - \infty, \quad \limsup_{x \to \pm \infty} \frac{f(x)}{x^2}\le 0  \Big\},
    \end{equation}
    which is the largest space on which $\fh_t$ is almost surely finite at all positive times, see \cite[Proposition 6.1]{sarkar2020brownian}.
    
	Since the original work of \cite{matetski2016kpz}, convergence to the KPZ fixed point has been verified for a handful of models, including intregrable last passage percolation (LPP) models \cite{nica2020one, dauvergne2021scaling}, the KPZ equation and polymer models \cite{virag2020heat}, certain non-integrable perturbations of TASEP \cite{quastel2020convergence} (see Theorem \ref{T:nnnnasep-cvg} for a precise statement), and more recently the stochastic six-vertex model and ASEP as described above \cite{ACH, aggarwal2024kpz}.
	
	\medskip
	\noindent\textbf{Random metrics and coupled random growth.} 

	The KPZ universality class also certain $2$-dimensional random metric and polymer models. Marginals of these models define random growth models in the KPZ class, e.g.\, we can consider a process of balls of increasing radius started at a fixed point $p$. Varying the base point  Conversely, if we are given a particularly natural coupling of random growth processes, then we can often (but not always!) realize this coupling through a random metric or a polymer model.
	
	To make this discussion more concrete, we can consider Liggett's basic coupling for ASEP \cite{liggett1976coupling}. In the basic coupling, each point $x \in \Z$ is associated to two Poisson processes on $\R$: $\Pi_{x, +}$, with intensity $p$, and $\Pi_{x, -}$, with intensity $p+1$. All Poisson processes are independent. Each copy $h_t$ of ASEP will attempt the jump $h_{t^-}(x) \mapsto h_t(x) \pm 2$ at each time in the Poisson process $\Pi_{x, \pm}$, succeeding if $h_{t^-}$ has a local minimum/maximum or $x$. In brief, height functions jump together. This coupling is equivalent to colored or multi-type ASEP, see \cite{amir2011tasep, ACH} for background.
	
	In the special case of TASEP, this coupling can be fully realized with a directed metric construction on $\Z \times \R$. 
	For each $x\in\Z$, define $\Delta_x \in \SRW$ by $\Delta_x(y) = -|x-y|$. This is the narrow wedge centred at $x$. Define 
	\begin{equation}
		\label{E:dPI}
		d_{\Pi}(x, s; y, t) = h_t(y; \Delta_x, s),
	\end{equation}
	for any $s \le t$ and $x, y \in \Z$.
	Here the right hand side is the height function at time $t$ and location $y$ for TASEP started from $\Delta_x$ at time $s$ in the basic coupling through the Poisson clocks $\Pi = \{ \Pi_{x, -} : x \in \Z\}$. Set $d_\Pi(x, s; y, t) = -\infty$ for $t > s$. We have $d_\Pi(p; p) = 0$ for all $p\in \Z \times \R$ and the function $d_\Pi$ satisfies a reverse triangle inequality 
	\begin{equation}
		\label{E:dPi}
		d_\Pi(p; q) + d_\Pi(q; r) \le d_\Pi(p, r).
	\end{equation}
	This makes $d_\Pi$ a directed metric, see Section \ref{S:prelim} for background.  Moreover, for any initial condition $h_0 \in \SRW$, the TASEP evolution of $h_0$ started at time $s$ in the basic coupling governed by the clocks $\Pi$ is given by the variational formula
    \begin{equation}
		\label{E:hst}
		h_{t}(\cdot; h_0, s) := \max_{x \in \Z} [h_0(x) + d_\Pi(x, s; \cdot, t)].
	\end{equation}
   We can actually define the metric $d_\Pi$ using a notion of paths and path length. We say that a function $\gamma:[s, t] \to \Z$ of bounded variation is a \textbf{path} from $(x, s)$ to $(y, t)$ if $\gamma(s) = x, \gamma(y) = t$, and we define its length
   $$
   |\gamma|_\Pi = V(\gamma) + 2 \# \{r \in (s, t]: r \in \Pi_{\gamma(r), -}\},
   $$
   where $V(\gamma)$ is the total variation of $\gamma$. With this definition, $d_\Pi(p; q)$ is the maximum length of a path from $p$ to $q$. Almost surely, this maximum always exists, making $d_\Pi$ a geodesic space. The metric $d_\Pi$ is a prototypical random (directed) metric in the KPZ class: a planar metric built from paths defined through a field of i.i.d.\ random variables or a noise on the plane.

   If we replace TASEP by a general ASEP in \eqref{E:dPI} then the triangle inequality \eqref{E:dPi} is still valid. However, the metric $d_\Pi$ is no longer a geodesic space, and the variational formula \eqref{E:hst} (which is closely related to the geodesic property) no longer holds.

		Just as TASEP and other growth models converge to the KPZ fixed point, random metrics also have a KPZ scaling limit. Indeed, if we define
		\[
		d_\Pi^\ep(x, s; y, t) = \ep^{1/2} d_\Pi(2 \ep^{-1} x, 2 \ep^{-3/2} s; 2 \ep^{-1} y, 2 \ep^{-3/2} t) + \ep^{-1}(t-s)
		\]
		then $d_\Pi^\ep$ converges as $\ep \to 0$ to the random function $\cL:\Rd \to \R, \Rd := \{(x, s; y, t) \in \R^4 : s < t\}$ known as the \textbf{directed landscape}, constructed in \cite{DOV} two years after the construction of the KPZ fixed point and with a completely different method. For the basic coupling of TASEP and ASEP, directed landscape convergence was recently proven by Aggarwal, Corwin, and Hegde \cite{ACH}. Since \cite{DOV}, convergence to the directed landscape has also been shown for classical last passage percolation models \cite{DOV, dauvergne2021scaling}, the KPZ equation \cite{wu2023kpz}, and the stochastic six-vertex model \cite{ACH}. The directed landscape is connected to the KPZ fixed point through a limiting version of \eqref{E:hst}. For $f \in \UC_0$, the process
		\begin{equation}
			\label{E:kpzfp-to-DL}
			\fh_{t}f(x) = \sup_{y \in \R} f(y) + \cL(y, 0; x, t)
		\end{equation}
		is a KPZ fixed point started from $f$ at time $0$, and so the directed landscape gives a coupling of the KPZ fixed point, simultaneously from all times and all initial conditions.

		\subsection{Main results: characterization theorems}
		
		The directed landscape contains much more information than the KPZ fixed point. Indeed, through formula \eqref{E:kpzfp-to-DL} it gives a coupling of the KPZ fixed point simultaneously from all initial conditions with the same driving noise. Nonetheless, one may hope that the law of the directed landscape can still be characterized through the KPZ point. In this paper we provide such a characterization. Our main theorem shows that the directed landscape is the unique directed metric on $\R^2$ with independent increments and KPZ fixed point marginals. 
		
		Setting some notation, for an upper semicontinuous function $f:\R \to \R \cup \{-\infty\}$, we write $(\fh_t f, t \ge 0)$ for the KPZ fixed point evolution started from the initial condition $f$. When we use this notation for random $f$, we always assume that the KPZ fixed point evolution is independent of $f$. Here and throughout the paper, we write $\Q^4_\uparrow = \Q^4 \cap \Rd$.
		
		\begin{theorem}
			\label{T:landscape-characterization}
			Suppose that $\cM:\Q^4_\uparrow \to \R$ is a random function satisfying:
			\begin{enumerate}
				\item (Triangle inequality) For any $o = (x, s)$, $p = (y, r)$, $q = (z, t) \in \Q^2$ with $s<r<t$, we have
				$\cM(o; p) + \cM(p; q) \le \cM(o; q)$.
				\item (Independent increments) For any $s_1<s_2<\cdots<s_k \in \Q$, the increments $\cM(\cdot, s_i; \cdot, s_{i+1}):\Q^2 \to \R, i = 1, \dots, k-1$ are independent.
				\item (KPZ fixed point marginals)
				For any $p = (x, t) \in \Q^2$, and $y_1, \dots, y_k, s_1, \dots, s_k \in \Q$ with each $s_i>0$, we have
				\[
				\{\cM(p; p + (y_i, s_i))\}_{i = 1}^k
				\eqd
				\{\fh_{s_i}\de_0(y_i)\}_{i = 1}^k.
				\]
				Here $\de_0$ is the narrow wedge initial condition given by $\de_0(0) = 0$ and $\de_0(x) = -\infty, x \ne 0$.
			\end{enumerate}
			Then $\cM$ has a continuous extension to $\Rd$ which is a directed landscape.
		\end{theorem}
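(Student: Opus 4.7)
My plan has three phases: (i) extend $\cM$ continuously from $\Q^4_\uparrow$ to $\Rd$; (ii) use the variational formula to realize $\cM$ as a coupling of KPZ fixed points from all initial conditions; (iii) invoke a characterization of the landscape coupling by its structural properties to identify $\cM$ with $\cL$.

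For phase (i), I would combine hypothesis (3) with standard tail bounds for $\fh_t\de_0$ to control $|\cM(p;q)|$ at rational quadruples, then use the reverse triangle inequality (in the spirit of the regularity arguments for $\cL$ in \cite{DOV}) to obtain moment bounds on increments $\cM(p;q)-\cM(p';q')$ in terms of $|p-p'| + |q-q'|$. A Kolmogorov-type continuity criterion then yields a continuous extension of $\cM$ to $\Rd$, and all three hypotheses transfer by density.

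For phase (ii), I would set
\[
\fh^\cM_{s,t} f(y) := \sup_{x \in \R}\, [f(x) + \cM(x,s;y,t)]
\]
for appropriate initial data $f$. Hypothesis (1) immediately gives $\fh^\cM_{s_1,s_3} f \ge \fh^\cM_{s_2,s_3}(\fh^\cM_{s_1,s_2} f)$; upgrading this inequality to an almost sure semigroup identity is the key compatibility step, and it uses hypothesis (2) to factor the joint law and hypothesis (3) together with the semigroup property of the KPZ fixed point from narrow wedges to match distributions. Granted the identity, monotonicity is automatic and the Markov property of $\fh^\cM$ in $t$ follows from hypothesis (2); shift commutativity comes from the translation invariance of the single-basepoint joint marginals in hypothesis (3), bootstrapped across time via independent increments.

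The hardest step is phase (iii): upgrading the narrow-wedge marginals of the sheet $\cM(\cdot,s;\cdot,t)$ to the joint law of an Airy sheet, so that $\fh^\cM_{s,t} f$ has the KPZ fixed point law for \emph{every} initial condition $f$, not merely narrow wedges. My plan is to iterate the semigroup identity from phase (ii) along a fine time mesh $s = r_0 < \cdots < r_N = t$, decomposing the sheet as an $N$-fold sup-convolution of independent short-time sheets; each short-time sheet, when rescaled, concentrates near its narrow-wedge marginal where hypothesis (3) gives full joint information from a single basepoint, and passing $N \to \infty$ and comparing with the analogous decomposition of the Airy sheet built from $\cL$ should match the joint laws. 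Once this is in hand, $\fh^\cM$ is a Markov coupling of KPZ fixed points from all initial conditions satisfying independent increments, monotonicity, and shift commutativity; an earlier characterization of the landscape coupling by these three properties (the coupling formulation of the main result advertised in the abstract) then identifies $\fh^\cM$ with the coupling induced by $\cL$, and hence $\cM$ with $\cL$ in law.
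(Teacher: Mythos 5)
Your phases (i) and (ii) track the paper's Section 3.1 closely: the paper likewise uses the one-point and two-point bounds from hypothesis (3) plus the triangle inequality to get H\"older continuity (\Cref{L:cty-strong}, \Cref{C:holder-cty}) and then upgrades the triangle inequality to the full metric composition law (\Cref{L:Q-composition}, \Cref{P:cts-extension}). One technical point you gloss over in (ii): the upgrade from $\cM_{s,r}\diamond\cM_{r,t}\le\cM_{s,t}$ to equality is not automatic from the semigroup law of $\fh$; the paper proves it by showing both sides have the same one-point law (using independence and the two-sided narrow-wedge marginals, which requires the skew-time-reversibility argument of \Cref{L:Equation-Q} to get the backward marginals). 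That lemma is essential and not covered by "the semigroup property of the KPZ fixed point."

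The real problem is phase (iii). Your plan — refine the time mesh, decompose $\cM_{0,1}$ as an $N$-fold sup-convolution, note that each short-time piece "concentrates near its narrow-wedge marginal where hypothesis (3) gives full joint information from a single basepoint," and pass $N\to\infty$ — identifies the right intuition (geodesic coalescence makes far-apart basepoints see a common narrow wedge after a short time), but it is not an argument. Concretely: for a two-point evaluation $\cM_{0,1}(x_1,y_1),\cM_{0,1}(x_2,y_2)$, refining the mesh never collapses to a single basepoint; at each slice $r_i$ the two geodesics may or may not have coalesced, and the estimate you need is precisely a quantitative bound on the probability that coalescence happens inside a given short slice $[r_i,r_{i+1}]$. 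That is the content of the paper's \Cref{prop:key} (branching time density bound of order $r^{2/3+\delta}$), which is proven via the stationary horizon (\Cref{T:stat-horizon-thm}, Property $\ast$), a Brownian comparison for the Airy$_2$ process (\Cref{lem:bcp}), and bespoke Brownian estimates (\Cref{lem:bmthree}, \Cref{lem:cond-brm}). The paper also cannot run the exchange on $\cM_{0,1}$ directly: it first proves a \emph{stationary} version (\Cref{prop:sta}), using the stationary horizon as initial data so that the relevant comparisons become tractable, and then removes the stationary initial condition via a Cameron--Martin tilt (\Cref{prop:finid}). None of these ingredients appears in your sketch, and "concentrates near its narrow-wedge marginal" does not by itself supply a coupling of $\cM_{r_i,r_{i+1}}$ and $\cL_{r_i,r_{i+1}}$ that makes the two compositions agree with probability $1-o(r_{i+1}-r_i)$. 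So phase (iii), as written, is a restatement of the goal rather than a proof.

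Finally, your closing appeal to "an earlier characterization of the landscape coupling by these three properties (the coupling formulation of the main result advertised in the abstract)" is circular: that coupling characterization is \Cref{T:KPZ-fixed-point-coupling}, which the paper \emph{deduces from} \Cref{T:landscape-characterization} (Section 5). There is no independent prior version of it. It is also unnecessary: once you know $\cM_{s,t}\eqd\cL_{s,t}$ as a two-parameter field for each $s<t$, the metric composition law and independent increments already pin down $\cM$ as a directed landscape via \Cref{def:landscape}, with no detour through the coupling formulation.
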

		\begin{remark}
			\label{R:post-thm11}
			\begin{enumerate}
				\item The choice of $\Q$ is arbitrary, both in the above theorem and throughout the paper. We could replace it with any countable dense subset of $\R$. Note also that we could have replaced $\Qd$ with $\Qd \cap (\R \times I)^2$ for any open interval $I \subset \R$; the proof does not change and we would recover a directed landscape defined only on the strip $\R\times I$.
				\item The KPZ fixed point marginal assumption can be alternately phrased as saying that for fixed $p\in\Q^2$, we have $\cM(p; \cdot) \eqd \cL(p; \cdot)$. Johansson and Rahman \cite{johansson2020multi} and Liu \cite{liu2022multipoint} derived different exact formulas for this law, neither of which is a special case of the KPZ fixed point formula. Theorem \ref{T:landscape-characterization} can alternately be viewed as a uniqueness theorem for the directed landscape given either the Johansson-Rahman or Liu multi-time formulas.
			\end{enumerate}
		\end{remark}

		We also give an alternate version of \Cref{T:landscape-characterization} that characterizes the directed landscape as the unique coupling of the KPZ fixed point satisfying certain natural conditions. This version, while not difficult to show given \Cref{T:landscape-characterization}, may better demonstrate how the characterization links back to couplings of discrete models.
		
		Let $\NW \subset \UC_0$ be the set of functions $f:\R \to \R \cup \{-\infty\}$ such that $\{x : f(x) > -\infty\}$ is a finite subset of $\Q$, and $f(\R) \subset \Q \cup \{-\infty\}$. In words, $\NW$ is the set of all finite rational combinations of narrow wedges. 
		\begin{theorem}
			\label{T:KPZ-fixed-point-coupling}
			Consider a family of random operators $\cK_{s, t}:\NW \to \UC_0, s \le t \in \Q$ such that:
			\begin{enumerate}
				\item (KPZ fixed point marginals) For all $f \in \NW$ and $s\le t\in \Q$, we have $\cK_{s, t} f \eqd \fh_{t-s} f$.
				\item (Independent increments) For any collection of disjoint intervals $[s_i, t_i], i = 1, \dots, k$, the operators $\cK_{s_i, t_i}, i = 1, \dots, k$ are independent.
				\item (Monotonicity) For any $s \le r < t \in \Q$ and any $f, g \in \NW$, if $g \le \cK_{s, r} f$ then $\cK_{r, t} g \le \cK_{s, t} f$.
				\item (Shift commutativity) For all $s < t \in \Q, f \in \NW, c \in \Q$, $\cK_{s, t} (f + c) = \cK_{s, t} f + c$.
			\end{enumerate}
			Then there exists a directed landscape $\cL$ such that almost surely, for any $s < t \in \Q, f \in \NW, x \in \R$ we have
			\begin{equation}
				\label{E:Kstt}
				\cK_{s, t} f(x) = \sup_{y \in \R} f(y) + \cL(y, s; x, t).
			\end{equation}
		\end{theorem}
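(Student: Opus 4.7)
The strategy is to reduce to \Cref{T:landscape-characterization}: define $\cM:\Qd\to\R$ by $\cM(x,s;y,t):=\cK_{s,t}\de_x(y)$, verify its three hypotheses, extract a directed landscape $\cL$ extending $\cM$, and lift the identity from narrow wedges to general $f\in\NW$. Independent increments of $\cM$ are immediate from those of $\cK$, since $\cM(\cdot,s_i;\cdot,s_{i+1})$ is measurable with respect to $\cK_{s_i,s_{i+1}}$. For the triangle inequality, fix $s<r<t$ and $x,y,z\in\Q$, choose rationals $c_n\uparrow \cM(x,s;y,r)$, and observe that each $\de_y+c_n\in\NW$ satisfies $\de_y+c_n\le \cK_{s,r}\de_x$ pointwise. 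Monotonicity then gives $\cK_{r,t}(\de_y+c_n)\le \cK_{s,t}\de_x$, shift commutativity rewrites the left side as $\cK_{r,t}\de_y + c_n$, and evaluating at $z$ and letting $n\to\infty$ yields $\cM(x,s;y,r)+\cM(y,r;z,t)\le\cM(x,s;z,t)$.

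\textbf{The main obstacle} is condition (3) of \Cref{T:landscape-characterization}, the joint KPZ fixed point marginals of $\cM(p;\cdot)$. The single-time case is immediate, since $\cK_{t,t+s}\de_x\eqd\fh_s\de_x$ as random functions and $\fh_s\de_0(\cdot)\eqd \fh_s\de_x(x+\cdot)$ by spatial translation invariance of the KPZ fixed point. For distinct times $s_1<\cdots<s_k$, the operators $\cK_{t,t+s_i}$ overlap in time and are not independent, so I plan an induction on $k$ through the almost sure semigroup-type identity
\[\cK_{t,t+s_k}\de_x(z)=\sup_{y\in\Q}\bigl[\cK_{t,t+s_{k-1}}\de_x(y)+\cK_{t+s_{k-1},t+s_k}\de_y(z)\bigr],\quad z\in\Q.\]
The $\ge$ direction is the triangle inequality above. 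The reverse is the delicate part: since $\cK_{t,t+s_{k-1}}\de_x$ lies in $\UC_0\setminus\NW$, one cannot apply $\cK_{t+s_{k-1},t+s_k}$ to it directly. Instead I aim to show the supremum has the one-point distribution of $\cK_{t,t+s_k}\de_x(z)$ by combining independent increments (decoupling the two factors in the supremum), the KPZ marginal applied to each factor separately, and the KPZ fixed point's own semigroup property; pointwise inequality together with equality in one-point distribution then forces almost sure equality at each $z\in\Q$, and a union bound over countable $z$ completes the identity. Independence of $\cK_{t+s_{k-1},t+s_k}$ from the induction history then propagates the match to step $k$ via the Markov structure of the KPZ fixed point.

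\textbf{Conclusion.} Feeding the three verified conditions into \Cref{T:landscape-characterization} produces a directed landscape $\cL$ with $\cL=\cM$ on $\Qd$. For a general $f=\max_i(\de_{y_i}+a_i)\in\NW$, the KPZ marginal at $t=s$ forces $\cK_{s,s}=\id$ almost surely, so monotonicity applied with $r=s$ together with shift commutativity yields $\cK_{s,t}f\ge\max_i[\cL(y_i,s;\cdot,t)+a_i]$ almost surely. Both sides have distribution $\fh_{t-s}f$---the left by the KPZ marginal hypothesis, the right because $\max_i[\cL(y_i,s;\cdot,t)+a_i]$ is the directed-landscape realization of $\fh_{t-s}f$ via \eqref{E:kpzfp-to-DL}---so pointwise inequality with matching one-point distributions forces equality at each $x\in\Q$, extending to all $x\in\R$ by continuity of $\fh_{t-s}f$ on $\UC_0$. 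Countability makes \eqref{E:Kstt} hold simultaneously for all $s<t\in\Q$, $f\in\NW$, $x\in\R$.
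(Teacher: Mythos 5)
Your plan — define $\cM(x,s;y,t)=\cK_{s,t}\de_x(y)$, verify the three hypotheses of \Cref{T:landscape-characterization}, extract $\cL$, then lift from $\de_x$ to general $f\in\NW$ via monotonicity, shift commutativity and \Cref{L:abstract-blomp} — is exactly the paper's route, and your triangle inequality verification and final "lifting" step are correct. The gap is in how you propose to verify condition (3), the multi-time KPZ fixed point marginals of $\cM(p;\cdot)$.

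The semigroup identity you propose, $\cK_{t,t+s_k}\de_x(z)=\sup_{y\in\Q}\bigl[\cK_{t,t+s_{k-1}}\de_x(y)+\cK_{t+s_{k-1},t+s_k}\de_y(z)\bigr]$, is precisely the max-preservation/variational property of $\cK$ that Remark \textit{4} after the theorem singles out as the hard part of the whole story: it fails for prelimiting models, and the paper only obtains it in the limit as a \textit{consequence} of the full characterization, not as an ingredient. To establish the one-point law of the supremum on the right side, you would need the joint law of $y\mapsto\cK_{t+s_{k-1},t+s_k}\de_y(z)$ for a fixed target $z$ and varying source $y$. The marginal hypothesis gives the law of $\cK_{s,t}f$ jointly over targets for each fixed $f\in\NW$, but says nothing about the joint law across different sources; this "backward" marginal is exactly what \Cref{L:Equation-Q} proves for pre-landscapes via a separate stochastic-domination argument, and assuming it here is circular — it is equivalent to knowing the law of the increment $\cL_{t+s_{k-1},t+s_k}$, which is what you are trying to establish.

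The paper sidesteps this entirely. It does not decompose the supremum; instead it proves that $(\cK_{s,s+t}f)_{t\in\Q_+}$ is a Markov process with the KPZ fixed point transition kernel by a monotone-approximation argument: choose $g_n\in\NW$ with $g_n\cvgup\cK_{s,s+t}f$, observe that conditional on $\cF_t$ each $\cK_{s+t,s+t'}g_n\eqd\fh_{t'-t}g_n$ by independence of increments, so the monotone limit $G=\lim_n\cK_{s+t,s+t'}g_n$ has (conditional on $\cF_t$) the law $\fh_{t'-t}(\cK_{s,s+t}f)$ by \Cref{P:KPZ-FP-cvge}. Monotonicity of $\cK$ gives $G\le\cK_{s,s+t'}f$, while unconditionally $G\eqd\cK_{s,s+t'}f$, so \Cref{L:abstract-blomp} forces $G=\cK_{s,s+t'}f$ a.s. Crucially $G$ is a monotone limit, not a supremum over narrow wedges, so no max-preservation is needed. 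To repair your proof you should replace the proposed semigroup identity with this Markov-property argument.
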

		\begin{remark}
        \label{R:intro-rem-1}
			\begin{enumerate}
				\item As in the previous theorem, the time index set $\Q$ could be replaced with $P \cap I$ for any countable dense set $P$ and any open interval $I$.
				We could also replace $\Q$ with any countable dense set when defining $\NW$.

				\item The marginal condition $1$ in \Cref{T:KPZ-fixed-point-coupling} is different from condition $3$ in \Cref{T:landscape-characterization}: condition
				$3$ in \Cref{T:landscape-characterization} asks for matching multi-time marginals starting only from the narrow wedge initial condition, whereas $1$ in \Cref{T:KPZ-fixed-point-coupling} asks only for fixed-time marginals, but from a dense set of initial conditions. In practice, these conditions are interchangeable; we have included the two versions for flexibility in applications.

				\item Finally, we point out that the existence of a variational formula of the form \eqref{E:Kstt} for \textit{some} field $\cL$ (not necessarily a directed landscape) is equivalent to saying that $\cK_{s, t}$ satisfies max-preservation:
				$$
				\cK_{s, t} (f \vee g) = \cK_{s, t} f \vee \cK_{s, t} g.
				$$
				Max-preservation fails for many prelimiting models (e.g.~ASEP in the basic coupling), and we do not have a simple argument for how it arises in the limit. Rather, we prove this through utilizing the full power of Theorem \ref{T:landscape-characterization} and the variational characterization \eqref{E:kpzfp-to-DL} of the law of the KPZ fixed point.

                This point is related to the fact that there are many natural prelimiting couplings of random growth models that do not arise from an underlying geodesic space. One upshot of Theorem \ref{T:KPZ-fixed-point-general} is that geodesics \textit{emerge} in the limit as long as we started from an underlying coupling with (approximate versions of) the independent increment, monotonicity, and shift commutativity properties.
			\end{enumerate}
		\end{remark}		
		
		In the statement of \Cref{T:KPZ-fixed-point-coupling}, we have restricted our initial conditions to the (essentially arbitrary) dense set $\NW$ mostly for convenience. Other initial conditions come along for the ride.

		\begin{corollary}
			\label{C:sss}
			In the setting of Theorem \ref{T:KPZ-fixed-point-coupling}, consider $f \in \UC_0$ satisfying
			\begin{equation}
				\label{E:f-cond}
				\limsup_{z \to x, z \in \Q} f(z) = f(x), \qquad \text{ for all } x \in \R.
			\end{equation}
			Suppose that for some $s\le t \in \Q$, there is a random variable $\cK_{s, t} f \in \UC_0$ such that $\cK_{s, t} f \eqd \fh_{t-s} f$ and $\cK_{s, t} g \le \cK_{s, t} f$ whenever $g \in \NW, g \le f$. Then formula \eqref{E:Kstt} holds almost surely for this $s, t$ and $f$. 
		\end{corollary}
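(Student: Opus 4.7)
The plan is to sandwich $\cK_{s, t} f$ against the expression
\[
\fh f(x) := \sup_{y \in \R} [f(y) + \cL(y, s; x, t)],
\]
where $\cL$ is the directed landscape supplied by Theorem \ref{T:KPZ-fixed-point-coupling}; by formula \eqref{E:kpzfp-to-DL}, $\fh f \eqd \fh_{t-s} f$. I would get $\cK_{s, t} f \ge \fh f$ pointwise almost surely through monotonicity and approximation, then use the marginal hypothesis $\cK_{s, t} f \eqd \fh_{t-s} f$ to upgrade this to equality.

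First I would construct approximants in $\NW$. Enumerate $\Q = \{q_1, q_2, \ldots\}$ and define $g_n \in \NW$ to be supported on $\{q_i : i \le n,\, f(q_i) > -\infty\}$, with $g_n(q_i)$ chosen to be a rational number in $[f(q_i) - 1/n, f(q_i)]$ at every such index. Then $g_n \le f$ and $g_n(z) \to f(z)$ for every $z \in \Q$ with $f(z) > -\infty$.

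Next I would show $\cK_{s, t} f(x) \ge \fh f(x)$ for each fixed $x \in \R$ almost surely. On the almost sure event where Theorem \ref{T:KPZ-fixed-point-coupling} applies simultaneously to every $g_n$ (a countable intersection), the monotonicity hypothesis and the variational formula \eqref{E:Kstt} applied to each $g_n$ give
\[
\cK_{s, t} f(x) \ge \cK_{s, t} g_n(x) = \sup_{y \in \R} [g_n(y) + \cL(y, s; x, t)].
\]
Given $\ep > 0$, pick $y_\ep \in \R$ with $f(y_\ep) + \cL(y_\ep, s; x, t) \ge \fh f(x) - \ep/3$; by condition \eqref{E:f-cond}, select $z_\ep \in \Q$ close to $y_\ep$ with $f(z_\ep) \ge f(y_\ep) - \ep/3$, and then use continuity of $\cL(\cdot, s; x, t)$ to arrange $\cL(z_\ep, s; x, t) \ge \cL(y_\ep, s; x, t) - \ep/3$. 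Once $n$ is large enough that $z_\ep$ lies in the support of $g_n$ and $1/n < \ep$, the single term $y = z_\ep$ in the supremum yields $\cK_{s, t} g_n(x) \ge \fh f(x) - 2\ep$. Sending $n \to \infty$ and then $\ep \to 0$ establishes the pointwise lower bound.

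Finally I would upgrade this to equality. For each $x$, the marginal identities $\cK_{s, t} f \eqd \fh_{t-s} f \eqd \fh f$ yield $\cK_{s, t} f(x) \eqd \fh f(x)$ as real random variables, so the almost sure inequality $\cK_{s, t} f(x) \ge \fh f(x)$ combined with equal laws forces $\cK_{s, t} f(x) = \fh f(x)$ almost surely (via $\P(\cK_{s, t} f(x) > c) = \P(\fh f(x) > c)$ for every $c \in \Q$). Taking a countable intersection over $x \in \Q$ and invoking continuity on $\R$ of both sides --- the right side from continuity of $\cL$, and the left side by transferring almost sure continuity of $\fh_{t-s} f$ through the distributional identity $\cK_{s, t} f \eqd \fh_{t-s} f$ --- extends the equality to all of $\R$. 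The main obstacle is the dense approximation in the second step: condition \eqref{E:f-cond} is essential, since without it $f$ could carry mass off $\Q$ and the $g_n$'s would fail to see the relevant maximizers; the remaining steps are largely bookkeeping.
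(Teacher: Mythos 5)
Your proof is correct and follows essentially the same route as the paper: approximate $f$ from below by functions in $\NW$, use monotonicity together with formula \eqref{E:Kstt} to get the almost sure lower bound $\cK_{s,t} f \ge \sup_{y \in \R}[f(y) + \cL(y,s;\cdot,t)]$ (with condition \eqref{E:f-cond} and continuity of $\cL$ doing the work to pass from rational to real maximizers), then upgrade to equality using the matching marginal law. The only cosmetic difference is the final upgrade step: the paper invokes Lemma \ref{L:abstract-blomp}(1) directly at the level of $\UC$-valued random variables, whereas you prove the same fact pointwise on $\Q$ and then extend by almost sure continuity of both sides --- which is precisely the argument inside that lemma, so this is a restatement rather than a genuinely different approach. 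Your closing remark that \eqref{E:f-cond} is essential for the approximation is a correct reading of the structure of the proof.
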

		
		Note that condition \eqref{E:f-cond} will always hold with $\Q$ replaced by \textit{some} countable dense set $Q$ which may depend on $f$. For example, letting $Q = \{ \argmax_{x \in [q, q']} f(x) : q < q' \in \Q\}$ gives such a set, where here we choose the rightmost $\argmax$ if multiple points exist. Therefore by Remark \ref{R:intro-rem-1}.1, in practice Corollary \ref{C:sss} can be applied to any function in $\UC_0$.

   \subsection{Main results: new framework for directed landscape convergence}

\textbf{A dream of universality.} \qquad The central challenge in the field of KPZ---sometimes referred to as the \emph{strong KPZ universality} problem---is to show that we will see the characteristic $1$:$2$:$3$ scaling and all KPZ limit objects whenever we start with a microscopic model satisfying a few basic qualitative properties indicating membership in the KPZ class. 

At present, we have an excellent understanding of the limit objects in the KPZ class, and there are techniques for showing that models with sufficient exact solvability converge to these limit objects. There are also a few comparison techniques available that allow for small perturbations of exactly solvable models, e.g. see \cite{quastel2020convergence, ransford2023universality}. However, the problem of analyzing truly non-integrable models (e.g. first and last passage percolation with general weights) seems far out of reach. 

Theorem \ref{T:landscape-characterization} can be viewed as providing one small step towards universality. It implies that if we can prove convergence to the KPZ fixed point for a moodel, then we can often upgrade this to the directed landscape convergence by checking a few straightforward conditions. Informally, if you can find a way to get 99\% of the way to universality, then Theorem \ref{T:landscape-characterization} can take you the last 1\%. Note that all previous approaches to studying landscape convergence rely on extra combinatorial structure coming from either some generalization of the Robinson-Schensted correspondence \cite{DOV, dauvergne2021disjoint, dauvergne2021scaling, wu2023kpz, dauvergne2024directed}, or the Yang-Baxter equation \cite{ACH}. 

To illustrate this idea, we give a handful of simple applications to models where KPZ fixed point convergence has been previously established, but directed landscape convergence has remained elusive.	
		
		\begin{theorem}
			\label{T:applications}
			The following models converge to the directed landscape after appropriate rescaling: the random walk web distance and Brownian web distance as conjectured in \cite{vetHo2023geometry} (\Cref{T:web-distance-cvg}), ASEPs and TASEPs under various exotic couplings (\Cref{thm:ASEPexo}), and general asymmetric exclusion processes in a `TASEP-perturbative' limiting regime and with random initial data (\Cref{thm:AEP}).
		\end{theorem}

        \begin{remark}
        \begin{enumerate}
            \item If a directed metric model converges to the directed landscape in a sufficiently strong sense, then its geodesics also converge to landscape geodesics, see \cite[Section 12]{DOV} and \cite[Section 8]{dauvergne2021scaling}. The required convergence is \textit{hypograph convergence}. This convergence is weaker than uniform-on-compact convergence and can be established without hard estimates, see \cite[Section 10]{dauvergne2021scaling}.
            We do not explore geodesic or hypograph convergence here, but note that one should be able to conclude geodesic convergence in the above models by applying our approach here with the aformentioned ideas in \cite{dauvergne2021scaling}.
            \item The only two models in Theorem \ref{T:applications} where landscape convergence has been previously established are ASEP and TASEP in the basic coupling. For the other models covered in \Cref{T:applications}, directed landscape convergence seems beyond the scope of all previous approaches, since the necessary combinatorial structures (e.g. \textit{line ensembles} or \textit{the Yang-Baxter equation}) are unknown or absent. In particular, the general exclusion processes in \Cref{thm:AEP} gives the first example of a model where there are no exact formulas known for any marginals, but convergence to the directed landscape can be established. The caveat here is that we need to consider exclusion processes with random initial data and in a restrictive (but still highly non-trivial) TASEP-perturbative regime.
            \item For brevity, we do not attempt to be comprehensive in applying our convergence framework and have focused on models where no tractable line ensembles are expected to exist. Beyond the models in \Cref{T:applications}, there are other models where our convergence framework applies straightforwardly. For example, in the original arXiv version of this paper \cite{dauvergne2024characterizationv1}, we included proofs of convergence for the KPZ equation and the O'Connell-Yor polymer starting from the results of \cite{virag2020heat}. For integrable last passage percolation (LPP) models, where directed landscape convergence was shown in \cite{DOV, dauvergne2021scaling}, our \Cref{T:landscape-characterization,T:KPZ-fixed-point-coupling} give alternate proofs starting from either KPZ fixed point or multi-time convergence (see \cite{matetski2016kpz} or \cite{liu2022multipoint} for exponential LPP, \cite{nica2020one} for Brownian LPP, and \cite{johansson2020multi} for geometric LPP). 
            
            We expect that our framework can also be applied to obtain directed landscape convergence for variants of TASEP where there are prelimiting formulas that are potentially suitable for taking a KPZ fixed point limit. See \cite{MR4177037, MR4152645, matetski2023exact, matetski2023tasep} for examples which follow from extensions of the original approach of \cite{matetski2016kpz}, and \cite{bisi2023non} for inhomogeneous versions of TASEP where an alternate RSK-based approach yields tractable formulas. Note that even pointwise convergence of the kernel has not been shown for these models, other than in a few cases (certain discrete time TASEPs, see \cite{MR4177037, matetski2023exact}). In these latter cases, trace class convergence has not been shown.
        \end{enumerate}
        \end{remark}

        We finish this section by elaborating on the (T)ASEP exotic couplings in Theorem \ref{T:applications}. For convenience, we will only couple height function evolutions where $h(0) \in 2 \Z$; adding in this condition makes the state space $\SRW$ irreducible under the ASEP evolution.
         First, we can alternately define the evolution of ASEP with parameter $p$ using two arrays of Poisson processes $Q^+(x, h), Q^-(x, h)$ on $\R$ indexed by the even lattice
        $$
        \Z^2_e = \{(x, h) \in \Z^2 : x + h \text{ is even}\}.
        $$
        The processes $Q^+(x, h)$ have intensity $p$ and the processes $Q^-(x, h)$ have intensity $p + 1$. An ASEP height function $h_t$ will attempt the jump $h_{t^-}(x) \mapsto h_t(x) \pm 2$ if $t \in Q^\pm(x, h_{t^-}(x))$ and $h_{t^-}$ has a local minimum/maximum at $x$. A pair of arrays $Q^\pm$ defines an ASEP evolution when we have the following two conditions:
        \begin{itemize}
            \item For any $h \in \SRW$ with local minima at $x_1, \dots, x_k$ and local maxima at $y_1, \dots, y_\ell$, the processes
            $$
            Q^+(x_i, h(x_i)), \;\; i = 1, \dots, k, \quad  Q^-(y_j, h(y_j)), \;\; j = 1, \dots, \ell
            $$
            are all independent.
            \item For any fixed time $t \in \R$, the forward clocks $\{Q^\pm(u) \cap [t, \infty): u \in \Z^2_e\}$ are independent of the backward clocks $\{Q^\pm(u) \cap (-\infty, t]: u \in \Z^2_e\}$.
        \end{itemize}
      Now, consider a coupling of ASEP simultaneously from all times and all initial conditions using the same clocks $Q^-, Q^+$. Such a coupling is always monotone, and is jointly Markov by the second bullet point above. In other words, the coupling satisfies prelimiting versions of $2, 3$ in Theorem \ref{T:KPZ-fixed-point-coupling}. There are many ways to impose an additional constraint on the clocks $Q^\pm$ so we always satisfy shift commutativity in the limit. To define the exotic couplings in \Cref{T:applications}, we fix some $u = (a, b) \in \Z^2_e$ with $b \ge |a|$, and ask that all clocks satisfy the additional condition $Q^\pm(u + x) = Q^\pm(u)$ for all $x \in \Z^2_e$. Beyond this, we set the clocks independently. This easily implies shift commutativity in the limit, allowing us to apply Theorem \ref{T:KPZ-fixed-point-coupling} to prove directed landscape convergence.

      The above construction also shows how our main theorems fail away from the continuum limit. Indeed, there are many distinct exotic couplings of ASEP, which satisfy very precise pre-limiting versions of Theorem \ref{T:KPZ-fixed-point-coupling}.1-4. All these couplings become equivalent in the KPZ limit. We note also that the exotic coupling construction covers three classic couplings: the case when $u = (0, 2)$ is Liggett's basic coupling, the case when $u = (1, 1)$ is the particle coupling, where Poisson clocks are attached to labelled particles rather than sites, and $u = (-1, 1)$ covers the similarly defined hole coupling. 

      Finally, there are many choices of the clocks $Q^\pm$ where shift commutativity \textit{does not} arise in the limit; here we expect a limiting grand monotone coupling of the KPZ fixed point \textit{not} driven by the directed landscape. A particularly intriguing example is when all clocks are independent. Here the limiting object should be driven by a three-dimensional, rather than a two-dimensional, noise.

\begin{problem}
In the above construction, consider the setting where all of the clocks $Q^\pm$ are independent. Construct the scaling limit of this model.
\end{problem}

		\subsection{Structure of the paper and proof overview}
		
		After a short preliminary section, we prove \Cref{T:landscape-characterization} in \Cref{S:joint-law}, up to a key technical result that we leave for \Cref{sec:bkey}. These are the most important parts of the paper. We then prove \Cref{T:KPZ-fixed-point-coupling} from \Cref{T:landscape-characterization} in \Cref{S:monotonicity-max-pres}. This section also contains a technical extension of \Cref{T:KPZ-fixed-point-coupling} that we will use to study general exclusion processes in Theorem \ref{thm:AEP}, where random initial data is required to reach the KPZ fixed point. Finally, \Cref{S:webdis,S:exclusion} prove the applications in \Cref{T:applications}. The proofs in the final sections are mostly straightforward, though there are some technical lemmas required to handle monotonicity in general exclusion processes.
		
		Broadly speaking, \Cref{T:landscape-characterization} can be viewed as a consequence of the fact that information flows along a countable set of \textit{coalescing geodesics} in the directed landscape. Very roughly, because this countable set is lower-dimensional and the directed landscape is independent in far away regions, we can hope to characterize the directed landscape using only some lower-dimensional marginal information.
		
		Setting some notations, we will write $\cM_{s, t}(x, y) = \cM(x, s; y, t)$ for the increment in $\cM$ from time $s$ to time $t$, and similarly define $\cL_{s, t}$ for the directed landscape. For functions $f:\R^k \to \R, g:\R^\ell \to \R, k, \ell \ge 1$, we define the \textbf{metric composition} $f \diamond g:\R^{k-1} \times \R^{\ell - 1} \to \R$ by
		\begin{equation}  \label{eq:diamond}
			f \diamond g(x, y) = \sup_{z \in \R} f(x, z) + g(z, r).
		\end{equation}
		The directed landscape $\cL$ is characterized by the law of the increment $\cS:= \cL_{0, 1}$, known as the Airy sheet, independent increment and scale invariance properties, and the metric composition law
		$$
		\cL_{s, t} = \cL_{s, r} \diamond \cL_{r, t}, \qquad s < r < t.
		$$  
		Indeed, we can view $\cS$ as an analogue of the normal distribution, and $\cL$ as an analogue of Brownian motion in the metric composition semigroup. We will prove \Cref{T:landscape-characterization} using a variant of the Lindeberg exchange method adapted to studying metric composition.

		Call a function $\cM$ satisfying the assumptions of \Cref{T:landscape-characterization} a \textbf{pre-landscape}.
		With the above framework in mind, we first show that any pre-landscape $\cM$ can be extended to a continuous function on $\Rd$ satisfying the metric composition law, and with the property that for any $s < t$, and $f:\R\to \R$ we have
		\begin{equation}
			\label{E:KPZFP-marginal-1}
			f \diamond \cM \eqd f \diamond \cL.
		\end{equation}
		This is fairly quick, and we refer the reader to \Cref{P:cts-extension} for details. Given this structure, to show that $\cM \eqd \cL$, it suffices to prove that $\cM_{s, t}$ and $\cL_{s, t}$ have the same finite dimensional distributions for all fixed $s < t$. We will outline how we check that for a set $F = \{(x_1, y_1), (x_2, y_2)\}$, the two-point distribution $\cM_{0, 1}|_F$ matches that of $\cL$. The general case uses the same idea. For $s \in [0, 1]$ define
		$
		\cM^s = \cL_{0, s} \diamond \cM_{s, 1},
		$
		so that $\cM^0 = \cM_{0, 1}$ and $\cM^1 = \cL_{0, 1}$. We will aim to show that with respect to the $L^1$-Wasserstein metric $d_W$, for $s < t = s + r$,
		\begin{equation}
			\label{E:wasserstein-goal}
			d_W(\cM^s|_F, \cM^t|_F) = o(r).
		\end{equation}
		The equality in law between $\cM^0|_F$ and $\cM^1|_F$ will then follow by interpolating between $\cM^0$ and $\cM^1$ with $\cM^t$ for $t$ in a fine mesh and applying the triangle inequality for $d_W$. To prove \eqref{E:wasserstein-goal}, we use the decomposition
		\begin{equation}
			\label{E:Lind-exchange}
			\cM^s = \cL_{0, s} \diamond \cM_{s, t} \diamond \cM_{t, 1}, \qquad \cM^t = \cL_{0, s} \diamond \cL_{s, t} \diamond \cM_{t, 1}.
		\end{equation}
		We will construct a coupling of the increments $\cM_{s, t}, \cL_{s, t}$ so that $\P[\cM^s|_F = \cM^t|_F] = 1 - O(r^{2/3 + \de})$ for some $\de > 0$. To see why this implies \eqref{E:wasserstein-goal}, observe that on a compact set $K$, KPZ scaling gives that $\|\cM_{s, t}\|_K, \|\cL_{s, t}\|_K = O(r^{1/3})$, and so a crude estimate implies
		$$
		\E\|\cM^s|_F - \cM^t|_F\|_2 =O(r^{1/3}) \P[\cM^s|_F \ne \cM^t|_F]= O(r^{1 + \de}).
		$$
		
		Now to construct the coupling between $\cM_{s, t}, \cL_{s, t}$, for $i = 1, 2$ let $A_i$ be the set of all $z \in \R$ where 
		$$
		\cL_{0, s} \diamond \cM_{t, 1}(x_i, y_i) \le \cL_{0, s}(x_i, z) + \cM_{t, 1}(z, y_i) + \log^{10}(r)r^{1/3}.
		$$
		The set $A_i$ is chosen so that in the candidate landscapes  $\cM^s, \cM^t$, the geodesic from $(x_i, 0)$ to $(y_i, 1)$ is in the set $A_i$ at times $s$ and $t$ with very high probability; this is guaranteed by the fact that the threshold $\log^{10}(r)r^{1/3} \gg r^{1/3}$. Therefore if we can couple $\cL_{s, t}, \cM_{s, t}$ so that
		\begin{equation}
			\label{E:Aicouple}
			\max_{z \in A_i} \cL_{0, s}(x_i, z) +  \cL_{s, t}(z, y) = \max_{z \in A_i} \cL_{0, s}(x_i, z) +  \cM_{s, t}(z, y), \qquad \forall y \in A_i,
		\end{equation}
		for $i = 1, 2$, then with high probability $\cM^s|_F = \cM^t|_F$. 
		The existence of a coupling where \eqref{E:Aicouple} holds only for exactly one value of $i$ is immediate by the KPZ fixed point marginal property \eqref{E:KPZFP-marginal-1}. 
		These two couplings (for $i=1$ and $i=2$) are compatible if the sets $A_1, A_2$ are well-separated from each other, since in this case the pieces $\cM_{s, t}|_{A_1 \times A_1}, \cM_{s, t}|_{A_2 \times A_2}$ are approximately independent. Moreover, the couplings for $A_1, A_2$ are still compatible even if $A_1, A_2$ partially overlap, as long as whenever they overlap on an interval $I$, the functions $\cL_{0, s}(x_1, \cdot), \cL_{0, s}(x_2, \cdot)$ differ by a constant $C_I$ on $I$. 
		
		This type of compatibility can be viewed as roughly corresponding to the event where in a candidate landscape $\cM^t$, if $\pi_i, i = 1, 2$ are the geodesics from $(x_i, 0)$ and $(y_i, 1)$, then either $\pi_1(t), \pi_2(t)$ are far away from each other ($A_1, A_2$ are separated), or else $\pi_1(t) = \pi_2(t)$ and the geodesics coalesce together well before time $s$ ($A_1, A_2$ overlap and the functions $\cL_{0, s}(x_1, \cdot), \cL_{0, s}(x_2, \cdot)$ differ by a constant). In other words, the coalescence time for the geodesics $\pi_1, \pi_2$ occurs far away from the interval $[s, t]$. If $\cM^s, \cM^t$ were true directed landscapes, we could prove that this compatibility event holds with probability $1 - r^{1-o(1)}$. For pre-landscapes, we will show the weaker bound $1 - r^{2/3 + \de}$ for a fixed $\delta > 0$, which still gives \eqref{E:wasserstein-goal}. In practice, we will also work with a stationary version of $\cM^s|_F, \cM^t|_F$ in order to facilitate technical computations.
		
		\begin{remark}
        \label{R:what-is-needed}
		The proof of \Cref{T:landscape-characterization} does not actually use much precise information about the directed landscape $\cL$. Indeed, all we really need is that $\cL$ satisfies the three conditions in \Cref{T:landscape-characterization} along with the following stationarity property, used to establish the $(1 - r^{2/3 + \de})$-bound discussed above.
				\begin{description}
					\item[Property $\ast$:] Let $\cB_1$ and $\cB_2$ be independent two-sided Brownian motions with drifts $a_1 < a_2$. Let $\cR_1(x)=\cB_1(x)$, and
					$\cR_2(x)=\cB_1(x)+\max_{y\le x}-\cB_1(y)+\cB_2(y)$. Fix $s <t$ and define $F_i(y) = \max_{x \in \R} \cR_i(x) + \cL(x, s; y, t)$.
					Then
					\begin{equation*}
						\label{E:two-path}
						(\cR_1, \cR_2) - (\cR_1(0), \cR_2(0)) \eqd (F_1, F_2) - (F_1(0), F_2(0)).
					\end{equation*}
				\end{description} Property $\ast$ was shown for the directed landscape in \cite{busani2024stationary}. However, in two-species TASEP the analogue much more classical, going back to Angel \cite{angel2006stationary} (see also \cite{ferrari2007stationary} for $k$-species TASEP). 
				
				This suggests that by combining Angel's theorem, the KPZ fixed point convergence in \cite{matetski2016kpz}, and our present approach, one should be able to obtain an alternate construction the directed landscape as the scaling limit of colored TASEP. For brevity, we have not pursued this in the present paper. 
		\end{remark}

		\medskip
		
		\noindent\textbf{On the Lindeberg exchange strategy and other related ideas.}
		
		Originating with Lindeberg’s proof of the central limit theorem \cite{lindeberg1922neue}, the method of replacement or exchange has been generalized (see \cite{chatterjee2005simple,MR2294976,MR556909,MR2630040,MR3584558}), and has been applied to derive universality for many probabilistic models, including some in the KPZ class, e.g. the
		Sherrington-Kirkpatrick spin glass model \cite{chatterjee2005simple},
		random matrices \cite{chatterjee2005simple,MR2784665,MR2647142,MR3704770},
		last passage percolation in a thin rectangle \cite{MR2240468}, 
		and directed polymers \cite{MR3861825,ransford2023universality,caravenna2023critical,tsai2024stochastic}.

        There are some similarities between our proofs and some existing arguments; for example, the Tsai's characterization of the critical stochastic heat flow \cite{tsai2024stochastic}, which appeared while we were finalizing this paper, also uses an exchange idea based on replacing time slices as in \eqref{E:Lind-exchange} to characterize a scaling limit.
		However, beyond the starting idea of `exchange', our approach significantly differs from the usual Lindeberg method. In particular, the study of test functions and moments has no relevance in our proof.
		An explanation for this is that compared to the previous applications of the Lindeberg method to directed polymers, our setting of the directed landscape is in the \textit{strongest} disorder regime, which is more sensitive to noise perturbations (see e.g.~\cite{MR4755868} and references therein), thereby requiring very different techniques.

        Another related idea used at the limit shape level is that of \textit{constructive Euler hydrodynamics}, e.g.\ see Theorem 3.1 in \cite{bahadoran2002constructive} and \cite{bahadoran2019constructive} for a survey. This strategy characterizes the entropy solution to Burgers' equation using a set of axioms similar to some of those in our \Cref{T:KPZ-fixed-point-coupling}. For example, they require a version of monotonicity and a local dynamics assumption in their deterministic setting. The axioms are tailored to study hydrodynamic limit points of interacting particle systems.

        \noindent \textbf{Future work.} 

        One challenge in KPZ is constructing limit objects with boundaries and boundary conditions. In this direction, a major breakthrough was recently made by X. Zhang \cite{zhang2025totally}, where the KPZ fixed point was constructed for functions on the half-line $[0, \infty)$, with a source at the point $0$. In forthcoming work, we build on the ideas of the present paper construct the directed landscape on the half-space $[0, \infty) \times \R$ using Zhang's half-space fixed point as the starting point. The idea is similar to the suggested construction of the full-space directed landscape in Remark \ref{R:what-is-needed}. One key step is first constructing the half-space stationary horizon, which implies an analogue of Property $\ast$ in the half-space setting.


		\textbf{Acknowledgments.}  The authors thank Jeremy Quastel for several useful discussions regarding convergence of general AEPs. The authors also thank Amol Aggarwal for pointing out the argument in \cite[Appendix D.3]{ACH}, and the references \cite{bahadoran2002constructive, bahadoran2019constructive}, and Ivan Corwin, Daniel Remenik, Milind Hegde, and Shalin Parekh for useful comments on an earlier version of the paper.
		
		\section{Preliminaries}
		\label{S:prelim} 
		
		Throughout this paper, we denote $\Q_+:=\Q\cap(0,\infty)$, $\Z_+:=\Z\cap (0,\infty)$, $\Z_-:=\Z\cap (-\infty, 0)$, $\bar \R := \R\cup \{\pm\infty\}$ and $\Rd := \{(x, s; y, t) \in \R^4 : s < t\}$ (as defined in the introduction), and $\llbracket a,b\rrbracket:=[a,b ]\cap \Z$ for any $a<b\in \bar \R$. The metric composition notation $\diamond$ from \eqref{eq:diamond} will also be frequently used.

		\subsection{The directed landscape and the KPZ fixed point}
        \label{SS:dl-kpzfp}
		Let $\UC$ be the set of all upper semicontinuous functions from $\R\to\bar \R$. A function $f$ is in $\UC$ if and only if its hypograph 
		$$
		\operatorname{hypo}(f) = \{(x, t) \in \R \times \bar \R : t \le f(x)\}
		$$
		is closed. We equip $\UC$ with a \textbf{local Hausdorff metric} 
		$$
		d_{H, \operatorname{loc}}(f, g) := d_H(E (\operatorname{hypo}(f_n), E (\operatorname{hypo}(f)).
		$$
		in the Hausdorff metric on closed subsets of $\R \times [-1, 1]$, where $E:\R \times \bar \R \to \R \times [-1, 1]$ is the map
		$
		E(x, s) = (x, e^{-|x|}s/(1 + |s|)),
		$
		and the $d_H$ metric on the right-hand side above is the usual Hausdorff metric.
		It is straightforward to check that $\UC$ is a compact Polish space. Setting some notations, we write $\de_x\in \UC$ for the \textbf{narrow wedge at $x$}, i.e., the function with $\de_x(x) = 0$ and $\de_x(y) = -\infty$ for $y \ne x$. For $f \in \UC$ and $K \subset \R$ we also write $f|_K \in \UC$ for the function equal to $f$ on $K$, and $-\infty$ on $K^c$.

		In \cite{matetski2016kpz}, the KPZ fixed point is constructed as a time-homogeneous Markov process on an appropriate subset of $\UC$, taking a function $f$ and mapping it to $\fh_t f \in \UC$, for any $t \ge 0$. More precisely, in \cite{matetski2016kpz} the KPZ fixed point is defined by specifying the translation probabilities
		\begin{equation}
			\label{E:transition-formula}
			\P[\fh_t f \le g]
		\end{equation}
		for all functions $f, -g \in \UC$ with at most linear growth at $\pm \infty$ through a Fredholm determinant formula. This characterizes the law of $\fh_t f$ for fixed $t, f$, which then characterizes the law of $(\fh_t f)_{t \ge 0}$ through the Markov property. The formula for the transition probabilities \eqref{E:transition-formula} is tractable enough to prove that for any $f$ with at most linear growth at $\pm \infty$, the map $(x, t) \mapsto \fh_t f(x)$ is H\"older-$1/2-$ in $x$ and  H\"older-$1/3-$ in $t$.
		
		An alternate characterization of the KPZ fixed point comes through the directed landscape, constructed in \cite{DOV}. The directed landscape is built from the \textbf{Airy sheet}, a random continuous function $\cS:\R^2 \to \R$. We call the rescaling $\cS_s(x, y) = s \cS(x/s^2, y/s^2)$ an Airy sheet of scale $s$. For the purposes of this paper, we will take the Airy sheet as a black box and only invoke its properties as necessary. Here recall that for $f:\R^k \to \R, g:\R^\ell \to \R$ we define $f \diamond g: \R^{\ell-1} \times \R^{k-1} \to \R$ by $f \diamond g(x, y) = \sup_{z \in \R} f(x, z) + g(z, y)$.
		\begin{definition}
			\label{def:landscape}
			The directed landscape $\cL:\Rd \to \R, (x, s; y, t) \mapsto \cL(x, s; y, t) = \cL_{s, t}(x, y)$ is the unique random continuous function satisfying
			\begin{enumerate}[label=\Roman*.]
				\item (Independent increments) For any disjoint time intervals $\{[t_i, s_i] : i \in \{1, \dots k\}\}$, the random functions
				$
				\{\cL_{s_i, t_i} : i \in \{1, \dots, k \} \}
				$
				are independent.
				\item (Metric composition law) Almost surely, for any $r<s<t$ we have
				$
				\cL_{r, t} = \cL_{r, s} \diamond \cL_{s, t}.
				$
				\item (Airy sheet marginals) For any $t\in \R$ and $s>0$, $\cL_{t, t + s^3} \eqd \cS_s$, an Airy sheet of scale $s$.
			\end{enumerate}
		\end{definition} 
		
		\begin{remark}
			The uninitiated reader may wish to compare Definition \ref{def:landscape} with the conditions in Theorem \ref{T:landscape-characterization}. The key difference is in the assumption on the marginals. In Theorem \ref{T:landscape-characterization}, once we prove metric composition for $\cM$ it is not difficult to show that condition $3$ is equivalent to knowing that for any fixed $f, g$ we have the identity $f \diamond \cL_{t + s^3} \diamond g \eqd f \diamond \cS_s \diamond g$. Without any extra structure, this marginal information would not be enough to see that $\cL_{t + s^3} \eqd \cS_s$, and the bulk of the proof of Theorem \ref{T:landscape-characterization} is devoted to proving this. The difference between the assumption of metric composition and the weaker triangle inequality is easier to overcome.
		\end{remark}

		As already alluded to, we can alternately describe the KPZ fixed point directly in terms of the directed landscape. Indeed, for $f \in \UC$, define $\fh_0 f = f$,
		and $\fh_t f$ for $t>0$ set $\fh_t f := f \diamond \cL_{0, t}$ (i.e.\, equation \eqref{E:kpzfp-to-DL}). Then $(\fh_t f)_{t \ge 0}$ is a time-homogeneous Markov process on $\UC$ whose transition probabilities match the Fredholm determinant formula for \eqref{E:transition-formula}, at least for initial conditions with linear growth. Moving forward we consider \eqref{E:kpzfp-to-DL} as defining the law of the KPZ fixed point $(\fh_t f)_{t \ge 0}$, which allows us to accommodate $f$ which may blow up faster than linearly at $\pm \infty$. Equation \eqref{E:kpzfp-to-DL} also defines one particular coupling of the KPZ fixed point simultaneously from all different initial conditions. We refer to this as the \textbf{landscape coupling}.
		
		With definition \eqref{E:kpzfp-to-DL} in mind, Sarkar and Vir\'ag \cite{sarkar2020brownian} identified the domain on which the KPZ fixed point is finite at all times, which is the space $\UC_0$ from \eqref{E:UC0}.

		\begin{prop}(Domain of the KPZ fixed point, \cite[Proposition 6.1]{sarkar2020brownian})
			\label{P:finitary-prop}
			Let $f \in \UC$. Then $f \in \UC_0$ if and only if almost surely, $\fh_t f(x) < \infty$ all $x \in \R, t > 0$. Moreover, if $f \in \UC_0$ then $\fh_t f(x)$ is continuous on the set $x \in \R, t > 0$ and almost surely, $\fh_t f \in \UC_0$ for all $t > 0$.
		\end{prop}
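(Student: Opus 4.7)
\textbf{Proof plan for Proposition \ref{P:finitary-prop}.} The whole argument is driven by the variational formula $\fh_t f(x) = \sup_{y \in \R}\bigl[f(y) + \cL(y, 0; x, t)\bigr]$ combined with the global parabolic shape of $\cL$. The key input I would take from the directed landscape literature is: for any compact $K \subset \R \times (0, \infty)$ and any $\alpha \in (2/3, 2)$, there is an almost surely finite random constant $C = C(K, \alpha)$ such that
\[
\Bigl|\cL(y, 0; x, t) + \frac{(x - y)^2}{t}\Bigr| \le C\bigl(1 + |y|^{\alpha}\bigr) \qquad \text{for all } (x,t) \in K,\ y \in \R.
\]
This two-sided parabolic estimate follows from known one-point tail bounds on the Airy sheet together with a chaining argument; if only a two-time bound were available, I would bootstrap using the metric composition law from Definition \ref{def:landscape}.

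\textbf{Step 1 (forward direction and Step 4 combined).} Assume $f \in \UC_0$. Fix compact $K$, and let $T = \sup_{(x,t) \in K} t$. Pick $\epsilon < 1/T$. By \eqref{E:UC0} there exists $M$ with $f(y) \le \epsilon y^2$ for $|y| > M$, while $f$ is bounded above on $[-M, M]$. Using the parabolic upper bound,
\[
f(y) + \cL(y, 0; x, t) \le \epsilon y^2 - \tfrac{(x-y)^2}{t} + C(1 + |y|^{\alpha}),
\]
which tends to $-\infty$ uniformly in $(x,t) \in K$ as $|y| \to \infty$. Hence the sup in the variational formula is attained on a compact $y$-range $[-M_K, M_K]$, so $\fh_t f(x)$ is finite on $K$. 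This also proves the quadratic growth bound: for fixed $t > 0$ and any $\epsilon < 1/t$, the same computation (now optimizing $\epsilon y^2 - (x-y)^2/t$ in $y$) gives $\fh_t f(x) \le \frac{\epsilon\, x^2}{1 - \epsilon t} + o(x^2)$, and sending $\epsilon \to 0$ yields $\limsup_{|x|\to\infty} \fh_t f(x)/x^2 \le 0$.

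\textbf{Step 2 (converse).} Suppose $f \notin \UC_0$. Either some $f(x_0) = +\infty$, in which case $\fh_t f(y) \ge f(x_0) + \cL(x_0, 0; y, t) = +\infty$ trivially; or $\limsup_{|y|\to\infty} f(y)/y^2 > 2\epsilon$ for some $\epsilon > 0$, and we can choose $|y_n| \to \infty$ with $f(y_n) \ge \epsilon y_n^2$. Pick $t > 1/\epsilon$ and any $x \in \R$. Using the parabolic lower bound,
\[
f(y_n) + \cL(y_n, 0; x, t) \ge \epsilon y_n^2 - \tfrac{(x - y_n)^2}{t} - C(1 + |y_n|^{\alpha}) \longrightarrow \infty,
\]
so $\fh_t f(x) = +\infty$ almost surely.

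\textbf{Step 3 (continuity).} For upper semicontinuity, given $(x_n, t_n) \to (x,t) \in \R \times (0,\infty)$, Step 1 provides a uniform compact $y$-range $[-M, M]$ containing all near-maximizers. On this compact range, $(y, x, t) \mapsto f(y) + \cL(y, 0; x, t)$ is u.s.c.\ since $f$ is u.s.c.\ and $\cL$ is continuous, and a supremum of an u.s.c.\ function over a compact set is u.s.c.\ in the parameter. For lower semicontinuity, pick any $y_0$ with $f(y_0) > -\infty$; then $(x,t) \mapsto f(y_0) + \cL(y_0, 0; x, t)$ is continuous, and taking the sup over such $y_0$ is l.s.c. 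Combining gives continuity.

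\textbf{Main obstacle.} Everything hinges on having the global parabolic comparison for $\cL$, in particular the lower bound which is the more delicate of the two (needing both one-point left-tail control and a chaining step in $y$). Once this ingredient is in hand, the four conclusions follow essentially algebraically from the variational formula, as sketched above.
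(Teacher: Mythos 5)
Your argument is correct and follows the standard route via the variational formula and a global parabolic shape estimate for $\cL$ (the paper itself does not prove this proposition; it simply cites Sarkar--Vir\'ag \cite[Proposition 6.1]{sarkar2020brownian}, whose proof proceeds in essentially the same way). The ``main obstacle'' you flag is not one: the two-sided parabolic bound you need is precisely Proposition \ref{P:shape-thm} (the shape theorem from \cite[Corollary 10.7]{DOV}), which is stated immediately below the proposition you are proving and is \emph{stronger} than the $|y|^\alpha$ correction you posit --- it gives polylogarithmic growth in $\|u\|_2$ at fixed $t$, with a single a.s.\ finite random constant $R$. The fact that there is one random constant $R$ controlling all $(y,0;x,t)$ simultaneously is also what you need, and should explicitly invoke, to get the ``for all $t > 0$'' quantifier inside the almost-sure event in the claim that $\fh_t f \in \UC_0$: your Step 1 as written treats $t$ as fixed, and a pointwise-in-$t$ argument would not immediately yield the a.s.\ uniform statement without an appeal to continuity (which you do prove in Step 3) or to the uniformity built into the shape theorem.
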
 
		
		A close relative of Proposition \ref{P:finitary-prop} is a dominated convergence theorem for the KPZ fixed point.
		
		\begin{prop}\label{P:KPZ-FP-cvge} (DCT for the KPZ fixed point)
			Suppose that $f_n \to f \in \UC$ and that for some $g \in \UC_0$ we have $f_n \le g$ for all $n$. Then in the landscape coupling, $\fh_t f_n(x) \to \fh_t f(x)$ a.s., uniformly on compact subsets of $(x, t) \in \R \times (0, \infty)$. In particular, for fixed $t > 0$, $\fh_t f_n \cvgd \fh_t f$ as random variables in $\UC$.
		\end{prop}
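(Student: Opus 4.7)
The plan is to work directly with the variational representation $\fh_t f_n(x) = \sup_{y \in \R}[f_n(y) + \cL(y, 0; x, t)]$ in the landscape coupling and reduce the problem, via a localization argument based on the common envelope $g \in \UC_0$, to hypograph convergence on a compact interval. First I would prove a.s.\ uniform convergence on compact subsets of $\R \times (0, \infty)$; the distributional statement then follows because a.s.\ uniform convergence of continuous functions on compacts implies local Hausdorff convergence of their hypographs.

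For localization, fix a compact $K \subset \R \times (0, \infty)$. The a.s.\ parabolic upper bound on $\cL$ (a standard input from \cite{DOV}, used already in the proof of \Cref{P:finitary-prop}) produces an a.s.\ finite $C = C(\omega, K)$ and deterministic $c > 0$ with $\cL(y, 0; x, t) \le C - cy^2$ for all $(x, t) \in K$ and $y \in \R$. Since $g \in \UC_0$ gives $g(y) = o(y^2)$, for any $A \in \R$ there exists $M = M(\omega, K, A)$ with $g(y) + \cL(y, 0; x, t) < A$ whenever $|y| > M$ and $(x, t) \in K$. The uniform domination $f_n \le g$, together with $f \le g$ (a consequence of $f_n \to f$ in $\UC$ and upper semicontinuity of $g$), passes this bound to both $f_n$ and $f$; thus every relevant supremum is effectively attained on $y \in [-M, M]$, uniformly in $n$ and in $(x, t) \in K$.

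Next, I use that convergence $f_n \to f$ in $\UC$ is hypograph convergence, equivalent to the Painlev\'e--Kuratowski conditions: \emph{(a)} $\limsup_n f_n(y_n) \le f(y)$ whenever $y_n \to y$, and \emph{(b)} for each $y \in \R$ there exist $y_n \to y$ with $\liminf_n f_n(y_n) \ge f(y)$. Taking any $(x_n, t_n) \in K$ with $(x_n, t_n) \to (x_*, t_*) \in K$, extract near-maximizers $y_n \in [-M, M]$ of $\fh_{t_n} f_n(x_n)$ and pass to a subsequence $y_n \to y_* \in [-M, M]$. Then \emph{(a)} and continuity of $\cL$ give $\limsup_n \fh_{t_n} f_n(x_n) \le f(y_*) + \cL(y_*, 0; x_*, t_*) \le \fh_{t_*} f(x_*)$, where the $|y| > M$ tail contributes at most $A$ and is sent to $-\infty$ by enlarging $M$. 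For the matching $\liminf$, fix a near-maximizer $z_*$ of $\fh_{t_*} f(x_*)$ (which exists by the same localization applied to $f$) and use \emph{(b)} to produce $z_n \to z_*$ with $\liminf_n f_n(z_n) \ge f(z_*)$; continuity of $\cL$ then yields $\liminf_n \fh_{t_n} f_n(x_n) \ge \fh_{t_*} f(x_*) - \epsilon$ for arbitrary $\epsilon > 0$.

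Finally, Proposition \ref{P:finitary-prop} gives continuity of $\fh_t f$ on $K$, so $\fh_{t_n} f(x_n) \to \fh_{t_*} f(x_*)$ along any convergent sequence in $K$; combined with the previous step this gives $\fh_{t_n} f_n(x_n) - \fh_{t_n} f(x_n) \to 0$. Choosing $(x_n, t_n)$ to realize $\sup_K |\fh_t f_n - \fh_t f|$ yields a.s.\ uniform convergence on $K$, completing the proof. The main obstacle is the localization: securing a parabolic upper bound on $\cL$ that is uniform over $(x, t) \in K$, so that a single random $M$ simultaneously controls the tails of all of the variational suprema. Once this uniform tail control is in hand, the remainder is a standard compactness and continuity argument.
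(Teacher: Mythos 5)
Your proof is correct and follows essentially the same route as the paper: localize the variational supremum to a common compact interval $[-M,M]$ via the shape theorem (\Cref{P:shape-thm}) combined with the sub-quadratic growth of $g$, then conclude from continuity of $\cL$ together with hypograph convergence of $f_n \to f$. You merely spell out the final ``continuity plus convergence'' step via the Painlev\'e--Kuratowski conditions, which the paper's one-sentence argument leaves implicit.
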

		
		\Cref{P:KPZ-FP-cvge} is almost immediate from the following shape theorem for the directed landscape, which we will refer to frequently to establish basic tail bounds in the paper. 
		
		For this proposition and throughout, we write $d(x, s; y, t) = (x-y)^2/(t-s)$.
		
		\begin{prop}(Shape theorem for $\cL$, \cite[Corollary 10.7]{DOV})
			\label{P:shape-thm}
			There exists a random constant $R > 0$ satisfying $\P[R > a] < c e^{-d a^{3/2}}$ for universal constants $c, d > 0$ and all $a > 0$, such that for all $u = (x, t; y, t + s) \in \Rd$ we have
			$$
			\Big|[\cL+d](u)\Big| \le R s^{1/3} \log^{4/3} \left(\frac{2(\|u\|_2 + 2)}{s}\right)\log^{2/3}(\|u\|_2 + 2).
			$$
		\end{prop}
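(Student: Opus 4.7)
The plan is to reduce to a pointwise Tracy--Widom-type tail bound on the recentered one-point marginal $[\cL+d](u)$, and then upgrade it to a uniform statement by a chaining argument that separately handles scale, spatial location, and local fluctuations.

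First, I would establish a sharp one-point tail. For fixed $u=(x,t;y,t+s)\in\Rd$, the Airy sheet marginal in Definition \ref{def:landscape}, combined with the scaling $\cS_{s^{1/3}}(a,b)=s^{1/3}\cS(a/s^{2/3},b/s^{2/3})$, gives
\[
[\cL+d](u) \eqd s^{1/3}\Bigl[\cS\bigl(0,(y-x)/s^{2/3}\bigr)+\bigl((y-x)/s^{2/3}\bigr)^2\Bigr].
\]
The diagonal marginal $\cS(0,0)$ has the centered GUE Tracy--Widom law, with tails $\P[\cS(0,0)>M]\le Ce^{-cM^{3/2}}$ and $\P[\cS(0,0)<-M]\le Ce^{-cM^3}$, and the off-diagonal variable $\cS(0,w)+w^2$ is a shift of an Airy process observed at a parabolically renormalized location, whose fluctuations are still controlled by the same $3/2$-tail after paying an additive cost at worst logarithmic in $|w|$. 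This yields a pointwise estimate of the form $\P[|[\cL+d](u)|>Ms^{1/3}]\le C\exp(-cM^{3/2})$ when $\|u\|_2\le 1$.

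Next, I would run a multi-scale chaining. For each dyadic scale $s\in[2^{-k-1},2^{-k}]$ and each dyadic spatial box $B_n\subset\R^2$ of side $2^n$, the rescaled field $\hat\cL(x,y):=s^{-1/3}[\cL(sx,0;sy,s)+(x-y)^2/s]$ is again a recentered directed landscape at unit scale. Its H\"older-$(1/2)^-$ in space and $(1/3)^-$ in time regularity, inherited from the known continuity of the Airy sheet and of $\cL$ itself, allows a net of $O(2^{c n})$-many points to control the supremum of $\hat\cL$ over $B_n$ up to an additive $O(n^{1/2})$. A union bound over nets, boxes, and scales, with the threshold $M=R\cdot\log^{4/3}(2(\|u\|_2+2)/s)\log^{2/3}(\|u\|_2+2)$, converts the pointwise bound into the desired uniform statement, with $R$ inheriting the $\exp(-cR^{3/2})$ tail.

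The main obstacle is that the tail $\exp(-cM^{3/2})$ from the Tracy--Widom upper bound is borderline for a union bound over configurations indexed jointly by scale, spatial location, and the chaining net; this is exactly why the statement carries two distinct logarithmic corrections with exponents $4/3$ and $2/3$. The exponents are tuned precisely so the union bound closes: the $\log^{4/3}(2(\|u\|_2+2)/s)$ factor absorbs the $\exp(-cM^{3/2})$ cost of summing over dyadic scales and the scale-normalized endpoint net, while the extra $\log^{2/3}(\|u\|_2+2)$ factor handles lateral excursions away from the diagonal of $\cS$. Getting these powers exactly right --- rather than some larger power --- requires a two-tier chaining: a coarse union bound over $(k,n)$ using the $3/2$-tail, and a finer modulus-of-continuity chain at each fixed scale. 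I would also need a parallel estimate for the Airy sheet (a standard shape theorem, cf.\ \cite[Theorem 1.4]{DOV}) to control lateral fluctuations at unit scale before assembling the full statement through Definition \ref{def:landscape}.
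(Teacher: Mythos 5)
This proposition is not proved in the paper at all---it is quoted verbatim as \cite[Corollary 10.7]{DOV}, so there is no in-paper argument for you to match. Your sketch is broadly in the spirit of the actual proof in \cite{DOV} (pointwise Tracy--Widom tail plus a chaining/union bound over scales and locations), but as written it has two genuine gaps.

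First, you treat the local regularity of $\cL$ as a black box, invoking ``H\"older-$(1/2)^-$ in space and $(1/3)^-$ in time regularity'' to control the chaining increments. This is not enough: a union-bound chaining scheme with the stated logarithmic corrections needs a \emph{quantitative two-point tail bound} of the mixed Tracy--Widom/Gaussian form
\[
\P\Bigl[\,\bigl|[\cL+d](u_1)-[\cL+d](u_2)\bigr|>M\tau^{1/3}+L\chi^{1/2}\Bigr]<c e^{-dM^{3/2}}+c e^{-dL^2},
\]
which is exactly \cite[Lemma 10.4]{DOV} (restated here as \Cref{L:cty-strong}). Deriving this is the nontrivial core of the argument and does not follow from H\"older continuity of a generic random field; in \cite{DOV} it comes out of the Brownian Gibbs structure of the Airy line ensemble. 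Without it, the chaining does not close with any explicit logarithmic exponents, let alone $4/3$ and $2/3$.

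Second, your claim that the exponents $4/3$ and $2/3$ are ``tuned precisely so the union bound closes'' is asserted rather than verified; this is precisely the part that requires the explicit two-tail input above, since the $\log^{4/3}$ and $\log^{2/3}$ factors track the $3/2$- and $2$-exponents separately. A smaller point: in your one-point reduction the phrase ``additive cost at worst logarithmic in $|w|$'' is unnecessary---$\cS(0,w)+w^2$ is exactly stationary in $w$, so the one-point marginal of $[\cL+d](u)$ is exactly $s^{1/3}$ times a Tracy--Widom GUE random variable, with no off-diagonal penalty at all.
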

		
		\begin{proof}[Proofs of \Cref{P:KPZ-FP-cvge}]
			By the bounds in Proposition \ref{P:KPZ-FP-cvge} and existence of the dominating function $g$, we can see that for any $t_0 > 0$, there exists a random variable $M > 0$ such that for all $n \in \N$ and $L \ge M$ (and with $f$ in place of $f_n$) we have
			$$
			\fh_t f_n(x) = \max_{y \in [-L, L]} f_n(y) + \cL(y, 0; x, t)
			$$
			whenever $|x| \le t_0, t \le t_0$. Continuity of $\cL$ and the convergence $f_n\to f$ then implies that $\fh_t f_n(x) \to \fh_t f(x)$ uniformly on compact subsets of $(x, t) \in \R \times (0, \infty)$. 
		\end{proof}
		
		\subsection{Basic properties of the directed landscape}
		
		The directed landscape is best viewed as a random directed metric. Following \cite[Section 5]{dauvergne2021scaling}, a \textbf{directed metric of positive sign} on a set $X$ is a function $d:X \times X \to \R \cup \{\infty\}$ satisfying $d(x, x) = 0$ for all $x$, together with the triangle inequality
		$$
		d(v, w) + d(w, u) \ge d(u, v).
		$$
		We say that $d$ is a \textbf{directed metric of negative sign} if $-d$ is a directed metric of positive sign. Many natural KPZ models are directed metrics but not metrics, and moreover, unlike the class of metrics, the class of directed metrics is preserved under the kinds of rescaling typical seen in the KPZ class. If we extend the directed landscape to equal $-\infty$ on $\R^4 \setminus \Rd$ then it becomes a directed metric (of negative sign) on $\R^2$. 
		
		As with usual metrics, we can define path length and geodesics in the directed landscape.  Paths will be continuous functions $\pi:[s, t] \to \R$ with length
		$$
		|\pi|_\cL = \inf_{k \in \N} \inf_{r_0 = s < r_1 < \dots < r_{k-1} < r_k =t} \sum_{i=1}^k \cL(\pi(r_{i-1}), r_{i-1}; \pi(r_i), r_i).
		$$
		A path is a geodesic if $|\pi|_\cL = \cL(\pi(s), s; \pi(t), t)$, and geodesics exist between all points in the directed landscape \cite[Lemma 13.2]{DOV}. The concept of geodesics is central to the proof of the main theorem. However, we will typically not use geodesics explicitly and instead think about them through the lens of metric composition. Indeed, a point $(y_0, r)$ lies on a geodesic from $(x, s)$ to $(z, t)$ if it is an argmax for the following metric composition:
		\begin{equation}
			\label{E:argmax-chara}
			y_0 \in \argmax_{y \in \R} \cL_{s, r}(x, y) + \cL_{r, t}(y, z).
		\end{equation}
		Geodesics between nearby points in the directed landscape coalesce \cite{bates2019hausdorff}, which manifests itself through the fact that the argmax $y_0$ above is typically stable if we perturb $x$ or $z$. Coalescence also implies that for $x \ne x'$, the difference profile $\cL_{s, t}(x, \cdot)- \cL_{s, t}(x', \cdot)$ is constant off of a lower dimensional set \cite{basu2019fractal, bates2019hausdorff, ganguly2022fractal}.

		It will also be useful to note the following basic symmetries for $\cL$, which we use frequently and without reference.
		
		\begin{lemma} [Lemma 10.2, \cite{DOV} and Proposition 1.23, \cite{dauvergne2021scaling}]
			\label{l:invariance} We have the following equalities in distribution as continuous functions from $\Rd \to \R$. Here $r, c \in \R$, and $q > 0$.
			\begin{enumerate}
				\item (Time stationarity)
				$$
				\displaystyle
				\cL(x, s ; y, t) \eqd \cL(x, s + r ; y, t + r).
				$$
				\item (Spatial stationarity)
				$$
				\cL(x, s ; y, t) \eqd \cL(x + c, s; y + c, t).
				$$
				\item (Flip symmetries)
				$$
				\cL(x, s ; y, t) \eqd \cL(-x, s ; -y, t) \eqd \cL(y, -t ; x, -s).
				$$
				\item (Skew stationarity)
				$$
				\cL(x, s ; y, t) \eqd \cL(x + cs, s; y + ct, t) + (t-s)^{-1}[(x - y)^2 - (x - y - (t-s)c)^2].
				$$
				\item ($1:2:3$ rescaling)
				$$
				\cL(x, s ; y, t) \eqd  q \cL(q^{-2} x, q^{-3}s; q^{-2} y, q^{-3}t).
				$$
			\end{enumerate}
		\end{lemma}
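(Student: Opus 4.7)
The plan is to reduce all five symmetries to the characterization of $\cL$ in Definition \ref{def:landscape}, together with the corresponding symmetries of the Airy sheet $\cS$. For each item, I would define a candidate continuous field $\tilde \cL:\Rd \to \R$ by taking the right-hand side of the claimed equality in distribution, and then verify the three axioms of Definition \ref{def:landscape} for $\tilde \cL$: independence of increments over disjoint time intervals, the metric composition law $\tilde \cL_{r, t} = \tilde \cL_{r, s} \diamond \tilde \cL_{s, t}$, and Airy sheet marginals of the correct scale. By the uniqueness part of Definition \ref{def:landscape}, $\tilde \cL \eqd \cL$, which is the claimed equality.

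First I would handle independent increments. The time-stationarity, spatial-stationarity, and skew-stationarity transformations act on the time coordinate either trivially or by a global shift, so they map disjoint time intervals to disjoint time intervals; the two flip symmetries either preserve disjointness spatially or reflect time through the origin, which still preserves disjointness. In each case, the increments $\tilde \cL_{s_i, t_i}$ are measurable functions of a disjoint collection of increments of $\cL$, so independence is preserved.

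Next I would verify the metric composition law. For the three translation-type symmetries and the spatial flip, this is essentially formal, since $\diamond$ commutes with translations and with spatial reflection inside the sup. The time flip reduces to rewriting $\sup_z \cL_{s, r}(x, z) + \cL_{r, t}(z, y)$ as metric composition for the time-reversed field. For skew stationarity, one checks by direct algebra that the parabolic correction $(t-s)^{-1}\lf[(x-y)^2 - (x - y - (t-s)c)^2\rg] = 2c(x-y) - c^2(t-s)$ is additive over consecutive time intervals, so it telescopes when composing at an intermediate time $r$, matching the $\diamond$ of the two corrected increments after a change of dummy variable $z \mapsto z + cr$. For the $1{:}2{:}3$ rescaling, the change of dummy variable $z \mapsto q^{-2} z$ inside the supremum interacts correctly with the prefactor $q$, giving $\diamond$-compatibility.

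Finally, and this is the main content, I would verify the Airy sheet marginals. Fix $s < t$ with $t - s = q^3$; the claim reduces in each case to the corresponding symmetry of the scaled Airy sheet $\cS_q$. Items (1) and (2) are immediate from spatial and temporal stationarity of $\cS_q$; item (5) is just the definition of $\cS_q$ in terms of $\cS$; the flip symmetries in (3) reduce to the known Airy sheet symmetries $\cS(x, y) \eqd \cS(-x, -y)$ and $\cS(x, y) \eqd \cS(y, x)$. The main obstacle is skew stationarity (4): this reduces to the identity $\cS(x + c, y + c) + 2c(y-x) - c^2 \eqd \cS(x, y)$, which is not available from Definition \ref{def:landscape} alone. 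I would import it from the underlying construction of $\cS$ via Brownian last passage percolation and the skew invariance of the Airy line ensemble, as carried out in \cite{DOV, dauvergne2021scaling}, which is why the statement cites those sources rather than being proved from scratch here.
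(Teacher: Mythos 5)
The paper does not prove this lemma; it is cited directly from Lemma 10.2 of \cite{DOV} and Proposition 1.23 of \cite{dauvergne2021scaling}, where these invariances are obtained from the construction of the Airy sheet out of Brownian last passage percolation and from the corresponding invariances of the parabolic Airy line ensemble. Your proposed route is a reasonable axiomatic alternative: define the transformed field, verify the three axioms of Definition \ref{def:landscape}, and invoke uniqueness. The formal parts (independent increments, metric composition, and the Airy sheet marginals for items 1, 2, 3, 5) are fine as you outline them, and you are correct that the only genuinely non-formal input is a shear/skew invariance of the Airy sheet that is not derivable from the axioms alone and must be imported from the construction.

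However, the reduced Airy sheet identity you state for skew stationarity is wrong as written. You claim the required fact is $\cS(x+c,y+c) + 2c(y-x) - c^2 \eqd \cS(x,y)$; but combined with the spatial stationarity $\cS(x+c,y+c)\eqd\cS(x,y)$ this would force the deterministic identity $2c(y-x)-c^2=0$, which clearly fails. Taking $s=0$, $t=1$ in item (4) and expanding the parabolic correction $(x-y)^2 - (x-y-c)^2 = 2c(x-y)-c^2$, the correct reduction is
$$
\cS(x,\,y+c) + 2c(x-y) - c^2 \eqd \cS(x,y),
$$
in which only the second spatial argument is translated; this is the genuine shear invariance of the Airy sheet. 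With that substitution, the structure of your argument (metric composition checked via the telescoping of $2c(x-y)-c^2(t-s)$ together with the change of variable $z\mapsto z+cr$, Airy marginals via the displayed identity plus spatial stationarity and $1{:}2{:}3$ scaling) goes through.
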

		In particular, we have the following identity at the level of the one-point law. For fixed $x, s; y, t$,
		$$
		\cL(x, s; y, t) \eqd (t-s)^{1/3} \cL(0,0; 0, 1) - \frac{(x-y)^2}{t-s}.
		$$
		The random variable $T = \cL(0,0; 0, 1)$ has the Tracy-Widom GUE distribution; this goes back to \cite{baik1999distribution}. The identity above shows that all one-point marginals of $\cL$ are affine shifts of this law. The symmetries $1$ and $3$ above imply that for any $f \in \UC_0$ and $s\in\R$,  
		$$
		(f \diamond \cL_{s, s + t})_{t \ge 0} \eqd (\cL_{s - t, s} \diamond f)_{t \ge 0}.
		$$
		This equality is known as skew-time reversibility of the KPZ fixed point, and plays an important role in Section \ref{S:joint-law}. It was first identified in \cite{matetski2016kpz}.
		
		Again using \Cref{l:invariance}, if we fix three of the coordinates of $\cL$ and vary only one of the spatial coordinates $x$ or $y$ we end up with a rescaling of the parabolic Airy$_2$ process $\cA(x) := \cL(0,0; x, 1)$. We will need a few basic facts about the Airy$_2$ process. In this next lemma, we record a standard comparison with a Brownian bridge with diffusion coefficient $\sqrt{2}$ (which, at least for the direction of Airy$_2$ to Brownian bridge, has been known since \cite[Proposition 4.1]{CH}). Throughout the paper, our convention for all Brownian motions and bridges is that they have diffusion coefficient $2$ unless otherwise stated.
		
		\begin{lemma}
			\label{L:airy-facts}
			Let $\cA$ be the parabolic Airy$_2$ process, fix a closed interval $[a, b]$, and let $L:[a, b] \to \R$ be the random affine function with $L(a) = \cA(a), L(b) = \cA(b)$. Then conditional on $\cA|_{[a, b]^c}$, the laws of $\cA|_{[a, b]} - L$ and a Brownian bridge $B:[a, b] \to \R$ with $B(a) = B(b) = 0$ are mutually absolutely continuous.
		\end{lemma}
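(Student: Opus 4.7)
The plan is to deduce this Brownian bridge comparison from the Brownian Gibbs property of the parabolic Airy line ensemble established by Corwin and Hammond. Recall that $\cA(x) = \cL(0,0;x,1)$ is the top line of a continuous line ensemble $\{\cA_k\}_{k \ge 1}$ with $\cA_1 = \cA > \cA_2 > \cdots$ pointwise almost surely, and the diffusion coefficient in the Gibbs property matches the paper's convention of $2$.

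The Brownian Gibbs property states that, conditional on the restriction $\cA|_{\R \setminus (a,b)}$ (which determines the boundary values $\cA(a), \cA(b)$ by continuity) and on the entire second line $\cA_2$, the restriction $\cA|_{[a,b]}$ is distributed as a Brownian bridge from $\cA(a)$ to $\cA(b)$ on $[a,b]$, conditioned to stay above $\cA_2|_{[a,b]}$. Since $\cA_2|_{[a,b]}$ is a.s.\ a continuous function that is a.s.\ strictly dominated by $\cA|_{[a,b]}$, the event that an unconstrained bridge from $\cA(a)$ to $\cA(b)$ lies above $\cA_2|_{[a,b]}$ has strictly positive probability. Consequently, the conditional law of $\cA|_{[a,b]}$ given $\cA|_{\R\setminus(a,b)}$ and $\cA_2$ is mutually absolutely continuous with respect to the law of an unconstrained Brownian bridge from $\cA(a)$ to $\cA(b)$.

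The remaining step is to integrate out the conditioning on $\cA_2$. Mutual absolute continuity is preserved under this marginalization: if $A$ is a measurable set of paths with $\P(\cA|_{[a,b]} \in A \mid \cA|_{[a,b]^c}) = 0$, then applying the tower property with the additional conditioning on $\cA_2$ and using the conditional mutual absolute continuity established above, one sees the corresponding Brownian bridge event is also null, and symmetrically in the reverse direction. Thus conditional on $\cA|_{[a,b]^c}$, the law of $\cA|_{[a,b]}$ is mutually absolutely continuous with the unconstrained bridge from $\cA(a)$ to $\cA(b)$. Since $L$ is a deterministic function of $\cA(a), \cA(b)$, which are in turn measurable with respect to $\cA|_{[a,b]^c}$, subtracting $L$ gives a deterministic transformation in the conditional framework, yielding mutual absolute continuity between $\cA|_{[a,b]} - L$ and a Brownian bridge $B$ from $0$ to $0$ on $[a,b]$.

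The only real subtlety is verifying the precise form of the Gibbs property needed here (in particular, that the parabolic conventions used to define $\cA$ via $\cL(0,0;\cdot,1)$ agree with the ensemble considered in Corwin-Hammond up to the affine/parabolic shift absorbed into $L$). This is a bookkeeping check rather than a substantive obstacle; beyond this, the argument is essentially immediate from the Gibbs property.
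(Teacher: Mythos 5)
Your proof has a genuine gap in the second direction of absolute continuity. You write that, conditional on $\cA|_{[a,b]^c}$ \emph{and} $\cA_2$, the law of $\cA|_{[a,b]}$ is \emph{mutually} absolutely continuous with respect to the unconstrained Brownian bridge, deducing this from positivity of the event $\{B > \cA_2|_{[a,b]}\}$. But positivity of that event only gives one direction. Conditional on $\cA_2$, $\cA|_{[a,b]}$ is a bridge conditioned to stay above $\cA_2$, and this conditioned law is supported entirely on $\{B > \cA_2|_{[a,b]}\}$, whereas the unconstrained bridge assigns positive mass to the complement $\{B(t) \le \cA_2(t) \text{ for some } t\}$. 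So $\operatorname{Law}(\text{conditioned}) \ll \operatorname{Law}(B)$ holds, but the reverse inclusion fails at this level of conditioning, and the subsequent marginalization over $\cA_2$ then only pushes through the (easy) direction $\operatorname{Law}(\cA|_{[a,b]} - L \mid \cA|_{[a,b]^c}) \ll \operatorname{Law}(B)$.

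The reverse direction $\operatorname{Law}(B) \ll \operatorname{Law}(\cA|_{[a,b]} - L \mid \cA|_{[a,b]^c})$ genuinely requires an extra input: you must show that for any set $U$ with $\P[B \in U] > 0$, the mixture over $\cA_2$ of the constrained-bridge laws still charges $U$, and this is not automatic because a fixed realization of $\cA_2$ could block all the bridge paths in $U$ (imagine $U$ consisting of paths that dip well below the level of $\cA_2$). The paper handles this by invoking the Gibbs property at increasing depths $k$ and the fact that $\sup \cA_k|_{[a,b]} \to -\infty$ almost surely, so that for $k$ sufficiently large (depending on $U$) the floor $\cA_{k+1}|_{[a,b]}$ is low enough that the non-intersecting bridges can spread out and realize any positive-measure bridge event on the top line. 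Your $k=1$ argument gives the forward direction cleanly, but you would need a statement of this flavor to close the reverse direction.
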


		\begin{proof}
			The process $\cA= \cA_1$ is the top curve in a sequence of continuous functions $\cA_i:\R\to \R, \cA_1 > \cA_2 > \dots$ known as the parabolic Airy line ensemble \cite{prahofer2002scale, CH}. Corwin and Hammond \cite{CH} showed that the parabolic Airy line ensemble satisfies the following resampling property: if we condition on $\cA_i(x)$ for all $(i, x)$ outside of some compact set $\{1, \dots, k\} \times [a, b]$ then $(\cA_1, \dots, \cA_k)|_{[a, b]}$ is simply given by $k$ independent Brownian bridges from $\cA_i(a)$ to $\cA_i(b)$ conditioned to not intersect. Applying this with $k = 1$ implies that conditional on $\cA|_{[a, b]^c}$, $\operatorname{Law}(\cA|_{[a, b]} - L) \ll \operatorname{Law}(B)$. 
			
			On the other hand, it is not difficult to show that $\sup \cA_k|_{[a, b]} \to -\infty$ almost surely as $k \to \infty$. This follows, for example, from the modulus of continuity in \cite[Theorem 1.4]{dauvergne2021bulk} and the one-point bound in \cite[(1.17), Theorem 1]{soshnikov2000gaussian}. Therefore for any set $U$ with $\P[B \in U] > 0$, there exists a random $K \in \N$ such that if we condition on $A_i(x)$ for $(i, x)$ outside of some compact set $\{1, \dots, k\} \times [a, b]$, then $\P[\cA|_{[a, b]} - L \in U] > 0$ as well. Hence conditional on $\cA|_{[a, b]^c}$, $\operatorname{Law}(B) \ll \operatorname{Law}(\cA|_{[a, b]} - L) $. 
		\end{proof}
		
		We also require a quantitative Brownian comparison. This is much more difficult to achieve than the soft comparison in Lemma \ref{L:airy-facts}, but still uses the Gibbs resampling property in the previous proof as the key input.
		
		\begin{lemma}[\protect{\cite[Corollary 1.3]{dauvergne2024wiener}}]  \label{lem:bcp}
			Fix $a < b \in \R$. The law $\nu$ of $\cA(t)-\cA(a), t \in [a, b]$ is absolutely continuous respect to the law $\mu$ of a Brownian motion in $[a, b]$ with $B(a) = 0$, with Radon-Nikodym derivative $\tfrac{d \nu}{d \mu} \in L^\infty(\mu)$.
		\end{lemma}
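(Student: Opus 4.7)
The plan is to derive the comparison from the Brownian Gibbs property of the parabolic Airy line ensemble $(\cA_i)_{i\ge 1}$, which already featured in the proof of \Cref{L:airy-facts}. Conditional on the boundary values $\cA(a), \cA(b)$ and on the second curve $\cA_2|_{[a, b]}$, $\cA|_{[a, b]}$ has the law of a Brownian bridge from $\cA(a)$ to $\cA(b)$ (diffusion coefficient $2$) conditioned to stay strictly above $\cA_2|_{[a, b]}$. Relative to an unconstrained bridge from $\cA(a)$ to $\cA(b)$, this conditional law has Radon-Nikodym derivative $\mathbf{1}\{\omega > \cA_2|_{[a, b]}\}/Z(\cA(a), \cA(b), \cA_2)$, where $Z$ is the non-intersection probability.

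Next I would pass from this bridge comparison to the desired comparison against a Brownian motion $W$ with $W(a) = 0$. Representing $W$ as a bridge from $0$ to a Gaussian endpoint $W(b)\sim \cN(0, 2(b-a))$, and disintegrating the law $\nu$ of $\cA - \cA(a)$ on $C([a, b])$ by the endpoint $\cA(b)-\cA(a)$, the chain rule yields
$$
\frac{d\nu}{d\mu}(\omega) \;=\; \frac{\rho(\omega(b))}{\phi(\omega(b))} \cdot \E\!\left[ \frac{\mathbf{1}\{\omega > \cA_2|_{[a,b]} - \cA(a)\}}{Z(\cA(a),\, \cA(a)+\omega(b),\, \cA_2)} \,\Big|\, \cA(b) - \cA(a) = \omega(b) \right],
$$
where $\rho$ is the density of $\cA(b)-\cA(a)$ and $\phi$ is the $\cN(0, 2(b-a))$ density. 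The ratio $\rho/\phi$ is pointwise bounded, since $\cA(b) - \cA(a)$ has a Gaussian upper tail (from the one-point Tracy-Widom estimate) and a much thinner lower tail.

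The main obstacle is a uniform-in-$\omega$ upper bound on the inner conditional expectation, since $1/Z$ naively blows up when $\cA(a)$ sits near or below $\sup_{[a, b]} \cA_2$. To handle this I would split on the good event $G = \{\sup_{[a, b]}\cA_2 \le -M\} \cap \{\cA(a) \ge -M/2\}$ for a sufficiently large constant $M$. On $G$, standard entropic repulsion estimates for Brownian bridges (cf.\ Hammond) give $Z \ge c > 0$ uniformly, so the $G$-contribution is trivially bounded. On $G^c$ the indicator $\mathbf{1}\{\omega > \cA_2|_{[a, b]} - \cA(a)\}$ forces $\omega$ to dominate a very high curve; combining this with the Tracy-Widom lower tail of $\cA(a)$ and the one-point tail of $\sup_{[a,b]} \cA_2$ extracted from the determinantal kernel bounds the $G^c$-contribution uniformly in $\omega$ as well. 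Putting these pieces together gives $d\nu/d\mu \in L^\infty(\mu)$.
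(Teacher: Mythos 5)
The paper does not prove this lemma; it is cited directly from \cite{dauvergne2024wiener}, and the remark right below the statement emphasizes that the $L^\infty$ bound strictly strengthens the earlier $L^p$ comparison of \cite{calvert2019brownian, hammond2022brownian}. Your sketch is essentially the Calvert--Hammond--Hegde strategy: disintegrate by the endpoint $\omega(b)$, express the Radon--Nikodym derivative through $1/Z$ via the Brownian Gibbs property, and split on a good event. That route does give $d\nu/d\mu \in L^p$ for all $p<\infty$, because there the bad events are rare and $1/Z$ has moments of all orders; but it does not obviously deliver $L^\infty$, and the way you try to upgrade it has concrete gaps.

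First, the pointwise bound $\rho/\phi \in L^\infty$ is asserted, not proved, and the stated reason is backwards: the one-point Tracy--Widom upper tail is $\exp(-cs^{3/2})$, which is \emph{heavier} than Gaussian, not lighter. The relevant object is the increment $\cA(b)-\cA(a)$, whose two-sided Gaussian density comparison is itself part of what the lemma claims. Second, the partition function $Z(\cA(a),\cA(a)+\omega(b),\cA_2)$ depends on $\omega(b)$, so on your fixed good event $G=\{\sup\cA_2\le -M\}\cap\{\cA(a)\ge -M/2\}$ one cannot bound $Z$ below uniformly in $\omega$: if $\omega(b)$ is chosen so that $\cA(a)+\omega(b)$ lies within $\delta$ of $\sup\cA_2$, entropic repulsion gives $Z$ of order $\delta$ no matter how large $M$ is. Third, on $G^c$ the factor $1/Z$ is unbounded, and the indicator $\mathbf{1}\{\omega>\cA_2-\cA(a)\}$ merely restricts the event; it does not cap $1/Z$. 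The Tracy--Widom tails you invoke control $\P[G^c]$ -- which is exactly what the $L^p$ proof needs -- but give no pointwise control of the conditional expectation at a fixed $\omega$. Closing these gaps to get a uniform-in-$\omega$ bound is the substantive new content of \cite{dauvergne2024wiener} and requires an idea beyond a sharpened bookkeeping of the CHH argument.
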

		A quantitative comparison with Brownian motion for $\cA$ was first achieved in \cite[Theorem 1.1]{calvert2019brownian} (see also \cite{hammond2022brownian} for an earlier comparison to Brownian bridge). In that paper the authors showed that $\tfrac{d \nu}{d \mu} \in L^p(\mu)$ for all $p < \infty$, a result that would also suffice for our purposes.

		\subsection{The stationary horizon}
		
		We end the preliminary section by describing the stationary measure for the directed landscape. This stationary measure is the \textbf{stationary horizon}, constructed in \cite{busani2024diffusive, seppalainen2023global} and connected to the directed landscape in \cite{busani2024stationary}. The last-passage description we give below for the stationary horizon is from \cite{dauvergne2024directed}. It is a version of the queuing-theoretic description given in the previously mentioned papers.
		
		First, for a collection of functions $f = (f_1, \dots, f_k):\R\to \R^k$, and a vector $\pi = (\pi_1 \ge \dots \ge \pi_k) \in \R^{k}$, define
		$$
		|\pi|_f = f_k(\pi_k) + \sum_{j=1}^{k-1} f_j(\pi_{j}) - f_j(\pi_{j+1}), \qquad f[k \to x] = \sup_{\pi \in \R^k_\ge, \pi_1 = x} |\pi|_f.
		$$
		Here $f[i \to x]$ is a last passage value from $(i, -\infty)$ to $(1,x)$ in the sense of \cite{dauvergne2024directed}, and $|\pi|_f$ can be thought of as the length of a path associated to $\pi$. Now let $a_1 < \dots < a_k$ and let $\cB=\{\cB_i\}_{i=1}^k$ be a family of independent two-sided Brownian motions where each $\cB_i$ has slope $a_i$. Define $\cR=\{\cR_i\}_{i=1}^k$ by
		\begin{equation}
			\label{E:R-B-translate}
			\cR_i(x) = \tilde \cR_i(x) - \tilde \cR_i(0), \qquad \tilde \cR_i(x) = \cB[i \to x].
		\end{equation}
		The following theorem generalizes Property $*$ from the introduction.
		\begin{theorem} (contained in \cite[Theorem 2.1]{busani2024stationary})
			\label{T:stat-horizon-thm}
			For any $a_1 < \dots < a_k \in \R$, if we let $\cR$ be defined as above and independent of the directed landscape, then for any $s < t$,
			$$
			(\cR_1, \dots, \cR_k) \eqd (\cR_1 \diamond \cL_{s, t}, \dots \cR_k \diamond \cL_{s, t}) - (\cR_1 \diamond \cL_{s, t}(0), \dots \cR_k \diamond \cL_{s, t}(0)). 
			$$
			Moreover, $(\cR_1, \dots, \cR_k)$ is the unique (in law) $k$-tuple of functions satisfying the stationarity property above and the asymptotic slope condition
			$$
			\lim_{t \to \pm \infty} \frac{\cR_i(t)}{t} = a_i, \qquad i = 1, \dots, k.
			$$
		\end{theorem}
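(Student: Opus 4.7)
The plan is to establish stationarity and uniqueness separately. Stationarity will be derived by scaling-limit transfer from multi-species TASEP in its stationary state; uniqueness will follow from a coupling-from-the-past argument that exploits the asymptotic slope condition together with coalescence of directed landscape geodesics.

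For stationarity, I would start from $k$-species TASEP on $\Z$ launched from Bernoulli product initial data at densities $\rho_1 > \dots > \rho_k$ corresponding to the asymptotic slopes $a_1<\dots<a_k$. By Angel's stationarity result \cite{angel2006stationary} and its $k$-species extension in \cite{ferrari2007stationary}, the recentred $k$-tuple of coupled height functions is invariant under TASEP evolution. Passing to the $1{:}2{:}3$ KPZ scaling limit, single-line TASEP converges to the directed landscape (e.g.\ through \cite{matetski2016kpz, ACH}), and the joint multi-species height functions converge to a $k$-tuple of continuous processes which can be identified with the tuple $\cR$ of \eqref{E:R-B-translate}. This identification uses that the continuum BLPP formula is the scaling limit of Ferrari-Martin's multi-line queuing formula. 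The discrete stationarity then transfers to the continuum identity
$$
(\cR_1, \dots, \cR_k) \eqd (\cR_i\diamond\cL_{s,t} - \cR_i\diamond\cL_{s,t}(0))_{i=1}^k.
$$

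For uniqueness, suppose $\cR'$ satisfies the stationarity above and has asymptotic slopes $a_i$. Define $\cR'^{(T)}$ by applying $\cL_{-T, 0}$ to $\cR'$ and recentring; by stationarity, $\cR'^{(T)}\eqd \cR'$ for every $T > 0$. Using coalescence of geodesics in the directed landscape, combined with the prescribed slope behaviour at $\pm\infty$, one argues that as $T\to\infty$ all dependence on $\cR'$ washes out and the limiting law is determined by the slopes alone. The same argument applied to $\cR$ itself yields the same limit, forcing $\cR'\eqd\cR$.

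The main obstacle is the multi-species joint scaling limit in the stationarity step. While single-line convergence to the directed landscape and single-density Burke-type stationarity are by now standard, pushing the full $k$-species joint invariant law through the scaling limit and identifying its continuum version with the BLPP construction \eqref{E:R-B-translate} requires tightness of the coupled height functions and careful identification of correlations between species. This is essentially the technical content of Theorem 2.1 of \cite{busani2024stationary}. A secondary subtlety is justifying that the coupling-from-the-past limit truly erases all dependence on $\cR'$: this requires a quantitative coalescence estimate for geodesics of $\cL_{-T, 0}$ emanating from profiles with prescribed asymptotic slopes, which can be extracted from shape-theorem estimates such as \Cref{P:shape-thm}.
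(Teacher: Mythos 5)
The paper does not prove \Cref{T:stat-horizon-thm}: it is imported verbatim from \cite{busani2024stationary}, as the parenthetical in the theorem statement indicates, so there is no in-paper proof to compare against. Your sketch is therefore an attempt to give an independent argument, and it is worth noting that the route you propose (multi-species TASEP stationarity after Angel and Ferrari--Martin, transferred through the KPZ scaling limit) is precisely the alternate construction the authors float in \Cref{R:post-thm11}(3), where they explicitly observe that Property $\ast$ could be recovered from Angel's theorem plus KPZ fixed point convergence of TASEP. The actual proof in \cite{busani2024stationary} runs along a different line: there the stationary horizon is obtained as the scaling limit of the Busemann process of exponential last passage percolation (going back to \cite{busani2024diffusive}), and invariance under $\cL$ is derived by identifying $\cR$ with the Busemann process of the directed landscape itself rather than by transferring a particle-system Burke property through the limit.

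On the merits of your sketch: the stationarity half has a real gap you partly acknowledge, namely the joint scaling limit of the $k$ coupled TASEP height functions started from the multi-species stationary measure. Single-line convergence to the KPZ fixed point does not by itself give joint convergence of the coupled tuple; one needs tightness and identification of the joint limit, which is essentially of the same difficulty as the colored TASEP convergence discussed (and deliberately not carried out) in this paper. The identification of the continuum limit of the Ferrari--Martin queueing description with the BLPP formula \eqref{E:R-B-translate} also needs a separate argument. For the uniqueness half, the coupling-from-the-past idea is sound in outline, but your claim that the requisite coalescence follows from shape-theorem bounds such as \Cref{P:shape-thm} undersells the difficulty: shape estimates give transversal fluctuation bounds, but the erasure of the initial condition requires genuine geodesic coalescence for the $\cL_{-T,0}$-argmax processes started from profiles of fixed slope, which is an additional input (in \cite{busani2024stationary} this role is played by Busemann-function machinery, not a raw shape bound). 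So the architecture is reasonable and matches the paper's own speculative remark, but both halves would need substantial technical development to become a proof.
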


		\section{Main characterization: proof of Theorem \ref{T:landscape-characterization}}
		\label{S:joint-law}
		
		In this section we prove \Cref{T:landscape-characterization}, up to the key technical input which we leave to the next section. Throughout the section, we call a function $\cM$ satisfying the assumptions of \Cref{T:landscape-characterization} a \textbf{pre-landscape}.
		
		\subsection{Continuity and metric composition}
		
		The first step in the proof of Theorem \ref{T:landscape-characterization} is extending any pre-landscape to a continuous function on all of $\Rd$.  This continuity claim follows from skew-time reversibility and continuity properties of the KPZ fixed point. As a byproduct of the proof, we will show that the continuous extension of $\cM$ has KPZ fixed point marginals in both the forward and backwards directions and satisfies metric composition, rather than just a triangle inequality. 
		
		\begin{prop}
			\label{P:cts-extension}
			Let $\cM$ be a pre-landscape. Then almost surely, $\cM$ has a continuous extension $\cM:\Rd \to \R$. Moreover, for any fixed function $f \in \UC_0$ and any $t \in \R$, $s\ge 0$, we have that
			\begin{equation}
				\label{E:hsyf}
				\fh_sf \eqd \cM_{t-s, t} \diamond f \eqd f \diamond \cM_{t-s, t}.
			\end{equation}		 
			where here the equality in law is as functions in $\UC_0$. Finally, almost surely, for any $s < r < t$ we have the metric composition law $\cM_{s, r} \diamond \cM_{r, t} = \cM_{s, t}$. 
		\end{prop}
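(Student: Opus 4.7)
The plan is to proceed in three stages: continuity, metric composition, and the marginal identities.

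For continuity, I would first use condition 3 to observe that for each fixed $p \in \Q^2$, the field $q \mapsto \cM(p; q)$ on rational $q$ downstream of $p$ has the finite-dimensional marginals of $\cL(p; \cdot)$. Writing $p = (x_0, s_0)$, the landscape $\cL(p; \cdot)$ is almost surely continuous on $\R \times (s_0, \infty)$, so each $\cM(p; \cdot)$ extends continuously there, and a countable intersection over $p \in \Q^2$ makes these extensions simultaneous. To upgrade to joint continuity in all four arguments, I would use the triangle inequality as a sandwich: given convergent rational sequences $p_n \to p = (x, s)$ and $q_n \to q = (y, t)$ with $s < t$, pick a rational $\hat p = (\hat x, \hat s)$ with $\hat s < s$. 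The inequality $\cM(\hat p; q_n) \ge \cM(\hat p; p_n) + \cM(p_n; q_n)$ combined with continuity of $\cM(\hat p; \cdot)$ bounds $\cM(p_n; q_n)$ from above, and a matching lower bound $\cM(p_n; q_n) \ge \sup_{z \in \Q}[\cM(p_n; (z, r)) + \cM((z, r); q_n)]$ for rational $r \in (s, t)$ gives joint continuity on $\Rd$.

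Next, for the metric composition law $\cM_{s, r} \diamond \cM_{r, t} = \cM_{s, t}$ with rational $s < r < t$: the triangle inequality gives the pointwise $\le$. For equality, fix rational $x, z$. By condition 3 at $(x, s)$, $\cM_{s, t}(x, z) \eqd \cL_{s, t}(x, z)$, and the joint law of $(\cM_{s, r}(x, \cdot), \cM_{s, t}(x, \cdot))$ matches that of $(\cL_{s, r}(x, \cdot), \cL_{s, t}(x, \cdot))$. Combining this with condition 3 applied at base points $(y, r)$ for $y \in \Q$ and independent increments, I would show that $\sup_y [\cM_{s, r}(x, y) + \cM_{r, t}(y, z)]$ also has the law of $\cL_{s, t}(x, z)$. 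Equality in distribution together with the pointwise $\le$ inequality forces almost sure equality at the dense rational grid, and continuity extends this to all $(x, z) \in \R^2$.

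Finally, for the marginal identities $\fh_s f \eqd \cM_{t-s, t} \diamond f \eqd f \diamond \cM_{t-s, t}$: for $f = \delta_{z_0}$, $f \diamond \cM_{t-s, t}(y) = \cM(z_0, t-s; y, t)$ has the desired law directly from condition 3 applied to the fixed starting point $(z_0, t-s)$, combined with time-translation of $\cL$. The other direction, $\cM_{t-s, t} \diamond \delta_{z_0} \eqd \fh_s \delta_{z_0}$, uses metric composition to split at a rational $r' \in (t-s, t)$: $\cM_{t-s, t} \diamond \delta_{z_0} = \cM_{t-s, r'} \diamond (\cM_{r', t} \diamond \delta_{z_0})$, with the inner factor having the KPZ fixed point law by the previous direction and composing with the independent $\cM_{t-s, r'}$ reproducing $\fh_s \delta_{z_0}$. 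Extending from $\NW$ initial conditions to general $f \in \UC_0$ uses density together with the DCT for the KPZ fixed point (Proposition 2.3), continuity of the extension, and metric composition.

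The main obstacle is the step in metric composition of verifying that the joint law of $\cM_{r, t}$ over multiple starting points matches that of $\cL_{r, t}$. Condition 3 directly supplies joint distributions only for $\cM$ with a single fixed starting point, so propagating to joint laws over multiple starting points requires a careful combination of independent increments, the triangle inequality, and the continuity from the earlier steps.
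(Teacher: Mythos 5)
Your proposal correctly identifies the logical structure of what needs to be proved, but it has two genuine gaps, and both trace back to the same missing ingredient: the backward (or reversed) marginal law $(\cM_{t-s,t}\diamond f)|_\Q \eqd \fh_s f|_\Q$, which is Lemma~3.2 (Lemma~\ref{L:Equation-Q}) in the paper and is really the crux of the whole proposition. Condition~3 of Theorem~\ref{T:landscape-characterization} controls the joint law of $\cM$ only as a function of the \emph{endpoint} for a single fixed starting point; nothing in the hypotheses directly gives you the joint law of $\cM_{r,t}(y,z)$ as the \emph{starting point} $y$ varies. You acknowledge this obstacle in your closing paragraph but do not resolve it. The paper resolves it by a stochastic domination argument: using the triangle inequality and independent increments one shows $\P[\cM_{t-s,t}\diamond f \le -g] \ge \P[f\diamond\cL_{t-s,t}\le -g]$ for finitely supported $g$, while the one-point marginals of the two sides agree by condition~3; a stochastic-domination-with-equal-marginals argument (via Lemma~\ref{L:abstract-blomp}-type reasoning) then forces equality of the whole laws. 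Without this lemma, your metric-composition step (which needs the joint law of $(\cM_{r,t}(z,y))_{z\in\Q}$) and your argument that $\cM_{t-s,t}\diamond\delta_{z_0}\eqd \fh_s\delta_{z_0}$ are both circular: each invokes exactly the backward marginal it is trying to establish.

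The continuity argument also does not close. Your sandwich inequality gives an \emph{upper} bound $\cM(p_n;q_n)\le\cM(\hat p;q_n)-\cM(\hat p;p_n)$, whose limit $\cM(\hat p;q)-\cM(\hat p;p)$ is strictly larger than the target value $\cM(p;q)$ for fixed $\hat p$ (triangle inequality is not an equality); to make it tight you must send $\hat p\to p$, which itself requires continuity in the starting coordinate. Your proposed lower bound $\cM(p_n;q_n)\ge \cM(p_n;(z,r))+\cM((z,r);q_n)$ has the same difficulty: the first term has a varying starting point $p_n$, so you again need what you are trying to prove. The paper instead establishes a quantitative two-point tail estimate (Lemma~\ref{L:cty-strong}) which holds once the two-point joint law $(\cM(u_1),\cM(u_2))\eqd(\cL(u_1),\cL(u_2))$ is known for pairs differing in a single coordinate -- and this again requires Lemma~\ref{L:Equation-Q} for the variation in the starting coordinates -- and then applies Kolmogorov--Centsov. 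So the missing backward-marginal lemma is not a technicality but the load-bearing step, and the proposal needs to supply an argument for it (or an equivalent substitute) before the rest can go through.
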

		
		We break the proof of \Cref{P:cts-extension} into a series of lemmas building up properties of $\cM$.
		
		\begin{lemma}
			\label{L:Equation-Q}
			Let $f \in \UC_0$ with $\supp(f) \subset \Q$ and $t-s, t \in \Q$. Then $
			\fh_sf|_\Q \eqd (\cM_{t-s, t} \diamond f)|_\Q.$
		\end{lemma}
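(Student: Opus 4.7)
The plan is to reduce to $f$ with finite rational support and then invoke condition~3 of \Cref{T:landscape-characterization} together with the skew-time reversibility of $\cL$, with the main technical work being the upgrade from single-point to joint distributional agreement.

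First, since $f = \bigvee_{x \in \supp f} f(x)\delta_x$ and $\diamond$ is max-preserving in each argument, the monotone approximation $f_n = \bigvee_{x \in \supp f \cap [-n,n]} f(x)\delta_x \uparrow f$ together with the KPZ fixed-point DCT (\Cref{P:KPZ-FP-cvge}) reduces the lemma to the case $f = \bigvee_{i=1}^k c_i\delta_{x_i}$ with each $x_i \in \Q$. In this finite case, for $q \in \Q$,
\[
\cM_{t-s,t} \diamond f(q) \;=\; \max_i\bigl[c_i + \cM(q, t-s; x_i, t)\bigr],
\]
and, using the flip symmetry in \Cref{l:invariance} followed by time stationarity of $\cL$, $\fh_s f \eqd \bigl(q \mapsto \max_i[c_i + \cL(q, t-s; x_i, t)]\bigr)$ as random functions on $\R$.

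For each fixed $q$, the single-point marginal agreement $\cM_{t-s, t}\diamond f(q) \eqd \fh_s f(q)$ follows immediately from condition~3 at $p = (q, t-s)$ with offsets $(y_i, s_i) = (x_i - q, s)$: this yields $\{\cM(q, t-s; x_i, t)\}_i \eqd \{\fh_s\delta_0(x_i - q)\}_i$, which in the landscape coupling matches $\{\cL(q, t-s; x_i, t)\}_i$ by time and spatial stationarity. Taking $\max_i[c_i + \cdot]$ on both sides closes the argument for a single $q$.

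The main obstacle is extending this single-point agreement to joint equality in law as random functions on $\Q$, i.e., simultaneously at finitely many $q_1, \dots, q_m \in \Q$, since condition~3 supplies only single-starting-point joint information for $\cM$. My plan is to apply condition~3 at a common earlier base point $p_0 = (x_0, t-s-\epsilon) \in \Q^2$ with $\epsilon \in \Q_+$ small, taking all endpoints $\{(q_j, t-s)\}_j$ and $\{(x_i, t)\}_i$ simultaneously; this matches the full joint law of $\{\cM(p_0; q_j, t-s)\}_j \cup \{\cM(p_0; x_i, t)\}_i$ to the corresponding landscape values. The triangle inequality (condition~1) forces $\cM(p_0; x_i, t) \ge \cM(p_0; q, t-s) + \cM(q, t-s; x_i, t)$ for every $q \in \Q$, while on the landscape side the reverse inequality holds as equality at the unique geodesic crossing at time $t-s$. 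Combined with the density of $\Q$ in $\R$, the continuity and coalescence of landscape geodesics, and a limit $\epsilon \to 0$, this determines the conditional joint law of the interior increments $\cM(q_j, t-s; x_i, t)$ and matches it to the landscape. This gluing step---turning single-starting-point condition~3 data into multi-starting-point joint information via the triangle inequality and landscape geodesic geometry---is the principal technical content of the proof.
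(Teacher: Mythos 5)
Your reduction to finitely-supported $f$ and the single-point marginal argument are both fine, but the gluing step has a genuine gap. To match the lemma's joint law as $q$ ranges over $\Q$, you effectively need the joint law of the increments $\{\cM(q_j, t-s; x_i, t)\}_{i,j}$ across several starting points $q_j$, which is essentially the full content of \Cref{prop:finid}---the technical heart of the whole theorem, requiring the Lindeberg-exchange machinery of \Cref{S:joint-law} and \Cref{sec:bkey}. Your plan to reconstruct this from condition~3 at one earlier base point $p_0$ plus the triangle inequality does not work: the triangle inequality $\cM(p_0; x_i, t) \ge \cM(p_0; q, t-s) + \cM(q, t-s; x_i, t)$ bounds the interior increment only from above and is consistent with it being arbitrarily small for any given $q$; even for $\cL$ the ``equality at the geodesic crossing'' recovers $\cL(q, t-s; x_i, t)$ only for $q$ near a single random crossing point, not for all $q_j$ simultaneously; and the appeal to ``the unique geodesic crossing'' and ``coalescence of landscape geodesics'' for $\cM$ is circular, since at this stage $\cM$ is defined only on $\Q^4_\uparrow$ and has no such geometric structure a priori.

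The paper's proof avoids this entirely by noting that the lemma needs only the law of the profile $\cM_{t-s,t}\diamond f$, not the joint law of the underlying increments, and that a one-sided comparison suffices---which is precisely the strength of information the triangle inequality can supply. Concretely: for an arbitrary test function $g$ with finite rational support, it conditions on the event that an earlier time slice $\cM_{r, t-s}(0,\cdot)$ lies just above $g$ (positive probability by \Cref{L:airy-facts}), uses independence of increments together with the triangle inequality to dominate $g \diamond \cM_{t-s,t}$ by $\cM_{r,t}(0,\cdot)$ on this event, and transfers the whole display to $\cL$ via condition~3 applied at the single base point $(0,r)$ (which gives the joint law of the two time slices). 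Sending $\ep \to 0$ yields stochastic domination of $(\cM_{t-s,t}\diamond f)|_\Q$ by $\fh_s f|_\Q$, and pairing this with the one-point marginal equality (which you also derived, via an equivalent route) closes the argument. So both proofs rest on condition~3 at a single base point and the triangle inequality, but the paper packages these as a one-sided domination rather than attempting a pointwise reconstruction of the interior field.
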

		
		\begin{proof}
			Let $g \in \UC_0$ be any function supported on a finite set $F \subset \Q$. Take $r \in \Q, r < t - s$, and $A \subset \{h \in \UC_0 : h \ge g\}$ such that $\P[\cL_{r, t-s}(0, \cdot) \in A] > 0$. Then
			\begin{align*}
				\P\left [ \cM_{t-s, t} \diamond f \le - g \right] &= \P\left [ g \diamond \cM_{t-s, t} \le - f \right] \\
				&\ge\P\left [\cM_{r, t-s}(0, \cdot) \diamond_\Q \cM_{t-s, t} \le -f \;\big \mid\; \cM_{r, t-s}(0, \cdot) \in A\right] \\
				&\ge\P\left [ \cM_{r, t}(0, \cdot) \le -f \;\big \mid\; \cM_{r, t-s}(0, \cdot) \in A\right].
			\end{align*}
			In the second line, in the metric composition law $\diamond_\Q$, we take a supremum over all $z \in \Q$ rather than over all $z \in \R$ as in the usual definition. This is required since $\cM$ is not defined off of $\Qd$. When we write $\cM_{r, t-s}(0, \cdot) \in A$ we mean that $\cM_{r, t-s}(0, \cdot)$ extends continuously to a function in $A$. This is a positive probability event since the same holds for $\cL$ and we have the equality in law in condition $3$ of \Cref{T:landscape-characterization}.
			
			In the above computation, the first inequality uses the independent increment property of $\cM$, and the second inequality uses the triangle inequality, which implies
			$
			\cM_{r, t-s} \diamond_\Q \cM_{t-s, t} \le \cM_{r, t}.
			$
			Now, the final probability above is the same in both $\cL$ and $\cM$ by condition $3$ of \Cref{T:landscape-characterization},
			and so by the metric composition law and independent increment properties for $\cL$, it is bounded below by
			\begin{align}
            \label{E:inf-eval}
				\inf_{h \in A} \P[h \diamond \cL_{t - s, t} \le -f].
			\end{align}
			Next, for any $\ep >0$, define the set $A_\ep = \{h \in \UC_0 : g \le h \le\sup g + 1, d_{H, \operatorname{loc}}(h, g) \le \ep \}$. \Cref{L:airy-facts} and the almost sure boundedness of the Airy$_2$ process $\cA$ (see \Cref{P:shape-thm}) shows that $\P[\cM_{r, t-s}(0, \cdot) \in A_\ep] > 0$. Moreover, since any $g' \in A_\ep$ in bounded above by $\sup g + 1$, \Cref{P:KPZ-FP-cvge} implies that
			$$
			\lim_{\ep \to 0} \inf_{g_\ep \in A_\ep}\P[g_\ep \diamond \cL_{t - s, t} \le -f] =\P[g \diamond \cL_{t - s, t} \le -f] = \P[f \diamond \cL_{t - s, t} \le -g].
			$$
			Therefore we conclude that $\P[ \cM_{t-s, t} \diamond f \le - g] \ge \P[f \diamond \cL_{t - s, t} \le -g]$.
			In other words,
			$\fh_sf|_\Q \eqd(f \diamond \cL_{t - s, t})|_\Q$ stochastically dominates $(\cM_{t-s, t} \diamond f)|_\Q$. To prove the lemma from here, it suffices to observe that for a single point $q \in \Q$:
			$$
			\P\left [\cM_{t-s, t} \diamond f(q) \le a \right] = \P[\cM_{t-s, t}(q, \cdot) \le -f + a]= \P[\cL_{t-s, t}(q, \cdot) \le -f + a] = \P\left [f \diamond \cL_{t-s, t} (q) \le a \right],
			$$
			where the middle equality uses condition $3$ of \Cref{T:landscape-characterization}.
		\end{proof}

        Note that the method for evaluating the infimum in \eqref{E:inf-eval} above by approximating deterministic initial conditions
        using random functions bears similarity to ideas from \cite[Section 7]{quastel2020convergence} and \cite[Proposition 5.1]{aggarwal2024kpz}.
		\begin{lemma}
			\label{L:Q-composition}
			Any pre-landscape $\cM$ satisfies the metric composition law on $\Q$: for all $s < r < t \in \Q$ we have $\cM_{s, r} \diamond_\Q \cM_{r, t} = \cM_{s, t}$.
		\end{lemma}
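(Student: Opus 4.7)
The plan is to pin down the equality $\cM_{s, r} \diamond_\Q \cM_{r, t}(x_0, y_0) = \cM_{s, t}(x_0, y_0)$ at each fixed $s < r < t \in \Q$ and $x_0, y_0 \in \Q$, and then take a countable union. The triangle inequality from condition 1 of \Cref{T:landscape-characterization} already gives $\cM_{s, r} \diamond_\Q \cM_{r, t}(x_0, y_0) \le \cM_{s, t}(x_0, y_0)$ a.s., so only the matching equality in distribution is needed. For any $X \le Y$ a.s. with $X \eqd Y$, the event $\{X < Y\}$ is contained in $\bigcup_{a \in \Q} \{X \le a < Y\}$, each of whose terms has probability $0$ because $\P[X \le a] = \P[Y \le a] \le \P[X \le a]$ already forces $\P[X \le a, Y > a] = 0$; this turns the distributional match into an a.s.\ identity.

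The heart of the argument is to identify the joint law of $\bigl(\cM_{s, r}(x_0, \cdot)|_\Q,\;\cM_{r, t}(\cdot, y_0)|_\Q\bigr)$ with the corresponding pair for $\cL$. For the first coordinate, condition 3 of \Cref{T:landscape-characterization} at $p = (x_0, s)$ directly yields $\cM_{s, r}(x_0, \cdot)|_\Q \eqd \cL_{s, r}(x_0, \cdot)|_\Q$. For the second coordinate we apply \Cref{L:Equation-Q} with $f = \delta_{y_0}$ and rational times $t - r$ and $t$; since $\cM_{r, t} \diamond \delta_{y_0}(x) = \cM_{r, t}(x, y_0)$, the lemma gives
\[
\cM_{r, t}(\cdot, y_0)|_\Q \;\eqd\; \fh_{t-r}\delta_{y_0}|_\Q,
\]
and the flip symmetry combined with time stationarity from \Cref{l:invariance} identifies $\fh_{t-r}\delta_{y_0}(x) = \cL_{0,\,t-r}(y_0, x)$ in law with $\cL_{r, t}(\cdot, y_0)$ as a function of $x$. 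Independence of the two factors on the $\cM$ side is condition 2, and on the $\cL$ side it is part of \Cref{def:landscape}, so the joint laws agree.

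The last step is to promote this joint-law equality to an equality of suprema. Exhausting $\Q$ by an increasing sequence of finite sets $\Q_N$, the truncated sup is a continuous functional of finitely many coordinates, yielding
\[
\sup_{z \in \Q_N} \cM_{s, r}(x_0, z) + \cM_{r, t}(z, y_0) \;\eqd\; \sup_{z \in \Q_N} \cL_{s, r}(x_0, z) + \cL_{r, t}(z, y_0)
\]
for each $N$. Both sides are monotone increasing in $N$, so for any $a \in \R$ the CDF at $a$ of the (monotone) limit is $\lim_N$ of the CDFs of the truncations, and the equality of CDFs passes to the limit. The $\cL$-side limit equals $\cL_{s, t}(x_0, y_0)$ by the metric composition law and continuity of $\cL$ (\Cref{def:landscape}), which by condition 3 is equal in law to $\cM_{s, t}(x_0, y_0)$. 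The main obstacle is that condition 3 gives joint marginals of $\cM(p; \cdot)$ only for fixed $p$, and not of $\cM(\cdot; q)$ for fixed $q$; \Cref{L:Equation-Q} applied to a narrow wedge at $y_0$ is what bridges this gap, and the flip symmetry of $\cL$ then converts the output into the ``backward'' marginal of $\cL$ that we need.
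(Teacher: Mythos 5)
Your proof follows essentially the same route as the paper's: reduce to fixed $(s,r,t,x_0,y_0)$, use the triangle inequality to get the a.s.\ inequality (so only equality in distribution is needed), establish the joint law identity $(\cM_{s,r}(x_0,\cdot),\cM_{r,t}(\cdot,y_0))|_\Q \eqd (\cL_{s,r}(x_0,\cdot),\cL_{r,t}(\cdot,y_0))|_\Q$ via condition 3 for the forward marginal, \Cref{L:Equation-Q} with a narrow wedge for the backward marginal, and independence of increments, and then transfer the sup. You supply details the paper omits: the explicit $X\le Y$, $X\eqd Y$ argument, the specific choice $f=\delta_{y_0}$ in \Cref{L:Equation-Q} together with the flip/time-stationarity symmetries, and the monotone truncation needed to pass the distributional identity through the countable supremum; these are all correct and fill exactly the gaps the paper leaves implicit.
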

		
		\begin{proof}
			Since $\Q$ is countable, it suffices to prove that for fixed $s < r < t \in \Q$ and $x, y \in \Q$ we have $\cM_{s, r} \diamond_\Q \cM_{r, t}(x, y) = \cM_{s, t}(x, y)$. By the triangle inequality we have $\cM_{s, r} \diamond_\Q \cM_{r, t}(x, y) \le \cM_{s, t}(x, y)$ almost surely so it suffices to prove that equality holds in distribution. For this, observe that in the directed landscape $\cL$ we have $\cL_{s, r} \diamond_\Q \cL_{r, t}(x, y) = \cL_{s, t}(x, y)$ by the usual metric composition law for $\cL$, continuity, and density of $\Q$. On the other hand, $\cL_{s, t}(x, y) \eqd \cM_{s, t}(x, y)$ (by condition $3$ of \Cref{T:landscape-characterization}), and 
			\begin{equation}
				\label{E:joint-equality-1}
				(\cL_{s, r}(x, z), \cL_{r, t}(z, y))_{z \in \Q} \eqd (\cM_{s, r}(x, z), \cM_{r, t}(z, y))_{z \in \Q}.
			\end{equation}
			This uses the independent increment property of $\cL, \cM$, the equality $(\cL_{s, r}(x, z))_{z\in\Q} \eqd (\cM_{s, r}(x, z))_{z\in\Q}$ from condition $3$ of \Cref{T:landscape-characterization}, and the equality $(\cL_{r, t}(z, y))_{z\in\Q} \eqd (\cM_{s, r}(z, y))_{z\in\Q}$ from Lemma \ref{L:Equation-Q}. Therefore $\cM_{s, r} \diamond_\Q \cM_{r, t}(x, y) \eqd \cM_{s, t}(x, y)$, as desired.
		\end{proof}
		
		\begin{lemma}
			\label{L:cty-strong}
			Let $u_1 = (x_1,s_1;y_1,t_1), u_2 = (x_2,s_2;y_2,t_2) \in \Q^4_\uparrow$, and $\hat x_2, \hat y_2 \in \R$ be chosen so that the four points $(x_2,s_2)$, $(\hat x_2,s_1)$, $( y_2,t_2)$, $(\hat y_2,t_1)$ are on the same straight line.
			Denote $\chi=|x_1-\hat x_2| + |y_1- \hat y_2|$, and $\tau=|t_1-t_2|+|s_1-s_2|$.
			Then letting $d(x, s; y, t) = (x-y)^2/(t-s)$, for any $M, L\ge 0$, we have
			\begin{align*}
				\P[|[\cM + d](u_1) -[\cM+d](u_2)| > M\tau^{1/3} +L\chi^{1/2}] < ce^{- dM^{3/2}} + ce^{-dL^2},
			\end{align*}
			where $c, d > 0$ are absolute constants.
		\end{lemma}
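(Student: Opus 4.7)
My strategy is to reduce the tail bound for $\cM$ to the analogous bound for the directed landscape $\cL$ by chaining through intermediate rational points that differ by a single coordinate at a time, and then to establish the $\cL$ bound using the shape theorem (\Cref{P:shape-thm}), Brownian comparison (\Cref{lem:bcp}), and skew stationarity (\Cref{l:invariance}(4)).

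For spatial-only chain links (fixed time slice, one of $x, y$ changing), condition $3$ of \Cref{T:landscape-characterization} (when the starting point is fixed) or \Cref{L:Equation-Q} (when the ending point is fixed) identifies the $\cM$-difference in law with the corresponding $\cL$-difference, and \Cref{lem:bcp} then yields the Gaussian tail $\exp(-dL^2)$ at scale $L\chi^{1/2}$. For temporal-only chain links (fixed $x, y$, with $s_1$ changing to $s_2$ say), I use metric composition (\Cref{L:Q-composition}) to write $\cM(x, s_1; y, t) = \sup_{z \in \Q} \cM(x, s_1; z, s_2) + \cM(z, s_2; y, t)$; the strong parabolic penalty $-(x-z)^2/(s_2-s_1)$ in the first factor concentrates the supremum near $z \approx x$, so the link reduces to controlling the diagonal value $\cM(x, s_1; x, s_2)$ (tail via condition $3$ at $p = (x, s_1)$) and the spatial increment $\cM(z, s_2; y, t) - \cM(x, s_2; y, t)$ in $z$ (tail via \Cref{L:Equation-Q} at endpoint $(y, t)$), yielding $\exp(-dM^{3/2})$ at scale $M\tau^{1/3}$.

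For the $\cL$ version of the bound on the full difference $[\cL+d](u_1) - [\cL+d](u_2)$, the splitting via the collinear auxiliary point $\hat u_2 = (\hat x_2, s_1; \hat y_2, t_1)$ separates the difference into a spatial part at the fixed time slice $(s_1, t_1)$ and a temporal part along the line through $u_2$. The spatial part is an Airy$_2$-type increment controlled by \Cref{lem:bcp}. For the temporal part, skew stationarity with parameter $c := (y_2-x_2)/(t_2-s_2)$ maps the pair $(\hat u_2, u_2)$ in distribution to a pair of $\cL$-values at a fixed spatial position $(\alpha, s_1; \alpha, t_1)$ and $(\alpha, s_2; \alpha, t_2)$, and two applications of \Cref{P:shape-thm} after KPZ rescaling then give the desired tail at scale $M\tau^{1/3}$. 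The principal obstacle will be the temporal-only chain links for $\cM$: since condition $3$ and \Cref{L:Equation-Q} only directly give fixed-start or fixed-end marginals, the metric-composition-plus-concentration argument is the technical heart of the proof, and the fact that $\hat x_2, \hat y_2$ may be irrational requires a minor rational approximation within the time slice $(s_1, t_1)$.
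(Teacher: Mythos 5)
Your chaining step — reducing to the case where only one of the four coordinates differs — is exactly the paper's first move, and for the purely spatial links your reasoning tracks the paper's. But from that point on you and the paper diverge, and the paper's route is substantially cleaner.

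For each single-coordinate change, the paper's strategy is to show the two-dimensional joint law $(\cM(u_1), \cM(u_2)) \eqd (\cL(u_1), \cL(u_2))$ and then simply cite the known two-point tail bound for $\cL$, namely \cite[Lemma 10.4]{DOV}. For the spatial links this joint-law identity is immediate from condition $3$ and \Cref{L:Equation-Q}. The key case is the temporal link $s_1 \ne s_2$ (say $s_1 < s_2$), and here you miss the central trick: by \Cref{L:Q-composition}, $\cM(u_1) = \cM_{s_1, s_2} \diamond_\Q \cM_{s_2, t_1}(x_1, y_1)$ and $\cM(u_2) = \cM_{s_2, t_1}(x_1, y_1)$, so both $\cM(u_1)$ and $\cM(u_2)$ are deterministic functions of the pair of profiles $\bigl(\cM_{s_1, s_2}(x_1, \cdot)|_\Q,\ \cM_{s_2, t_1}(\cdot, y_1)|_\Q\bigr)$. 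By independence of increments and equation \eqref{E:joint-equality-1}, this pair of profiles has the same joint law as its $\cL$-analogue, so $(\cM(u_1), \cM(u_2)) \eqd (\cL(u_1), \cL(u_2))$ again — with no concentration argument at all. Your plan to control the metric composition by a parabolic-penalty concentration of the argmax is not fundamentally wrong, but it is an analysis problem (requiring uniform-in-$z$ tail bounds for $\cM_{s_1, s_2}(x, \cdot)$, a modulus of continuity for $\cM_{s_2, t}(\cdot, y)$, and a union-bound/optimization over the scale of the deviation to recover the $e^{-dM^{3/2}}$ exponent) that the paper sidesteps entirely by matching the joint law and outsourcing the estimate to $\cL$. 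Similarly, your reconstruction of the $\cL$-level bound from skew stationarity and \Cref{P:shape-thm} is unnecessary work: that bound is precisely \cite[Lemma 10.4]{DOV} and the paper just cites it.

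So the gap is not a wrong step but a missed idea: you did not notice that for the temporal link, metric composition expresses \emph{both} $\cM(u_1)$ and $\cM(u_2)$ in terms of the same pair of time-slice profiles, whose joint law is already known to match $\cL$. Once you see this, the proof is three lines per case plus a citation, and none of the concentration or Brownian-comparison machinery you invoke is needed.
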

		\begin{proof}
			It suffices to prove this estimate assuming that precisely one of $x_1-x_2$, $s_1-s_2$, $y_1-y_2$, $t_1-t_2$ is non-zero. We claim that in all these cases we have \begin{equation}
				\label{E:joint-loi}
				(\cM(u_1), \cM(u_2)) \eqd (\cL(u_1), \cL(u_2)),
			\end{equation}
			in which case the conclusion follows from \cite[Lemma 10.4]{DOV}. In the case when $y_1\ne y_2$ or $t_1\ne t_2$ this is immediate from condition $3$ of \Cref{T:landscape-characterization}; and when $x_1 \ne x_2$ this follows from Lemma \ref{L:Equation-Q}. Now suppose $s_1 \ne s_2$. By symmetry we may assume $s_1 < s_2$. Using Lemma \ref{L:Q-composition} we have
			\begin{align*}
				\cM(u_1) = \cM_{s_1, s_2} \diamond_\Q \cM_{s_2, t_1}(x_1, y_1), \qquad \cM(u_2) = \cM_{s_2, t_1}(x_1, y_1).
			\end{align*}
			The same equalities hold with $\cL$ in place of $\cM$, and so \eqref{E:joint-loi} follows since the joint law of $\cM_{s_1, s_2}(x_1, \cdot)|_\Q$, $\cM_{s_2, t_1}(\cdot, y_1)|_\Q)$ is the same as in $\cL$ (this is an instance of \eqref{E:joint-equality-1}).
		\end{proof}

		\begin{corollary}
			\label{C:holder-cty}
			Any pre-landscape $\cM$ has a continuous extension which is locally H\"older-$1/2-$ in $x, y$, and locally H\"older-$1/3-$ in $s, t$.
		\end{corollary}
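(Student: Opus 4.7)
The approach is a Kolmogorov--Chentsov style chaining argument driven by the tail bound in \Cref{L:cty-strong}, which already delivers KPZ-optimal Gaussian-type tails for increments of $[\cM + d]$ at the correct anisotropic scaling. First, I would fix a compact box $K \subset \Rd$ of the form $\{(x, s; y, t) : |x|, |y|, |s|, |t| \leq N, \ t - s \geq \epsilon\}$. On such a $K$, the map $(x, s; y, t) \mapsto d(x, s; y, t) = (x-y)^2/(t-s)$ is smooth with bounded partial derivatives, so any Hölder regularity I can establish for $[\cM + d]$ on $K \cap \Qd$ transfers directly to $\cM$.

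Next, I would build a dyadic net in $K \cap \Qd$ at the anisotropic KPZ spacing: spatial coordinates spaced by $2^{-n}$ and temporal coordinates by $2^{-3n/2}$. For each pair of neighboring net points $u_1, u_2$ at scale $n$, I apply \Cref{L:cty-strong} with the prescribed auxiliary points $\hat x_2, \hat y_2$; these are rational up to harmless perturbation and make $\chi, \tau$ of the same order as the Euclidean increments. Choosing $M$ and $L$ to grow as modest powers of $n$ (so that $M^{3/2}, L^2 \gtrsim n$), and union bounding over the $O(2^{Cn})$ pairs at each scale, one gets summable tail probabilities. Borel--Cantelli then yields, almost surely, a random constant $C = C(\alpha, \beta, K)$ such that for every $\alpha < 1/2, \beta < 1/3$,
\[
\bigl|[\cM + d](u_1) - [\cM + d](u_2)\bigr| \leq C\bigl(|x_1 - x_2|^\alpha + |y_1 - y_2|^\alpha + |s_1 - s_2|^\beta + |t_1 - t_2|^\beta\bigr)
\]
uniformly over $u_1, u_2 \in K \cap \Qd$. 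Because $\Qd$ is dense in $\Rd$, this uniform Hölder bound extends $\cM + d$, and hence $\cM$, by uniform continuity to all of $K$ with the stated regularity; exhausting $\Rd$ by a countable family of such boxes then gives the desired global continuous extension.

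The main technical nuisance will be that \Cref{L:cty-strong} requires four auxiliary points to be collinear, so it cannot be applied to an arbitrary pair $(u_1, u_2)$ directly. I plan to resolve this by decomposing each chaining increment into substeps that change only the time coordinates, then only the spatial coordinates, in which case the collinearity condition is automatic and $\hat x_2 = x_2, \hat y_2 = y_2$ or vice versa. This decomposition matches the split between spatial and temporal contributions in the target Hölder bound: the exponents $3/2$ and $2$ appearing in the Gaussian tails of \Cref{L:cty-strong} are precisely what is needed for the chaining sums to converge at any exponents strictly below $1/3$ and $1/2$ respectively, exactly as in the corresponding regularity argument for $\cL$ itself (compare \cite[Proposition 10.5]{DOV}).
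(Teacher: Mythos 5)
Your approach is exactly the paper's: the corollary is proved by applying the Kolmogorov--Chentsov criterion to the two-point tail bound of Lemma \ref{L:cty-strong}, and the paper's proof is precisely the one-line remark ``This is immediate from Lemma \ref{L:cty-strong} and the Kolmogorov-Centsov criterion.'' One small misreading worth flagging: the collinearity of the four auxiliary points in Lemma \ref{L:cty-strong} is a \emph{definition} of $\hat x_2, \hat y_2$, not a constraint on the pair $(u_1, u_2)$, so the lemma already applies to arbitrary pairs; also in your time-only substeps one has $\hat x_2 \neq x_2$ in general, but since $\chi = O(\tau)$ on a compact set with $t-s \geq \epsilon$ the extra $L\chi^{1/2}$ term is absorbed by $M\tau^{1/3}$, so your chaining goes through regardless.
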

		
		\begin{proof}
			This is immediate from Lemma \ref{L:cty-strong} and the Kolmogorov-Centsov criterion.
		\end{proof}

		\begin{corollary}  \label{lem:cMunies}
			Proposition \ref{P:shape-thm} holds verbatim with a pre-landscape $\cM$ in place of $\cL$.

		\end{corollary}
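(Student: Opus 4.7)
The plan is to reduce to the proof of Proposition \ref{P:shape-thm} for $\cL$ given in \cite[Corollary 10.7]{DOV}, by observing that this proof only uses two inputs about $\cL$, both of which are now available for $\cM$. The first input is the one-point tail bound: for any fixed $u \in \Rd$, $[\cL+d](u)$ has Tracy-Widom GUE-type tails. For $\cM$, condition $3$ of \Cref{T:landscape-characterization} together with Lemma \ref{L:Equation-Q} gives $\cM(u) \eqd \cL(u)$ for all $u \in \Qd$ (as already observed in the proof of Lemma \ref{L:cty-strong}, see equation \eqref{E:joint-loi}); combined with the continuity upgrade from Corollary \ref{C:holder-cty}, this extends to $u \in \Rd$, giving the same one-point tails. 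The second input is the two-point increment bound used to chain pointwise bounds into a uniform estimate: the original proof invokes \cite[Lemma 10.4]{DOV}, and the analogous statement for $\cM$ is exactly \Cref{L:cty-strong}.

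With these two ingredients in hand, I would proceed as follows. First fix a countable collection of reference points $u_k \in \Qd$, dense enough that every $u \in \Rd$ is near some $u_k$ at an appropriate scale, and apply Borel-Cantelli using the one-point tails to define a random constant $R_0$ controlling $|[\cM+d](u_k)|$ simultaneously over all $k$ with the stated $e^{-d a^{3/2}}$ tail. Then, for each reference point $u_k$, use \Cref{L:cty-strong} and another Borel-Cantelli argument over a dyadic scaling grid to produce a random constant $R_1$ controlling the increment $|[\cM+d](u) - [\cM+d](u_k)|$ uniformly over all $u$ near $u_k$. Combining these yields the desired bound with $R := R_0 + R_1$, whose tail matches the one in \Cref{P:shape-thm} since both $R_0$ and $R_1$ do.

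The argument is structurally identical to the one for $\cL$ because the logarithmic prefactors in \Cref{P:shape-thm} are precisely what is produced by this chaining scheme given the tail exponents $3/2$ (from the one-point law) and $3/2$, $2$ (from the temporal and spatial increments in \Cref{L:cty-strong}). The only minor point to check is that \Cref{L:cty-strong}, as stated, already covers the types of perturbations needed in the chaining—namely, varying one of $x_2, s_2, y_2, t_2$ at a time while keeping the others fixed—which it does via the decomposition into the four special cases handled in its proof. There is no genuine obstacle here; the result is essentially a bookkeeping exercise transferring \cite[Corollary 10.7]{DOV} verbatim once one verifies that the only properties of $\cL$ used are its one-point marginal law and the increment estimate of \cite[Lemma 10.4]{DOV}.
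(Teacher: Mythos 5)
Your proposal is correct and takes essentially the same route as the paper: the paper's entire proof is the observation that the argument in \cite[Corollary 10.7]{DOV} only invokes one-point tails and the two-point bound of \cite[Lemma 10.4]{DOV}, both of which transfer to $\cM$ (one-point laws agree by condition $3$ of Theorem \ref{T:landscape-characterization}, and the two-point bound is Lemma \ref{L:cty-strong}). The extra chaining details you sketch are not needed since the corollary defers to the existing proof in \cite{DOV} rather than re-running it, but they do not change the argument.
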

		
		\begin{proof}
			The proof of Proposition \ref{P:shape-thm} in \cite{DOV} only uses one-point and two-point tail bounds for $\cL$, which are unchanged for $\cM$ (the one-point marginals are equal and the two-point tail bound is given by \Cref{L:cty-strong}).
		\end{proof}
		
		\begin{proof}[Proof of Proposition \ref{P:cts-extension}]
			Corollary \ref{C:holder-cty} provides the continuous extension of $\cM$. Next, we can interpret Lemma \ref{L:Equation-Q} as saying that for any $f, g \in \UC_0$ supported inside $\Q$, we have
			$$
			\P[g\diamond \cL_{t-s, t} \diamond f \le 0] = \P[g \diamond \cM_{t-s, t} \diamond f \le 0],
			$$
			Since both $\cM, \cL$ are continuous, by an approximation arguments this identity holds for arbitrary $f, g \in \UC_0$, giving \eqref{E:hsyf} in full generality.
			
			To prove the metric composition law, observe that continuity of $\cM$ implies that for fixed $s < r < t \in \Q$ and $x, y \in \Q$ we have $\cM_{s, r} \diamond_\Q \cM_{r, t}(x, y) = \cM_{s, r} \diamond \cM_{r, t}(x, y)$. Therefore Lemma \ref{L:Q-composition} shows that 
			\begin{equation}
				\label{E:MC}
				\cM_{s, t} = \cM_{s, r} \diamond \cM_{r, t}
			\end{equation}
			on $\Q^2$. 
			Using the continuity of $\cM$, and \Cref{lem:cMunies} which controls the location of the argmax in the metric composition,
			we can deduce that both sides of \eqref{E:MC} are continuous, and are also continuous in $s, r$, and $t$.
			Thus we can extend \eqref{E:MC} to $\R^2$, and to hold for any $s<r<t$ simultaneously.		
		\end{proof}

		\subsection{The law of $\cM_{0, 1}$ and an exchange strategy}
		By the metric composition law in \Cref{P:cts-extension} and independence of increments, via \Cref{def:landscape}, to prove \Cref{T:landscape-characterization} it suffices to show that $\cM_{s, t} \eqd \cL_{s, t}$ is a (rescaled) Airy sheet for any $s<t$.
		Without loss of generality, we prove this for $s=0$ and $t=1$.	By continuity of $\cM$, it suffices to prove the following.
		\begin{prop}  \label{prop:finid}
			Take any $k\in\N$, and $x_1,\ldots, x_k \in \R$, and $y_1,\ldots, y_k \in \R$.
			Then 
			\[
			\{\cM_{0,1}(x_i,y_i)\}_{i=1}^k\stackrel{d}{=}\{\cL_{0,1}(x_i,y_i)\}_{i=1}^k.
			\]
		\end{prop}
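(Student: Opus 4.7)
The plan is to apply the Lindeberg-style exchange argument sketched in the paper's overview. For $s \in [0,1]$ set $\cM^s := \cL_{0,s} \diamond \cM_{s,1}$, which by \Cref{P:cts-extension} and the independent increments assumption interpolates between $\cM^0 = \cM_{0,1}$ and $\cM^1 = \cL_{0,1}$ (at both endpoints the law matches $\cM_{0,1}$, using \eqref{E:hsyf}). Fix a finite evaluation set $F = \{(x_i, y_i)\}_{i=1}^k \subset \R^2$. The goal is the $L^1$-Wasserstein bound
\[
d_W(\cM^s|_F, \cM^t|_F) = o(r), \qquad r := t - s,
\]
for all $s < t$ in $[0,1]$. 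Summing along a fine mesh of $[0,1]$ via the triangle inequality for $d_W$ then forces $\cM^0|_F \eqd \cM^1|_F$, which is exactly \Cref{prop:finid}.

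To establish the Wasserstein bound I would build, for each pair $s<t$ and with $(\cL_{0,s}, \cM_{t,1})$ held fixed and independent of both $\cM_{s,t}$ and $\cL_{s,t}$, a joint coupling of $\cM_{s,t}$ and $\cL_{s,t}$ under which $\cM^s|_F = \cM^t|_F$ off an event of probability $O(r^{2/3+\delta})$. Combined with the crude tail bound $\|\cM_{s,t}\|_K + \|\cL_{s,t}\|_K = O(r^{1/3})$ on compact sets (from \Cref{P:shape-thm} and \Cref{lem:cMunies}), this yields $\E\|\cM^s|_F - \cM^t|_F\|_2 = O(r^{1+\delta}) = o(r)$. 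The coupling is constructed via the "near-optimal transfer sets"
\[
A_i := \Big\{ z \in \R : \cL_{0,s} \diamond \cM_{t,1}(x_i, y_i) \le \cL_{0,s}(x_i, z) + \cM_{t,1}(z, y_i) + \log^{10}(1/r)\, r^{1/3} \Big\}.
\]
Because the slack $\log^{10}(1/r)\,r^{1/3}$ dominates the $O(r^{1/3})$ fluctuations of $\cM_{s,t}$ and $\cL_{s,t}$, with probability $1 - O(r^c)$ every argmax of $z \mapsto \cL_{0,s}(x_i,z) + \cM_{s,t}(z, z') + \cM_{t,1}(z', y_i)$ (and its $\cL_{s,t}$ analogue) sits inside $A_i$, so it suffices to match the two suprema over $z \in A_i$ for each $i$.

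The principal obstacle, which I would isolate as a standalone lemma and defer to the subsequent section, is coupling the two fields simultaneously along all of $A_1, \dots, A_k$. For a single index $i$, an exact matching exists because the law of $\cL_{0,s}(x_i, \cdot) \diamond \cM_{s,t}|_{A_i}$ is a KPZ fixed point evolution (from \Cref{P:cts-extension}) and hence coincides with the corresponding expression built from $\cL_{s,t}$. These per-index couplings paste together provided the $A_i$'s are either pairwise disjoint (so that independent increments make the pieces $\cM_{s,t}|_{A_i \times A_i}$ independent) or overlap on intervals where $\cL_{0,s}(x_i, \cdot) - \cL_{0,s}(x_j, \cdot)$ is constant, since adding a constant to one of the arguments of a max does not change which argument achieves it. Geometrically, the latter is exactly the event that the geodesics from $(x_i, 0)$ and $(x_j, 0)$ have coalesced well before time $s$, i.e.\ that the coalescence time lies far outside $[s, t]$.

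The heart of the proof is therefore to show that this "disjoint-or-coalesced-early" compatibility event holds with probability $1 - O(r^{2/3+\delta})$ for a pre-landscape, where only the one-point KPZ fixed point marginals and the independent increments property are available. I would reduce this to a one-point tail estimate by passing to a stationary version of $\cM^s|_F$ and $\cM^t|_F$: replace the narrow-wedge endpoints by the stationary horizon $\cR$ of \Cref{T:stat-horizon-thm}, so that the joint increment process along time becomes stationary in the spatial variable, and then estimate the coalescence tail via the shape theorem (\Cref{P:shape-thm} and \Cref{lem:cMunies}) together with the quantitative Brownian comparison of \Cref{lem:bcp}. The delicate step is the $r^{2/3+\delta}$ bound itself, since for a genuine directed landscape $\cL$ one would expect the stronger $r^{1-o(1)}$ rate; obtaining the weaker $r^{2/3+\delta}$ bound using only two-point marginal information (i.e.\ without invoking line-ensemble or Yang--Baxter structure) is what makes the technical input nontrivial.
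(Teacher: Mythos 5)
Your overall idea (Lindeberg exchange on $\cM^s = \cL_{0,s}\diamond\cM_{s,1}$, coupling $\cM_{s,t}$ to $\cL_{s,t}$ through approximate-argmax sets $A_i$, and compatibility via disjointness-or-coalescence) matches the paper's, but the organizational structure of your argument has a genuine gap. You set up the exchange for the narrow-wedge endpoints $\de_{x_i}$ and then propose to ``pass to a stationary version'' inside the key coupling lemma. This does not close the argument: if you run the coupling with the stationary horizon $\cR$ in place of the narrow wedges, the high-probability matching event you obtain is $\{\tilde\cM^s|_F = \tilde\cM^t|_F\}$ for the \emph{stationary} interpolants $\tilde\cM^u_i := \cR_i \diamond \cL_{0,u}\diamond\cM_{u,1}(y_i)$, not for your $\cM^u|_F$ evaluated at $(x_i,y_i)$. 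The compatibility event is defined in terms of $H_i := \cR_i \diamond\cL_{0,s}$, and the whole point of introducing $\cR$ is that $H_i - H_j$ has a tractable Brownian/queueing structure (driving Lemmas \ref{lem:key1}--\ref{lem:key2}); replacing $\cR_i$ by $\de_{x_i}$ gives $H_i = \cL_{0,s}(x_i,\cdot)$, a parabolic Airy$_2$ process, whose difference profile $\cL_{0,s}(x_i,\cdot)-\cL_{0,s}(x_j,\cdot)$ has a far less amenable law. So the stationary reduction cannot be quarantined to a single estimate; it has to be the statement you actually prove.

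Consequently, the paper proves \Cref{prop:sta} (the stationary analogue, with $\cR_i$ in place of $\de_{x_i}$) first and in full, and only then deduces \Cref{prop:finid} by a separate Cameron--Martin argument: it perturbs the driving Brownian motions of $\cR$ by deterministic tent-shaped drifts $\phi_\ep$, so that $\cR^\ep = f(\cB + \phi_\ep)$ still has law absolutely continuous with respect to $\cR$ (hence \eqref{E:RRR} transfers), while \eqref{eq:rri}--\eqref{eq:rrz} show $\cR^\ep_i$ concentrates into a narrow wedge at $x_i$ as $\ep\to 0$. Your sketch omits this deduction entirely; without it the argument does not reach the claimed conclusion. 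Two minor looseness issues as well: (i) the crude bound $\E\|\cM^s|_F-\cM^t|_F\| = O(r^{1/3})\cdot\P[\cM^s\neq\cM^t]$ is not literally valid since $\|\cM_{s,t}\|_K$ has unbounded tails --- the paper's \Cref{lem:tail} supplies the needed exponential tail on the difference before integrating; (ii) you need to restrict $s,t$ to $[\theta,1-\theta]$ (as in \Cref{prop:clo}) and handle the boundary layers near $0$ and $1$ separately, which you do not mention.
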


		The rest of this section is devoted to proving \Cref{prop:finid}. We shall then use $C,c>0$ to denote large and small constants, whose values may change from line to line.
		
		We will first prove a stationary version of \Cref{prop:finid}. Take $\cR= (\cR_1, \ldots, \cR_k)$ as in \Cref{T:stat-horizon-thm}.	Below the choice of slopes $a_1<\cdots<a_k$ does not matter, e.g., we can simply take each $a_i=i$.
		\begin{prop}  \label{prop:sta}
			Take any $k\in\N$, $y_1,\ldots, y_k \in \R$, and $\cR$ as above, independent of $\cL, \cM$. Then,
			\begin{equation}
				\label{E:RRR}
				(\cR, \{\cR_i \diamond \cM_{0, 1}(y_i)\}_{i = 1}^k) \eqd (\cR, \{\cR_i \diamond \cL_{0, 1}(y_i)\}_{i = 1}^k).
			\end{equation}
		\end{prop}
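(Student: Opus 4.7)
The plan is to prove \Cref{prop:sta} by the Lindeberg-type interpolation described in the overview, with the stationary horizon $\cR$ playing the role of an anchor that localizes argmaxes. Take $\cL$ independent of $(\cM,\cR)$ and define
\[
\cM^s := \cL_{0,s} \diamond \cM_{s,1}, \qquad s \in [0,1],
\]
so that $\cM^0 = \cM_{0,1}$, $\cM^1 = \cL_{0,1}$, and each $\cM^s$ is continuous by \Cref{P:cts-extension}. Set $Z(s) := (\cR, \{\cR_i \diamond \cM^s(y_i)\}_{i=1}^k)$; the goal becomes showing that the law of $Z(s)$ is independent of $s \in [0,1]$.

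First I would show that the $L^1$-Wasserstein distance satisfies $d_W(Z(s), Z(t)) = o(r)$ with $r := t-s$ small, after which a telescoping argument along a mesh of $[0,1]$ closes the proof. Using the metric composition law write
\[
\cM^s = \cL_{0,s} \diamond \cM_{s,t} \diamond \cM_{t,1}, \qquad \cM^t = \cL_{0,s} \diamond \cL_{s,t} \diamond \cM_{t,1},
\]
which localizes the difference between $Z(s)$ and $Z(t)$ to the single swap $\cM_{s,t} \leftrightarrow \cL_{s,t}$. The shape theorem extended to $\cM$ (\Cref{lem:cMunies}) together with \Cref{P:shape-thm} give $\|\cM_{s,t}\|_K, \|\cL_{s,t}\|_K = O(r^{1/3})$ on any compact $K$, while the asymptotic slopes $a_i$ of $\cR_i$ keep the relevant argmaxes confined to a compact region of $\R$ with overwhelming probability.

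The core task is to construct a coupling of $\cM_{s,t}$ and $\cL_{s,t}$ (holding $\cR$, $\cL_{0,s}$, $\cM_{t,1}$ fixed) under which $Z(s) = Z(t)$ with probability $1 - O(r^{2/3+\de})$ for some $\de > 0$. I would introduce, for each $i$, the effective argmax set
\[
A_i := \bigl\{ z \in \R : \cR_i \diamond \cL_{0,s}(z) + \cM_{t,1}(z, y_i) \ge \cR_i \diamond \cL_{0,s} \diamond \cM_{t,1}(y_i) - \log^{10}(1/r)\, r^{1/3} \bigr\},
\]
chosen so that the $\log^{10}(1/r)\, r^{1/3}$ buffer dominates the $O(r^{1/3})$ middle increment and hence, with high probability, both $\cR_i \diamond \cM^s(y_i)$ and $\cR_i \diamond \cM^t(y_i)$ are computed by optimizing over $z \in A_i$. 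Condition~3 of \Cref{T:landscape-characterization} together with \Cref{T:stat-horizon-thm} yield the marginal equality
\[
\bigl(\cR_i \diamond \cL_{0,s},\; \cM_{s,t}|_{A_i \times \{y_i\}}\bigr) \eqd \bigl(\cR_i \diamond \cL_{0,s},\; \cL_{s,t}|_{A_i \times \{y_i\}}\bigr)
\]
for each $i$ separately, allowing the single-index couplings to be chosen to agree exactly. Patching these into a joint coupling relies on the structural fact that wherever $A_i$ and $A_j$ overlap, $\cR_i \diamond \cL_{0,s}$ and $\cR_j \diamond \cL_{0,s}$ differ by an additive constant on the overlap (a manifestation of geodesic coalescence), making the index-wise couplings compatible up to a deterministic shift. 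Combining the equality $Z(s) = Z(t)$ on the good event with the crude tail bound $\E \|Z(s) - Z(t)\|_2 = O(r^{1/3})$ on the complement delivers $d_W(Z(s), Z(t)) = O(r^{1+\de})$.

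The main obstacle is the existence of the coupling with the advertised $O(r^{2/3+\de})$ error, in particular the patching/separation estimate for the sets $\{A_i\}$; this is the key technical result to be deferred to \Cref{sec:bkey}. The stationary-horizon slopes $a_1 < \cdots < a_k$ enter essentially here, providing the $O(1)$-scale separation between typical argmax regions that makes the estimate tight, and the quantitative Brownian comparison in \Cref{lem:bcp} supplies the local regularity for $\cR_i \diamond \cL_{0,s}$ required to run the patching argument cleanly.
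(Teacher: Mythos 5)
Your overall strategy is the same as the paper's: interpolate via $\cM^s = \cL_{0,s}\diamond\cM_{s,1}$, localize the difference to a single swap $\cM_{s,t}\leftrightarrow\cL_{s,t}$, identify near-argmax sets $A_i$ at scale $\log^{10}(1/r)\,r^{1/3}$, couple the two middle increments through a patching function built from the $H_i := \cR_i\diamond\cL_{0,s}$ (which, by \Cref{T:stat-horizon-thm}, is again a Brownian motion with drift, so the regularity needed here is free of charge rather than coming from \Cref{lem:bcp} as you suggest), and defer the $A_i\cap A_j$ separation/coalescence estimate to \Cref{sec:bkey}. That much matches the paper's Proposition \ref{prop:clo} and Lemma \ref{lem:exiF}, modulo the fact that the Wasserstein argument should be run on finite-dimensional marginals $(\cR|_F,\cM^s)$ rather than on the infinite-dimensional object $\cR$ itself.

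There is a gap, though, in the telescoping step: you assert $d_W(Z(s),Z(t)) = O(r^{1+\delta})$ and then ``telescope along a mesh of $[0,1]$,'' but this bound cannot hold uniformly on $[0,1]$. Near $s=0$ the factor $\cL_{0,s}$ degenerates to a delta function, and near $t=1$ the factor $\cM_{t,1}$ does the same; consequently the shape estimate and argmax-localization constants (and hence $\delta$ and the implied constant) blow up as $s\to 0$ or $t\to 1$. Summing a non-uniform $O(r^{1+\delta})$ bound over a mesh of $[0,1]$ does not give a quantity that tends to $0$. The paper fixes this by proving the coupling bound only for $s<t\in[\theta,1-\theta]$ (Proposition \ref{prop:clo}, with constants depending on $\theta$), telescoping over a mesh of $[\theta,1-\theta]$, and bounding the two endpoint intervals directly with the coarse a priori estimate Lemma \ref{lem:tail}, which gives $\E|\cM_0-\cM_\theta|+\E|\cM_1-\cM_{1-\theta}| = O((\log\theta)^2\theta^{1/3})$ under \emph{any} coupling. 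Taking the mesh size to $0$ and then $\theta\to 0$ closes the argument. Your proposal needs this two-parameter limit to avoid the endpoint degeneracy.
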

		
		The proof of Proposition \ref{prop:sta} uses the strategy of the Lindeberg exchange method outlined in the introduction. To setup,
		for each $t\in [0,1]$, we denote\footnote{Here and below, we use the convention that $\cL(x,t;x,t)=\cM(x,t;x,t)=0$ for any $x,t$, and $\cL(x,t;y,t)=\cM(x,t;y,t)=-\infty$ for any $t$ and $x\neq y$.}
		\begin{equation} \label{eq:cMdef}
			\cM^t_i = \cR_i \diamond \cL_{0, t} \diamond \cM_{t, 1}(y_i), \qquad \cM^t = (\cM^t_1, \dots, \cM^t_k), 
		\end{equation}
		where $\cR, \cL, \cM$ are all independent. The vectors $\cM^t, t \in [0, 1]$ interpolate between the two sides of the equality in law in Proposition \ref{prop:sta}.
		
		Our basic strategy will be to show that for $0\le s < t\le 1$ we can define a new coupling of the random variables $\cR, \cM^t$ and $\cM^s$ so that $\E \|\cM^t - \cM^s\|_1 = o(t-s)$ for $t$ close to $s$. Note that this will typically not be true in the coupling given through \eqref{eq:cMdef} where $\cL, \cM$, and $\cR$ are all independent. The next proposition states this more precisely. 
		
		Below we allow all the constants $C,c>0$ to depend on $a_1<\cdots<a_k$ and $y_1,\ldots, y_k$.
		\begin{prop}  \label{prop:clo}
			For each  $\theta \in (0, 1/4)$, there exists $\delta>0$ such that the following holds. For $s <  t \in [\theta, 1 - \theta]$, we can find a coupling of $\cR, \cL, \cM$ such that each of the pairs $(\cR, \cM^t)$ and $(\cR, \cM^s)$ have the same law as in \eqref{eq:cMdef}, and 
			$$
			\P[\cM^s \ne \cM^t] \le C (t-s)^{2/3+\delta},
			$$
			where $C>0$ may also depend on $\theta$.
		\end{prop}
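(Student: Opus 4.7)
The plan is to build, for each $s < t$ in $[\theta, 1-\theta]$ with $r := t - s$, a joint law for $(\cR, \cL, \cM)$ under which $\cR$, $\cL_{0,s}$, $\cM_{t,1}$, and the pair $(\cL_{s,t}, \cM_{s,t})$ remain mutually independent with the prescribed individual marginals, but for which $\cM^s = \cM^t$ outside an event of probability $O(r^{2/3+\delta})$. Writing $f_i := \cR_i \diamond \cL_{0, s}$ and $g_i := \cM_{t,1}(\cdot, y_i)$, so that after conditioning on $(\cR, \cL_{0,s}, \cM_{t,1})$ the two quantities to be matched are
$$
\cM^s_i = \max_{z, w \in \R} \bigl[f_i(z) + \cM_{s,t}(z, w) + g_i(w)\bigr], \qquad \cM^t_i = \max_{z, w \in \R} \bigl[f_i(z) + \cL_{s,t}(z, w) + g_i(w)\bigr],
$$
only the joint law of $(\cL_{s,t}, \cM_{s,t})$ is in play. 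By \eqref{E:hsyf} and the independent-increment condition, for any fixed $f_i$ the laws of $f_i \diamond \cL_{s,t}$ and $f_i \diamond \cM_{s,t}$ agree as random functions of the time-$t$ spatial variable, and this is the raw input for every single-index coupling below.

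Next, I localize. For each $i$, introduce the (random, $(\cR,\cL_{0,s},\cM_{t,1})$-measurable) transit set
$$
A_i := \bigl\{z \in \R : f_i(z) + g_i(z) \ge \textstyle\max_{w}(f_i + g_i)(w) - \log^{10}(1/r)\, r^{1/3}\bigr\}.
$$
Because both $\cL_{s,t}$ and $\cM_{s,t}$ satisfy the shape theorem \Cref{lem:cMunies} with $O(r^{1/3})$ fluctuations on unit scales, and because the two-point modulus \Cref{L:cty-strong} gives matching H\"older estimates, a standard argument shows that off an event of very small probability the argmax pairs $(z^\ast, w^\ast)$ in both $\cM^s_i$ and $\cM^t_i$ lie in $A_i \times A_i$, and each $A_i$ sits inside a $\log^C(1/r)\, r^{2/3}$-interval about a deterministic center depending on $y_i$. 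Consequently, it suffices to couple $(\cL_{s,t}, \cM_{s,t})$ so that $f_i \diamond \cL_{s,t}|_{A_i} = f_i \diamond \cM_{s,t}|_{A_i}$ for every $i$.

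For a single index $i$, the equality in law recalled above lets us couple $\cL_{s,t}$ with $\cM_{s,t}$ so that the required identity holds on $A_i$. The key point is that these $k$ individual couplings can be glued into one consistent coupling on the event that the transit sets satisfy a coalescent dichotomy: for every pair $i \ne j$, either $A_i$ and $A_j$ are quantitatively separated, in which case the restrictions of $(\cL_{s,t}, \cM_{s,t})$ to $A_i \times A_i$ and $A_j \times A_j$ depend on approximately disjoint pieces of the underlying sheet; or $A_i$ and $A_j$ overlap on an interval $I$, in which case $f_j - f_i$ is constant on $I$. This last statement is a geodesic coalescence claim: an overlap of $A_i \cap A_j$ forces the geodesics initialized by $\cR_i$ and $\cR_j$ with respective endpoints $(y_i, 1), (y_j, 1)$ to have merged strictly before time $s$, so the backward profiles differ by a constant near the common transit. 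Because shifting $f_i$ by a constant $c$ only shifts $f_i \diamond \cL_{s,t}$ and $f_i \diamond \cM_{s,t}$ by the same constant $c$, the individual couplings are then mutually compatible on the overlaps, and they can be pieced together into one joint coupling of $(\cL_{s,t}, \cM_{s,t})$.

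The main obstacle — and the origin of the exponent $r^{2/3+\delta}$ — is the quantitative bound on the bad event that some pair $A_i, A_j$ comes within $o(r^{2/3})$ of each other without the underlying geodesics having coalesced by time $s$. The naive estimate is useless because $r^{2/3}$ is exactly the transversal scale of $A_i$, so a coalescence bound linear in separation gives no gain; one must exploit the joint stationarity built into the stationary horizon \Cref{T:stat-horizon-thm} (which is precisely why it was convenient to dress up \Cref{prop:finid} into the stationary \Cref{prop:sta}), together with the quantitative Brownian comparison in \Cref{lem:bcp}, to extract a super-linear decay in the separation. Executing this bound is the technical heart of the argument and will be deferred to \Cref{sec:bkey}; granted it, everything else — verifying localization in $A_i$, controlling the shape-theorem exceptional event, and assembling the glued coupling — is a routine union bound based on \Cref{lem:cMunies,P:shape-thm,L:cty-strong}.
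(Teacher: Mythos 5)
Your proposal correctly identifies the overall strategy of the paper's proof: define transit sets $A_i$ where the argmax of the metric composition must lie, condition on $(\cR, \cL_{0,s}, \cM_{t,1})$, and find a coupling of $(\cL_{s,t}, \cM_{s,t})$ that matches $f_i \diamond \cL_{s,t}$ with $f_i \diamond \cM_{s,t}$ on each $A_i$ simultaneously, relying on the coalescent dichotomy deferred to \Cref{sec:bkey}. You have even correctly identified the two cases (separation versus overlap-with-constant-difference) that make the gluing possible.

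However, the gluing step as you have written it is the actual crux of the argument, and it does not close. You propose to construct $k$ individual couplings — one per index $i$, each realized via \eqref{E:hsyf} applied to $f_i$ — and then claim they ``can be pieced together into one joint coupling.'' For the well-separated case you argue that the restrictions of $(\cL_{s,t}, \cM_{s,t})$ to $A_i \times A_i$ and $A_j \times A_j$ ``depend on approximately disjoint pieces of the underlying sheet,'' but neither $\cL_{s,t}$ nor the pre-landscape increment $\cM_{s,t}$ has exact spatial independence, so ``approximately disjoint'' gives no coupling at all; and even in the overlap case, shift compatibility of the two marginal laws does not by itself produce a single joint law for $(\cL_{s,t}, \cM_{s,t})$ achieving all $k$ constraints at once. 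The paper resolves this by building a \emph{single} function $F$ (see \Cref{lem:exiF}): on the good event $\cE_*$, $F - H_i$ is constant on each connected component of the (fattened) $A_i$, $F = -\infty$ off $\bigcup_i A_i$, and $F$ has uniform Brownian-type modulus. Then a single application of the KPZ fixed point marginal \eqref{E:hsyf} to the initial condition $F$ yields one coupling with $F \diamond \cM_{s,t} = F \diamond \cL_{s,t}$ everywhere, and a localization argument (that the argmax of $H_i(x) + \cM_{s,t}(x,z) + \cM_{t,1}(z, y_i)$ lands inside $\hat A_i$, and that $F - H_i$ is locally constant there) pushes this single identity down to $\cM^s_i = \cM^t_i$ for all $i$. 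This construction, not the $k$ individual couplings, is what makes the overlap and separation cases dovetail; the coalescent dichotomy is precisely the condition under which a consistent $F$ can be built. Without the explicit $F$ your argument is an intuition, not a proof. Two further minor points: the paper needs $A_i$ to be the $\log^{10}(r) r^{2/3}$-neighborhood of the sub-level set (so that each component has a positive minimal length, which is essential for $F$ to be well-defined and for the localization to have a buffer), while your $A_i$ is un-fattened; and your claim that $A_i$ lies in a $\log^C(1/r) r^{2/3}$-interval ``about a deterministic center depending on $y_i$'' is wrong — the center is random, sitting in an $O(\log(1/r))$-window, and only the \emph{width} of $A_i$ is $O(\log^C(1/r) r^{2/3})$ with high probability.
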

		The proof of this proposition will be given shortly; we first finish the proof of \Cref{prop:sta} given Proposition \ref{prop:clo}.
		We will also need a tail estimate for the difference $\cM^s_i - \cM^t_i$ to convert the total variation bound in Proposition \ref{prop:clo} to a bound on a Wasserstein distance.

		\begin{lemma}  \label{lem:tail}
			For any $0\le s<t \le 1$, under any coupling between $\cM, \cL,$ and $\cR$, we have the following:
			for any $i\in \llbracket 1, k\rrbracket$ and $M>0$,
			\[
			\P\left[|\cM^s_i - \cM^t_i | >  M(1 - \log(t-s))^2(t-s)^{1/3}\right] < C\exp(-cM).
			\]
		\end{lemma}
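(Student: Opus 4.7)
The plan is to decompose both $\cM^s_i$ and $\cM^t_i$ through the metric composition law of \Cref{P:cts-extension}, then to localize the relevant argmaxes to a compact box, and finally to bound the resulting difference by the supremum of $|\cM_{s,t} - \cL_{s,t}|$ on that box using the shape theorems. Writing $r := t - s$ and
\[
U(z) := \cR_i \diamond \cL_{0,s}(z), \qquad V(w) := \cM_{t,1}(w, y_i),
\]
\Cref{P:cts-extension} gives $\cM^s_i = U \diamond \cM_{s,t} \diamond V\,(y_i)$ and $\cM^t_i = U \diamond \cL_{s,t} \diamond V\,(y_i)$. By \Cref{T:stat-horizon-thm}, $U(\cdot) - U(0)$ is a two-sided Brownian motion of variance $2$ and drift $a_i$; by \Cref{P:shape-thm} and \Cref{lem:cMunies}, the function $V$ and each of the sheets $\cM_{s,t}, \cL_{s,t}$ satisfies the quantitative shape bound. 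Since all of these are marginal statements in the respective objects, they apply under any joint law of $(\cM, \cL, \cR)$, which is what the statement requires.

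The second step is to localize. Setting $L := C_\theta\bigl((1+|\log r|)^2 + M\bigr)$ for a large enough constant $C_\theta$, I would argue that the $(z,w)$-argmaxes of both $U(z) + \cM_{s,t}(z,w) + V(w)$ and $U(z) + \cL_{s,t}(z,w) + V(w)$ lie in $[-L, L]^2$ except on an event of probability at most $Ce^{-cM}$. Writing $\sup_w [-(z-w)^2/r - (w-y_i)^2/(1-t)] = -(z-y_i)^2/(r+1-t)$ and using $1-t \ge \theta$, the objective after optimizing over $w$ is dominated by $U(z) - c(z - y_i)^2 + O(\mathrm{logs})$ at scale $|z| \gg 1$. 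The value at $|z|=L$ is then strictly smaller than the value at $z = 0$ unless the Brownian-type increment $U(z) - a_i z - U(0)$ exceeds $\sim L^2$ somewhere in $|z| \ge L$, which occurs with probability at most $Ce^{-cL^3}$ by a dyadic union of Gaussian-tail bounds. An analogous argument in $w$, together with the $e^{-ca^{3/2}}$ tails on the shape-theorem constants $R_\cM, R_\cL, R_V$, yields the claimed localization probability.

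On the intersection of this localization event with the event $\{R_\cM + R_\cL \le M^{2/3}\}$ (whose complement has probability $\le Ce^{-cM}$ by the shape-theorem tail), the inequality $|\sup F - \sup G| \le \sup |F - G|$ applied to the joint $(z,w)$-sup gives
\[
|\cM^s_i - \cM^t_i| \le \sup_{|z|, |w| \le L} |\cM_{s,t}(z,w) - \cL_{s,t}(z,w)| \le (R_\cM + R_\cL)\, r^{1/3}\, \log^{4/3}\!\bigl(\tfrac{C(L+2)}{r}\bigr) \log^{2/3}(L+2),
\]
since the parabolic $-d$-corrections cancel. Substituting the bounds $R_\cM + R_\cL \le M^{2/3}$ and $L = C_\theta((1+|\log r|)^2 + M)$, an elementary calculation shows the right side is at most $M(1 - \log r)^2 r^{1/3}$ once $M$ is above a fixed constant; for smaller $M$ the conclusion is vacuous after enlarging $C$. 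A final union bound over the bad events then delivers the stated estimate, possibly after rescaling $M$ by a constant.

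The hardest part will be the argmax localization, since it must simultaneously handle the Brownian-type growth of $U$ (from the stationary horizon), the parabolic decay of $V$, and the uniform-on-compacts behaviour of $\cM_{s,t}, \cL_{s,t}$ via their shape theorems — with no assumed joint independence between these pieces. Fortunately, every ingredient needed is a marginal tail estimate provided directly by \Cref{T:stat-horizon-thm}, \Cref{P:shape-thm}, and \Cref{lem:cMunies}, so the remaining work is a careful but routine Gaussian-plus-polynomial-tail union bound.
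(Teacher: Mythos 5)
Your decomposition of $\cM^s_i$ and $\cM^t_i$ via metric composition, and the overall strategy of localizing the optimizer and then bounding $\sup|\cM_{s,t}-\cL_{s,t}|$ via the shape theorem, matches the paper's. However, there is a genuine gap in the step where you bundle $\cR_i$ and $\cL_{0,s}$ into $U=\cR_i\diamond\cL_{0,s}$ and invoke \Cref{T:stat-horizon-thm} to conclude that $U(\cdot)-U(0)$ is a drifted Brownian motion ``under any joint law of $(\cM,\cL,\cR)$.'' The lemma is stated for an \emph{arbitrary} coupling of $\cM,\cL,\cR$, while \Cref{T:stat-horizon-thm} explicitly requires $\cR$ to be independent of the landscape. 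Under an arbitrary coupling the joint law of $(\cR_i,\cL_{0,s})$ is unconstrained (only the two individual marginals are fixed), and $U$ is a functional of the pair, so its law is \emph{not} a marginal of $\cR$, $\cL$, or $\cM$ alone and need not be Brownian. Your claim that this is ``a marginal statement in the respective objects'' is therefore incorrect, and the Gaussian-tail bound you invoke for $U(z)-a_iz-U(0)$ is unjustified, so the localization step breaks.

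The paper's proof sidesteps this by never collapsing $\cR_i\diamond\cL_{0,s}$: it writes both $\cM^s_i$ and $\cM^t_i$ as three-variable suprema over $(x,w,z)$ of $\cR_i(x)+\cL_{0,s}(x,w)+\cL_{s,t}(w,z)+\cM_{t,1}(z,y_i)$ (resp.\ with $\cM_{s,t}$ in place of $\cL_{s,t}$), and applies the Brownian tail to $\cR_i$ alone and the shape theorem to $\cL_{0,s}$ alone — both genuinely marginal and hence coupling-independent — to localize the argmax of $\cM^t_i$ to a box of side $CM^{1/2}$, then reads off the difference from the shape theorem at that common argmax. Your argument can be repaired the same way by not bundling. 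A secondary issue is your introduction of the $\theta$-dependent constant $C_\theta$ via the assumption $1-t\ge\theta$: the lemma places no such restriction and the constants must be uniform over $0\le s<t\le 1$. Fortunately this is unnecessary — after optimizing out $w$ the parabolic coefficient is $1/(r+(1-t))=1/(1-s)\ge 1$ since $1-s\le 1$, so no lower bound on $1-t$ is needed.
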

		\begin{proof}
			We can assume that $M$ is large enough, since otherwise the conclusion is trivial.
			We first prove that
			\begin{equation}  \label{eq:pmfh}
				\P\left[\cM^s_i < \cM^t_i -  M(1 - \log(t-s))^2(t-s)^{1/3}\right] < C\exp(-cM).  
			\end{equation}
			By \Cref{P:shape-thm} and \Cref{lem:cMunies}, with probability at least $1-C\exp(-cM)$, the bound in \Cref{P:shape-thm} holds for both $\cM, \cL$ with some $R < M^{2/3}$.
			Also, with probability $>1-C
			\exp(-cM)$, we have $|\cR_i(x)-\cR_i(0)-a_ix|<|x|+M$ for all $x \in \R, i = 1, \dots, k$.
			Let $\cE$ denote the intersection of the above two events.
			Then on $\cE$, we have that 
			\begin{multline*}
				\cR_i(x) + \cL_{0, s}(x, w) + \cL_{s, t}(w, z) + \cM_{t, 1}(z, y_i) < \cR_i(0) + \cL_{0, s}(0,0) + \cL_{s, t}(0, y_i) + \cM_{t, 1}(y_i, 1) \le \cM^t_i,    
			\end{multline*}
			for any $x, w, z\in\R$ with $|x|\vee |w| \vee|z| > CM^{1/2}$.
			Then by the continuity of $\cM, \cL, \cR$, there exist $x_*, w_*, z_*$ with $|x_*|\vee |w_*| \vee|z_*|\le CM^{1/2}$, such that
			\[
			\cM^t_i=\cR_i(x_*) + \cL_{0, s}(x_*,w_*) + \cL_{s, t}(w_*,z_*) + \cM_{t, 1}(z_*,y_i).
			\]
			Also, on $\cE$ we have
			\[
			\cM_{s, t}(w_*,z_*)-\cL_{s, t}(w_*,z_*) \ge -CM^{2/3}(t-s)^{1/3}(\log(M)+|\log(t-s)|)^2.
			\]
			On the other hand, we have
			\[
			\cM^s_i\ge \cR_i(x_*) + \cL_{0, s}(x_*,w_*) + \cM_{s, t}(w_*,z_*) + \cM_{t, 1}(z_*,y_i).
			\]
			These imply that on $\cE$, $\cM^s_i>\cM^t_i-CM^{2/3}(t-s)^{1/3}(\log(M)+|\log(t-s)|)^2$, yielding \eqref{eq:pmfh}. We can similarly prove this estimate with the roles of $\cM^s_i$ and $\cM^t_i$ switched, so the conclusion follows.
		\end{proof}
		
		\begin{proof}[Proof of \Cref{prop:sta}]
			It suffices to show that the equality in law holds with $\cR$ replaced by $\cR|_F$ where $F \subset \R$ is a finite subset of the domain of $\cR:\R \to \R^n$. We think of $\cR|_F$ as a random variable in $\R^{n|F|}$. Let $d_W(X, Y) = \inf_\mu \E_\mu \|X-Y\|_1$ be the $L^1$-Wasserstein distance between the laws of two $\R^n$-valued random variables $X$ and $Y$. Here the infimum is over all couplings of the random variables $X, Y$. Fix $\theta \in (0, 1/4)$. 
			By \Cref{prop:clo} and \Cref{lem:tail}, for $s < t \in [\theta, 1 - \theta]$ in the coupling of $\cM^s, \cM^t$ from \Cref{prop:clo} we have
			$$
			\E\|(\cR|_F, \cM_{s}) - (\cR|_F, \cM_{t})\|_1 = \E\|\cM_{s} - \cM_{t}\|_1 < C(t-s)^{1+\delta/2}.
			$$ 
			This then bounds $d_W((\cR|_F, \cM_{s}), (\cR|_F, \cM_{t}))$.
			Also, \Cref{lem:tail} alone implies that in any coupling.
			\begin{equation*} \label{eq:cMadd}
				\E|\cM_{0}- \cM_\theta| + \E|\cM_1- \cM_{1 - \theta}| < C(\log(\theta))^2 \theta^{1/3}.
			\end{equation*} Therefore by the triangle inequality Wasserstein distance, we have
			\begin{align*}
				d_W((\cR|_F, \cM_{0}), (\cR|_F,\cM_{1}))  &< C(\log(\theta))^2 \theta^{1/3} + \sum_{i=1}^n d_W((\cR|_F, \cM_{\theta + (1 - 2\theta)(i-1)/n}), (\cR|_F, \cM_{\theta + (1 - 2\theta) i/n})) \\
				&\le C(\log(\theta))^2 \theta^{1/3} + C n^{-\de/2} (1 - 2 \theta)^{1 + \delta/2}
			\end{align*}
			Taking $n \to \infty$ and then $\theta \to 0$ gives that $d_W((\cR|_F, \cM_{0}), (\cR|_F, \cM_{1})) = 0$, and so $(\cR|_F, \cM_0) \eqd (\cR|_F, \cM_1)$.
		\end{proof}
		
		\subsection{The coupling}
		We now define the coupling to be used in \Cref{prop:clo}. Below we fix $s < t = s + r \in [\theta, 1 - \theta]$, and throughout assume $r = t-s$ is sufficiently small. We start with independent copies of $\cR$,  $\cL_{0, s}$, and $\cM_{t, 1}$, and consider the following subsets of $\R$.
		For each $i\in \llbracket 1, k\rrbracket$, denote 
		$
		H_i= \cR_i \diamond \cL_{0, s},
		$
		and
		\[
		\tilde{A}_i=\{x \in \R :H_i(x) + \cM_{t, 1}(x,y_i) > H_i \diamond \cM_{t, 1}(y_i) - \log^{10}(r)r^{1/3} \}.
		\]
		We let $A_i$ be the $\log^{10}(r)r^{2/3}$-neighborhood of $\tilde{A}_i$.
		For each $i, j \in \llbracket 1, k\rrbracket$, $i\neq j$, we let $\tilde{D}_{i,j}$ be the set where
		$H_i-H_j$
		is non-constant, i.e.,
		$$
		\tilde{D}_{i,j} = \{x \in \R : \text{ for all } \ep > 0, \text{ there exists } y \in (x-\ep, x + \ep) \text{ with } H_i(x) - H_j(x) \ne H_i(y) - H_j(y)\}.
		$$
		We then let $D_{i,j}$ be the $\log^{10}(r)r^{2/3}$ neighborhood of $\tilde{D}_{i,j}$. The following estimate is the key to our coupling.
		\begin{prop}   \label{prop:key}
			There is a constant $\delta>0$, such that for all sufficiently small $r > 0$ and $i,j\in\llbracket 1, k\rrbracket$, $i\neq j$, we have $\P[A_i\cap A_j \cap D_{i,j}=\emptyset] > 1- r^{2/3+\delta}$.
		\end{prop}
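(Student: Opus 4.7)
My plan is to bound $\P[A_i\cap A_j\cap D_{i,j}\ne\emptyset]$ by translating the three sets into geodesic-like statements and reducing the question to a quantitative coalescence estimate between the backward optimizers underlying $H_i$ and $H_j$. The set $\tilde A_i$ marks where a near-optimal backward path from $(y_i,1)$ crosses time $t$, at tolerance $\log^{10}(r) r^{1/3}$, which is just above the KPZ height scale $r^{1/3}$ for the time gap $r$. The set $\tilde D_{i,j}$ records failure of backward coalescence for $H_i(x)=\sup_z\cR_i(z)+\cL_{0,s}(z,x)$ versus $H_j(x)=\sup_z\cR_j(z)+\cL_{0,s}(z,x)$: when on a neighborhood of $x$ these two optimization problems share a common argmax, $H_i-H_j$ is locally constant there and the neighborhood is disjoint from $\tilde D_{i,j}$ (and, provided the constancy neighborhood is wider than $\log^{10}(r) r^{2/3}$, disjoint from $D_{i,j}$). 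Thus the bad event amounts to the two optimizers being within the $\log^{O(1)}(1/r) r^{2/3}$-scale at time $t$ while their backward extensions fail to coalesce by time $0$.

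As a first step I would localize $\tilde A_i$ inside a random window $I_i$ of length $\log^{O(1)}(1/r) r^{2/3}$ centered at the unique maximizer $x_i^*$, with exceptional probability at most $r^C$ for any $C$. By the stationary horizon identity $(H_i-H_i(0))_i\eqd(\cR_i)_i$ from \Cref{T:stat-horizon-thm} and the Airy-sheet marginal of $\cM_{t,1}$, the process $z\mapsto H_i(z)+\cM_{t,1}(z,y_i)$ has the law of an Airy-$2$-type process plus a parabola of curvature of order $r^{-1}$; combining the quantitative Brownian comparison \Cref{lem:bcp} with the shape theorem \Cref{lem:cMunies} turns a level set of depth $\log^{10}(r) r^{1/3}$ into an interval of length at most $\log^{O(1)}(1/r) r^{2/3}$. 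In particular $A_i\cap A_j\ne\emptyset$ forces $|x_i^*-x_j^*|\le\log^{O(1)}(1/r) r^{2/3}$.

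The central step is to show that the probability that $x_i^*$ and $x_j^*$ lie within $\log^{O(1)}(1/r) r^{2/3}$ and yet their backward paths fail to coalesce by time $0$ is at most $r^{2/3+\delta}$. I would carry this out by a conditional-density argument: conditional on $\cL_{0,s}$, $\cM_{t,1}$, and the data of $\cR_i,\cR_j$ outside a macroscopic window containing the putative common argmax, the remaining randomness of $\cR_i-\cR_j$ is absolutely continuous with bounded Radon-Nikodym derivative relative to a Brownian motion with drift $a_j-a_i\ne 0$, via the stationary horizon structure together with \Cref{lem:bcp}. Rewriting $\{A_i\cap A_j\cap D_{i,j}\ne\emptyset\}$ in this conditional frame expresses it as a requirement that $(x_i^*,x_j^*)$ lie in a specific $r^{2/3}$-window and that $H_i-H_j$ exhibit a local jump at the common location (i.e., failure of coalescence); integrating against the bounded conditional density gives the $r^{2/3+\delta}$ bound. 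The hard part is that this whole estimate must be extracted purely from the pre-landscape's independent-increment and KPZ fixed point marginal structure, without access to the fine geodesic picture of a true directed landscape; this constraint forces routing through the stationary horizon and Airy-$2$ Brownian comparison, and is the reason the exponent is $2/3+\delta$ rather than the $1-o(1)$ that a true directed landscape would yield via direct coalescence arguments.
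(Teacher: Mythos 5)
Your high-level reading of the event is correct: $A_i,A_j$ nearly track the time-$t$ positions of near-optimal backward paths for $H_i,H_j$, and $D_{i,j}$ records failure of coalescence of those backward profiles, so the bad event is ``paths meet at time $t$ but have not coalesced at time $s$.'' Your identification of the relevant scales ($r^{2/3}$ in space, $\log^{10}(r)r^{1/3}$ tolerance) and of the two key black boxes — the stationary horizon representation of $(H_i)_i$ from \Cref{T:stat-horizon-thm} and the quantitative Brownian comparison \Cref{lem:bcp} for $\cM_{t,1}$ — also matches the paper's proof exactly.

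However, the central technical step is wrong, and it is wrong in a way that cannot be repaired without essentially adopting a different argument. You claim that, after conditioning, ``$\cR_i-\cR_j$ is absolutely continuous with bounded Radon--Nikodym derivative relative to a Brownian motion with drift $a_j-a_i\ne 0$.'' Under the stationary horizon representation (taking $i<j$, say $k=2$), $\cR_2-\cR_1 = \sup_{z\le\cdot}\bigl(\cB_2(z)-\cB_1(z)\bigr)$ up to an additive constant: this is a running-maximum process, which is monotone non-decreasing. Monotone processes are mutually singular with Brownian motion, and conditioning on the configuration outside a macroscopic window does not remove this. Correspondingly, $\tilde D_{i,j}$ is the set of increase of a running maximum — a measure-zero, Hausdorff-dimension-$1/2$ set — so an ``integrate against a bounded conditional density'' argument cannot assign the right probability to the event $A_i\cap A_j\cap D_{i,j}\ne\emptyset$; you are trying to compute the mass of a Lebesgue-null set against a supposed density. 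The paper sidesteps this by working directly with the singular structure: it rewrites $D_{1,2}\cap I\ne\emptyset$ as an event that $\cB_2-\cB_1$ returns to its running maximum in the inflated interval $I_*$, reduces the joint event with $A_1$ to a three-Brownian-path ordering estimate (\Cref{lem:bmthree}, via Karlin--McGregor), which yields the $\ep^4$ bound and hence $\P[A_1\cap D_{1,2}\cap I\ne\emptyset]\lesssim r^{4/3}\log^{44}(r)$ in \Cref{lem:key1}, and then handles $A_2$ by a separate conditional Brownian hitting estimate, \Cref{lem:cond-brm}, exploiting the independence of $\cB_2$ past the point where it touches the running maximum (\Cref{lem:key2}). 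The final bound comes from a union over $\sim r^{-2/3}\log(r)$ translates of $I$. Your plan collapses the three bounds into one density computation, but the singularity of the running-maximum set is exactly the obstruction that forced the paper's Karlin--McGregor decomposition, and is what ultimately produces the $r^{2/3+\delta}$ rather than a cleaner exponent.

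A secondary point: your reduction ``$A_i\cap A_j\ne\emptyset$ forces $|x_i^*-x_j^*|\le\log^{O(1)}(1/r)r^{2/3}$'' is correct, but the paper does not localize around argmaxes at all — it runs a union bound over a fixed mesh of intervals. The argmax-localization can be made to work with superpolynomially small failure probability (the parabola from $\cM_{t,1}$ has curvature $O(1)$, and the level set of depth $\ep$ near the Brownian argmax is of size $\sim\ep^2$), so this is a legitimate alternative starting point; but it only sets up the problem and does not address the singularity issue in the main step.
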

		The proof of this estimate relies on careful analysis of the joint law of $\cR_i$ and $\cR_j$, together with Property $\ast$ from \Cref{R:post-thm11}.
		We postpone it to \Cref{sec:bkey}, and continue our construction of the coupling.

		Let $\cE_*$ be the event where $A_i\cap A_j \cap D_{i,j}=\emptyset$, for each $i, j\in \llbracket 1,k\rrbracket$, $i\neq j$.
		Note that for each $i\in \llbracket 1, k\rrbracket$, (using the shape bounds in \Cref{P:shape-thm}, \Cref{lem:cMunies} on $\cL, \cM$) $A_i$ consists of finitely many open intervals $A_{i, 1}, \dots, A_{i, \ell(i)}$, each of length at least $2\log^{10}(r)r^{2/3}$.
		\begin{lemma}  \label{lem:exiF} There exists a function $F:\R\to \R\cup\{-\infty\}$ that is $\sig(\cR, \cL_{0, s}, \cM_{t, 1})$-measurable, and  satisfies the following conditions:
			\begin{itemize}
				\item On $\mathcal E_*$, for each $i\in \llbracket 1,k\rrbracket$, $F-H_i$ is constant on each interval $A_{i, j}$, $j = 1, \dots, \ell(i)$.
				\item $F=-\infty$ outside of $\bigcup_{i=1}^k A_i$,
				\item For any $M>0$, with probability at least $1 - C\exp(-cM)$ the following is true: $\cE_*$ holds, and $|F(x)-F(y)| \le M \log(|x-y|^{-1} + 2)\sqrt{|x-y|}$ for all $x, y \in \bigcup_{i=1}^k A_i$, $|x-y|\le \log^{10}(r)r^{2/3}$.
			\end{itemize}
		\end{lemma}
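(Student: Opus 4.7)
The plan is to build $F$ by consistently shifting and gluing the functions $H_i$ across the intervals comprising $\bigcup_i A_i$, exploiting the structural fact built into $\cE_*$ that $H_i - H_{i'}$ is locally constant on each component of $A_i \cap A_{i'}$ (since $A_i \cap A_{i'} \cap D_{i, i'} = \emptyset$ forces $H_i - H_{i'}$ to not vary there). Concretely, fix a deterministic enumeration of the collection $\{A_{i, j}\}_{i, j}$ (say, by leftmost endpoint, breaking ties by $i$) and process them in order. For the first interval $A_{i_0, j_0}$ in any given connected component $C$ of $\bigcup_i A_i$, set $F := H_{i_0}$ on $A_{i_0, j_0}$. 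For each subsequent $A_{i, j} \subset C$ that meets some already-processed $A_{i', j'}$, set $F := H_i + c$ on $A_{i, j}$, where $c$ is the unique constant making $F$ agree with the previously assigned value on $A_{i, j} \cap A_{i', j'}$; on $\cE_*$ this $c$ is well-defined because $H_i - H_{i'}$ is constant there. Finally, set $F = -\infty$ off $\bigcup_i A_i$.

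When a single new interval $A_{i, j}$ meets several already-processed intervals, the gluing is a priori over-determined, but consistency follows from Helly's theorem in $\R$: any finite collection of pairwise-overlapping real intervals shares a common point, and at such a point the values $H_\cdot + c_\cdot$ assigned by each interval must agree pointwise by the inductive construction, which forces the constants to satisfy all cycle constraints in the overlap graph (which is chordal in $1$D). Since the enumeration and the constants are deterministic functions of $\cR, \cL_{0, s}, \cM_{t, 1}$, the resulting $F$ is $\sigma(\cR, \cL_{0, s}, \cM_{t, 1})$-measurable, and the first two bullets are immediate from the construction.

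For the H\"older bound in the third bullet, I would work on the intersection of three events: (i) $\cE_*$; (ii) each $H_i = \cR_i \diamond \cL_{0, s}$ obeys $|H_i(x) - H_i(y)| \le M \log(|x-y|^{-1} + 2) \sqrt{|x-y|}$ on the relevant window, which follows from the Brownian-type asymptotics of $\cR_i$ (\Cref{T:stat-horizon-thm}) combined with the shape bound \Cref{lem:cMunies} for $\cL_{0, s}$ and Brownian comparison (\Cref{lem:bcp}); and (iii) distinct connected components of $\bigcup_i A_i$ are pairwise separated by distance exceeding $\log^{10}(r) r^{2/3}$. Granted (iii), any $x, y \in \bigcup_i A_i$ with $|x - y| \le \log^{10}(r) r^{2/3}$ must lie in the same component, and since each $A_{i, j}$ has length at least $2 \log^{10}(r) r^{2/3} > 2|x-y|$, at most one component of each $A_i$ meets $[x, y]$, so the segment is covered by a chain of $n \le k$ consecutively overlapping intervals $A_{i_1, j_1}, \dots, A_{i_n, j_n}$. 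Choosing intermediate points $x = z_0, z_1, \ldots, z_n = y$ on $[x, y]$ with $z_\ell \in A_{i_\ell, j_\ell} \cap A_{i_{\ell+1}, j_{\ell+1}}$ and summing $F(z_{\ell+1}) - F(z_\ell) = H_{i_\ell}(z_{\ell+1}) - H_{i_\ell}(z_\ell)$, the square-root bound together with Cauchy-Schwarz (absorbing the $\sqrt{n} \le \sqrt{k}$ factor into the constants $C, c$) delivers the stated estimate.

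I expect the main obstacle to be establishing the separation of components in item (iii). Within a single $A_i$ it is automatic from the definition of $A_i$ as a $\log^{10}(r) r^{2/3}$-thickening of $\tilde A_i$; but for $i \ne i'$ the components of $A_i$ and $A_{i'}$ could in principle come within $\log^{10}(r) r^{2/3}$ of each other without overlapping. Ruling out this near-miss requires a tail bound of the same flavor as \Cref{prop:key}, using the joint control of $\cR_i, \cR_{i'}, H_i, H_{i'}$ through the stationary horizon and the shape theorem to show that $\tilde A_i$ and $\tilde A_{i'}$ either overlap or are well-separated with overwhelming probability.
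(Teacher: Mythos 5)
Your construction differs from the paper's in one load-bearing detail, and that difference is exactly what creates the obstacle you flag at the end. When a new interval $I_{\ell+1}$ is disjoint from the previously processed region, you restart with $F := H_{i_0}$ on $I_{\ell+1}$, so the value of $F$ takes an uncontrolled jump across the gap; this is why you then need separation of components (your item (iii)) and a fresh tail bound ``of the same flavor as \Cref{prop:key}.'' The paper instead extends $F$ across gaps by choosing the additive constant on $I_{\ell+1}$ so that the right limit of $F$ at the left endpoint of $I_{\ell+1}$ equals the left limit of $F$ at the right endpoint of $\bigcup_{\iota\le \ell}I_\iota$. With this matching, $F(y)-F(x)$ telescopes into increments of the various $H_i$ over subintervals of $[x,y]$, plus zero contributions across gaps, and the Hölder bound follows directly from Brownianity of the $H_i$ (\Cref{T:stat-horizon-thm}) and the containment $\bigcup_i A_i\subset[-M,M]$ (\Cref{P:shape-thm}/\Cref{lem:cMunies}) with no component-separation estimate at all. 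So the gap in your proposal is real: item (iii) is not established, and it is an avoidable detour rather than a necessary lemma.

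A secondary imprecision is the appeal to Helly's theorem for the consistency of the gluing. The intervals at issue are the new $A_{i,j}$ and the already-processed intervals it meets; those processed intervals need not pairwise overlap among themselves, so Helly (which requires pairwise intersection of the whole family) does not give a common point. What does make the gluing well-posed is the left-endpoint ordering itself: when $I_{\ell+1}=[a,b]$ is processed, every earlier $I_\iota$ that it meets satisfies $a_\iota\le a\le b_\iota$, so $I_\iota\cap I_{\ell+1}=[a,\min(b_\iota,b)]$ and all these intersections share the left endpoint $a$; combined with $\cE_*$ (which makes $H_{i_{\ell+1}}-H_{i_\iota}$ constant on each intersection), the shift constant is forced at $a$ and hence unambiguous. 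You should replace the Helly reference by this ordering argument.
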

		\begin{proof}  
			On $\mathcal E_*^c$, set $F = 0$ on $\bigcup_{i=1}^k A_i$ and $-\infty$ off this set. Next, for any interval $A_{i, j}$ and a function $f:\R\to \R\cup\{-\infty\}$ , we say that $f$ is suitable on $A_{i, j}$, if $f-H_i$ is constant on $I$. Sort the intervals $A_{i, j}, i = 1, \dots, k, j = 1, \dots, \ell(i)$ into a list $I_1, \ldots, I_n$ so that the $I_j$ are ordered by their left endpoint (ordering arbitrarily if left endpoints agree). We define $F$ on $I_1, \dots, I_n$ inductively.
			
			For $I_1$, choosing $i$ so that $I_1 = A_{i, j}$ for some $j$, and we let $F=H_i$ on $I_1$. Now assume that $F$ is already defined on $\bigcup_{\iota=1}^\ell I_\iota$ for some $\ell\in\llbracket 1, n-1\rrbracket$, and for each $\iota\in \llbracket 1,\ell\rrbracket$, $F$ is suitable on $I_\iota$.
			We then define $F$ on $I_{\ell+1}$, as follows.
			If $I_{\ell+1}$ intersects $\bigcup_{\iota=1}^\ell I_\iota$, there is a unique way of defining $F$ to make it suitable on $I_{\ell+1}$. Here we crucially use that under $\cE_*$, for each $i< i'$ and $j, j'$, we have that $H_i-H_{i'}$ is constant on $A_{i, j} \cap A_{i', j'}$, since $A_i\cap A_j$ is disjoint from $D_{i,j}$. Otherwise, we define $F$ to be suitable on $I_{\ell+1}$, so the right limit of $F$ at the left endpoint of $I_{\ell+1}$ equals the left limit of $F$ at the right endpoint of $\bigcup_{\iota=1}^\ell I_\iota$. Take $F=-\infty$ outside $\bigcup_{i=1}^k A_i$.
			
			It remains to check the last condition. By \Cref{T:stat-horizon-thm} the law of each $H_i$ is a Brownian motion with drift $a_i$.
			Then for any $M>1$, with probability $>1-C\exp(-cM)$ we have that $|H_i(x)-H_i(y)|<M \log(|x-y|^{-1} + 2)\sqrt{|x-y|}$ for each $i\in\llbracket 1,k\rrbracket$ and $x, y \in [-M, M]$.
			Also, by \Cref{P:shape-thm}/\Cref{lem:cMunies}, with probability $>1-C\exp(-cM)$, we have that $\bigcup_{i=1}^k A_i\subset [-M, M]$. 
			Thus the last condition holds from the construction of $F$. The measurability is immediate from construction.
		\end{proof}

		Now, observe that if $\cM, \cL$ and $\cR$ are all independent, then conditional on $X := (\cR, \cL_{0, s}, \cM_{t, 1})$ we have the equality in law
		\begin{equation}
			\label{E:coupling-def}
			F \diamond \cM_{s, t} \eqd F \diamond \cL_{s, t}.
		\end{equation}
		Indeed, by the independent increment property of $\cL, \cM$, both $\cM_{s, t}, \cL_{s, t}$ are independent of $X, F$, and so by \Cref{P:cts-extension}, both sides of this equality are equal in law to $\fh_{r} F$. Therefore conditional on $X$, we can couple $\cM_{s, t}, \cL_{s, t}$ so that this equality is realized almost surely. Observe that in this coupling, the pairs $(X, \cM_{s, t})$ and $(X, \cL_{s, t})$ are still respectively independent; it is only in the triple $(X, \cM_{s, t}, \cL_{s, t})$ where we lose the independence. In particular, the individual laws of $(\cR,\cM^s)$ and $(\cR,\cM^t)$ (defined by \eqref{eq:cMdef}) are still the same as in the original independent coupling. 
		
		We use this new coupling to prove \Cref{prop:clo}. 
		
		\begin{proof}[Proof of \Cref{prop:clo}]
			By Proposition \ref{prop:key}, it is enough to show that
			\begin{equation}   \label{eq:claim0}
				\P\left[\cM^s \neq \cM^t, \cE_* \right] < C\exp(-c\log^2(r)).
			\end{equation}
			We let $\cE$ be the event where $\cE_*$ holds, and additionally:
			\begin{itemize}
				\item $|F(x)-F(y)|< \log^2(r) \log(|x-y|^{-1} + 2)\sqrt{|x-y|}$ for any $x, y \in \bigcup_{i=1}^k A_i$,  $|x-y|\le \log^{10}(r)r^{2/3}$.
				\item $|H_i(x)-H_i(y)|< \log^2(r) \log(|x-y|^{-1} + 2)\sqrt{|x-y|}$ for any $i\in\llbracket 1,k\rrbracket$ and $x, y \in [-\log^2(r), \log^2(r)]$.
				\item $R<\log^2(r)$, for the $R$ in \Cref{P:shape-thm}/\Cref{lem:cMunies} defined for both $\cL$ and $\cM$. 
				\item $|H_i(x)-H_i(0)| < C|x| + \log^2(r)$, for any $i\in\llbracket 1,k\rrbracket$ and $x\in \R$.
			\end{itemize}
			By \Cref{lem:exiF}, standard Brownian motion estimates, and \Cref{P:shape-thm}/\Cref{lem:cMunies}, 
			$$
			\P[\cE_* \setminus \cE]<C\exp(-c\log^2(r)).
			$$
			It now remains to show that $\cM^s = \cM^t$ on $\cE$.
			
			Let $\hat A_i$ be the $\log^{10}(r) r^{2/3}/2$ neighborhood of $\tilde{A}_i$.
			We next show that on $\cE$, 
			\begin{equation}   \label{eq:claim1}
				H_i \diamond \cM_{s, t}|_{\hat A_i} = H_i \diamond \cL_{s, t}|_{\hat A_i}.
			\end{equation}
			First observe that on $\cE$, the third bullet point (for $\cM$) and the fourth bullet point above guarantee
			\begin{equation}
				\label{E:AI-estimate}
				\bigcup_{i=1}^k A_i \subset [-\log^2(r)/2, \log^2(r)/2].
			\end{equation}
			Now take any $z\in [-\log^2(r)/2, \log^2(r)/2]$. On $\cE$, for both of the functions
			\[
			F+\cM_{s,t}(\cdot,z), \quad H_i+\cM_{s,t}(\cdot,z),
			\]
			any argmax must be in the interval $I_z := (z-\log^{10}(r) r^{2/3}/2, z+\log^{10}(r) r^{2/3}/2)$. This follows by invoking the first and third bullet point to bound the sum with $F$, and the second, third, and fourth bullet point to bound the sum with $H_i$, and using \eqref{E:AI-estimate} to ensure that we can apply the bound in the second bullet point near $z$. Then for each $i\in\llbracket 1,k\rrbracket$, since $F-H_i$ is constant on $I_z$ when $z \in \hat A_i$ (by the construction of $F$), we must have that
			\[
			F\diamond \cM_{s, t}(z)
			- H_i\diamond \cM_{s, t}(z)
			= F(z)-H_i(z).
			\]
			This equality with $\cL_{s, t}$ in place of $\cM_{s, t}$ holds by the same reasoning.
			Hence \eqref{eq:claim1} follows from \eqref{E:coupling-def}. 
			
			Next, let $z_*=\argmax_z H_i(z) + \cM_{t,1}(z,y_i)$ and let $Y_i = H_i \diamond \cM_{t, 1}(y_i)$. On $\cE$,  \eqref{E:AI-estimate} gives that $|z_*| \le \log^2(r)$, and so by the third bullet point of $\cE$ we have the bound
			$$
			H_i(z_*) + \cM_{s,t}(z_*,z_*) + \cM_{t,1}(z_*,y_i) \ge Y_i  - \log^4(r)r^{1/3}.
			$$
			On the other hand, for $z \notin \hat A_i$ and $x \in \R$ with $|x|, |z| \le \log^2(r)$ we must have
			\begin{align*}
				&H_i(x) + \cM_{s,t}(x,z) + \cM_{t,1}(z,y_i) \\
				\le\;\; &|H_i(x) - H_i(z)| + \cM_{s,t}(x,z) + H_i(z) + \cM_{t,1}(z,y_i) \\
				\le\;\; &\log^4(r)r^{1/3} + (Y_i - \log^{10}(r)r^{1/3}).
			\end{align*}
			Here the final inequality uses the second and third bullet points to bound $|H_i(x) - H_i(z)| + \cM_{s,t}(x,z)$, and the definition of $\tilde A_i$ to bound $H_i(z) + \cM_{t,1}(z,y_i)$. An identical bound holds with $\cL_{s,t}(x,z)$ in place of $\cM_{s,t}(x,z)$. The same bound holds when either $|x| > \log^2(r)$ or $|z| > \log^2(r)$ by the shape bound from the third and fourth bullet points.
			
			Combining the previous two displays, we see that in the metric composition laws
			\begin{align*}
				\cM^s_i &= \max_{x, z} H_i(x) + \cM_{s,t}(x,z) + \cM_{t,1}(z,y_i), \\
				\cM^t_i &= \max_{x, z} H_i(x) + \cL_{s,t}(x,z) + \cM_{t,1}(z,y_i),
			\end{align*}
			the maximum is always achieved at some $z \in \hat A_i$. Therefore by \eqref{eq:claim1} we have $\cM^s_i = \cM^t_i$ on $\cE$, as desired. 			
		\end{proof}

		\subsection{Removing the stationary initial condition}
		In this subsection we deduce \Cref{prop:finid} from \Cref{prop:sta}.

		\begin{proof}[Proof of \Cref{prop:finid}]
			By the continuity of both $\cM$ and $\cL$, it suffices to consider the case when $x_1,\ldots, x_k$ are all distinct.
			Without loss of generality, we further assume that $x_1< \cdots <x_k$.
			
			We consider $\cR = (\cR_1, \dots, \cR_k)$ constructed as in \Cref{T:stat-horizon-thm} from independent Brownian motions $\cB = (\cB_1, \dots, \cB_k)$, with slopes $1, 2, \dots, k$.

			Now, since Proposition \ref{prop:sta} makes a claim about the \textit{joint} laws of $(\cR, \cR \diamond \cM_{0, 1})$ and $(\cR, \cR \diamond \cL_{0, 1})$, for any random function $\cQ:\R \to \R^n$ whose law $\mu_\cQ$ is absolutely continuous with respect to the law $\mu_\cR$ of $\cR$, we have that \eqref{E:RRR} holds with $\cQ$ in place of $\cR$ (where $\cQ$ is independent of $\cL, \cM$). With this in mind, we will construct a sequence of functions $\cR^\ep, \ep > 0$ with $\mu_{\cR^\ep} \ll \mu_{\cR}$ such that as $\epsilon\to 0$,
			\begin{equation}
				\label{eq:rri}
				\sup_{x: |x-x_i|>\epsilon}  \cR_i^\ep(x) - 2k|x| - \cR_i^\ep(x_i) \cvgp -\infty,    
			\end{equation}
			and
			\begin{equation}   \label{eq:rrz}
				\sup_{x: |x-x_i|\le \epsilon} \cR_i^\ep (x) - \cR_i^\ep(x_i) \cvgp 0.
			\end{equation}
			Then by \eqref{E:RRR} for $\cR^\ep$, continuity of $\cM$ and $\cL$, and the tail bounds in \Cref{P:shape-thm}/\Cref{lem:cMunies} on $\cM, \cL$, the conclusion follows.
			
			We now give the construction of $\cR^\ep$.
			Recall that $\cR= f(\cB)$ where $f$ is given by \eqref{E:R-B-translate}. Therefore if $\phi:\R \to \R^n$ is any deterministic function with $\phi(0) = 0, \int \|\phi'\|_2^2 < \infty$, then by the Cameron-Martin theorem, $f(\cB+ \phi)$ has an absolutely continuous law with respect to $\cR$. With this in mind, define $\phi_\ep$ to be the unique continuous function with $\phi_\ep(0) = (0, \dots, 0)$ such that for all $i$:
			\begin{itemize}[nosep]
				\item $\phi_{\ep, i}'(x) = 0$ for $x < x_i - \ep, x > x_k + 1$.
				\item $\phi_{\ep, i}'(x) = i\ep^{-2}$ for $x \in (x_i - \ep, x_i)$.
				\item $\phi_{\ep, i}'(x) = -2i\ep^{-2}$ for $x \in (x_i, x_k + 1)$.
			\end{itemize}
			We set $\cR^\ep = f(\cB + \phi_\ep)$ and check that \eqref{eq:rri}, \eqref{eq:rrz} hold. First, for all $x, i$, we have the inequalities
			$$
			\phi_\ep[i \to x] + |\pi^{i, x}|_\cB \le (\cB + \phi_\ep)[i \to x] \le \cB[i \to x] + \phi_\ep[i \to x]
			$$
			where $\pi^{i, x}$ is a vector achieving the supremum of $\phi_\ep[i \to x]$. Recognizing that $\pi^{i, x_i} = (x_i, \dots, x_i)$, these bounds imply that
			\begin{align*}
				\cR_i^\ep(x) - 2k|x| - \cR_i^\ep(x_i) &\le \phi_\ep[i \to x] - 2k|x| - \phi_\ep[i \to x_i] + \tilde \cR_i(x) - \cB_i(x_i) \\
				&= (\phi_\ep[i \to x] - \phi_\ep[i \to x_i]) + (\cR_i(x) - 2k|x|+ \tilde \cR_i(0) - \cB_i(x_i)).
			\end{align*}
			As $\ep\to 0$, the second bracketed term above remains bounded even after we take a supremum over $x$, since $\cR_i$ is a Brownian motion with drift $i \le k$; while by straightforward algebra the first term goes to $-\infty$ uniformly over $x$ with $|x - x_i| > \ep$. This yields \eqref{eq:rri}. For the bound \eqref{eq:rrz}, observe that for any $f:\R \to \R^n$ and $x < x_i < y$ we have
			\begin{align*}
				f[i \to x] - f[i \to x_i] &\le f_1(x)-f_1(x_i)\\
				f[i \to y] - f[i \to x_i]  &\le \sum_{j=1}^i \sup_{z < z' \in [x_i, y]} f_j(z) - f_j(z').
			\end{align*}
			Using these inequalities with $f = \cB + \phi_\ep$ yields \eqref{eq:rrz} since $\cB$ is continuous and each $\phi_{\ep, j}$ is non-decreasing on $[x_i -\ep, x_i]$ and non-increasing on $[x_i, x_i + \ep]$.
		\end{proof}

		\section{Branching point estimation: proof of Proposition \ref{prop:key}} \label{sec:bkey}
		
		In this section, we prove \Cref{prop:key}, the final piece of \Cref{T:landscape-characterization}.
		Without loss of generality we prove the bound for $i=1$ and $j=2$. 
		In this section we again use $C,c>0$ to denote large and small constants, whose values may change from line to line, and they are allowed to depend on $\theta, y_1, y_2$ and the slopes $a_1, a_2$.
		
		We start with a simple transversal fluctuation estimate.
		\begin{lemma}  \label{lem:key0}
			We have $\P[A_1\not\subset [-|\log(r)|, |\log(r)|] ]<\exp(-c |\log^3(r)|)$.
		\end{lemma}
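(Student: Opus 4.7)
I would prove \Cref{lem:key0} by showing that, on an event of probability at least $1 - \exp(-c|\log r|^3)$, one has $H_1(x) + \cM_{t,1}(x,y_1) < H_1 \diamond \cM_{t,1}(y_1) - c'|\log r|^2$ uniformly for $|x| \geq |\log r|-1$. Because $\log^{10}(r) r^{1/3} \to 0$, this gap dwarfs the cutoff in the definition of $\tilde A_1$, forcing $\tilde A_1 \subset (-(|\log r|-1), |\log r|-1)$; since $\log^{10}(r) r^{2/3} \to 0$ as well, the $\log^{10}(r) r^{2/3}$-neighborhood $A_1$ then lies inside $[-|\log r|, |\log r|]$ for $r$ small. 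The dominant mechanism is the parabolic decay of $\cM_{t,1}(x,y_1) \approx -(x-y_1)^2/(1-t)$.

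Three ingredients, all at the scale $\exp(-c|\log r|^3)$, drive the argument. First, by \Cref{T:stat-horizon-thm}, $H_1 - H_1(0)$ is a Brownian motion with drift $a_1$, so a dyadic union bound using the reflection principle and Gaussian tails yields, for any fixed $\epsilon > 0$, the uniform estimate $|H_1(x) - H_1(0) - a_1 x| \leq \epsilon x^2$ for all $|x| \geq |\log r|-1$, with exceptional probability at most $C_\epsilon \exp(-c_\epsilon |\log r|^3)$. Second, by condition (3) of \Cref{T:landscape-characterization} and the Tracy--Widom one-point law of $\cL$, the shifted marginal $\cM_{t,1}(x, y_1) + (x-y_1)^2/(1-t)$ is a (rescaled) Tracy--Widom variable with right tail $\exp(-cu^{3/2})$. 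Union-bounding with $u = \delta|\log r|^2$ over a polylog-fine grid of points in $[-|\log r|^3, |\log r|^3]$, and interpolating by the spatial Hölder-$(1/2)^-$ continuity of $\cM_{t,1}(\cdot,y_1)$ coming from \Cref{L:cty-strong}, gives $\cM_{t,1}(x, y_1) \leq -(x-y_1)^2/(1-t) + \delta|\log r|^2$ for all such $x$, for any fixed $\delta > 0$. For $|x| > |\log r|^3$, the shape theorem (\Cref{lem:cMunies}) with $R \leq |\log r|^2$ delivers a far stronger bound. Third, the Tracy--Widom left tail $\exp(-cu^3)$ yields $\cM_{t,1}(0, y_1) \geq -|\log r|$, and evaluating the metric composition at $z = 0$ then gives $H_1 \diamond \cM_{t,1}(y_1) \geq H_1(0) - |\log r|$.

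On the intersection of these three events, for $|x| \geq |\log r|-1$ one has
\[
H_1(x) + \cM_{t,1}(x, y_1) - H_1 \diamond \cM_{t,1}(y_1) \leq a_1 x + \epsilon x^2 - \frac{(x-y_1)^2}{1-t} + \delta|\log r|^2 + |\log r|.
\]
Since $1/(1-t) \geq 1/(1-\theta) > 1$, choosing $\epsilon, \delta$ small enough depending on $\theta$ (and absorbing the linear-in-$x$ term into the parabolic one for $|\log r|$ large) drives the right-hand side below $-c'_\theta |\log r|^2$, giving the desired gap. The one calibration point to watch is that both the Brownian fluctuation term $\epsilon x^2$ and the Tracy--Widom tail error $\delta|\log r|^2$ compete with the parabolic gain at the scale $|x| = |\log r|$, so one must use the strict inequality $1/(1-t) > 1$ (which holds precisely because $t \geq \theta > 0$ in \Cref{prop:clo}) to leave a positive remainder. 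Beyond this bookkeeping, no deeper obstacle arises, and the lemma follows.
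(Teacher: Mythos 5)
Your proof fills in the details the paper omits; the paper's one-line proof cites exactly the ingredients you use (the Brownian nature of $H_1$ from \Cref{T:stat-horizon-thm} and the shape bound \Cref{lem:cMunies}), augmented with standard Tracy--Widom tail asymptotics and a grid union bound. The only small misstep is the closing ``calibration point'': the strict inequality $1/(1-t)>1$ is not actually what rescues the estimate, since $\epsilon$ and $\delta$ are free parameters and $1/(1-t)\ge 1$ always; choosing $\epsilon+\delta<1/2$ already makes the parabola win at scale $|x|=|\log r|$ regardless of whether $t$ is bounded away from $0$. (A cosmetic remark on the interpolation between grid points: rather than invoking the raw two-point bound of \Cref{L:cty-strong}, it is cleaner to use stationarity of $x\mapsto \cM_{t,1}(x,y_1)+(x-y_1)^2/(1-t)$ and bound the supremum over each unit interval with the $\exp(-cu^{3/2})$ local tail, which avoids the polylog loss that a single global shape-theorem constant $R$ would otherwise introduce.)
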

		\begin{proof}
			This follows immediately from the fact that $H_1-H_1(0)$ is a Brownian motion with drift $1$ (\Cref{T:stat-horizon-thm}), and \Cref{lem:cMunies}. We omit the details.
		\end{proof}
		Now take any interval $I$ of length $r^{2/3}$, $I\subset [-|\log(r)|, |\log(r)|]$.
		Our proof will consist of two steps:
		\begin{enumerate}
			\item Bounding the probability that $A_1 \cap D_{1,2} \cap I \neq \emptyset$;
			\item Bounding the probability that $A_2\cap I\neq\emptyset$, conditional on the previous event. 
		\end{enumerate}
		We will show that (for some $\delta>0$) these two probabilities are $<C\log^{44}(r)r^{4/3}$ and $<r^{\delta}$, respectively.
		
		Let $\cB_1, \cB_2$ be independent two-sided Brownian motions, with slopes $a_1$ and $a_2$ respectively. By \Cref{T:stat-horizon-thm}, we can couple $(H_1, H_2)$ with $(\cB_1, \cB_2)$ so that
		\[
		\cB_1=H_1-H_1(0), \quad \tilde\cB_2-\tilde\cB_2(0)=H_2-H_2(0),
		\]
		where
		\[
		\tilde\cB_2(x) = \cB_1(x) + \sup_{z\le x} \cB_2(z)-\cB_1(z).
		\]
		We also write $\cM_1(x)=\cM(x, t; y_1, 1)$ and $\cM_2(x)=\cM(x, t; y_2, 1)$. By \Cref{P:cts-extension} and the symmetries of $\cL$, both of the processes
		$
		x \mapsto (1-t)^{-1/3} \cM_i((1-t)^{2/3}(x - y_i)), 
		$
		$i = 1, 2$ are parabolic Airy$_2$ processes, so we can apply the comparison result \Cref{lem:bcp} to $\cM_1, \cM_2$.

		We next bound the first probability.
		\begin{lemma}  \label{lem:key1}
			We have $\P[A_1 \cap D_{1,2} \cap I \neq \emptyset] < C  \log^{44}(r)r^{4/3}$.
		\end{lemma}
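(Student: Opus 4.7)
The plan is to bound $\P[A_1 \cap D_{1,2} \cap I \ne \emptyset]$ by $\log^{44}(r) r^{4/3}$ by reducing the event to a two-argmax condition on two conditionally independent processes. A naive product-of-marginals estimate gives only $O(r)$: $A_1 \cap I \ne \emptyset$ confines $\xi_1 := \argmax F$ (with $F := H_1 + \cM_{t,1}(\cdot, y_1) = \cB_1 + \cM_1 + \text{const}$, writing $\cM_1 := \cM_{t,1}(\cdot, y_1)$) to within $O(h^2) = O(\log^{20}(r) r^{2/3})$ of $I$, contributing $\sim r^{2/3}$; while $D_{1,2} \cap I \ne \emptyset$ requires a record of $V := \cB_2 - \cB_1$ within $L := \log^{10}(r) r^{2/3}$ of $I$, which by the stationary $\mathrm{Exp}((a_2-a_1)/2)$ law of $M_V - V$ has probability $\sim \sqrt{L} = r^{1/3}$. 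To gain the missing $r^{1/3}$, the plan is to trade the record condition for a second near-argmax condition on the auxiliary process $G := \cB_2 + \cM_1$, and exploit that $\xi_1$ and $\argmax G$ are conditionally independent given $\cM_1$.

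First I would carry out a geometric reduction. Given $x_0 \in A_1 \cap D_{1,2} \cap I$ and a record $\eta$ of $V$ with $|\eta - x_0| \le L$, the near-argmax inequality $F(x_0) \ge \max F - h$ (where $h = \log^{10}(r) r^{1/3}$) extends to $\eta$ with tolerance $O(h)$ after controlling oscillations of $\cB_1$ and $\cM_1$ on scale $L$ via the Brownian comparison (\Cref{lem:bcp}). Substituting into the record inequality $\cB_2(\eta) - \cB_2(y) \ge \cB_1(\eta) - \cB_1(y)$ valid for $y \le \eta$ produces $G(\eta) \ge G(y) - O(h)$ for $y \le \eta$. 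Brownian-type local fluctuations then force the left-restricted argmax $\zeta := \argmax_{y \le \eta} G(y)$ to lie within $O(h^2)$ of $\eta$, so both $\xi_1$ and $\zeta$ are trapped in $I' := I + [-Ch^2, Ch^2]$ of length $O(\log^{20}(r) r^{2/3})$.

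Next I would bound the two-argmax probability. Conditional on $\cM_1$, $F$ depends only on $\cB_1$ and $G$ only on $\cB_2$, and $\cB_1, \cB_2$ are independent, so $\xi_1$ is conditionally independent of the global $\xi_2 := \argmax G$. After replacing $\zeta$ by $\xi_2$ in the common case and absorbing the complementary case $\{\xi_2 > \eta\}$ by a separate argument, a Cameron-Martin shift of each $\cB_i$ combined with standard Gaussian argmax-density estimates gives $\P[\xi_i \in I' \mid \cM_1] \le C \log^c(r) |I'|$ on a high-probability event for $\cM_1$ (controlled by the shape bound in \Cref{lem:cMunies}). Multiplying,
\[
\P[A_1 \cap D_{1,2} \cap I \ne \emptyset] \le \E_{\cM_1}\!\bigl[\P[\xi_1 \in I' \mid \cM_1]\,\P[\xi_2 \in I' \mid \cM_1]\bigr] \lesssim |I'|^2 \log^{c'}(r) \lesssim \log^{44}(r)\, r^{4/3}.
\]

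The hardest part will be reducing $\zeta$ to the global $\xi_2$. Because the global argmax of $G$ is biased to the right of $\xi_1$ by roughly $(a_2 - a_1)(1-t)/2 > 0$, the configuration $\xi_2 > \eta$ is typical, so $\zeta \ne \xi_2$ and a direct replacement fails. Handling this case likely requires either conditioning on a $\cB_1$-dependent cutoff and controlling the resulting conditional argmax density of $G$ on an initial segment, or an independent Slepian-type comparison showing that the joint event $\{\xi_1 \in I',\ \eta\text{ is a near-left-max of }G\text{ at scale }h^2\}$ already has probability $O(r^{4/3})$. Making the density bound uniform over $\cM_1$ in the good event is the other delicate point and absorbs most of the polylogarithmic factor $\log^{44}$.
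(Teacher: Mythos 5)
Your reduction to a near-max condition on $\cB_1+\cM_1$ and a near-record condition on $\cB_2-\cB_1$ mirrors the paper's, but the key combinatorial step diverges and your trade loses the factor of $r^{1/3}$ you are trying to gain. The paper keeps \emph{both} constraints and, after replacing $\cM_1$ by an auxiliary Brownian motion via Lemma~\ref{lem:bcp}, writes the event on $[z_+-1,z_+]$ as the three-curve ordering $\cB_2 < \cB_1 + \ep < -\cM_1 + 2\ep$ for three \emph{independent} Brownian motions; Karlin--McGregor (Lemma~\ref{lem:bmthree}) then gives $\ep^3$ for this simultaneous ordering, an extra $\ep$ comes from the remaining one-sided window, and $\ep^4 \sim r^{4/3}$ with $\ep=\log^{10}(r)r^{1/3}$. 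Your trade $\{\cB_2<\cB_1+\ep\}\cap\{\cB_1<-\cM_1+\ep\}\Rightarrow\{\cB_2<-\cM_1+2\ep\}$ discards the inner inequality $\cB_2<\cB_1+\ep$. After conditioning on $\cM_1$ and using independence of $\cB_1,\cB_2$, the two surviving one-sided constraints give only $\ep^2$ on $[z_+-1,z_+]$ rather than $\ep^3$; the extra $\ep$ came precisely from the constraint you discarded.

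The same mismatch appears in your argmax-density step: the left-restricted argmax $\zeta$ is boundary-concentrated, so $\P[\zeta\in I'\mid\cM_1]\sim\sqrt{|I'|}\sim r^{1/3}$, not $\sim|I'|\sim r^{2/3}$ (the running-max gap $M_G-G$ is a reflected Brownian motion with an exponential stationary law, and $\min_{I'}(M_G-G)<O(\ep)$ has probability $\sim\ep$). So even granting the density bound for $\xi_1$ and the conditional independence, the product is $r^{2/3}\cdot r^{1/3}=r$, which is just the naive marginal product with no improvement. You rightly flag the replacement $\zeta\to\xi_2$ as the hard step, but it is not merely hard: $\zeta$ depends on $\cB_1$ through the random record point $\eta$, so the conditional independence you invoke holds only for the global $\xi_2$, which generically lies a macroscopic distance to the right of $\eta$ and is therefore not in $I'$. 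Your second proposed fix -- that $\{\xi_1\in I',\ \eta\text{ near-left-max of }G\}$ already has probability $O(r^{4/3})$ -- is exactly the event the computation above shows to be of order $r$.
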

		
		For this proof and throughout the remainder of the section, we define $I_* = [z_-, z_+]$ to be the (closed) $\log^{10}(r)r^{2/3}$ neighbourhood of $I$.
		\begin{proof}[Proof of \Cref{lem:key1}]
			Let $\cE_1 := \{A_1\cap I \neq \emptyset\}$. This is the event where
			\[
			\max_{x\in I_*} \cB_1(x)+\cM_1(x) > \max_{x\in \R} \cB_1(x)+\cM_1(x) - \log^{10}(r)r^{1/3}.
			\]
			Similarly, $\cE_2 :=\{D_{1,2}\cap I\neq \emptyset\}$ is the event where
			\[
			\max_{x\in I_*} \cB_2(x)-\cB_1(x) > \sup_{x<z_-} \cB_2(x)-\cB_1(x).
			\]
			
			We define $\cE_1'$:
			\[
			\cB_1(z_+)+\cM_1(z_+) > \max_{z_+-1\le x \le z_++1} \cB_1(x)+\cM_1(x) - 2\log^{10}(r)r^{1/3},
			\]
			and $\cE_2'$:
			\[
			\cB_2(z_+)-\cB_1(z_+) > \max_{z_+-1\le x\le z_+} \cB_2(x)-\cB_1(x) - \log^{10}(r)r^{1/3}.
			\]
			Then $\cE_1\setminus \cE_1'$ implies that
			\[
			\max_{x\in I_*} \cB_1(x)+\cM_1(x) > \cB_1(z_+)+\cM_1(z_+)+ \log^{10}(r)r^{1/3},
			\]
			and $\cE_2\setminus \cE_2'$ implies that
			\[
			\max_{x\in I_*} \cB_2(x)-\cB_1(x) > \cB_2(z_+)-\cB_1(z_+)+ \log^{10}(r)r^{1/3}.
			\]
			Then $\P[\cE_1\setminus \cE_1'], \P[\cE_2\setminus \cE_2']< C\exp(-c \log^5(r) )$, using \Cref{lem:bcp}, the stationarity of $x \mapsto \cA_2(x) + x^2$, standard Brownian estimates, and the fact that $I\subset [-|\log(r)|, |\log(r)|]$.
			
			On the other hand, again using \Cref{lem:bcp} and the stationarity of $x \mapsto \cA_2(x) + x^2$, we can deduce that $\P[\cE_1'\cap\cE_2'] < C r^{4/3} \log^{44}(r)$,
			by replacing $\cM_1 - \cM_1(z_+)$ on $[z_+ - 1, z_+ + 1]$ with a Brownian motion $\cB_3$ with $\cB_3(z_+) = 0$ with drift in $[-C |\log(r)|, C |\log(r)|]$, and using \Cref{lem:bmthree} below (with $X = \cB_3(z_+ - \cdot), Y = \cB_1(z_+ - \cdot) - \cB_1(z_+), Z = \cB_2(z_+ - \cdot) - \cB_1(z_+), \ep = 2\log^{10}(r)r^{1/3}$ and $D = C |\log(r)|$).
			
			By putting the above estimates together we get that
			$\P[\cE_1\cap\cE_2] < C r^{4/3} \log^{44}(r) + C\exp(-c \log^5(r))$ and the conclusion follows.
		\end{proof}
		
		\begin{lemma} \label{lem:bmthree}
			Take any $D>1$ and $0<\ep<1$.
			Let $X, Y, Z$ be three independent Brownian motions, each with drift in $[-D, D]$.
			Then we have
			\[
			\P\left[ X(x)+Y(x) < \ep, \forall x\in[-1,1];\; Z(x)-Y(x) < \ep, \forall x\in[0,1]  \right] < C\ep^4 D^4.
			\]
		\end{lemma}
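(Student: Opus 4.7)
The plan is to factor the probability into nearly independent pieces using the two-sided Brownian structure, reduce the joint event on $[0, 1]$ to a planar Brownian motion constrained in a wedge, and invoke a Brownian survival estimate. Since the restrictions of $(X, Y)$ to $[-1, 0]$ and of $(X, Y, Z)$ to $[0, 1]$ are independent (and $Z$ lives only on $[0, 1]$),
\[
\P[\cdots] = \P\!\left[\sup_{[-1,0]}(X + Y) < \ep\right] \cdot \P\!\left[\sup_{[0,1]}(X + Y) < \ep,\ \sup_{[0,1]}(Z - Y) < \ep\right].
\]
The first factor is the sup of a single one-sided drifted Brownian motion (drift in $[-2D, 2D]$, doubled variance), and the standard drift-adjusted reflection bound $\P[\sup_{[0,1]} W < \ep] \le C\ep(1 + |\mu|)$ yields $\le C\ep(1 + D)$.

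For the second factor, I would decouple by changing variables: set $P = X + Z$ and $Q = X + 2Y - Z$ on $[0, 1]$. A direct covariance computation, using the independence of $X, Y, Z$, shows that $P$ and $Q$ are \emph{independent} Brownian motions (with different variances and drifts of size $O(D)$). Since $X + Y = (P + Q)/2$ and $Z - Y = (P - Q)/2$, and pointwise $\max((P + Q)/2,\, (P - Q)/2) = (P + |Q|)/2$, the joint event becomes
\[
\sup_{x \in [0,1]}\bigl(P(x) + |Q(x)|\bigr) < 2\ep.
\]

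After rescaling $P$ and $Q$ to unit variance, this is precisely the event that a standard planar Brownian motion, starting at the origin, remains in a certain wedge whose vertex lies at distance $\Theta(\ep)$ along the axis of symmetry. A direct angle computation gives the opening angle $\pi/3$. For zero drift, the classical wedge survival estimate -- obtained via the conformal map $z \mapsto z^3$ which sends the $\pi/3$-wedge to a half-plane and reduces the problem to a one-dimensional Brownian first-passage bound -- gives a survival probability of order $\ep^3$ on time $[0,1]$. Incorporating drifts of size $O(D)$, the drift dependence can be tracked (via Girsanov combined with the fact that $(P(1), Q(1))$ is confined to the wedge under survival) to produce an additional factor of $(1 + D)^3$. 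Multiplying the two factors yields $\P[\cdots] \le C\ep^4(1 + D)^4 \le C'\ep^4 D^4$ since $D > 1$.

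The main obstacle is the quantitative wedge survival estimate \emph{with drift}. While the zero-drift $\ep^3$ bound is classical, producing a polynomial $(1 + D)^3$ correction, rather than the exponential-in-$D^2$ factor that a naive Girsanov calculation would give, requires care. One workaround is to split into cases according to the signs of the drifts of $P$ and $Q$ and carry out direct Brownian estimates in each case; another is to bypass the wedge picture entirely and instead condition on $Q$ and bound the probability that the independent Brownian motion $P$ stays below the random barrier $2\ep - |Q|$, though this also requires a non-trivial Brownian estimate to extract the $\ep^3$ order.
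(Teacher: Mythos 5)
Your decomposition into the $[-1,0]$ and $[0,1]$ pieces and the first-factor bound $C\ep(1+D)$ match the paper's $C\ep D$ up to constants. The change of variables $P = X+Z$, $Q = X+2Y-Z$ is correct (independence of $P,Q$ follows from $\operatorname{Var}X = \operatorname{Var}Z$), and the resulting $\pi/3$-wedge is the right geometric object --- it is precisely the $A_2$ Weyl chamber underlying three ordered Brownian motions, so you are looking at the same picture the paper sees through the Karlin--McGregor lens. The $\ep^3$ order is correct.

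However, you explicitly flag and do not close the key gap: the drift dependence of the wedge survival probability, where a naive Girsanov gives $e^{O(D^2)}$ rather than $O(D^3)$. The paper resolves this cleanly, and the missing idea is worth internalizing: \emph{condition on the endpoints} $X(1), Y(1), Z(1)$. Conditionally, $X, Y, Z$ on $[0,1]$ become Brownian bridges, which are \emph{drift-free} --- all drift information has been absorbed into the (Gaussian, mean $O(D)$, variance $O(1)$) law of the endpoints. The ordered-bridge probability is then an explicit Karlin--McGregor Vandermonde determinant, which is $\ep^3$ times a product of three endpoint gaps; taking expectation over the endpoint law contributes $O(D)$ per gap, giving $\ep^3 D^3$. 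This cleanly decouples the $\ep^3$ (from the bridges) from the $D^3$ (from the endpoints) and sidesteps the drift-in-the-wedge difficulty entirely. Your ``condition on $Q$'' workaround is not quite the same and would still leave you wrestling with the drift of $P$; conditioning on the \emph{terminal values} is the move that makes the problem drift-free inside the conditioning.
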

		\begin{proof}
			It is straightforward to calculate that
			\[
			\P\left[ X(x)+Y(x) < \ep, \forall x\in[-1,0] \right] < C\ep D.
			\]
			It now remains to show that
			\[
			\P\left[ X(x)+Y(x), Z(x)-Y(x) < \ep, \forall x\in[0,1]  \right] \le C \ep^3 D^3.
			\]
			We can rewrite this as
			\[
			\P\left[ Z(x) < Y(x) + \ep < - X(x) + 2\ep, \forall x\in[0,1]  \right].
			\]
			Conditional on $X(1), Y(1), Z(1)$, the processes $Z(x)$, $Y(x) + \ep$, $2\ep - X(x)$ on $[0,1]$ are Brownian bridges, started at $0, \ep, 2\ep$ respectively.
			Using the Karlin-McGregor formula, the probability that these processes stay ordered is an explicit Vandermonde determinant, which can be bounded by 
			\[C\ep^3 ((Y(1)-Z(1)+\ep)\vee 0)((\ep-X(1)-Y(1))\vee 0)((2\ep -X(1)-Z(1))\vee 0).\]
			See e.g., \cite[Proposition 3.1]{hammond2022brownian} for details regarding this standard computation. Taking an expectation over $X(1), Y(1), Z(1)$ gives $C\ep^3D^3$, and the conclusion follows.
		\end{proof}
		
		We then bound the second probability.
		\begin{lemma}  \label{lem:key2}
			There exists $\delta>0$ such that the following holds.
			Whenever $\P[A_1 \cap D_{1,2} \cap I \neq \emptyset] > r^{10}$, we have $\P[A_2\cap I\neq \emptyset \mid A_1 \cap D_{1,2} \cap I \neq \emptyset] < r^{\delta}$.
		\end{lemma}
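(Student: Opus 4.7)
My plan is to mirror the strategy of \Cref{lem:key1}: introduce a pinning event at $z_+$ for $A_2 \cap I \neq \emptyset$, reduce to a local Brownian calculation via \Cref{lem:bcp}, and extract the factor $r^\delta$ from an explicit Brownian exchange estimate that exploits the distinct parabolic centres and drifts.

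First, in parallel with the definition of $\cE_1'$, I would define the pinning event
\[
\cE_3' := \Bigl\{H_2(z_+) + \cM_2(z_+) > \max_{x \in [z_+-1, z_++1]} H_2(x) + \cM_2(x) - 2\log^{10}(r)r^{1/3}\Bigr\}.
\]
By combining \Cref{lem:bcp} for $\cM_2$, the stationarity of $x\mapsto \cA(x)+x^2$, and the Brownian description of $H_2$ on compact intervals coming from \Cref{T:stat-horizon-thm}, I would conclude, exactly as in the reduction of $\cE_1$ to $\cE_1'$ in the proof of \Cref{lem:key1}, that $\P[\{A_2 \cap I \neq \emptyset\}\setminus \cE_3'] < C\exp(-c\log^5(r))$. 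Since $\P[A_1\cap D_{1,2}\cap I\neq \emptyset] > r^{10}$ by hypothesis, this stretched-exponential error is negligible after dividing, and the lemma reduces to proving
\[
\P[\cE_1' \cap \cE_2' \cap \cE_3'] \le r^{2\delta}\, \P[\cE_1' \cap \cE_2']
\]
for some $\delta>0$.

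To analyse this ratio I would first condition on the independent Brownian pair $(\cB_1, \cB_2)$. Under this conditioning $H_1, H_2$ are deterministic and $\cE_2'$ becomes a deterministic event, while $\cE_1'$ and $\cE_3'$ become events about $\cM_1$ and $\cM_2$ respectively. Applying \Cref{lem:bcp} together with the shape bound in \Cref{lem:cMunies} to replace $\cM_i - \cM_i(z_+)$ on $[z_+-1, z_++1]$ for $i=1,2$ by Brownian motions with drifts of order $|\log r|$ absorbing the parabolic tilt, and then integrating over $(\cB_1, \cB_2)$, the ratio becomes a question about joint near-maxima of a collection of Brownian-like processes. The $r^\delta$ gain should arise because the ``best'' location for the near-maximum of $H_2 + \cM_2$ is typically separated from that of $\cB_1+\cM_1$ by an amount of order $1$ (coming from $y_1 \ne y_2$ together with the drift gap $a_2 - a_1 > 0$), so the chance of both near-maxima falling into the common interval $I$ of length $r^{2/3}$ carries an extra factor $r^\delta$. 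An explicit four-process Brownian computation in the spirit of \Cref{lem:bmthree}, combined with a Karlin-McGregor determinantal evaluation, would make this precise.

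The main obstacle is that $\cM_1$ and $\cM_2$ are both spatial slices of the same field $\cM_{t,1}$, so \Cref{lem:bcp} does not immediately provide a joint Brownian comparison after one has already conditioned on $\cM_1$. To circumvent this I would refrain from conditioning on $\cM_1$ directly: apply \Cref{lem:bcp} to each $\cM_i$ marginally, use the $L^\infty$ Radon-Nikodym bound to pull each factor over to an independent Brownian motion at the cost of only an $O(1)$ constant, and then appeal to the factorisation afforded by the independence of $\cM_{t,1}$ from $(\cB_1, \cB_2, \cR)$. A more refined alternative, if the crude bound is insufficient, is to use the metric composition $\cM_{t,1} = \cM_{t,u}\diamond \cM_{u,1}$ for an intermediate $u$ close to $t$, condition on $\cM_{u,1}$, and express $\cM_1, \cM_2$ as maxes of the independent field $\cM_{t,u}$ against two distinct deterministic functions, recovering a genuine joint Brownian comparison via \Cref{lem:bcp} applied to $\cM_{t,u}$. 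Either way, carefully tracking the drift dependence and constants through the conditioning is the technical heart of the proof.
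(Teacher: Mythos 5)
Your high-level template (pinning event at $z_+$, reduction to Brownian statements via \Cref{lem:bcp}, error control against the hypothesis $\P[\cE_1\cap\cE_2]>r^{10}$) does match the paper's skeleton, but there are two substantive gaps that I think would prevent the proof from going through as written.

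First, your pinning event is built from $H_2$, whereas the paper's is built from $\cB_2$. This is not cosmetic. The paper's key structural observation is that on the event $\cE_2=\{D_{1,2}\cap I\neq\emptyset\}$ there is some $z_*\in I_*$ with $\tilde\cB_2(z_*)=\cB_2(z_*)$ (because $\tilde\cB_2-\cB_1=\sup_{z\le\cdot}(\cB_2-\cB_1)$ increases precisely at record points, which is exactly what $D_{1,2}$ records), and this is what lets one swap $H_2$ for $\cB_2$ near $z_+$. Once that swap is made, the increment $\cB_2(\cdot)-\cB_2(z_+)$ on $[z_+,z_++1]$ is a \emph{fresh} Brownian motion that is genuinely independent of the conditioning event $\cE_1\cap\cE_2$ (which involves $\cB_1$, $\cB_2|_{(-\infty,z_+]}$, and $\cM_1,\cM_2$). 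By keeping $H_2$, you keep the dependence of the pinned process on all of $\cB_1,\cB_2$ up to $z_+$, and there is no clean independent piece to exploit. You do not appear to have noticed the $D_{1,2}\Rightarrow\tilde\cB_2=\cB_2$ mechanism, which is the crux.

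Second, your proposed tool for the $r^\delta$ gain --- a Karlin--McGregor computation "in the spirit of \Cref{lem:bmthree}" --- gives an \emph{unconditional} small-ball estimate of polynomial type, but what is actually needed is a \emph{conditional} estimate of the form "a fresh Brownian motion $X$ is unlikely to stay within $\ep$ of $Y+\ep$, even after $Y$ has been conditioned on an arbitrary event of probability $\ge\ep^{100}$". This is \Cref{lem:cond-brm}, whose proof is a genuinely different multiscale argument (dyadic comparison to conditioned bridges), not a determinantal computation. In particular, because the hypothesis only gives $\P[\cE_1\cap\cE_2]\in(r^{10}, Cr^{4/3}\log^{44}(r))$ --- a range of nearly seven powers of $r$ --- you cannot deduce a small conditional probability from an absolute upper bound on $\P[\cE_1'\cap\cE_2'\cap\cE_3']$ without essentially matching lower and upper bounds on $\P[\cE_1'\cap\cE_2']$, which you do not have. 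The paper avoids this entirely by working with the conditional law of $\cM_2$ directly and letting \Cref{lem:cond-brm} absorb the conditioning. Your heuristic that the $r^\delta$ comes from separation of the $H_1+\cM_1$ and $H_2+\cM_2$ argmaxes is an appealing coalescence picture, but it is not what is actually being exploited, and making it quantitative would require controlling the joint law of the two argmaxes conditional on $\cE_1\cap\cE_2$, which is at least as hard as the original problem.

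Finally, your worry about $\cM_1,\cM_2$ being slices of the same field is real, but the workarounds you propose (pushing each through \Cref{lem:bcp} marginally, or interposing an intermediate time $u$) are not needed: the paper never needs a joint Brownian comparison for $(\cM_1,\cM_2)$. It compares only $\cM_2$ to Brownian, after conditioning on $\cE_1\cap\cE_2$, and the burden of handling that conditioning is shifted entirely onto \Cref{lem:cond-brm}.
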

		
		\begin{proof}
			The event $A_2\cap I \neq \emptyset$ is equivalent to
			\[
			\max_{x\in I_*} \tilde\cB_2(x)+\cM_2(x) > \max_{x\in \R} \tilde\cB_2(x)+\cM_2(x) - \log^{10}(r)r^{1/3}.
			\]
			This implies the following event, denoted by $\cE_3$:
			\[
			\max_{x\in I_*} \tilde\cB_2(x) +\cM_2(x) > \max_{x \in [z_+, z_+ + 1]} \cB_2(x) +\cM_2(x) - \log^{10}(r)r^{1/3},
			\]
			since $\tilde\cB_2 \ge \cB_2$.
			Now, define the event $\cE_3'$ by
			\[
			\cB_2(z_+)+\cM_2(z_+) > \max_{x \in [z_+, z_+ + 1]} \cB_2(x)+\cM_2(x) - 2 \log^{10}(r)r^{1/3}.
			\]
			Recalling the event $\cE_2$ from the proof of \Cref{lem:key1}, we see that on this event there is some point $z_* \in I_*$ with $	\cB_2(z_*) = \tilde \cB_2(z_*)$.
			Therefore $\cE_3 \cap \cE_2 \setminus\cE_3'$ implies that
			\[
			\cB_2(z_*) - \cB_2(z_+) + \max_{x\in I_*} [\tilde\cB_2(x)-\tilde \cB_2(z_*) +\cM_2(x) - \cM_2(z_+)] > \log^{10}(r)r^{1/3}.
			\]
			Therefore $\P[\cE_3 \cap \cE_2 \setminus \cE_3']< \exp(-c|\log^5(r)|)$ (using \Cref{lem:bcp}, the stationarity of Airy$_2$ plus a parabola, standard Brownian motion estimates and the fact that $I\subset [-|\log(r)|, |\log(r)|]$). Now, additionally recall the event $\cE_1$ from the proof of Lemma \ref{lem:key1} chosen so that $\cE_1 \cap \cE_2 = \{A_1 \cap D_{1, 2} \cap I \ne \emptyset\}$. We then need to bound $\P[\cE_3' \mid \cE_1 \cap \cE_2]$.

			Observe that $\cB_2(x)-\cB_2(z_+), x \ge z_+$ is independent of $\cE_1 \cap \cE_2$, and conditionally independent of $\cM_2$ on $\cE_1 \cap \cE_2$. Therefore letting $\cM_2'$ be $\cM_2$ conditional on $\cE_1 \cap \cE_2$, we have to estimate the probability of
			\[
			\P\left[ \cB_2(x) - \cB_2(z_+) < - \cM_2'(x) + \cM_2'(z_+) + 2\log^{10}(r)r^{1/3}, \; \forall x \in [z_+, z_+ + 1] \right].
			\]
			We again use \Cref{lem:bcp} and the stationarity of Airy$_2$ plus a parabola, to replace $\cM_2'$ with a Brownian motion with drift in $[-C|\log(r)|, C|\log(r)|]$, conditioned on an event of probability at least $c r^{10}$.
			Then by \Cref{lem:cond-brm} below (with $D = C |\log(r)|, Y= \cM_2'(\cdot + z_+) - \cM_2'(z_+), X = \cB_2(\cdot + z_+) - \cB_2(z_+)$, and $\ep = 2 \log^{10}(r)r^{1/3}$) the above probability is at most $C r^{\delta}$ for some absolute constant $\delta > 0$. Putting everything together gives that
			\begin{align*}
				\P[A_2\cap I\neq \emptyset \mid A_1 \cap D_{1,2} \cap I \neq \emptyset] &\le \P[\cE_3 \mid \cE_1 \cap \cE_2] \\ 
				&\le \P[\cE_3' \mid \cE_1 \cap \cE_2] + \P[\cE_3 \cap \cE_2 \setminus \cE_3'] \P[\cE_2 \cap \cE_1]^{-1} \\
				&\le C r^\de + r^{-10} \exp(-c|\log^5(r)|),
			\end{align*}
			and the conclusion follows.
		\end{proof}
		
		\begin{lemma} \label{lem:cond-brm}
			There exists $\delta>0$ such that the following holds.
			Take any $D>1$ and $0<\ep<1$, with $D<\ep^{-1/4}$.
			Let $X$ and $Y$ be two independent Brownian motions, each with drift in $[-D, D]$. Let $\cE$ be an event measurable with respect to $Y$, such that $\P[\cE]>\ep^{100}$. Then 
			\[
			\P\left[ X(x) < Y(x) + \ep, \; \forall x\in [0, 1] \mid \cE \right] < \ep^\delta.
			\]
		\end{lemma}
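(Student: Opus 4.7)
My plan is to apply a moment method. Since $\cE \in \sigma(Y)$ and $X \perp Y$, by the tower property the target quantity equals $\E[g(Y) \mid \cE]$, where $g(y) := \P[X(x) < y(x) + \ep, \forall x \in [0,1]]$ for deterministic paths $y$.

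The heart of the argument is a moment bound: for a large integer $p$ (to be chosen depending on $\delta$), I will show that
\[
\E[g(Y)^p] \le C_p \ep^{ap}
\]
for some $a > 0$ and $C_p > 0$. Given this, Markov's inequality gives $\P[g(Y) > \ep^{a/2}] \le C_p \ep^{ap/2}$; taking $p$ with $ap/2 \ge 100 + a$ and using $\P[\cE] \ge \ep^{100}$ yields $\P[g(Y) > \ep^{a/2} \mid \cE] \le \ep^{a/2}$ for small $\ep$. Combined with the trivial bound $g(Y) \le \ep^{a/2}$ on the complementary event, this gives $\E[g(Y) \mid \cE] \le 2\ep^{a/2}$, and the lemma follows with $\delta := a/3$.

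To prove the moment bound, I introduce iid copies $X^{(1)}, \ldots, X^{(p)}$ of $X$ (independent of $Y$), so that $\E[g(Y)^p] = \P[X^{(i)}(x) < Y(x) + \ep, \forall i, \forall x \in [0,1]]$. Setting $V^{(i)} := X^{(i)} - Y$, each $V^{(i)}$ is a Brownian motion with variance $4x$, drift in $[-2D, 2D]$, and pairwise covariance $\mathrm{Cov}(V^{(i)}, V^{(j)})(x) = 2x$ for $i \ne j$ (arising from the common $-Y$ term). Decomposing $V^{(i)} = W + Z^{(i)}$ with $W := p^{-1}\sum_i V^{(i)}$ splits these into independent pieces: $W$ is a Brownian motion of variance $2x(p+1)/p$, and $(Z^{(i)})_{i=1}^p$ is a sum-zero Gaussian process independent of $W$. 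The event becomes $\sup_x [W(x) + \max_i Z^{(i)}(x)] < \ep$. Since $\max_i Z^{(i)}(x)$ has typical magnitude $\sim \sqrt{2 x \log p}$, enforcing the event forces $W$ to be atypically negative along a macroscopic portion of $[0,1]$, which is a Gaussian-tail event whose probability decays polynomially in $\ep$, with an exponent growing linearly in $p$.

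The main obstacle will be handling the drift, which may be as large as $\ep^{-1/4}$. A naive Girsanov tilting introduces Radon-Nikodym factors of order $\exp(O(D^2)) = \exp(O(\ep^{-1/2}))$, which would destroy the moment bound. The drift must therefore be incorporated directly into the Gaussian estimate. The hardest case is strongly negative drift of $V^{(i)}$ (which makes single-copy survival easier, pushing $\sup V^{(i)}$ below $\ep$ with probability $\sim \ep^{3/4}$), but this favorable effect does not amplify across the $p$ mutually independent $X^{(i)}$; the hypothesis $D < \ep^{-1/4}$ is exactly what ensures that the moment decay exponent $a$ remains bounded away from $0$ uniformly in the drift parameters.
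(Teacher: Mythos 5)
Your plan is a genuinely different route from the paper's: you set up a moment/Markov argument reducing the conditional estimate to an unconditional moment bound $\E[g(Y)^p]\le C_p\,\ep^{a p}$, whereas the paper works directly with the conditioning via a dyadic multi-scale decomposition, showing with high probability $Y(2^{-i})<H2^{-i/2}$ at many dyadic times $2^{-i}$ (the event $\cE_*$), and then arguing via stochastic dominance (and Karlin--McGregor type comparisons) that $X$ cannot stay below these levels at all those scales. Your reduction via Markov and the decomposition $V^{(i)}=W+Z^{(i)}$ is clean and would, if the moment bound were true with the claimed dependence, give the lemma. The moment framework neatly avoids the paper's explicit manipulation of the conditioning event. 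However, the proposal as written is a program rather than a proof, and I see the following substantive gaps.

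First, the moment bound itself is not proven; it is simply asserted that enforcing $\sup_x[W(x)+\max_i Z^{(i)}(x)]<\ep$ is ``a Gaussian-tail event whose probability decays polynomially in $\ep$, with an exponent growing linearly in $p$.'' That quantitative claim is actually wrong in its stated form. At zero drift, $\E[g(Y)^p]$ is a persistence probability: after Brownian scaling it equals the probability that a single Brownian motion stays above the maximum of $p$ independent Brownian motions (shifted by a constant) up to time $T=\ep^{-2}$. This is a classical cone persistence problem whose exponent is known (Ben-Naim--Krapivsky, Schehr) to grow only logarithmically, roughly like $\tfrac14\log p$. One can check this for small $p$: for $p=1$ it is a half-plane, exponent $1/2$; for $p=2$ it is a wedge of opening angle $2\pi/3$ (correlation $1/2$), exponent $\pi/(2\cdot 2\pi/3)=3/4$ --- far from a doubling. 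Logarithmic growth does still let you eventually beat the fixed loss of $\ep^{100}$, so this does not in itself doom the strategy, but it means $p$ must be taken astronomically large (of order $e^{400}$), and you have to control the $p$-dependence of the implicit constant $C_p$, which you do not address. More to the point, the heuristic ``$W$ atypically negative on a macroscopic portion of $[0,1]$'' is a $p$-dependent but $\ep$-independent event; the $\ep$-dependence of the moment must come from the constraint near $x=0$ propagated across $\sim\log(1/\ep)$ scales, which is essentially the dyadic structure the paper exploits. So if you tried to actually prove the moment bound, you would be led back to machinery much like the paper's.

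Second, the drift is the hard part and is essentially not handled. You correctly observe that Girsanov costs $\exp(O(D^2))=\exp(O(\ep^{-1/2}))$ and cannot be used naively, and you then write that the hypothesis $D<\ep^{-1/4}$ ``is exactly what ensures that the moment decay exponent $a$ remains bounded away from $0$.'' That sentence is an assertion, not an argument, and it is not obvious: with strong negative drift on $V^{(i)}=X^{(i)}-Y$, the single-copy probability $g(Y)$ becomes \emph{typically} of order $D\ep\lesssim\ep^{3/4}$, but after the decomposition $V^{(i)}=W+Z^{(i)}$ the centered pieces $Z^{(i)}$ carry no drift at all; the drift sits entirely on $W$, and the relevant persistence problem changes character (the constraint is only active on an $x$-window of length $\sim D^{-2}$, after which the drift carries $W$ away). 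Making this rigorous and uniform in the drift is not a footnote --- it is where the real work is, and it is exactly what the hypothesis $D<\ep^{-1/4}$ is there to feed into. The paper's proof sidesteps this entirely by using its comparison claim (monotone coupling of conditioned Brownian motions), which handles all drifts $|$slope$|\le D$ simultaneously once $D$ is small at the relevant dyadic scale $2^{-\lfloor m/2\rfloor}$, and this is precisely where the paper invokes $D<\ep^{-1/4}$.

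In summary: interesting alternative framework, but the two things it hinges on --- the moment estimate and its uniformity in the drift --- are not proven, the stated heuristic for the exponent's growth rate is incorrect, and a genuine proof of the moment bound would likely require the same multi-scale analysis the paper carries out directly.
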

		\begin{proof}
			Let $H$ be a constant whose value is to be determined. 
			
			Denote $m=\lfloor |\log_2(\ep)| \rfloor$.
			For each $i\in\N$, we let $\cE_i$ be the event that $Y(2^{-i}) \ge H2^{-i/2}$, and $\cE_*$ be the event where $\cE_i$ happens for at most half of all $i\in\llbracket 1, 2m \rrbracket$.
			We will show that, by taking $H$ large enough, we have $\P[\cE_*]>1-\ep^{101}$.
			
			We need the following statement, which will be repeatedly used. 
			\begin{claim}
				Let $Q\le R$ be real numbers and let $f,g:[0,1]\to \R\cup\{-\infty\}$ be deterministic functions which are bounded above. Let $W$ be a Brownian motion with drift $R$, conditioned on the event $\{W(x)\ge g(x): x \in [0,1]\}$, and let $Z$ be a Brownian motion with drift $Q$ conditioned on the event $\{Z(x)\ge f(x) : x \in [0, 1]\}$. 
				
				Suppose that for some $s\ge 0$, $f(x)\le g(x)+s$ for all $x\in [0,1]$ with $f(x) \ne -\infty$. Then $W+s$ stochastically dominates $Z$, in the sense that we can couple the two processes together so that $W(x)+s\ge Z(x)$ for any $x\ge 0$.
			\end{claim}
			\begin{claimproof} 
				For each $y>1$, we let $W^{(y)}$ (resp.~$Z^{(y)}$) be the Brownian bridge on $[0, y]$, with boundary values $W^{(y)}(0)=0$ and $W^{(y)}(y)=Ry$ (resp.~$Z^{(y)}(0)=0$ and $Z^{(y)}(y)=Qy$), conditional on $W^{(y)}\ge g$ on $[0,1]$ (resp.~$Z^{(y)}\ge f$ on $[0,1]$).
				The one line case of \cite[Lemmas 2.6,2.7]{CH} shows that we can couple $W^{(y)}$ and $Z^{(y)}$ so that almost surely $W^{(y)}+s\ge Z^{(y)}$ on $[0,y]$.
				As $y\to\infty$, we have $W^{(y)}\to W$ and $Z^{(y)}\to Z$ weakly in the uniform topology in any compact set. Thus the claim follows.
			\end{claimproof}
			
			Let $\Phi$ be the set of all length $m$ integer sequences $1\le i_1 < \cdots < i_m\le 2m$.
			For any sequence $(i_1, \dots, i_m)$ in $\Phi$, we consider the probability of $\cE_{i_1} \cap \cdots \cap \cE_{i_m}$.
			For each $i\in\llbracket 1, 2m\rrbracket$, let $W_i$ be a Brownian motion with drift $2^{-(i+1)/2}D$, conditional on $W_i(x)\ge x-1$ for any $x\in[0,1]$.
			By the claim above, for each $j\in\llbracket 1,m\rrbracket$, the process $W_{i_j}|_{[0, 2]} + H$ stochastically dominates the process $ x \mapsto 2^{(i_j+1)/2}Y(2^{-i_j-1}x)$ conditional on $\bigcap_{k\in\llbracket j+1, m\rrbracket} \cE_{i_k}$. Therefore
			\[
			\P\left[\cE_{i_j} \bigg\mid \bigcap_{k\in\llbracket j+1, m\rrbracket} \cE_{i_k} \right] < \P[W_{i_j}(2) \ge (2^{1/2}-1)H].
			\]
			Then when $H>2$, $\P\left[ \bigcap_{j\in\llbracket 1, m\rrbracket} \cE_{i_j} \right] \le \prod_{j=1}^m\P[W_{i_j}(2) \ge (2^{1/2}-1)H]$. 
			By comparing $W_i$ and $W_{i'}$ for $i<i'$, using the claim again, we get that $W_i$ dominates $W_{i'}$. Therefore the above probability is further bounded by $\P[W_{\lfloor m/2\rfloor}(2) \ge (2^{1/2}-1)H]^{m/2}$.
			
			Now, the slope of $W_{\lfloor m/2\rfloor}$ (i.e., $2^{-(\lfloor m/2\rfloor+1)/2}D$) is at most of constant order by the bound $D < \ep^{-1/4}$. Therefore as $H\to\infty$, $\P[W_{\lfloor m/2\rfloor}(2) \ge (2^{1/2}-1)H]\to 0$, uniformly in $m$ and $D$.
			
			Then by taking a union bound over all sequences in $\Phi$, we have 
			\[
			\P[\cE_*] > 1 - 2^{2m}  \P[W_{\lfloor m/2\rfloor}(2) \ge (2^{1/2}-1)H]^{m/2}.
			\]
			By taking $H$ large (independent of $m$ and $D$), we have that $\P[\cE_*]>1-\ep^{101}$. 
			Then since  $\P[\cE]>\ep^{100}$, we get $\P[\cE_*\mid \cE]>\frac{\P[\cE]-\ep^{101}}{\P[\cE]}>1-\ep$. 
			
			We next bound the probability of $X < Y + \ep$ in $[0, 1]$, conditional on $\cE_*\cap\cE$.
			For each $i\in\llbracket 1, 2m\rrbracket$, we let $\cE_i'$ be the event where $X(2^{-i}) \le H2^{1-i/2}$.
			We then have
			\[
			\P\left[ X(x) < Y(x) + \ep, \; \forall x\in [0, 1] \mid \cE_*\cap\cE \right] \le 
			\max_{(i_1, \cdots, i_m)\in \Phi}\P\left[ \bigcap_{j=1}^m \cE_{i_j}' \right].
			\]
			For each $i\in\llbracket 1, 2m\rrbracket$, we let $W'_i$ be a Brownian motion with drift $-2^{-(i+1)/2}D$, conditional on $W'_i\le 0$ in $[0,1]$.
			Fix $(i_1, \cdots, i_m)\in \Phi$. By the claim above, for each $j\in\llbracket 1,m\rrbracket$, $W'_{i_j}$ is stochastically dominated by the process $x\mapsto 2^{(i_j+1)/2}X(2^{-i_j-1}x)$ conditional on $\bigcap_{k\in\llbracket j+1, m\rrbracket} \cE_{i_k}'$. Therefore
			\[
			\P\left[\cE_{i_j}' \bigg\mid \bigcap_{k\in\llbracket j+1, m\rrbracket} \cE_{i_k}' \right] < \P[W_{i_j}'(2) \le 2^{3/2}H].
			\]
			Therefore
			\[
			\P\left[ \bigcap_{j=1}^m \cE_{i_j}' \right] \le \prod_{j=1}^m \P[W_{i_j}'(2) \le 2^{3/2}H].
			\]
			As above, by comparing $W_i'$ and $W_{i'}'$ for $i<i'$ and using the claim, this probability is further bounded by $\P[W_{\lfloor m/2\rfloor}'(2) \le 2^{3/2}H]^{m/2}$.
			As the slope of $W_{\lfloor m/2\rfloor}'$ is at most of constant order, one can bound $\P[W_{\lfloor m/2\rfloor}'(2) \le 2^{3/2}H]$ away from $1$, independent of $m$ and $D$.
			Thus we conclude that 
			\[
			\P\left[ X(x) < Y(x) + \ep, \; \forall x\in [0, 1] \mid \cE_*\cap\cE \right] < \ep^{\delta_0},
			\]
			for some $\delta_0>0$ small enough. This, together with the lower bound $\P[\cE_*\mid \cE]>1-\ep$, implies the conclusion.
		\end{proof}
		
		\begin{proof}[Proof of \Cref{prop:key}]
			By \Cref{lem:key1} and \Cref{lem:key2}, for some $\delta>0$ we have
			\[
			\P[A_1\cap A_2\cap D_{1,2}\cap I\neq \emptyset] < C\log^{44}(r) r^{4/3 + \de}  + r^{10},
			\]
			for any $I$ of length $r^{2/3}$, $I\subset [-|\log(r)|, |\log(r)|]$. Taking a union over at most $3 r^{-2/3} \log(r)$ covering $[-|\log(r)|, |\log(r)|]$ and using \Cref{lem:key0} yields the result.
		\end{proof}
		
		\section{Variants of the characterization theorem}
		
		\label{S:monotonicity-max-pres}
		In this section we prove \Cref{T:KPZ-fixed-point-coupling}, \Cref{C:sss}, and  a technical variant of \Cref{T:KPZ-fixed-point-coupling} that will be used later to study convergence of general exclusion processes. Throughout this section, we will appeal to the following simple lemma.
		
		\begin{lemma}
			\label{L:abstract-blomp}
			
			\begin{enumerate}[nosep]
				\item If $X, Y$ are $\UC$-valued random variables with $X \eqd Y$ and $X \le Y$ almost surely, then $X = Y$ almost surely.
				\item Let $X_n \le Y_n$ be sequences of $\UC$-valued random variables such that $X_n \cvgd Y$ and $Y_n \cvgp Y$. Then $X_n \cvgp Y$. The same is true if instead $X_n\ge Y_n$.
			\end{enumerate}
		\end{lemma}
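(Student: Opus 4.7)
The plan for (1) is to reduce the a.s.\ equality to a countable collection of $\{0,1\}$-valued comparisons. Identify each $g \in \UC$ with its closed hypograph $H(g) \subset \R \times \bar\R$, and for each compact $K$ set $\phi_K(g) := \mathbf{1}[H(g) \cap K = \emptyset]$. Since $g \le g'$ in $\UC$ is equivalent to $H(g) \subseteq H(g')$, the functional $\phi_K$ is monotone decreasing in $g$. The hypothesis $X \le Y$ then forces $\phi_K(X) \ge \phi_K(Y)$ a.s., while $X \eqd Y$ forces $\phi_K(X) \eqd \phi_K(Y)$; together these $\{0,1\}$-valued variables must coincide a.s.\ for each fixed $K$. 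I would then fix a countable family $\{K_n\}$ of closed balls with rational centers and radii (inside $\R \times [-1,1]$ via the embedding $E$ used to define the local Hausdorff metric on $\UC$) whose hit/miss data separates closed subsets, intersect the a.s.\ events over $n$, and conclude that $H(X) = H(Y)$, hence $X = Y$, almost surely.

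For (2), the plan is a subsequential argument leveraging Part (1). Since $\UC$ is compact Polish, the family $\{(X_n, Y_n, Y)\}$ is automatically tight in $\UC^3$, so any subsequence admits a further subsequence $(n_k)$ along which $(X_{n_k}, Y_{n_k}, Y) \cvgd (X^*, Y^*, Y^{**})$. From $Y_n \cvgp Y$ and the continuous mapping theorem applied to a metric compatible with the $\UC$ topology, one obtains $Y^* = Y^{**}$ almost surely. The partial order $\le$ is closed on $\UC \times \UC$ (hypograph inclusion is preserved under local Hausdorff limits of closed sets), so $X^* \le Y^*$ almost surely. The marginals give $X^* \eqd Y \eqd Y^{**}$, and combined with $Y^* = Y^{**}$ this produces $X^* \le Y^{**}$ a.s.\ with $X^* \eqd Y^{**}$. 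Part (1) then forces $X^* = Y^{**}$ a.s.

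Consequently, the joint distributional limit of $(X_{n_k}, Y)$ along this subsequence has both coordinates a.s.\ equal, which is precisely $X_{n_k} \cvgp Y$. Since every subsequence contains a further subsequence converging to $Y$ in probability, $X_n \cvgp Y$. The case $X_n \ge Y_n$ is identical by symmetry, since Part (1) is symmetric in $X$ and $Y$.

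The main point requiring care is the separation argument in Part (1): verifying that a countable family of compact test sets detects equality of arbitrary closed subsets of $\R \times \bar\R$ through their intersection data, and that the $\phi_K$ are genuinely measurable for the local Hausdorff topology on $\UC$ (which uses that $A \mapsto \mathbf{1}[A \cap K = \emptyset]$ is upper semicontinuous in the Fell topology on closed sets). Once this is set up, both parts reduce to tightness, continuous mapping, and closedness of the relation $\{X \le Y\}$, which are standard.
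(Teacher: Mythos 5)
Your proof is correct, and Part~(2) is essentially the paper's argument (tightness in the compact space $\UC$, pass to a subsequential joint limit, apply Part~(1), then upgrade distributional convergence along a diagonal to convergence in probability via the metric and the subsequence principle). In Part~(1) you take a mildly different route from the paper, though the skeleton is the same: both reduce the a.s.\ equality to a countable family of monotone test functionals and then exploit $X\eqd Y$ together with a.s.\ ordering. The paper's test functionals are the real-valued suprema $f_{q,n}=\sup_{[q-1/n,q+1/n]}f$ for $q\in\Q$, $n\in\N$, which pin down an upper semicontinuous function pointwise, and the contradiction is extracted via a single cutoff $p$ with $\P(X_{q,n}<p\le Y_{q,n})>0$. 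You instead use the $\{0,1\}$-valued hit-or-miss functionals $\phi_K(g)=\indic[\operatorname{hypo}(g)\cap K=\emptyset]$ over a countable base of compacta, for which the step ``ordered plus equal in law implies a.s.\ equal'' is immediate from monotonicity of the indicator. Your version buys a cleaner contradiction step at the cost of a slightly heavier separation/measurability discussion (you need that the rational-ball hit/miss data detects closed hypographs, and that each $\phi_K$ is Borel for the local Hausdorff topology of the paper --- both true, but worth spelling out, as you flag). The paper's version sidesteps the hit-or-miss formalism by working directly with the sup functionals, which are plainly measurable and already suffice. Either way the lemma follows.
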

		
		\begin{proof}
			\textbf{Proof of 1.} First, any upper semicontinuous function $f \in \UC$ is determined by the countable set of values $f_{q, n} := \sup_{x \in [q - 1/n, q + 1/n]} f(q)$ where $q \in \Q, n \in \N$. Therefore it suffices to show that $X_{q, n} = Y_{q, n}$ almost surely for all fixed $q, n$. Now, if $X_{q, n} < Y_{q, n}$ with positive probability, then we could find some $p$ in $\R$ such that 
			$$
			\P(X_{q, n} < p) - \P(Y_{q, n} < p) = \P(X_{q, n} < p \le Y_{q, n}) > 0) > 0,
			$$
			contradicting that $X \eqd Y$. Here the equality in the display above uses the almost sure ordering $X \le Y$ which implies that $X_{q, n} \le Y_{q, n}$ almost surely as well.

			\textbf{Proof of 2.} First observe that $(X_n, Y_n) \cvgd (Y, Y)$. Indeed, since $X_n \cvgd Y$ and $Y_n \cvgd Y$, the sequence $(X_n, Y_n)$ is tight. Let $(X, Y)$ be any subsequential limit of $(X_n, Y_n)$ in distribution. Since $X_n \le Y_n$ (resp.~$X_n \ge Y_n$) almost surely for all $n$ we have $X \le Y$ (resp.~$X \ge Y$) almost surely. Since $X \eqd Y$, part $1$ implies $X = Y$ almost surely. Next, by the continuous mapping theorem $d_{H, \operatorname{loc}}(X_n, Y_n) \cvgd d_{H, \operatorname{loc}}(Y, Y) = 0$. Since the limit is constant, this convergence also holds in probability. By the triangle inequality,
			$$
			d_{H, \operatorname{loc}}(X_n, Y) \le d_{H, \operatorname{loc}}(X_n, Y_n) + d_{H, \operatorname{loc}}(Y_n, Y),
			$$
			and so since the right-hand side converges to $0$ in probability, so does the left-hand side, yielding the conclusion.
		\end{proof}

		\begin{proof}[Proof of \Cref{T:KPZ-fixed-point-coupling}]
			We first show that for any $f \in \NW$ and $s\in \Q$,
			\begin{equation}
				\label{E:big-marginals}
				(\cK_{s, s + t} f : t \in \Q \cap [0, \infty)) \eqd (\fh_t f : t \in \Q \cap [0, \infty)),
			\end{equation}
			where here $\fh_t$ is the usual KPZ fixed point. To prove the equality, it is enough to show that if $\cF_t = \sigma(\cK_{s, s+ r} f : 0\le r \le t)$ and $t' > t$, then conditional on $\cF_t$ we have
			\begin{equation}
				\label{E:KPZ-FP-marg}
				\cK_{s, s + t'} f \eqd \fh_{t' - t} (\cK_{s, s + t} f),
			\end{equation}
			where on the right-hand side of \eqref{E:KPZ-FP-marg}, the KPZ fixed point evolution $\fh_{t' - t}$ is independent of $\cK_{s, s + t} f$. To prove \eqref{E:KPZ-FP-marg}, let $g_n \in \NW$ be a (random) sequence such that $g_n \cvgup \cK_{s, s + t} f$ almost surely in $\UC$. The existence of such a sequence $g_n$ uses that $\cK_{s, s + t} f$ is continuous almost surely. Then by the independence of the increments of $\cK$, conditional on $\cF_t$ we have 
			$$
			\fh_{t' - t} g_n \eqd \cK_{s + t, s + t'} g_n 
			$$
			for all $n$. On the other hand, monotonicity of $\cK$ implies that $\cK_{s + t, s + t'} g_n$ is a monotone sequence in $\UC$, and so it converges to a limit $G$. By the previous display and the dominated convergence theorem for the KPZ fixed point (\Cref{P:KPZ-FP-cvge}) we have that conditional on $\cF_t$, $G \eqd \fh_{t' - t} (\cK_{s, s + t} f)$. Finally, since $g_n  \le \cK_{s, s + t} f$, another application of monotonicity for $\cK$ implies that $\cK_{s + t, s + t'} g_n  \le \cK_{s, s + t'} f$ and so $G \le \cK_{s, s + t'} f$ almost surely. On the other hand, unconditionally we have that $G \eqd \cK_{s, s + t'} f$. Therefore $G = \cK_{s, s + t'} f$ almost surely by \Cref{L:abstract-blomp}, yielding \eqref{E:KPZ-FP-marg} (and therefore \eqref{E:big-marginals}).
			
			Now, for $(x, s; y, t) \in \Qd$ define $\cM(x, s; y, t) = \cK_{s, t} \de_x(y)$. Then $\cM$ satisfies conditions $1-3$ of \Cref{T:landscape-characterization}. Condition $2$ follows by the independent increment property of $\cK$, condition $3$ follows from \eqref{E:big-marginals}, and for the triangle inequality (condition $1$), observe that for any rationals $q, q' \in \Q$ with $q' \le \cK_{s, r} \de_x(y) \le q$ we have that 
			\begin{align*}
				\cM(x, s; y, r) + \cM(y, r; z, t) &= \cK_{s, r} \de_x(y) + \cK_{r, t} \de_y(z) \\
				&\le q + \cK_{r, t} \de_y(z) = \cK_{r, t} (\de_y + q')(z) + (q - q') \\
				&\le\cK_{s, t} \de_x + (q-q') = \cM(x, s; z, t) + (q-q').
			\end{align*}
			Here in the equality on the second line we have used shift commutativity of $\cK$. In the inequality on the third line we have used monotonicity of $\cK$ together with the fact that $\de_y + q' \le \cK_{s, t} \de_x$ by the choice of $q'$. Since $q, q'$ were arbitrary, this yields the triangle inequality.
			
			Applying \Cref{T:landscape-characterization}, we see that $\cM$ has a continuous extension to $\Rd$ which is a directed landscape. This implies \eqref{E:Kstt} when $f = \de_x$ for some $x \in \R$. For general $f \in \NW$, we can write $f = \max_{x\in F} (\de_{x} + q_x)$ for some finite rational set $F$ and rationals $q_x, x \in F$. Therefore
			\begin{equation}
				\label{E:inequality-obvious}
				\cK_{s, t} f \ge \max_{x \in F} \cK_{s, t}(\de_{x} + q_x) = \max_{x \in F} [q_x + \cK_{s, t}\de_{x}] = \max_{x \in \R} f(x) + \cM(x, s; \cdot, t).
			\end{equation}
			Here the inequality uses monotonicity of $\cK$, the first equality uses shift commutativity and the second equality is by definition. Now, the left and right-hand sides above are equal in law since $\cM$ is a directed landscape, so by \Cref{L:abstract-blomp} they are equal almost surely.
		\end{proof}
		
		\begin{proof}[Proof of \Cref{C:sss}]
			We have that almost surely,
			$$
			\cK_{s, t} f \ge \sup_{x \in \Q} \cK_{s, t}(f|_{\{x\}}) = \sup_{x \in \Q} f(x) + \cL(x, s; \cdot, t) = \sup_{x \in \R} f(x) + \cL(x, s; \cdot, t). 
			$$
			Here the inequality uses monotonicity of $\cK$, the first equality uses \Cref{T:KPZ-fixed-point-coupling}, and the final equality uses condition \eqref{E:f-cond} and continuity of $\cL$. Finally, the two sides above are equal in law and so  by \Cref{L:abstract-blomp} they are equal almost surely.
		\end{proof}
		
		We now state and prove a technical variant of \Cref{T:KPZ-fixed-point-coupling}. 	First, let $\scH$ be the set of continuous functions $h:\R \to \R$ satisfying 
		\begin{itemize}[nosep]
			\item There exists a finite set $\{q_1 < \dots < q_k\} \subset \Q$ such that $h$ is linear on each of the pieces $[q_i, q_{i+1}]$, and $h(q_i) \in \Q$ for all $i$.
			\item $h' = 0$ off of the compact interval $[q_1, q_k]$.
		\end{itemize}
		Note that the set $\scH$ is countable. 
		
		\begin{theorem}
			\label{T:KPZ-fixed-point-general}
			Suppose that for every $s \in \Q_+$ and every $n \in \N$ we have a random continuous function $B_{s, n}:\R\to\R$ such that the tail $\sigma$-algebra $\cT_s$ for each of the sequence $\{B_{s, n}\}_{n\in\N}$ is trivial.
			
			Let $\scH_{s, n} = \scH + B_{s, n}$ and $\scH_s = \bigcup_{n \in \N} \scH_{s, n}$.
			Suppose that for every $s \le t \in \Q_+$ there is a family of random operators $\cK_{s, t}:\scH_s \to \UC, s \le t \in \Q_+$ such that:
			\begin{enumerate}
				\item (KPZ fixed point marginals) For $s \le t \in \Q_+$ and $f \in \scH_s$, we have the equality in law $(\cK_{s, t} f, f) \eqd (\fh_t f, f)$.
				\item (Conditional independence) Fix $t \in \Q_+$, and define the $\sigma$-algebra $\cF_t = \sigma\{B_{s, n}, \cK_{s, r} : s \le r \le t \in \Q_+, n \in \N\}$. Then for any $t \le r \in \Q_+$, $\cK_{t, r}|_{\scH_{t, n}}$ is conditionally independent of $\cF_t$ given $B_{t, n}$.
				\item (Comparability) For all $s \le r \in \Q_+$ and $f \in \scH_s, g \in \scH_r$, we have that $\sup (g - \cK_{s, r} f) < \infty$. 
				\item (Monotonicity) For any $s \le r < t \in \Q_+$ and $f \in \scH_s, g \in \scH_r$, if $g \le \cK_{s, r} f$ then $\cK_{r, t} g \le \cK_{s, t} f$.
				\item (Shift commutativity) For all $s < t \in \Q_+, f \in \scH_s, c \in \Q$, $\cK_{s, t} (f + c) = \cK_{s, t} f + c$.
			\end{enumerate}
			Then there exists a directed landscape $\cL$ such that almost surely, for any $s < t \in \Q_+, f \in \scH_s$,
			\begin{equation}
				\label{E:Kst}
				\cK_{s, t} f = f\diamond \cL_{s,t}.
			\end{equation}
		\end{theorem}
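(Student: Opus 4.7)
The plan is to reduce \Cref{T:KPZ-fixed-point-general} to \Cref{T:KPZ-fixed-point-coupling} by constructing, from the operators $\cK$, a companion family $\tilde\cK_{s,t}:\NW \to \UC_0$ that satisfies the hypotheses of the earlier theorem, and then transferring the variational formula back to $\scH_s$. The two principal obstacles are that (i) the classes $\scH_s$ vary with $s$ through the random processes $B_{s,n}$ and do not contain narrow wedges, and (ii) the independence-of-increments hypothesis here is only conditional on $\{B_{s,n}\}$.

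For each $s \in \Q_+$, $x \in \Q$, and $N \in \N$, let $w_x^N \in \scH$ be the piecewise-linear tent with $w_x^N(x) = 0$ and $w_x^N(y) = -N$ for $|y-x| \ge 1/N$, linear in between; then $w_x^N + B_{s,n} \in \scH_{s,n}$ decreases to $\de_x + B_{s,n}(x)$ in $\UC$ as $N \to \infty$. Taking $r=s$ in condition 4 (with $\cK_{s,s}$ interpreted as the identity) yields simple monotonicity $f \le g \Rightarrow \cK_{s,t} f \le \cK_{s,t} g$ on $\scH_s$, so that $\cK_{s,t}(w_x^N + B_{s,n})(y)$ is non-increasing in $N$. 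I define
\[
\tilde\cL^{(n)}_{s,t}(x,y) := \lim_{N \to \infty} \cK_{s,t}(w_x^N + B_{s,n})(y) - B_{s,n}(x).
\]
By condition 1, \Cref{P:KPZ-FP-cvge}, and the shift equivariance $\fh_{t-s}(\de_x + c) = c + \fh_{t-s}\de_x$ (which is immediate from \eqref{E:kpzfp-to-DL}), this limit is finite almost surely and is jointly distributed with $B_{s,n}$ as the independent product of $B_{s,n}$ with $\fh_{t-s}\de_x$, regardless of $n$.

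The crucial step is to show that $\tilde\cL^{(n)}_{s,t}$ is almost surely independent of $n$ and of the whole family $\{B_{s,n}\}_{n \in \N}$. For $n \ne m$, compare $\cK_{s,t}(w_x^N + B_{s,n})$ and $\cK_{s,t}(w_x^N + B_{s,m})$ via a two-sided sandwich using elements of $\scH_{s,n}$ and $\scH_{s,m}$ that dominate each of the two tents from above and below up to controllable additive differences, combined with monotonicity (condition 4) and shift commutativity (condition 5). Passing to $N \to \infty$ and matching the marginal laws from condition 1, part 1 of \Cref{L:abstract-blomp} forces $\tilde\cL^{(n)}_{s,t}$ and $\tilde\cL^{(m)}_{s,t}$ to coincide up to an additive random constant that is measurable with respect to the tail $\sigma$-algebra $\cT_s$. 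Triviality of $\cT_s$ makes this constant deterministic, and the common marginal law pins it to zero. Setting $\tilde\cL_{s,t} := \tilde\cL^{(1)}_{s,t}$ and, for $f \in \NW$, $\tilde\cK_{s,t} f(y) := \sup_{x \in \supp f}[f(x) + \tilde\cL_{s,t}(x,y)]$, I then verify the four hypotheses of \Cref{T:KPZ-fixed-point-coupling}: KPZ fixed point marginals follow from condition 1 combined with \Cref{P:KPZ-FP-cvge}, shift commutativity and monotonicity are inherited from $\cK$, and independent increments holds because $\tilde\cK_{s,t}$ has now been shown to be $B$-free, so the conditional independence in condition 2 upgrades to unconditional.

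Applying \Cref{T:KPZ-fixed-point-coupling} produces a directed landscape $\cL$ with $\tilde\cK_{s,t} f = f \diamond \cL_{s,t}$ for $f \in \NW$. For general $f \in \scH_s$, approximating $f$ near each $x_0 \in \R$ by an element of $\scH_s$ equal to $f(x_0)$ on a small neighborhood of $x_0$ and much more negative elsewhere, and applying the narrow-wedge construction in the $N \to \infty$ limit with monotonicity, yields the variational bound $\cK_{s,t} f(y) \ge \sup_{x_0 \in \R}[f(x_0) + \cL_{s,t}(x_0, y)]$. Condition 1 gives the corresponding equality in law, and part 1 of \Cref{L:abstract-blomp} upgrades it to almost sure equality, establishing \eqref{E:Kst}. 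The main obstacle is the third paragraph: simultaneously identifying the narrow-wedge extractions $\tilde\cL^{(n)}$ across different auxiliary processes $B_{s,n}$ and using the trivial-tail hypothesis to transport conditional independence to the unconditional form required by \Cref{T:KPZ-fixed-point-coupling}.
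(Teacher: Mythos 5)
Your overall reduction to \Cref{T:KPZ-fixed-point-coupling} — building an auxiliary family $\tilde\cK_{s,t}:\NW\to\UC_0$, using tail triviality of $\cT_s$ to upgrade the conditional independence of condition $2$ to the unconditional independence required by \Cref{T:KPZ-fixed-point-coupling}, and then extending back from $\NW$ to $\scH_s$ — is the paper's strategy. The gap is in your definition $\tilde\cK_{s,t}f(y) := \sup_{x\in\supp f}[f(x) + \tilde\cL_{s,t}(x,y)]$. Once $f\in\NW$ has more than one point in its support, the claim $\tilde\cK_{s,t}f\eqd\fh_{t-s}f$ requires knowing the \emph{joint} law of the narrow-wedge extractions $(\tilde\cL_{s,t}(x_i,\cdot))_i$, whereas condition $1$ of the theorem only controls the law of $\cK_{s,t}$ applied to a \emph{single} initial condition in $\scH_s$ at a time. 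Your variational formula therefore asserts max-preservation of the coupling for free, which the paper explicitly flags (in the last item of the remark following \Cref{T:KPZ-fixed-point-coupling}) as something it cannot argue directly and only recovers after the full characterization theorem is in hand. The same defect hits your monotonicity check: for the variational-formula $\tilde\cK$, showing $g\le\tilde\cK_{s,r}f\Rightarrow\tilde\cK_{r,t}g\le\tilde\cK_{s,t}f$ would require a triangle inequality for $\tilde\cL$, which again is part of the conclusion rather than something one can cite as ``inherited from $\cK$.''

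The paper sidesteps both issues by instead defining, for each fixed $f\in\NW$, $\tilde\cK^n_{s,t}f := \lim_{m\to\infty}\cK_{s,t}(B_{s,n}+f_m(s,n))$, where $f_m(s,n)\in\scH$ is a single decreasing sequence with $B_{s,n}+f_m(s,n)\cvgdown f$ in $\UC$. Then $\tilde\cK^n_{s,t}f\eqd\fh_{t-s}f$ follows from condition $1$ and the dominated convergence theorem (\Cref{P:KPZ-FP-cvge}) applied along this one sequence — no max-preservation needed — and monotonicity of $\tilde\cK$ is obtained by passing the comparison $g\le\tilde\cK_{s,r}f$ through the prelimit $\cK_{s,r}(B_{s,1}+f_m)$ and invoking comparability (condition $3$). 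Your single-wedge extraction is essentially the $|\supp f|=1$ case of this construction; the fix is to replace the variational formula with the whole-function approximation.
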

		
		\begin{proof}
			For every $f \in \NW, s \in \Q_+$ and $n \in \N$, define a (random) sequence $f_m = f_m(s, n), m \in \N$ in $\scH$ as follows. Let $\{x_1, \dots, x_\ell\} = \{x \in \R : f(x) > -\infty\}$. Define $f_m = \max_{i = 1, \dots, \ell} (g_{m, i} + f(x_i))$ where $g_{m, i}$ is any function chosen that $g_{m, i}(x_i) \in (- B_{s, n}(x_i) + 1/(m+1), - B_{s, n}(x_i) + 1/m]$, and
			$$
			g_{m, i}'(x) = \begin{cases}
				m, \qquad &x \in (x_i - 1, x_i), \\
				-m, \qquad &x \in (x_i, x_i + 1), \\
				0, \qquad &x \notin [x_i - 1, x_i + 1].
			\end{cases}
			$$
			Then $B_{s, n} + f_m(s, n) \cvgdown f$ in $\UC_0$. By monotonicity of $\cK$, for any fixed $t \ge s\in\Q_+$ the functions $\cK_{s, t}(B_{s, n} + f_m(s, n))$ form a monotone sequence almost surely. Let $\tilde \cK_{s, t}^n f$ denote the almost sure limit of this monotone sequence. By \Cref{P:KPZ-FP-cvge}, $\tilde \cK_{s, t}^n f \eqd \fh_{t -s} f$.
			
			We check that $\tilde \cK_{s, t}^n f = \tilde \cK_{s, t}^{n'} f$ almost surely for all $n, n'$. Indeed, by Condition $3$ (comparability) and our choice of approximation, there exists some constant $c > 0$ such that $B_{s, n} \le B_{s, n'} + c$. Then for any fixed $m' \in \N$ there exists $m_0(m')$ such that for all $m \ge m_0(m')$ we have that 
			$$
			B_{s, n} + f_m(s, n) \le B_{s, n'} + f_{m'}(s, n').
			$$
			Using this inequality and monotonicity of $\cK$ implies that $\tilde\cK_{s, t}^n f \le \tilde\cK_{s, t}^{n'} f$. By a symmetric argument, we also have that $\tilde\cK_{s, t}^n f \ge \tilde\cK_{s, t}^{n'} f$. From now on we drop the superscript $n$ and write $\tilde\cK_{s, t} f = \tilde\cK_{s, t}^n f$. We check that the family $\tilde\cK_{s, t} f$ satisfies the conditions of \Cref{T:KPZ-fixed-point-coupling}. 
			
			The KPZ fixed point marginal condition was established above, and shift commutativity is immediate since we chose the prelimiting sequences so that $f_m + q  = (f + q)_m$ for $q \in \Q$, and the original operators $\cK_{s, t}:\scH_s \to \UC$ satisfy shift commutativity. 
			
			We move to the independent increment property. 
			Fix $t\le r\in\Q_+$.
			First observe that by condition $2$ above (conditional independence), for any $\sigma$-algebra $\cA$ with $\sigma(B_{t, n}) \subset \cA \subset \cF_t$, $\cK_{t, r}|_{\scH_{t, n}}$ and $\cF_t$ are conditionally independent given $\cA$. We will use this with $\cA = \cT_{t, n} := \sigma(B_{t, n'} : n'\ge n)$. Let $\cG_t = \sigma(\tilde\cK_{s, s' } : s \le s' \le t \in \Q_+)$ and let $A \in \cG_t \subset \cF_t$. Let $B \in \sigma(\tilde\cK_{t, r})$ and note that $B \in \sigma(\cK_{t, r}|_{\scH_{t, n}})$ for any fixed $n$ since $\tilde\cK_{t, r}f = \tilde\cK_{t, r}^nf$ for $f \in \NW$. Therefore 
			\begin{equation}
				\label{E:BABAB}
				\P[A \cap B \mid \cT_{t, n}] = \P[A \mid \cT_{t, n}] \P[B \mid \cT_{t, n}].	
			\end{equation}
			Taking $n \to \infty$ in \eqref{E:BABAB} gives that $\P[A \cap B \mid \cT_t] = \P[A \mid \cT_t] \P[B \mid \cT_t]$ where $\cT_t = \bigcap_{n \in \N} \cT_{t, n}$ is the tail $\sigma$-algebra. The existence and evaluation of this limit uses backward martingale convergence. Then, since the tail $\sigma$-algebra $\cT_t$ is trivial, $\P[A \cap B] = \P[A]\P[B]$, and so $\tilde \cK_{t, r}$ is independent of $\cG_t$. This implies the independent increment property.
			
			Finally we prove monotonicity. Fix $s \le r < t \in \Q_+$ and suppose that $g \le \tilde\cK_{s, r} f$ for some $g, f \in \NW$. Since we constructed $\tilde \cK_{s, t} f$ as the decreasing limit of $\cK_{s, r} [B_{s, 1} + f_m(s, 1)]$, for any $m \in \N$ we have that 
			$
			g \le \cK_{s, r} [B_{s, 1} + f_m(s, 1)].
			$
			The construction of $g$ from $B_{r, 1} + g_m(r, 1)$ and the fact that $\sup (B_{r, 1} - \cK_{s, t} [B_{s, 1} + f_m(s, 1)]) < \infty$ for all fixed $m$ (condition $3$ above) implies that for any fixed $m \in \N$ and $q \in \Q_+$ we can find some $m_0(m) \in \N$ such that for $m' \ge m_0(m)$ we have 
			$$
			B_{r, 1} + g_{m'}(r, 1) \le \cK_{s, r}[B_{s, 1} + f_m(s, 1)] + q = \cK_{s, r}[B_{s, 1} + f_m(s, 1) + q].
			$$
			Therefore using monotonicity of $\cK$ we have that 
			$$
			\cK_{r, t}[B_{r, 1} + g_{m'}(r, 1)] \le \cK_{s, t} [B_{s, 1} + f_m(s, 1) + q] = \cK_{s, t} [B_{s, 1} + f_m(s, 1)] + q.
			$$
			Thus taking $m' \to \infty$, then $m\to \infty$, and then $q \to 0$ gives that $\tilde \cK_{r, t} g \le \tilde \cK_{s, t} f$ almost surely, as desired.
			
			Therefore appealing to \Cref{T:KPZ-fixed-point-coupling} we have that formula \eqref{E:Kst} holds for some directed landscape $\cL$, with $f\in\NW$ and $\tilde \cK$ in place of $\cK$. To extend this to $\cK$ and $f \in \scH_s$, we can apply \Cref{C:sss}. The only thing to check is that for $g \in \NW$ and $f \in \scH_s$ with $g \le f$ we have that $\tilde \cK_{s, t} g \le \cK_{s, t} f$. For this, observe that as in the previous paragraph, we can argue that for any fixed rational $q \in \Q_+$, for all large enough $m$ we have $B_{s, 1} + g_m(s, 1) \le f + q$, and so $\tilde \cK_{s, t} g \le \cK_{s, t} [f + q] = \cK_{s, t}f + q$, which gives the result since $q \in \Q_+$ is arbitrary.
		\end{proof}

		In the remaining sections, we apply our general framework to prove convergence of specific models to the directed landscape. 
		All convergence results are new, except for convergence of colored TASEP and ASEP. In the case of colored TASEP, our method offers a shorter route to proving convergence that bypasses a technical analysis of the underlying line ensembles.
		
		\section{Web distances}        \label{S:webdis}
		
		For coalescing random walks, a classical probability model and the dual of the voter model, the trajectories form an infinite tree.
		In the discrete-time setting with state space $\Z$, this structure gives rise to a $(1+1)$-dimensional random walk web.
		Under diffusive scaling limits, it converges to the Brownian web, which can be understood as coalescing Brownian motions. The Brownian web was first constructed by T\'oth and Werner \cite{MR1640799}, building on the work of Arratia \cite{MR2630231}.
		The convergence of the random walk web to the Brownian web was established in \cite{fontes2004brownian}, where the term Brownian web was introduced. More background and results about this model can be found in the related survey \cite{MR3644280}.

		In the random walk web and the Brownian web, Vet\H o and Vir\'ag \cite{vetHo2023geometry} introduced two models of random directed metrics. Through a correspondence with classical models of last passage percolation, they showed that certain two-parameter marginals of these directed metrics converge to the KPZ fixed point run from the narrow wedge initial condition. The correspondence with classical last passage percolation models does not extend to the full four-parameter field in these models and so convergence to the directed landscape was not clear. They left this as a conjecture in their paper. We can resolve this conjecture with an application of Theorem \ref{T:landscape-characterization}.
		
		We next define these two models precisely. Let $\Z^2_e = \{(i, n) \in \Z^2 : i + n \text{ is even}\}$ be the even lattice, and from every point $v = (i, n)$ assign two outgoing directed edges to $(i-1, n + 1)$ and $(i+1, n+1)$. Let $\{\zeta(v), v \in \Z^2_e\}$ be a field of i.i.d.\ Rademacher random variables. The \textbf{random walk web} $Y$ is a family of coalescing random walks $Y_{(i, n)}:\llbracket n, \infty\rrbracket\to \Z$ started at each point $(i, n) \in \Z^2_e$, given by setting $Y_{(i, n)}(n) = i$ and
		$$
		Y_{(i, n)}(j+1) = Y_{(i, n)}(j) + \zeta(Y_{(i, n)}(j), j), \qquad \text{ for } j \in \llbracket n, \infty\rrbracket.
		$$
		We can now define the \textbf{random walk web distance} $D^{\operatorname{RW}}$.
		\begin{definition}
			\label{D:rw-web-dist}
			For any $(i,n;j,m) \in (\Z^2_e)^2$ let $D^{\operatorname{RW}}(i,n;j,m)$ be the smallest non-negative
			integer $k$ such that there are points $(i_0, n_0) = (i, n), \dots, (i_k, n_k), (i_{k+1}, n_{k+1}) = (j, m) \in \Z^2_e$ with the following property. There
			are random walk paths in $Y$ from $(i_0,n_0)$ to $(i_1, n_1)$ and from $(i_j - \zeta(i_j, n_j), n_j + 1)$ to $(i_{j+1}, n_{j+1})$ for all $1 \le j \le k$. We set $D^{\operatorname{RW}}(i,n;j,m) = \infty$ if no such $k$ exists.
		\end{definition}

		In the scaling limit where the system of random walks $D^{\operatorname{RW}}$ converges to a system of coalescing Brownian motions (the Brownian web), the random walk web distance converges to a limit known as the \textbf{Brownian web distance} $D^{\operatorname{Br}}$ which has a description similar to \Cref{D:rw-web-dist}. To define this distance, we first need a precise definition of the Brownian web.
		
		We can give an explicit recursive construction of the \textbf{rational Brownian web} $(B_q)_{q \in \Q^2}$:
		\begin{enumerate}[label=\arabic*.]
			\item Let $q_i = (x_i, s_i), i \in \N$ be any enumeration of $\Q^2$, and sample independent standard Brownian paths $W_{q_i}, i \in \N$, where $W_{q_i}:[s_i, \infty) \to \R$ has initial condition $W_{q_i}(s_i) = x_i$. Set $B_{q_1} = W_{q_1}.$
			\item For all $i \ge 2$, given $B_{q_1}, \dots, B_{q_{i-1}}$, let $T$ be the first time in $[s_i, \infty)$ when $W_{q_i}(t) = B_{q_j}(t)$ for some $j < i$, and let $J$ be the minimal index such that $W_{q_i}(T) = B_{q_J}(T)$. Then set $B_{q_i}(t) = W_{q_i}(t)$ for $t \le T$ and $B_{q_i}(t) = B_{q_J}(t)$ for $t \ge T$.
		\end{enumerate}
		Intuitively, this construction gives a countable collection of Brownian motions that run independently until they hit, after which point they run together. The law of this construction is independent of the choice of the enumeration of $\Q$. To construct the full Brownian web, we take the closure of the set of paths $(B_q)_{q \in \Q}$ is an appropriate topology. 
		
		Following \cite{fontes2004brownian}, we put a topology on the space $\Pi$ of all continuous functions $f:[t, \infty) \to \R$, where $t \in \R$. First, for such a function $f$, let $\hat f:\R \to \R$ be the function equal to $f$ on $[t, \infty)$ and equal to $f(t)$ on $(-\infty, t)$. 
		We say that a sequence of continuous functions $f_n:[t_n, \infty) \to \R$ converges to a limit $f:[t, \infty) \to \R$ if $t_n \to t$ and $\hat f_n \to \hat f$ uniformly on compact sets. This defines a Polish space. The \textbf{Brownian web} is the closure $\cB$ of the rational Brownian web $\{B_q : q \in \Q^2\}$ in this space. Given the Brownian web, we can now define the Brownian web distance.
		
		\begin{definition}
			\label{D:bwd}
			For $(x, s; y, t) \in \R^4$, define the \textbf{Brownian web distance} $D^{\operatorname{Br}}(x, s; y, t)$ to be the infimum over all non-negative integers $k$ for which there exist points 
			$$
			(x_0, t_0) = (x, s), (x_1, t_1), \dots, (x_k, t_k), (x_{k+1}, t_{k+1}) = (y, t) \in \R^2
			$$
			such that $s < t_1 < \cdots < t_k < t$, and there is a continuous path $\pi:[s, t] \to \R$ such that $\pi(t_i) = x_i$ for each $i\in \llbracket 0, k+1,\rrbracket$, and for each $i\in \llbracket 0, k\rrbracket$ there exists $B_i \in \cB$ with $\pi|_{[t_i, t_{i+1}]} = B_i|_{[t_i, t_{i+1}]}$. The infimum is equal to $+\infty$ if there is no such $k \in \N$.
		\end{definition}
		Note that in \Cref{D:bwd}, the points $(t_i, x_i), i = 1, \dots, k$ are special: these are points that both lie on the interior of some Brownian path $B \in \cB$, \textit{and} at the beginning of a different Brownian path $B' \in \cB$ which is not contained in $B$. Such points are not visible if we restrict our attention to the rational Brownian web, since when restricted to the rationals, there is a unique Brownian path emanating from each point.

		Neither of the distances $D^{\operatorname{RW}}$ or $D^{\operatorname{Br}}$ are true metrics, because they are not symmetric or finite everywhere, but both are directed metrics on $\Z^2_e$ and $\R^2$, respectively.
		
		One of the main results of \cite{vetHo2023geometry} is a two-parameter convergence result relating $D^{\operatorname{RW}}, D^{\operatorname{Br}}$ to the directed landscape. To set up this theorem, for $\eta \in (0, 1)$ define positive constants
		$$
		a_\eta = \frac{(1-\eta^2)^{1/6}}{(\eta/2)^{2/3}}, \quad b_\eta = \frac{1-\sqrt{1-\eta^2}}2, \quad c_\eta = \frac{\eta^{1/3}(1-\eta^2)^{1/6}}{2^{1/3}}, \quad d_\eta = 2^{2/3}\eta^{1/3}(1-\eta^2)^{2/3},
		$$
		and for $n \in \N$ define rescalings
		\begin{align*}
			\cM^\eta_n(x, s; y, t) &= - a_\eta n^{-1/3} \left( D^{\operatorname{RW}}(\eta nt + d_\eta n^{2/3} y, -nt; \eta ns + d_\eta n^{2/3} x, -ns) - b_\eta n (t-s) - c_\eta n^{2/3} (y-x) \right), \\ 
			\cK_n(x, s; y, t) &=-n^{1/3}\Big(D^{\operatorname{Br}}(2 y n^{2/3} + 2t n, -t n; 2x n^{2/3} +  2 sn, -s n) - n(t-s)  - 2n^{2/3}(y - x) \Big). 
		\end{align*}
		Here in the rescaling of $D^{\operatorname{RW}}$ we actually need to take integer parts in the argument in order to put the points on the lattice $\Z^2_e$; throughout this section we ignore this minor point, for the convenience of notations. Note also that while we consider rescalings of $D^{\operatorname{RW}}$ in all directions $\eta$, we only consider the rescaling of $D^{\operatorname{Br}}$ in one direction; other directions are equivalent by Brownian scaling. Finally, it is worth pointing out the argument exchange on $(x, s)$ and $(y, t)$ when we move between $D^{\operatorname{Br}}, D^{\operatorname{RW}}$ and the limiting versions. We do this so that the main theorems of \cite{vetHo2023geometry} translate to KPZ fixed point marginals moving forwards in time, rather than backwards.  The directed landscape is time-reversible, so we would ultimately end up with the same convergence theorem without this step.

		At Lebesgue almost every $u = (x, s) \in \R^2$, $\cK_n(u; \cdot) \equiv - \infty$ almost surely \cite[Proposition 3.3]{vetHo2023geometry}, and a similar phenomenon happens in the limit of $\cM^\eta_n$. Therefore in order to see the directed landscape in the limit, we need to modify the definition slightly. Define $f_n, g_n^\eta:\R \to \R \cup\{-\infty\}$ by letting $f_n(\eta) = g_n^\eta(x) = -\infty$ for $x > 0$ and setting
		$f_n(x) = 2n^{1/3}x$ and $g_n^\eta(x) = c_\eta a_\eta n^{1/3} x$ for $x \le 0$.	Then set 
		\begin{equation}
			\label{E:softnw}
			\begin{split}
				\widetilde \cK_n(x, s; y, t) = \max_{z \in \R} f_n(z - x) + \cK_n(z, s; y, t), \\
				\widetilde \cM_n^\eta(x, s; y, t) = \max_{z \in \R}  g_n^\eta(z - x) + \cM_n^\eta(x, s; y, t).
			\end{split}
		\end{equation}
		
		In other words, if we view $\cK_n, \cM_n^\eta$ as describing the driving noise for a growth process, $\widetilde \cK_n, \widetilde \cM_n^\eta$ represents that growth process started at \textit{soft} narrow wedges, either $f_n$ or $g_n^\eta$. 
		\begin{theorem}[Theorem 1.6 and Theorem 1.9, \cite{vetHo2023geometry}]
			\label{T:veto-virag}
			As $n \to \infty$,
			$$
			\widetilde \cK_n(0,0; y, t) \cvgd \cL(0, 0;y, t) 
			$$
			with respect to the topology of
			uniform convergence on compact subsets of $(y, t) \in \R\times (0,\infty)$. Similarly, for any fixed $\eta \in (0, 1)$, as $n \to \infty$, for any finite set $F \subset (0, \infty)$,
			$$
			\widetilde \cM_n^\eta(0,0; y, t) \cvgd \cL(0, 0;y, t)
			$$
			with respect to the topology of
			uniform convergence on compact subsets of $(y, t) \in \R\times F$.
		\end{theorem}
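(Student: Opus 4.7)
The plan is to reduce both convergences to the already known KPZ fixed point convergence for classical, exactly solvable last passage percolation (LPP) models through an explicit web-to-LPP bijection, and to identify the soft narrow wedges $f_n$ and $g_n^\eta$ with a true narrow-wedge initial condition on the LPP side. In outline: (i) encode web distance as an LPP value; (ii) match scalings and initial data; (iii) invoke known KPZ fixed point convergence.

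For the random walk web, the key observation is that $D^{\operatorname{RW}}(i,n;j,m)$ is, by its definition, a minimum count of coalescence "breaks" among interpolating polygonal paths. Tracking the dual random walk web (walks run backwards in time with the same Bernoulli field) and using the fact that forward and dual coalescing families form a duality pair, each break can be assigned to a lattice site of the dual web; minimizing over break sequences becomes maximizing over monotone lattice paths. This rewrites $D^{\operatorname{RW}}$ as a last passage time in an i.i.d.\ geometric field (or, after Poissonization, exponential LPP). The constants $a_\eta,b_\eta,c_\eta,d_\eta$ are precisely those that center and scale this LPP field into the KPZ window around the characteristic with slope $\eta$, and the linear function $g_n^\eta$ encodes on the LPP side a boundary whose slope matches the shape function at $\eta$, which under the reduction becomes a narrow wedge at a single point. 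One then invokes the known convergence to $\cL(0,0;\cdot,\cdot)$ from narrow wedge for exponential or geometric LPP at any finite collection of times (\cite{matetski2016kpz,johansson2020multi,liu2022multipoint}), giving the $\widetilde\cM_n^\eta$ statement. For the Brownian web, the same strategy applies but one invokes convergence of Brownian LPP from a narrow wedge to $\cL$ (\cite{nica2020one,dauvergne2021scaling}). The uniform-in-$(y,t)$ conclusion then requires tightness of $\widetilde\cK_n$ as a random continuous function on compact subsets of $\R\times(0,\infty)$; this transfers from the Brownian LPP side, where the limit is a rescaling of the parabolic Airy$_2$ process in $y$ (H\"older $1/2-$) and extends to a H\"older $1/3-$ profile in $t$ as in \Cref{l:invariance}.

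The main obstacle is carrying out the web-to-LPP bijection cleanly, particularly the treatment of the soft narrow wedges. From a Lebesgue-generic starting point $(x,s)\in\R^2$, both $D^{\operatorname{RW}}(x,s;\cdot)$ and $D^{\operatorname{Br}}(x,s;\cdot)$ are identically $-\infty$ after rescaling, because the coalescing web structure forces most spatial points to be "off the skeleton"; the linear penalties $f_n$ and $g_n^\eta$ smear the starting location over a short spatial interval so that it selects a typical skeleton point, and one must verify that this smearing translates under the LPP bijection to exactly a narrow wedge endpoint (rather than, say, to a wedge-plus-boundary condition that would change the limit). Once this identification is in place, combined with the transferred tightness, the theorem reduces to an instance of the already established KPZ fixed point convergence for the relevant LPP model.
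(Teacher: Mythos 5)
This statement is not something the paper proves; it is quoted verbatim from V\'et\H o and Vir\'ag \cite{vetHo2023geometry} (their Theorems~1.6 and 1.9), and the paper's ``proof'' consists entirely of that citation together with \Cref{R:notation-use}, which notes that the $\widetilde\cM_n^\eta$ part extends from $F=\{1\}$ to general finite $F$ via the coupling with the Sepp\"al\"ainen--Johansson model / reflected simple random walks established in \cite[Proposition~3.7]{vetHo2023geometry}. So the correct benchmark for your proposal is V\'et\H o--Vir\'ag's argument, not anything in the present paper.

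Your overall strategy --- encode the web distance as a last passage value via the dual web, identify the soft narrow wedges $f_n,g_n^\eta$ with a true narrow wedge on the LPP side, then invoke existing KPZ fixed point convergence --- is in the right spirit and is indeed close to what \cite{vetHo2023geometry} does. However, the specific identification you propose is off: the discrete correspondence is to the Sepp\"al\"ainen--Johansson model (equivalently, reflected simple random walks), not to a geometric or exponential LPP model, and ``Poissonization'' is not the bridge to the Brownian case; the Brownian web distance is handled through its own coupling to Brownian LPP / reflected Brownian motions rather than as a Poissonized limit of the random-walk picture. More seriously, the crux of V\'et\H o--Vir\'ag's argument is precisely the ``web-to-LPP bijection,'' and your sketch of it (``each break can be assigned to a lattice site of the dual web; minimizing over break sequences becomes maximizing over monotone lattice paths'') is a heuristic rather than a construction --- you acknowledge this, but this is the core of the proof, not a loose end. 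Finally, the discrepancy in uniformity between the two parts of the theorem is a real subtlety: the paper itself flags that V\'et\H o and Vir\'ag stated the random-walk result only at $F=\{1\}$ precisely because the multi-time upgrade on the discrete LPP side was not then available; your treatment asserts ``convergence at any finite collection of times'' for the discrete model as if it were a known black box, which is where the present authors instead lean on the SJ coupling. As a blind reconstruction of the cited proof your high-level plan is reasonable, but the model identification is wrong and the bijection itself is not supplied.
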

		
		\begin{remark}
			\label{R:notation-use}
			The notations we use here are slight different from those in \cite{vetHo2023geometry}, but they are easily checked to be equivalent. Moreover, Theorem 1.9 in \cite{vetHo2023geometry} (which concerns $\widetilde \cM_n^\eta$) is stated only for $F = \{1\}$. However, the proof goes through for any finite set $F$ because of the coupling with the Sepp\"al\"ainen-Johannson model/reflected simple random walks proven in that paper (see Proposition 3.7 therein). Vet\H o and Vir\'ag chose to state their theorem only when $F = \{1\}$ only because they did not have access to the upgraded uniform convergence theorem (see the discussion after Theorem 1.9 in that paper).
		\end{remark}
		
		Given the two-parameter convergence in Theorem \ref{T:veto-virag}, one may expect that $\widetilde \cK_n, \widetilde \cM_n^\eta$ converge to $\cL$ as a four-parameter fields. This is Conjecture 1.8 in \cite{vetHo2023geometry} for $\widetilde \cK_n$. Even the convergence $\widetilde \cK_n(\cdot, -1; 0, 0) \cvgd \cL(\cdot, -1; 0,0)$ is far from clear, because there is no natural symmetry in $x, y$ in the definition of $D^{\operatorname{Br}}(x, s; y, t)$. This was also conjectured in \cite{vetHo2023geometry} (Conjecture 1.7 therein).  As an application of our first characterization Theorem \ref{T:landscape-characterization}, we can establish full convergence of $\widetilde \cK_n, \widetilde \cM_n^\eta \cvgd \cL$. We restrict our attention to proving convergence of finite-dimensional distributions. We expect no conceptual barrier to extending to uniform in compact convergence (especially for $\widetilde{\cK}_n$), but this would require extra technical work.
		
		\begin{theorem}
			\label{T:web-distance-cvg}
			Let $(\widetilde \cM_n, n \in \N)$ equal $(\cM_n^\eta, n \in \N)$ for some fixed $\eta$, or $(\tilde \cK_n, n \in \N)$. Then for any finite set $F \subset \Rd$, as $n \to \infty$,
			$
			\widetilde \cM_n|_F \cvgd \cL|_F.
			$
		\end{theorem}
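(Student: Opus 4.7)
The plan is to apply the characterization Theorem~\ref{T:landscape-characterization} to a subsequential limit of the random function $\widetilde{\cM}_n$, viewed as a map $\Qd\to\R$. By Theorem~\ref{T:veto-virag} together with the translation invariance of $D^{\operatorname{Br}}$ (spatial and temporal) and $D^{\operatorname{RW}}$ (even-lattice shifts), for each fixed starting point $p=(x_0,s_0)\in\Q^2$ the marginal $\widetilde{\cM}_n(p;\cdot)$ converges in distribution to $\cL(p;\cdot)$ jointly over any dense countable set of endpoints. In particular $\{\widetilde{\cM}_n|_F\}_n$ is tight for every finite $F\subset\Qd$; by a diagonal extraction over starting points $p\in\Q^2$, I would pass to a subsequence along which $\widetilde{\cM}_n$ converges jointly in distribution, and via Skorokhod almost surely, to a random $\cM^\infty:\Qd\to\R$. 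Verifying the three hypotheses of Theorem~\ref{T:landscape-characterization} for $\cM^\infty$ will identify $\cM^\infty\eqd\cL|_{\Qd}$, and the usual subsequence-of-subsequences argument then upgrades this to $\widetilde{\cM}_n|_F\cvgd\cL|_F$ along the full sequence.

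Two of the three pre-landscape conditions are comparatively soft. The KPZ fixed point marginal condition (3) is immediate from Theorem~\ref{T:veto-virag} combined with translation invariance. For independent increments (2), at finite $n$ the random variable $\widetilde{\cM}_n(\cdot,s_i;\cdot,s_{i+1})$ is measurable with respect to the driving noise in the rescaled time strip $[s_i,s_{i+1}]$: for $D^{\operatorname{RW}}$ this is the Rademacher field $\zeta$ at lattice times in the strip, since paths in $Y$ run forward and only consult $\zeta$ at sites visited after starting; for $D^{\operatorname{Br}}$ the analogous statement follows from the usual description of the Brownian web restricted to a time interval as a fresh coalescing system driven by the Brownian noise in that interval. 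Independence across disjoint strips at finite $n$ then passes to the joint distributional limit.

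The main obstacle is the triangle inequality (1). The bare rescalings $\cM_n^\eta$ and $\cK_n$ are directed metrics of negative sign by concatenation of web paths, but the soft-narrow-wedge modifications $\widetilde{\cM}_n$ need not satisfy the triangle inequality at finite $n$. I would instead show that the pre-limit defect
\[
\De_n(o;p;q) \;:=\; \widetilde{\cM}_n(o;p) + \widetilde{\cM}_n(p;q) - \widetilde{\cM}_n(o;q)
\]
vanishes in probability. Writing $\widetilde{\cK}_n(x,s;y,t) = \max_{z \le x}[f_n(z-x) + \cK_n(z,s;y,t)]$ and letting $z_1^*$, $z_2^*$ be the soft-wedge argmaxima for $\widetilde{\cK}_n(o;p)$ and $\widetilde{\cK}_n(p;q)$, using $z=z_1^*$ as a suboptimal choice in the maximum defining $\widetilde{\cK}_n(o;q)$ together with the exact triangle inequality for $\cK_n$ yields
\[
\De_n(o;p;q) \;\le\; -2n^{1/3}(y_p - z_2^*) \;+\; \bigl[\cK_n(z_1^*,s_o;y_p,t_p) - \cK_n(z_1^*,s_o;z_2^*,t_p)\bigr],
\]
with an analogous bound for $\widetilde{\cM}_n^\eta$ via $g_n^\eta$ and $\cM_n^\eta$. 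Establishing uniform-in-$n$ two-point tail bounds on $\cK_n$ of Airy-sheet type (transferred from the shape theorem for $\cL$ through the convergence in Theorem~\ref{T:veto-virag}, via Kolmogorov--\v Centsov) produces a random, in-probability-bounded H\"older constant $R$ for which the bracketed term is at most $R\,|y_p-z_2^*|^{1/2}$. Optimizing $-2n^{1/3}\Delta + R\Delta^{1/2}$ over $\Delta\ge 0$ gives the deterministic-in-$R$ bound $\De_n(o;p;q)\le R^2/(8 n^{1/3}) = O_p(n^{-1/3})$, which tends to zero. Passing to the Skorokhod limit forces the triangle inequality for $\cM^\infty$ almost surely on every rational triple, and Theorem~\ref{T:landscape-characterization} then identifies $\cM^\infty$ with the directed landscape. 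The hardest technical step is the pre-limit two-point tail estimate for $\cK_n$, which must be obtained uniformly in $n$ without recourse to the continuity of the limit.
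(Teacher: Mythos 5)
Your overall scaffolding matches the paper: tightness of $\widetilde\cM_n|_{\Qd}$, extraction of a subsequential limit, verification of the three hypotheses of Theorem~\ref{T:landscape-characterization}, and then a subsequence-of-subsequences argument. Conditions (2) and (3) are handled essentially as in the paper. The difference, and the gap, is in your proof of the approximate triangle inequality.

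After plugging $z=z_1^*$ into the soft-wedge maximum for $\widetilde\cK_n(o;q)$ and applying the triangle inequality for $\cK_n$, you arrive (correctly) at
\[
\De_n(o;p;q)\;\le\; f_n(z_2^*-y_p)\;+\;\bigl[\cK_n(z_1^*,s_o;y_p,r_p)-\cK_n(z_1^*,s_o;z_2^*,r_p)\bigr],
\]
and you propose to control the bracket by a H\"older-$1/2$ bound $R\,|y_p-z_2^*|^{1/2}$ on $\cK_n(z_1^*,s_o;\cdot,r_p)$, ``transferred from the shape theorem for $\cL$ via Theorem~\ref{T:veto-virag} and Kolmogorov--\v Centsov.'' This step does not go through. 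Theorem~\ref{T:veto-virag} gives convergence (and hence tightness and asymptotic H\"older control) only for the soft-wedge field $\widetilde\cK_n$, not for the bare web distance $\cK_n$. The bare object is genuinely pathological: $\cK_n(u;\cdot)\equiv-\infty$ a.s.\ for Lebesgue-a.e.\ starting point $u$, and where finite it is a rescaling of an integer-valued distance, hence a piecewise-constant function in the endpoint with no uniform-in-$n$ modulus of continuity. There is also no way to route the bound back through $\widetilde\cK_n$: using $\widetilde\cK_n(o;w,r_p)\ge f_n(z_1^*-x_o)+\cK_n(z_1^*,s_o;w,r_p)$ with equality at $w=y_p$ only yields the lower bound
\[
\cK_n(z_1^*,s_o;y_p,r_p)-\cK_n(z_1^*,s_o;z_2^*,r_p)\;\ge\;\widetilde\cK_n(o;p)-\widetilde\cK_n(o;z_2^*,r_p),
\]
which is the wrong direction; your bracket dominates the controllable quantity rather than being dominated by it.

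The paper avoids introducing $z_1^*$ entirely. It uses the metric composition law for the soft-wedge field,
$\widetilde\cM_n(o;q)=\max_w\widetilde\cM_n(o;w,r)+\cM_n(w,r;q)$,
evaluates it at $w=z_2^*$, and combines with $\widetilde\cM_n(p;q)\le\cM_n(z_2^*,r;q)$ to get
$\De_n\le|\widetilde\cM_n(o;p)-\widetilde\cM_n(o;z_2^*,r)|$,
a two-point fluctuation of the soft narrow-wedge profile $\widetilde\cM_n(o;\cdot,r)$ (not of the bare $\cK_n$). That fluctuation tends to $0$ in probability once one shows $z_2^*\to y_p$, which follows because the soft-wedge penalty $f_n$ or $g_n^\eta$ blows up linearly in $n^{1/3}$ while $\widetilde\cM_n(o;\cdot,r)$ stays tight. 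This keeps the argument entirely inside the class of quantities covered by Theorem~\ref{T:veto-virag}, and is where you need to revise your proof.
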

		
		Given our \Cref{T:landscape-characterization}, and \Cref{T:veto-virag}, the only non-trivial task is to check that the triangle inequality passes to the limit. This is not obvious, since while $\cM_n^\eta, \cK_n$ satisfy the triangle inequality, $\widetilde \cM_n^\eta, \widetilde \cK_n$ do not. 
		
		\begin{lemma}
			\label{L:upper-tilde}
			Fix $o = (x, s), p = (y, r), q = (z, t) \in \R^2$ with $s < r < t$. Let $\widetilde\cM_n$ be as in \Cref{T:web-distance-cvg}. For any $\delta > 0$, we have
			\begin{equation}
				\label{E:leftright1}
				\lim_{n \to \infty} \P\left[\widetilde\cM_n(o, p) + \widetilde\cM_n(p, q) \le \widetilde\cM_n(o, q) + \delta \right] = 1.
			\end{equation}
		\end{lemma}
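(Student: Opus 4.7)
The plan is to exploit the fact that although $\widetilde\cM_n$ is only a soft-narrow-wedge growth process (and hence not a directed metric), the underlying field $\cM_n$ does satisfy the pointwise triangle inequality. Write $o=(x,s)$, $p=(y,r)$, $q=(z,t)$, and let $(\widetilde\cM_n,\cM_n,g_n)$ be either $(\widetilde\cK_n,\cK_n,f_n)$ or $(\widetilde\cM_n^\eta,\cM_n^\eta,g_n^\eta)$. Pick an argmax $z_2^\ast\in(-\infty,y]$ in the definition of $\widetilde\cM_n(p,q)$, and set $p_\ast=(z_2^\ast,r)$. For any $\zeta_3\le x$, the triangle inequality for $\cM_n$ applied across the intermediate space-time point $(z_2^\ast,r)$ gives $\cM_n(\zeta_3,s;z,t)\ge \cM_n(\zeta_3,s;z_2^\ast,r)+\cM_n(z_2^\ast,r;z,t)$. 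Adding $g_n(\zeta_3-x)$ on both sides and maximizing over $\zeta_3$ yields $\widetilde\cM_n(o,q)\ge \widetilde\cM_n(o,p_\ast)+\cM_n(z_2^\ast,r;z,t)$. Substituting $\cM_n(z_2^\ast,r;z,t)=\widetilde\cM_n(p,q)-g_n(z_2^\ast-y)\ge \widetilde\cM_n(p,q)$ (since $g_n\le 0$) and rearranging reduces the goal to
\[
\widetilde\cM_n(o,p)-\widetilde\cM_n(o,p_\ast)\le \delta \qquad \text{with probability tending to $1$.}
\]

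Next I would show that $z_2^\ast\cvgp y$. The upper bound $z_2^\ast\le y$ is automatic because $g_n(\zeta-y)=-\infty$ for $\zeta>y$. For the lower bound, I use the identity $g_n(\zeta-y)=g_n(\zeta-(y-\epsilon))-c_n\epsilon$, where $c_n$ is a positive multiple of $n^{1/3}$ (namely $c_\eta a_\eta n^{1/3}$ or $2n^{1/3}$), valid for $\zeta\le y-\epsilon$. This gives
\[
\sup_{\zeta\le y-\epsilon}\bigl[g_n(\zeta-y)+\cM_n(\zeta,r;z,t)\bigr]=\widetilde\cM_n\bigl((y-\epsilon,r),q\bigr)-c_n\epsilon.
\]
By \Cref{T:veto-virag} applied at the translated base point $(y-\epsilon,r)$ (valid because $\cM_n$ and $\cM_n^\eta$ are stationary in space-time translations in distribution), the first term on the right converges in distribution to $\cL(y-\epsilon,r;z,t)$, hence is tight; whereas $c_n\epsilon\to+\infty$. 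At the same time $\widetilde\cM_n(p,q)$ is tight. Thus with probability tending to $1$, the restricted max is strictly less than $\widetilde\cM_n(p,q)$, so the unrestricted argmax cannot lie in $(-\infty,y-\epsilon]$. Since $\epsilon>0$ is arbitrary, $|z_2^\ast-y|\cvgp 0$.

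Finally, \Cref{T:veto-virag} (together with \Cref{R:notation-use} in the $\cM_n^\eta$ case, applied to $F=\{r-s\}$) gives that $\widetilde\cM_n(o,(\cdot,r))$ converges in distribution, uniformly on compact subsets of the spatial variable, to $\cL(o,(\cdot,r))$, which is almost surely continuous. Passing to a Skorokhod coupling on a subsequence, we may assume this convergence is a.s.\ uniform on a compact neighborhood of $y$. Since $z_2^\ast\cvgp y$, continuity and uniform convergence together imply $\widetilde\cM_n(o,p_\ast)\to\cL(o,p)$ in probability, and $\widetilde\cM_n(o,p)\to\cL(o,p)$ in probability by the same result. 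Therefore $\widetilde\cM_n(o,p)-\widetilde\cM_n(o,p_\ast)\cvgp 0$, and combining with the first paragraph gives \eqref{E:leftright1}. The main obstacle is conceptual rather than technical: \Cref{T:veto-virag} only provides marginal convergence of $\widetilde\cM_n$ from a single base point, so joint convergence of the three increments $\widetilde\cM_n(o,p),\widetilde\cM_n(p,q),\widetilde\cM_n(o,q)$ is unavailable and the triangle inequality cannot be transferred to the limit directly. The entire argument is engineered to avoid this, by leveraging the pointwise triangle inequality of $\cM_n$ and the fact that the steep slope $\sim n^{1/3}$ of the soft narrow wedge $g_n$ forces its argmax to concentrate at $y$.
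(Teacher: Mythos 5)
Your proof is correct and follows essentially the same route as the paper: anchor the intermediate point at the argmax $y_n = z_2^\ast$ of $g_n(\cdot-y)+\cM_n(\cdot,r;q)$, use the triangle inequality for the underlying field $\cM_n$ to reduce the problem to bounding $\widetilde\cM_n(o,p)-\widetilde\cM_n(o,(z_2^\ast,r))$, then use the steep slope $\sim n^{1/3}$ of the soft narrow wedge to force $z_2^\ast\cvgp y$ and combine with continuity of the limiting profile via \Cref{T:veto-virag}. The only cosmetic difference is that the paper phrases the first step as a metric composition identity (Eq.\ \eqref{E:metric-Kee}), of which only the $\le$ direction is ever used; your derivation directly from the pointwise triangle inequality of $\cM_n$ is equivalent and slightly more economical.
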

		
		\begin{proof}
			Let $\cM_n$ equal $\cK_n$ or $\cM_n^\eta$, depending on whether $\widetilde{\cM}_n$ equals $\widetilde \cK_n$ or $\widetilde \cM_n^\eta$. Then we have the metric composition law
			\begin{equation}
				\label{E:metric-Kee}
				\max_{w \in \R} \widetilde \cM_n(o; w, r) + \cM_n(w, r; q) = \widetilde \cM_n(o; q).
			\end{equation}
			Let $g_n$ equal $g_n^\eta$ or $f_n$, to be narrow wedge used in the definition of $\widetilde{\cM}_n$.
			Then if we let $y_n$ be the argmax of the map
			$$
			u \mapsto g_n(u - y) + \cM_n(u, r; q),
			$$
			we have that
			$
			\widetilde\cM_n(p, q) = g_n(y_n - y) + \cM_n(y_n, r; q) \le \cM_n(y_n, r; q) ,
			$ 
			and so by \eqref{E:metric-Kee} we have that
			$$
			\widetilde\cM_n(o, p) + \widetilde\cM_n(p, q) \le \widetilde\cM_n(o, p) + \cM_n(y_n, r; q) \le \widetilde\cM_n(o, q) + |\widetilde\cM_n(o; p) - \widetilde \cM_n(o; y_n, r)|. 
			$$
			Now, $\widetilde\cM_n(o; \cdot, r)$ converges in law to a random continuous function uniformly on compact sets. This uses Theorem \ref{T:veto-virag} and translation invariance of $\widetilde\cM_n$, which follows from translation invariance of $D^{\operatorname{Br}}$ and $D^{\operatorname{RW}}$. Therefore to prove \eqref{E:leftright1}, it suffices to show that $y_n \cvgp y$ as $n \to \infty$.
			
			To see why $y_n \cvgp y$, first note that $y_n \le y$ by the definition of $g_n$. Now fix $\delta > 0$ and observe that on the event $y_n \le y -\delta$, we have the equality
			$$
			\widetilde M_n(p; q) = \widetilde M_n(y - \delta, r; q) - g_n(-\delta).
			$$
			Since both $\widetilde M_n(p; q), \widetilde M_n(y - \delta, r; q)$ converge in law to finite random variables as $n \to \infty$ by Theorem \ref{T:veto-virag} and translation invariance, and $g_n(-\delta) \to -\infty$, this implies that $\P[y_n \le y -\delta] \to 0$ as $n \to \infty$, as desired.
		\end{proof}
		
		\begin{proof}[Proof of Theorem \ref{T:web-distance-cvg}]
			Let $Q \subset \R$ be a countable dense set such that $F \subset Q^4_\uparrow := Q^4 \cap \Rd$. The sequence $\widetilde \cM_n|_{Q^4_\uparrow}$ is tight, since its one-point laws $\widetilde \cM_n(u)$ are all tight by Theorem \ref{T:veto-virag}. Let $\widetilde \cM:Q^4_\uparrow \to \R$ be any subsequential limit in law of $\widetilde \cM_n|_{Q^4_\uparrow}$. Then $\widetilde \cM$ has independent increments, since the same was true of $\widetilde \cM_n$. It also satisfies the triangle inequality by Lemma \ref{L:upper-tilde}, and the marginals of $\widetilde \cM(o; \cdot)$ match those of the directed landscape for any fixed $o \in Q^2$ by Theorem \ref{T:veto-virag} and translation invariance of $\cM_n$. Therefore by Theorem \ref{T:landscape-characterization}, $\widetilde \cM = \cL|_{Q^4_\uparrow}$ for some directed landscape $\cL$, yielding the result.
		\end{proof}
		
		\section{Exclusion processes}
		\label{S:exclusion}

		Finally we consider general finite range asymmetric exclusion processes (AEPs). Consider a jump distribution $p$ on $\Z$ such that $\{v:p(v)+p(-v)>0\}$ is finite and additively generates $\Z$, and whose expectation satisfies $\sum_v vp(v)=1$.

		We encode particle configurations on $\Z$ by functions $\eta:\Z\to\{0,1\}$. For each $x\in\Z$, $\eta(x)=1$ means there is a particle at $x$, and $\eta(x)=0$ means there is no particle at $x$. For any particle configuration $\eta$, we can write down its height function $h:\Z\to \Z$ through the rule that $h(x+1)-h(x)=2\eta(x)-1$ for any $x\in\Z$. For an AEP evolution $h_t$, as discussed in the introduction we have a choice as to the anchor point $h_0(0)$. We always assume that $h_0(0) \in 2 \Z$.
		
		As in \cite{quastel2020convergence}, we let $\SRW_\ep \subset \UC$ be the set of functions $f:\R\to \R$ such that for each $x\in\Z$, $f|_{[x\ep/2, (x+1)\ep/2]}$ is linear with slope $\pm 2\ep^{-1/2}$ and $f(0) \in 2 \ep^{1/2} \Z$. We call $\SRW_\ep$ the set of simple random walk paths at scale $\ep$. Then $\SRW:= \SRW_1$ is the state space for an AEP if we extend our AEP height functions to be linear on each of the pieces $[x, x+1], x \in \Z$. Let $A_\ep: \SRW \to \SRW_\ep$ be the natural rescaling map given by $A_\ep f(x) = \ep^{1/2} f(2\ep^{-1}x)$.
		
		To put things into the framework of \Cref{T:KPZ-fixed-point-coupling} and \Cref{T:KPZ-fixed-point-general}, we will encode the dynamics of a collection of coupled AEPs using a collection of random operators $\cK^\ep = \{\cK^\ep_{s, t}:\SRW_\ep \to \SRW_\ep, s \le t \in \R\}$.
		Specifically, suppose that we have a collection of coupled AEPs $h_{s, t} f$, $s\le t$, defined for all $f \in \SRW$. For $f \in \SRW_\ep$ we write
		\begin{equation}   \label{eq:cKst}
			\cK_{s, t}^\ep f = A_\ep \circ h_{2 \ep^{-3/2}s, 2 \ep^{-3/2}t} \circ (A_\ep)^{-1} f + 2\ep^{1/2}\lfloor \ep^{-3/2}(t-s)/2\rfloor.
		\end{equation}
		\subsection{ASEPs under various couplings}
		As discussed in the introduction, the nearest neighbor case where $p(v)\neq 0$ only when $v\in \{-1, 1\}$ is also referred to as the asymmetric simple exclusion process (ASEP). The case where $p(1) = 1, p(-1) = 0$ is TASEP. For ASEP, Aggarwal, Corwin, and Hegde \cite{ACH, aggarwal2024kpz} proved convergence to the KPZ fixed point.
		\begin{theorem}  \label{thm:fUCSRW}
			Consider a collection of functions $f^\ep \in \SRW_\ep, \ep > 0$, and $f \in \UC, f \not\equiv -\infty$ such that $f^\ep\to f$ in $\UC$. Suppose that there exists a constant $A > 0$ with $f^\ep(x) \le A(|x| + 1)$ for each $\ep>0$ and $x \in \R$. Consider ASEP with jump distribution $p$. Then for any fixed $t>0$, $\cK^\ep_{0, t}f^\ep \cvgd \fh_t f$ in the topology of uniform convergence on compact sets.
		\end{theorem}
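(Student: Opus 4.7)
My plan is to reduce to step (narrow wedge) initial conditions via the monotonicity of Liggett's basic coupling for ASEP, establish convergence in that exactly solvable case using the Tracy--Widom Fredholm determinant formulas, and then recover general initial conditions through sandwich bounds that pinch the limit using the landscape variational formula $\fh_t f(x)=\sup_y f(y)+\fh_t \de_y(x)$ implicit in \eqref{E:kpzfp-to-DL}.

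For the step case, I would invoke the Tracy--Widom Fredholm determinant formulas for the ASEP height function starting from the step initial condition, and perform a steepest descent analysis on the contour integral representation of the kernel. The goal is to show the rescaled kernel converges to the kernel of the Matetski--Quastel--Remenik Fredholm determinant for $\P[\fh_t \de_0 \le g]$, which by \eqref{E:transition-formula} characterizes the law of $\fh_t \de_0$. Joint convergence in the spatial argument $x$ and in the step location $y$ should then follow from multi-point extensions of the same Tracy--Widom formulas. Tightness in the uniform-on-compact topology can be obtained via a Kolmogorov--Centsov argument from two-point tail estimates on ASEP height function increments, for example the moment bounds available from duality with the $q$-boson process.

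With joint narrow-wedge convergence in hand, I would extend to general initial conditions using monotonicity and shift commutativity of the basic coupling. For the lower bound, any $f^\ep \in \SRW_\ep$ with $f^\ep \to f$ dominates, up to an $\ep$-dependent error, a sharp V-shape $g^\ep_y \in \SRW_\ep$ with minimum value $f^\ep(y)-\eta$ at $y$ for each $y$ in a finite set $F$, so
\[
\cK^\ep_{0,t} f^\ep \;\ge\; \max_{y \in F}\; \cK^\ep_{0,t} g^\ep_y \;\cvgd\; \max_{y \in F}\big[f(y) - \eta + \fh_t \de_y\big]
\]
by Step 1 together with the convergence of $\cK^\ep_{0,t}g^\ep_y$ to the narrow-wedge KPZ fixed point evolution (which uses that the V-shape looks like a discrete narrow wedge on the relevant scale). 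Taking $F$ dense, $\eta \to 0$, and invoking the landscape variational formula yields a lower bound of $\fh_t f$. For the upper bound, the growth hypothesis $f^\ep(x)\le A(|x|+1)$ lets us dominate $f^\ep$ by a piecewise-linear envelope $g^\ep \in \SRW_\ep$ for which KPZ fixed point convergence is separately accessible (e.g.~from Baik--Ben Arous--Pech\'e formulas for two-sided step / linear-slope initial conditions); monotonicity gives $\cK^\ep_{0,t} f^\ep \le \cK^\ep_{0,t} g^\ep$, and tightening the envelope recovers $\fh_t f$.

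The main obstacle is the steepest descent analysis of the ASEP Fredholm determinant in Step~1: the ASEP kernels involve $q$-deformed contour structures substantially more intricate than their TASEP counterparts handled in \cite{matetski2016kpz}, requiring delicate contour choices to extract the KPZ scaling limit in a form that matches the MQR kernel. The sandwich argument in Step~2 is conceptually routine, but the upper bound is somewhat delicate because ASEP is not exactly max-preserving, so the dominating envelope must be chosen from a class of initial conditions for which convergence to the KPZ fixed point is already available and which can be tightened to $f^\ep$ in the uniform-on-compact sense.
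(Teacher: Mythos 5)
Theorem \ref{thm:fUCSRW} is quoted directly from \cite[Theorem 2.2(2)]{quastel2020convergence}; the present paper supplies no proof of it, so your sketch is an independent attempt to reprove Quastel--Sarkar, and it has two structural gaps that go beyond the technical obstacle you flag. First, Step 1 requires a joint-in-$(x,y)$ \emph{functional} limit for step-initial-condition ASEP, and moreover one that survives the basic coupling so that $\max_{y\in F}\cK^\ep_{0,t}g^\ep_y$ converges to the coupled KPZ-fixed-point expression. The Tracy--Widom Fredholm determinant for step ASEP yields a one-point asymptotic; no multi-point or coupled functional limit has ever been extracted by steepest descent on the ASEP kernels, and doing so would be a major new development rather than a delicate variant of the MQR analysis for TASEP. (This coupled narrow-wedge convergence is precisely what makes the colored-ASEP result of \cite{ACH}, or this paper's Theorem \ref{thm:ASEPexo}, non-trivial; the latter uses Theorem \ref{thm:fUCSRW} as input, so invoking it here would be circular.)

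Second, the upper half of your sandwich does not close. The envelope class you propose (BBP/linear-slope, two-sided step) is a finite-parameter family from which you cannot choose $g_n\ge f$ with $g_n\downarrow f$ in $\UC$; consequently $\cK^\ep_{0,t}f^\ep\le\cK^\ep_{0,t}g^\ep$ does not tighten to $\fh_t f$. Making a one-sided squeeze from above work requires ASEP-to-KPZ-fixed-point convergence for a dense-from-above class of initial conditions, which is essentially the statement you are trying to prove. (The lower-bound half is sound in spirit, though note the approximants should be $\Lambda$-shapes with \emph{maximum} at $y$, not V-shapes with minimum, since $g^\ep_y$ must sit below $f^\ep$.) For context, Quastel and Sarkar's actual proof avoids both issues: it couples ASEP to Bernoulli equilibrium on mesoscopic scales, establishes a hydrodynamic smoothing estimate, and transfers the known TASEP-to-KPZ-fixed-point convergence of \cite{matetski2016kpz}; no ASEP-specific Fredholm determinant asymptotics are used.
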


        In the special case of TASEP, this is the main theorem of \cite{matetski2016kpz}. For general ASEP, this is proven as part of Theorem 2.10 in \cite{aggarwal2024kpz}. That theorem actually shows the stronger claim that when $k$ ASEP evolutions are coupled together in the basic coupling, then evolutions converge jointly to $k$ KPZ fixed points coupled through the directed landscape. One consequence of the upcoming Theorem \ref{thm:ASEPexo} is a proof of this in the special case of basic coupled TASEP that bypasses the Yang-Baxter framework of \cite{ACH, aggarwal2024kpz}.
        
		To describe different couplings of ASEP, we will think of the dynamics as being generated by a collection of Poisson point processes on $\R$, which we call \emph{Poisson clocks}. A Poisson point at time $t\in \R$ will encode the attempt of a certain particle to make a certain jump at time $t$. The following natural couplings can be described in this way:
		\begin{itemize}
			\item \textbf{Basic coupling (colored ASEP).} This coupling was previously described in the introduction. To each $x\in\Z$, we associate two Poisson clocks, with rate $p(1)$ and $p(-1)$ respectively. All Poisson clocks are jointly independent.
			The rate $p(1)$ and $p(-1)$ clocks associated to $x$ encode the jumps of particles from site $x$ to $x+1$ and $x-1$, respectively, simultaneously for all instances of ASEP.
			\item \textbf{Particle coupling.} In each of the ASEPs, we label all the particles by integers, such that if a particle is labeled $\ell$, then the next particle to its right is labeled $\ell-1$.
			
			For each $\ell\in\Z$, we associate two Poisson clocks, with rate $p(1)$ and $p(-1)$ respectively. All such Poisson clocks are jointly independent.
			The rate $p(1)$ (resp.~$p(-1)$) clock associated to $\ell$ encodes the right (resp.~left) jumps of the particle with label $\ell$, simultaneously for all instances of ASEP.
			\item \textbf{Hole coupling.} Instead of thinking of ASEP as particles attempting to move to the right with rate $p(1)$ and to the left with rate $p(-1)$, the model can equivalently be described as each hole attempting to move to the left with rate $p(1)$ and to the right with rate $p(-1)$, independently for all the holes.
			In each of the instances of ASEP, we label all the holes by integers, such that if a hole is labeled $\ell$, then the next hole to its right is labeled $\ell+1$.
			
			For each $\ell\in\Z$, we have two Poisson clocks, with rate $p(1)$ and $p(-1)$ respectively. All such Poisson clocks are jointly independent.
			The rate $p(1)$ (resp.~$p(-1)$) clock associated to hole $\ell$ encodes the left (resp.~right) jumps of this hole.
		\end{itemize}
		These three couplings are examples of more general \emph{exotic couplings}, as defined in the introduction. We give an alternate, equivalent, description of the exotic couplings here using labels on particles and holes rather than on sites and heights.
		
		For any ASEP configuration, we label all particles and holes by integers via the height function.
		More precisely, for a particle configuration with height function $h:\Z\to\Z$, if a particle is at location $x\in\Z$, it is given the label $-(h(x)+x)/2$; if a hole is at location $x\in\Z$, it is given the label $(-h(x)+x)/2$.
		It is straightforward to check that this labeling rule is compatible with ASEP dynamics.

		\begin{definition}   \label{def:exocp}
			For any $(a, b)\in \N^2\setminus\{(0,0)\}$, we define the \textbf{$(a,b)$-exotic coupling} as follows.
			Let $\Z^2/(a,b)$ be the quotient of $\Z^2$ under addition by $(a,b)$.
			We associate each element $x \in \Z^2/(a,b)$ to two Poisson clocks with rate $p(1)$ and $p(-1)$, respectively.
			Let $Q:\Z^2\to\Z^2/(a,b)$ denote the projection map.
		Any ASEP under the $(a,b)$-exotic coupling is generated by these Poisson clocks as follows.
			For any $\ell, k\in\Z$, if the particle with $\ell$ sits immediate to the left  (resp.~right) of the hole with label $k$, then their swap is encoded by the rate $p(1)$ (resp.~$p(-1)$) clock associated to $Q(\ell, k)$.
		\end{definition}
		The basic, particle, and hole couplings are the $(1,1)$-, $(1,0)$-, and $(0,1)$-exotic couplings, respectively (see \Cref{fig:exotic}).
		
		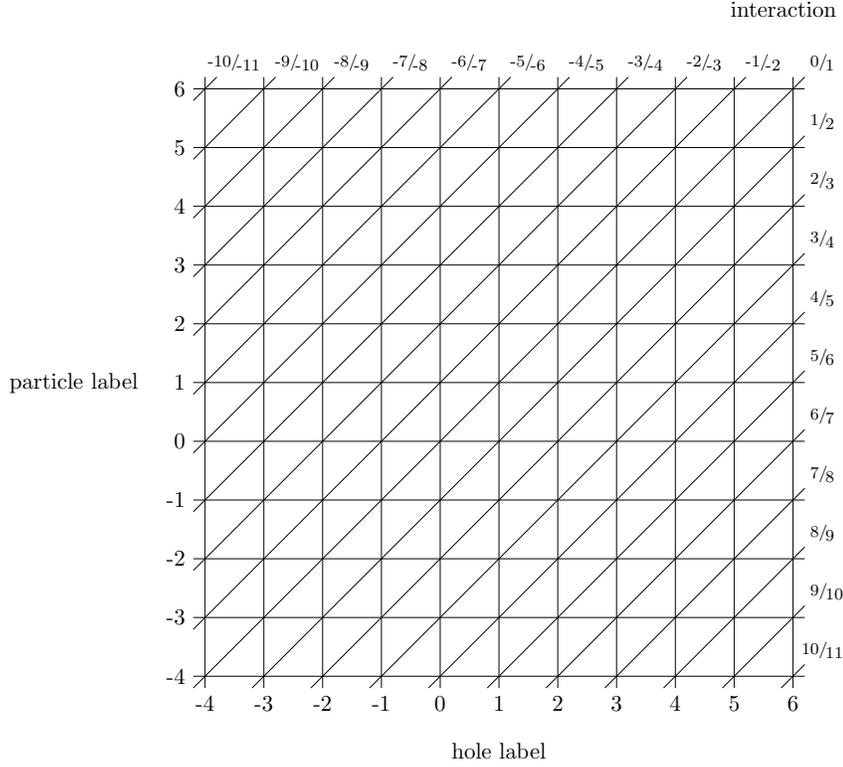
\begin{figure}[t]
			\centering
			\resizebox{0.8\textwidth}{!}{
				\centering
				\begin{tikzpicture}

					\foreach \i in {-4,...,6}
					{
						\draw [line width=.1pt, opacity=0.3] (-4.2,\i) -- (6.2,\i);
						\draw [line width=.1pt, opacity=0.3] (\i,-4.2) -- (\i,6.2);
						
						\draw [line width=.1pt, opacity=0.3] (\i-0.2,-4.2) -- (6.2,2.2-\i);

						\draw (-4.2,\i) node[anchor=east]{\i};
						\draw (\i,-4.2) node[anchor=north]{\i};
					}
					
					\begin{scriptsize}
						
						\foreach \i in {0,...,10}
						{
							\draw (6.7,6.2-\i) node[anchor=south east]{\i/};
						}
						\foreach \i in {1,...,11}
						{
							\draw (6.45,7.2-\i) node[anchor=south west]{\i};
						}
						
						\foreach \i in {1,...,10}
						{
							\draw (6.7-\i,6.2) node[anchor=south east]{-\i/};
						}
						\foreach \i in {2,...,11}
						{
							\draw (7.43-\i,6.2) node[anchor=south west]{-\i};
						}
					\end{scriptsize}
					
					\foreach \i in {-3,...,6}
					{
						\draw [line width=.1pt, opacity=0.3] (-4.2,\i-0.2) -- (2.2-\i,6.2);
					}
					
					\draw (-5,1) node[anchor=east]{particle label};
					\draw (1,-5) node[anchor=north]{hole label};
					\draw (7,7) node[anchor=south]{interaction site ($x / x+1$)};

				\end{tikzpicture}
			}
			
			\caption{An illustration of the Poisson clocks in exotic couplings: under the (1,0)-exotic coupling, Poisson clocks in the same row are taken to be the same, giving the particle coupling; under the (0,1)-exotic coupling, Poisson clocks in the same column are taken to be the same, giving the hole coupling; under the (1,1)-exotic coupling, Poisson clocks in the same diagonal (corresponding to the same interaction site) are taken to be the same, giving the basic coupling.}  
			\label{fig:exotic}
		\end{figure}
		
		\begin{lemma}   \label{lem:exomono}
			For $\cK^\ep$ coming from any exotic coupling, any $s<t$, and any fixed initial conditions $f \le g \in \SRW_\ep$, we have $\cK^\ep_{s, t} f \le \cK^\ep_{s, t} g$.
		\end{lemma}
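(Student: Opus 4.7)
The plan is to reduce the lemma to an infinitesimal statement about a single Poisson clock firing and then settle it by a short case analysis exploiting the parity of ASEP height functions.

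Setting $h^f_t, h^g_t$ to be the two AEP height functions driven by the common clocks of the $(a,b)$-exotic coupling starting from $h^f_0=(A_\ep)^{-1}f$ and $h^g_0=(A_\ep)^{-1}g$, I would first record that $h^f_0(x)\equiv h^g_0(x)\pmod 2$ for every $x\in\Z$ (both height functions have $h(0)\in 2\Z$ and step by $\pm 1$), and that swaps change heights by $\pm 2$ at a single site; hence $h^f_t(x)-h^g_t(x)\in 2\Z$ for all $x, t$. Since only finitely many clocks influence any bounded space-time window, it suffices to check that whenever a single clock fires, $h^f\le h^g$ is preserved pointwise; the lemma then follows after rescaling via \eqref{eq:cKst}.

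Next I would fix such a clock, say the rate-$p(1)$ clock on coset $Q(\ell_0,k_0)$ (the $p(-1)$ case is symmetric). By definition this ringing attempts, in both $h^f$ and $h^g$ simultaneously, every swap at a currently adjacent label pair of the form $(\ell_0+na, k_0+nb)$, $n\in\Z$; each realized swap decreases its height at exactly one site by $2$, and since different $n$ correspond to different particles, these sites are pairwise distinct. So I can work position-by-position: fix a candidate swap location $x$ and consider the four cases determined by whether the swap realizes at $x$ in $h^f$ and/or $h^g$. Three of them (both fire, only $h^f$ fires, or neither fires) trivially preserve $h^f(x+1)\le h^g(x+1)$ after the event.

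The main obstacle is the remaining case, in which the swap fires at $x$ in $h^g$ but not in $h^f$, and I expect it to be the crux of the argument. Suppose for contradiction that $h^f(x+1)>h^g(x+1)-2$. Together with $h^f\le h^g$ and the same-parity observation above, this forces $h^f(x+1)=h^g(x+1)$. Since $h^g$ has a local maximum at $x+1$ (particle at $x$, hole at $x+1$) and $h^f$ has $\pm 1$ increments while satisfying $h^f\le h^g$, we are further forced to have $h^f(x)=h^g(x)$ and $h^f(x+2)=h^g(x+2)$, so $h^f$ too has a particle at $x$ and a hole at $x+1$. Because the particle and hole labels at $x$ and $x+1$ are determined solely by the height values there via $-(h(x)+x)/2$ and $(x+1-h(x+1))/2$ respectively, these labels agree in $h^f$ and $h^g$ and hence lie in the same coset $Q(\ell_0,k_0)$. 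Consequently the very same clock triggers a swap at $x$ in $h^f$, contradicting the case hypothesis. Thus $h^f(x+1)\le h^g(x+1)-2$ and order is still preserved at $x$. Assembling the cases over all $x$ and all clock events completes the plan.
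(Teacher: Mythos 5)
Your proposal is correct and fills in, with an explicit case analysis using the parity of the height increments and the label formula, precisely the single-jump order-preservation step that the paper's proof asserts is ``straightforward to check'' before concluding by induction. This is the same route as the paper, just with the details worked out.
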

		\begin{proof}
			It is straight forward to check that, for any two particle configurations under any exotic coupling, if the associated height functions $h^+, h^-:\Z\to\Z$ are ordered, i.e., $h^+(x)\ge h^-(x)$ for any $x\in\Z$, at some time, then after any possible jump the same ordering holds. The conclusion follows by induction.
		\end{proof}
		
		Our main result on ASEP is the following. 
		
		\begin{theorem}  \label{thm:ASEPexo}
			Take $f_1, \ldots, f_k \in\UC$ and $f_1^\ep, \ldots, f_k^\ep \in \SRW_\ep$ for each $\ep>0$, such that $f_i^\ep\to f_i$ in $\UC$ for each $i\in \llbracket 1, k\rrbracket$. Suppose also that for some $A > 0$, we have $f_i^\ep(x) \le A(1 + |x|)$ for all $i\in \llbracket 1, k\rrbracket$, $\ep>0$, and $x \in \R$.
			
			Consider operators $\cK^\ep = \{\cK^\ep_{s, t}:\SRW_\ep \to \SRW_\ep, s < t \in \R\}$ defined through $(a_\ep, b_\ep)$-exotic couplings of ASEP with a common jump distribution $p$. Suppose $(a_\ep, b_\ep)\in\N^2\setminus \{(0,0)\}$ satisfies $a_\ep \ep^{1/2}\to 0$ and $b_\ep \ep^{1/2}\to 0$ as $\ep\to 0$.
			Then for any finite $T\subset (0, \infty)$, as $\ep \to 0$,
			$$
			\Big\{\cK^\ep_{0, t}f_i^\ep \Big\}_{i\in \llbracket 1, k\rrbracket, t\in T}\cvgd \Big\{f_i\diamond \cL_{0,t} \Big\}_{i\in \llbracket 1, k\rrbracket, t\in T},
			$$in the topology of uniform convergence on compact sets.
		\end{theorem}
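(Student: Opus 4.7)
The plan is to apply \Cref{T:KPZ-fixed-point-coupling} to subsequential limits of the rescaled ASEP operators $\cK^\ep_{s, t}$ defined in \eqref{eq:cKst}. For each $(s, t, x, q) \in \Q^4$ with $s<t$, approximate the narrow wedge $\de_x + q$ by a steep V-shaped path $f^\ep_{x, q} \in \SRW_\ep$. By \Cref{thm:fUCSRW}, $\cK^\ep_{s, t} f^\ep_{x, q} \cvgd \fh_{t-s}(\de_x + q)$ for each fixed data. Using Skorokhod representation together with a diagonal extraction along the countable parameter set, I would pass to a subsequence along which joint almost sure convergence holds. Denote the limits by $\cK_{s, t}(\de_x + q)$ and extend to all of $\NW$ by the max-linear rule $\cK_{s, t}(\max_i(\de_{x_i} + q_i)) := \max_i \cK_{s, t}(\de_{x_i} + q_i)$, which automatically enforces shift commutativity.

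I would then verify the remaining axioms of \Cref{T:KPZ-fixed-point-coupling}. The independent-increment property passes to the limit from the observation that in the prelimit $\cK^\ep_{s, t}$ is a measurable function of the Poisson clocks in the relevant time window; these are independent on disjoint intervals regardless of the exotic coupling. Monotonicity follows from \Cref{lem:exomono} together with the prelimit Markov property $\cK^\ep_{r, t} \circ \cK^\ep_{s, r} = \cK^\ep_{s, t}$: if $g \le \cK_{s, r} f$ for $f, g \in \NW$, then in the Skorokhod coupling for any rational $\eta > 0$ we have $g^\ep \le \cK^\ep_{s, r} f^\ep + \eta$ for all small $\ep$, and \Cref{lem:exomono} yields $\cK^\ep_{r, t} g^\ep \le \cK^\ep_{r, t}(\cK^\ep_{s, r} f^\ep + \eta)$; combining this with the approximate shift commutativity available in the prelimit (see below) gives $\cK^\ep_{r, t} g^\ep \le \cK^\ep_{s, t} f^\ep + \eta + o(1)$, and taking $\ep \to 0$ and $\eta \to 0$ yields the axiom.

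The KPZ fixed point marginal condition for single narrow wedges is immediate from \Cref{thm:fUCSRW}. For multi-wedge $f = \max_i(\de_{x_i}+q_i) \in \NW$ we must show the joint identity $\max_i \cK_{s, t}(\de_{x_i}+q_i) \eqd \fh_{t-s} f$. In the prelimit, \Cref{thm:fUCSRW} applied to the combined initial condition $\max_i f^\ep_{x_i, q_i}$ yields $\cK^\ep_{s, t}(\max_i f^\ep_{x_i, q_i}) \cvgd \fh_{t-s} f$, and \Cref{lem:exomono} gives the one-sided bound $\cK^\ep_{s, t}(\max_i f^\ep_{x_i, q_i}) \ge \max_i \cK^\ep_{s, t} f^\ep_{x_i, q_i}$; \Cref{L:abstract-blomp} then promotes the distributional inequality $\fh_{t-s} f \ge \max_i \cK_{s, t}(\de_{x_i}+q_i)$ to equality (after identifying marginals and using the landscape variational formula $\fh_{t-s} f \eqd \max_i \fh_{t-s}(\de_{x_i}+q_i)$).

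Having verified the axioms, \Cref{T:KPZ-fixed-point-coupling} produces a directed landscape $\cL$ with $\cK_{s, t} f = f \diamond \cL_{s, t}$ almost surely for $f \in \NW$. The full statement of \Cref{thm:ASEPexo} for $f_i \in \UC$ and $t \in T$ then follows from \Cref{C:sss} together with prelimit monotonicity \Cref{lem:exomono}: approximating each $f_i$ from below by narrow wedge combinations, prelimit monotonicity sandwiches $\cK^\ep_{0, t_i} f_i^\ep$ between quantities converging to $f_i \diamond \cL_{0, t_i}$, which combined with the marginal convergence $\cK^\ep_{0, t_i} f_i^\ep \cvgd \fh_{t_i} f_i \eqd f_i \diamond \cL_{0, t_i}$ yields the joint convergence. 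The main obstacle is the quantitative use of the scaling condition $a_\ep \ep^{1/2}, b_\ep \ep^{1/2} \to 0$: in the prelimit, deterministic shift commutativity $\cK^\ep_{s, t}(f^\ep + c) = \cK^\ep_{s, t} f^\ep + c$ holds only for the basic coupling, while for general $(a_\ep, b_\ep)$-exotic couplings a height shift changes the assignment of Poisson clocks in $\Z^2/(a_\ep, b_\ep)$ to the particle-hole swaps. The scaling condition ensures that over any macroscopic label window the clock lattice is visited by $\to \infty$ distinct clocks while a bounded shift in $\Q$ corresponds to a vanishing fraction of the lattice, which via a coupling/decorrelation argument converts the prelimit distributional shift-commutativity into the almost-sure shift commutativity and approximate max-preservation needed above.
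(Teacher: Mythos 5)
Your proposal diverges from the paper's route in a way that introduces a genuine gap, specifically in how you define the limiting operator $\cK_{s,t}$ on $\NW$ and then try to verify the KPZ fixed point marginal condition.

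The paper defines $\cK_{s,t}f$ for $f\in\NW$ as a subsequential limit in law of $\cK^\ep_{s,t}f^\ep$, where $f^\ep\in\SRW_\ep$ is the discrete-height-function approximation of the \emph{full} multi-wedge $f$. With that choice, the KPZ fixed point marginal $\cK_{s,t}f\eqd\fh_{t-s}f$ is immediate from \Cref{thm:fUCSRW}, and what needs work is shift commutativity. The paper proves shift commutativity in \Cref{lem:exoshift} via the \emph{exact} prelimit identity \eqref{eq:shift-inv-bd}: in the $(a_\ep,b_\ep)$-exotic coupling, adding a constant $c_\ep\in(a_\ep+b_\ep)\ep^{1/2}\Z$ to the height function commutes with the dynamics up to a deterministic spatial shift of size $c_\ep(a_\ep+b_\ep)^{-1}(a_\ep-b_\ep)\ep^{1/2}/2$, which vanishes under the scaling assumption; sandwiching between $c_\ep^\pm$ rounded values using \Cref{lem:exomono} then kills the rounding error. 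Your proposal instead defines $\cK_{s,t}$ only on single wedges as subsequential limits, and extends to $\NW$ by fiat via the max-linear rule. This makes shift commutativity trivial, but it \emph{transfers} the difficulty to the marginal condition, and your argument there is circular.

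Concretely, in your third paragraph you have $\cK^\ep_{s,t}(\max_i f^\ep_{x_i,q_i})\cvgd \fh_{t-s}f$ (marginal for a combined initial condition, which is fine from \Cref{thm:fUCSRW}) and the prelimit one-sided bound $\cK^\ep_{s,t}(\max_i f^\ep_{x_i,q_i})\ge\max_i\cK^\ep_{s,t}f^\ep_{x_i,q_i}$, and you want to conclude, via \Cref{L:abstract-blomp}, that $\max_i\cK_{s,t}(\de_{x_i}+q_i)\eqd\fh_{t-s}f$. But \Cref{L:abstract-blomp} requires already knowing that $\max_i\cK_{s,t}(\de_{x_i}+q_i)$ and $\fh_{t-s}f$ have the same law, and that is exactly the thing you are trying to establish. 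The identity $\fh_{t-s}f\eqd\max_i\fh_{t-s}(\de_{x_i}+q_i)$ holds in the \emph{landscape coupling} of the KPZ fixed point, but the random variables $\{\cK_{s,t}(\de_{x_i}+q_i)\}_i$ arising as a joint subsequential limit of ASEP under an exotic coupling come with a different (unknown) joint law; only the one-wedge marginals are pinned down by \Cref{thm:fUCSRW}. The law of their maximum is therefore unidentified a priori. Indeed, the paper explicitly remarks that max-preservation fails even for ASEP under the basic coupling in the prelimit, and that the limiting max-preservation \eqref{E:Kstt} is a nontrivial output of \Cref{T:landscape-characterization}, not an input. In the paper's argument, the analogous step (promoting \eqref{E:inequality-obvious} to equality) is legitimate precisely because by that point $\cM$ has already been identified as a directed landscape, so the law of $\max_{x} f(x)+\cM(x,s;\cdot,t)$ is known to be $\fh_{t-s}f$. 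Your proposal reaches for that conclusion before the characterization theorem has been applied. To repair this you should follow the paper and define $\cK_{s,t}f$ from the combined initial condition $f^\ep$, then prove shift commutativity directly via \eqref{eq:shift-inv-bd}; the decorrelation argument you sketch at the end is unnecessary and much harder than the exact label-shift identity that the exotic-coupling structure already provides.

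A secondary, smaller issue: in your monotonicity step, the inequality $g^\ep\le\cK^\ep_{s,r}f^\ep+\eta$ obtained from a Skorokhod coupling is only uniform on compacts, while \Cref{lem:exomono} requires a global ordering. This can be patched with a tail estimate (the wedge approximants $g^\ep$ drop at slope $-2\ep^{-1/2}$ off a compact set, far faster than the height function), but as written it is incomplete.
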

		To prove \Cref{thm:ASEPexo} using \Cref{T:KPZ-fixed-point-coupling}, we need to construct a limiting family of random operators on $\NW$.
		
		For any $f\in\NW$ and $\ep>0$, we define $f^\ep$ to be the minimal function in $\SRW_\ep$ satisfying $f \le f^\ep$. Such a minimal function exists since if $g_1, g_2 \in \SRW_\ep$ then so is $g_1 \wedge g_2$. Observe that $f^\ep \cvgdown f$ in $\UC$ as $\ep \to 0$ and each of $f^\ep$, $\ep \in (0, 1)$, is bounded above by $\sup f + 1$, and so satisfies the convergence conditions of \Cref{thm:fUCSRW}.

		Since $\NW$ is countable, \Cref{thm:fUCSRW} guarantees that the collection $\{\cK^\ep_{s,t}f^\ep\}_{s \le t\in \Q, f\in\NW}$ is tight as $\ep\to 0$, in the product of uniform-on-compact topologies.

		\begin{lemma}   \label{lem:exoshift}
			Let $\{\cK_{s,t}f\}_{ s \le t\in \Q, f\in\NW}$ be any subsequential limit in law of $\{\cK^\ep_{s,t}f^\ep\}_{0\le s \le t\in \Q, f\in\NW}$ as $\ep \to 0$.
			Then $\cK$ satisfies shift commutativity, i.e., for all $ s \le t \in \Q, f \in \NW, c \in \Q$, $\cK_{s, t} (f + c) = \cK_{s, t} f + c$.
		\end{lemma}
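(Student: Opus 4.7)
The starting point will be a prelimit form of shift commutativity at the ASEP level. The $(a,b)$-exotic coupling evolves the height function using Poisson clocks attached to equivalence classes of particle/hole labels, and the dynamics depend only on the particle configuration (equivalently, on the increments of $h$). Therefore if we feed $\SRW_\ep$-valued initial data and shift it by any constant $d \in 2\ep^{1/2}\Z$ (so that the shifted function still lies in $\SRW_\ep$), the rescaled output shifts by exactly the same $d$. More precisely, for any $g \in \SRW_\ep$ and $d \in 2\ep^{1/2}\Z$, the construction \eqref{eq:cKst} gives $\cK^\ep_{s,t}(g+d) = \cK^\ep_{s,t} g + d$ deterministically.

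The only non-trivial point is that for $f \in \NW$ and $c \in \Q$, the $\SRW_\ep$-approximation $(f+c)^\ep$ need not equal $f^\ep + c$, since $f^\ep + c$ need not lie in $\SRW_\ep$. The plan is to correct $c$ to the grid: let $c_\ep^- = 2\ep^{1/2}\lfloor c/(2\ep^{1/2})\rfloor$ and $c_\ep^+ = 2\ep^{1/2}\lceil c/(2\ep^{1/2})\rceil$, both within $2\ep^{1/2}$ of $c$, and both in $2\ep^{1/2}\Z$. Since $f^\ep + c_\ep^+ \in \SRW_\ep$ and dominates $f+c$, minimality of $(f+c)^\ep$ gives $(f+c)^\ep \le f^\ep + c_\ep^+$. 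Conversely, $(f+c)^\ep - c_\ep^- \in \SRW_\ep$ and dominates $f$, so $(f+c)^\ep - c_\ep^- \ge f^\ep$. Hence
\[
f^\ep + c_\ep^- \;\le\; (f+c)^\ep \;\le\; f^\ep + c_\ep^+.
\]

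Applying the deterministic monotonicity of Lemma \ref{lem:exomono} to this sandwich and then invoking prelimit shift commutativity with the admissible shifts $d = c_\ep^\pm$, we obtain
\[
\cK^\ep_{s,t} f^\ep + c_\ep^- \;\le\; \cK^\ep_{s,t}(f+c)^\ep \;\le\; \cK^\ep_{s,t} f^\ep + c_\ep^+,
\]
pointwise almost surely. Now pass to the subsequential joint limit as $\ep \to 0$ along the subsequence that produces $\cK$: both $f$ and $f+c$ lie in the (countable) index set $\NW$, so the pair $(\cK^\ep_{s,t} f^\ep, \cK^\ep_{s,t}(f+c)^\ep)$ converges jointly in law to $(\cK_{s,t} f, \cK_{s,t}(f+c))$ in the uniform-on-compact topology. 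Since $c_\ep^\pm \to c$, the sandwich becomes $\cK_{s,t} f + c \le \cK_{s,t}(f+c) \le \cK_{s,t} f + c$ almost surely, yielding the desired equality. No genuine obstacle arises here; the only subtlety is the bookkeeping around the lattice rounding of $c$, which is handled cleanly by sandwiching between $c_\ep^-$ and $c_\ep^+$ and using monotonicity to propagate the sandwich through $\cK^\ep_{s,t}$.
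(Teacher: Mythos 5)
Your overall architecture — sandwich $(f+c)^\ep$ between $f^\ep + c_\ep^-$ and $f^\ep + c_\ep^+$ with $c_\ep^\pm$ rounded to a lattice, push through $\cK^\ep_{s,t}$ by monotonicity (\Cref{lem:exomono}), and pass to the joint subsequential limit using continuity of $\cK_{s,t}f$ — is exactly the paper's strategy. The gap is in your prelimit shift identity. You claim that for any $g \in \SRW_\ep$ and $d \in 2\ep^{1/2}\Z$, $\cK^\ep_{s,t}(g+d) = \cK^\ep_{s,t} g + d$ deterministically, on the grounds that ``the dynamics depend only on the particle configuration.'' This is false for general exotic couplings. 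Shifting the height function by $d$ (equivalently by $D = \ep^{-1/2} d$ in unscaled units) leaves the particle configuration unchanged but shifts every particle label and every hole label by $-D/2$, because the labels are defined via the height function. In an $(a_\ep, b_\ep)$-exotic coupling, the Poisson clock governing the swap of particle $\ell$ and hole $k$ is indexed by $Q(\ell,k) \in \Z^2/(a_\ep, b_\ep)$, and $Q(\ell - D/2, k - D/2) = Q(\ell, k)$ only if $(D/2, D/2) \in (a_\ep, b_\ep)\Z$, which for $a_\ep \neq b_\ep$ (e.g.\ particle or hole coupling) forces $D = 0$. So the shifted initial data evolves under \emph{different} clocks, and deterministic shift commutativity fails.

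The paper's fix is a twisted version of the identity: only for shifts $c_\ep \in (a_\ep + b_\ep)\ep^{1/2}\Z$ (not $2\ep^{1/2}\Z$), one has
\[
\cK^\ep_{s,t}(f^\ep + c_\ep)(x) = \cK^\ep_{s,t} f^\ep\bigl(x - c_\ep (a_\ep+b_\ep)^{-1}(a_\ep - b_\ep)\ep^{1/2}/2\bigr) + c_\ep,
\]
because the combination of a height shift and the compensating spatial translation moves $(\ell, k)$ by a multiple of $(a_\ep, b_\ep)$, hence preserves the Poisson clocks. The spatial shift is $O(\ep^{1/2})$ (bounded by the prefactor $|(a_\ep - b_\ep)/(a_\ep+b_\ep)| \le 1$), so it vanishes in the limit, and $c_\ep^\pm \to c$ uses the hypothesis $a_\ep\ep^{1/2}, b_\ep\ep^{1/2}\to 0$. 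With this corrected identity in place of your claimed deterministic commutativity, the rest of your sandwich-and-limit argument goes through as you wrote it.
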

		\begin{proof}
			Fix any $f\in\NW, c\in\Q$, and $s\le t\in\Q$. For any $\ep>0$,  from the definition of the $(a_\ep, b_\ep)$-exotic coupling and the construction of $f^\ep$ we have that

			\begin{equation}   \label{eq:shift-inv-bd}
				\cK^\ep_{s, t} (f^\ep + c_\ep)(x) = \cK^\ep_{s, t} f^\ep(x- c_\ep(a_\ep + b_\ep)^{-1} (a_\ep-b_\ep)\ep^{1/2}/2) +  c_\ep,
			\end{equation}
			for any $c_\ep \in (a_\ep+b_\ep)\ep^{1/2}\Z$ and $x\in\R$.
			We now take $c_\ep^+ = (a_\ep+b_\ep)\ep^{1/2}\lceil c(a_\ep+b_\ep)^{-1}\ep^{-1/2} \rceil$ and $c_\ep^- = (a_\ep+b_\ep)\ep^{1/2}\lfloor c(a_\ep+b_\ep)^{-1}\ep^{-1/2} \rfloor$.
			By \Cref{lem:exomono}, we have that
			\[
			\cK^\ep_{s, t} (f^\ep + c_\ep^-) \le \cK^\ep_{s, t} (f + c)^\ep \le \cK^\ep_{s, t} (f^\ep + c_\ep^+).  
			\]
			Now, using \eqref{eq:shift-inv-bd} we have
			\[
			\cK^\ep_{s, t} f^\ep(x- c_\ep^-(a_\ep + b_\ep)^{-1} (a_\ep-b_\ep)\ep^{1/2}/2) +  c_\ep^- \le \cK^\ep_{s, t} (f + c)^\ep(x)
			\le \cK^\ep_{s, t} f^\ep(x- c_\ep^+(a_\ep + b_\ep)^{-1} (a_\ep-b_\ep)\ep^{1/2}/2) +  c_\ep^+,
			\]
			for any $x\in \R$.
			Taking $\ep\to 0$ along the given subsequence, we have that $\cK^\ep_{s, t} f^\ep\to \cK_{s,t}f$ in the uniform-on-compact topology, $(a_\ep + b_\ep)^{-1} (a_\ep-b_\ep)\ep^{1/2}/2\to 0$, and $c_\ep^+, c^\ep_- \to c$. Therefore since $\cK_{s, t} f$ is continuous, the function $x\mapsto \cK^\ep_{s, t} f^\ep(x- c_\ep^\pm(a_\ep + b_\ep)^{-1} (a_\ep-b_\ep)\ep^{1/2}/2) +  c_\ep^\pm$ converges to $\cK_{s,t}f+c$ (in distribution, uniformly on compact sets as $\ep\to 0$ along the subsequence).
			Therefore $\cK_{s,t}(f+c)= \cK_{s,t}f+c$.
		\end{proof}

		\begin{proof}[Proof of \Cref{thm:ASEPexo}]
			Without loss of generality, we assume that condition \eqref{E:f-cond} holds for each $f_i$. Otherwise we can replace $\Q$ in our definition of $\NW$ with a countable dense set $Q\subset \R$ tailored so that \eqref{E:f-cond} holds for each $f_i$, as remarked below \Cref{C:sss}. 
			
			From the convergence of $\cK^\ep_{0, t}f_i^\ep$ for each $i\in\llbracket 1, k\rrbracket$, given by \Cref{thm:fUCSRW}, we have that $\{\cK^\ep_{0, t}f_i^\ep\}_{i\in \llbracket 1, k\rrbracket, t\in T}$ is tight as $\ep\to 0$, in the product of uniform-on-compact topologies.
			We then take a joint subsequential limit of $\{\cK^\ep_{0, t}f_i^\ep\}_{i\in \llbracket 1, k\rrbracket, t\in T}$ and $\{\cK^{\ep}_{s,t}f^{\ep}\}_{ s \le t\in \Q, f\in\NW}$ as $\ep\to 0$,
			denoted by $\{\tilde \cK_{0, t}f_i\}_{i\in \llbracket 1, k\rrbracket, t\in T}$ and $\{\cK_{s,t}f :  s \le t\in \Q, f\in\NW \}$.
			
			By \Cref{thm:fUCSRW}, the independence of the underlying Poisson clocks, \Cref{lem:exomono}, and \Cref{lem:exoshift}, $\{\cK_{s,t}f\}_{0\le s\le t \in \Q}$ satisfies four conditions specified in \Cref{T:KPZ-fixed-point-coupling}.
			Therefore there exists a directed landscape $\cL$ such that almost surely $\cK_{s,t}f=f \diamond \cL_{s, t}$ for any $s\le t\in \Q$ and $f\in\NW$.

			Now for each $f_i$ and $f\in\NW$, such that $f_i(x) > f(x)$ for all $x \in \R$, from the convergence of $f^\ep_i \to f_i$ and $f^\ep \to f$ and the minimal choice of the approximators $f^\ep$, we have $f^\ep_i \ge f^\ep$ for all small enough $\ep$. 
			Then for each $t\in T$, by \Cref{lem:exomono} we have $\cK^\ep_{0,t}f^\ep_i \ge \cK^\ep_{0,t}f^\ep$ for all small enough $\ep$, thus $\cK_{0,t}f^\ep_i \ge \cK_{0,t}f^\ep$.
			By \Cref{C:sss} the conclusion follows.	
		\end{proof}

		\subsection{AEPs under the basic coupling}
		\label{S:gen-AEPs}

		In the remainder of this section, we consider general AEPs with the basic coupling. The setup is described at the beginning of \Cref{S:exclusion}.  Recall from \eqref{eq:cKst} that we use the notation $\cK^\eps$ for the collection of random operators encoding the dynamics of a rescaled AEP. In this notation, the jump distribution $p$ is repressed. In this section, we will consider sequences of AEP operators $\cK^\ep = \{\cK^\ep_{s, t}:\SRW_\ep \to \SRW_\ep, s \le t \in \R\}, \ep > 0$ where the jump distributions $p_\ep$ depend on $\ep$, and satisfy the following:
        \begin{itemize}[nosep]
            \item For all $i$, $p_\eps(i) \to \mathbf{1}(i = 1)$ as $\eps \to 0$. In other words, the AEPs approach TASEP as $\eps \to 0$.
            \item There exists $K \in \N$ such that $p_\ep(i) = 0$ for all $|i| > K$ and $|p_\ep(i)| \le K$ for all $i \le K$. This is a strong uniform integrability condition.
        \end{itemize}
        In this subsection we will prove joint convergence of AEPs to the directed landscape by appealing to \Cref{T:KPZ-fixed-point-general}. One technical claim verifying approximate monotonicity for the model is left to \Cref{S:aep-mono}. 
		
		
        KPZ fixed point convergence for general AEPs in this TASEP-perturbative setting follows from the methods of \cite{quastel2020convergence}\footnote{The convergence of AEPs with fixed jump distribution to the KPZ fixed point shown in \cite{quastel2020convergence} was retracted in May 2025 due to an error in the proof. Nonetheless, a modification of the proof still works in the weaker setting of Theorem \ref{T:nnnnasep-cvg} \cite{quastel2025private}. It will be published as an erratum to \cite{quastel2020convergence}.}. The convergence result applies to initial conditions which are not too far from the stationary Bernoulli product measure. We let $\nu$ be the law on $\SRW$ of a simple symmetric random walk path $X$ with $X(0) = 0$. We let $\nu_\ep$ be the pushforward of $\nu$ under the natural map $A_\ep:\SRW \to \SRW_\ep$.

		\begin{theorem}
			\label{T:nnnnasep-cvg}
			Fix $s < t \in \R$, and suppose that for all $\ep > 0$, we have a random initial condition $f^\ep$ with law $g_\ep d\nu_\ep$, such that $f^\ep$ is independent of $\cK^\ep_{s, t}$. Additionally assume that:
			\begin{itemize}[nosep]
				\item As $\ep \to 0$, $g_\ep d\nu_\ep$ converges weakly in $\UC$ to a limit law $\mu$ on $\UC_0$.
				\item $\limsup_{\ep \to 0} \|g_\ep d\nu_\ep\|_{L^2(\nu_\ep)} < \infty$.
				\item Define random variables 
				$$
				M_\ep = \inf \{a \in \R : f^\ep(x) < a(1 + |x|) \text{ for all } x \in \R\}.
				$$
				Then $M_\ep, \ep > 0$ is a tight sequence of random variables.
			\end{itemize}
			Then $(f^\ep, \cK^\ep_{s, t} f^\ep) \cvgd (f, \fh_{t-s} f)$, where $f \sim \mu$ is independent of the KPZ fixed point evolution. Here the underlying topology is $\UC$-convergence of $f^\ep$ to $f$ and uniform-on-compact convergence of $\cK^\ep_{s, t} f^\ep$ to $\fh_{t-s} f$.
		\end{theorem}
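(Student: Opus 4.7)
The plan is to invoke the KPZ fixed point convergence for general AEPs proven by Quastel and Sarkar \cite{quastel2020convergence}, whose hypotheses the three conditions in the theorem are chosen to encode: weak convergence of the initial condition to a limit in $\UC_0$, quantitative closeness to the stationary Bernoulli measure through the $L^2$ bound, and a tight linear growth rate $M_\ep$.

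First, I would establish joint tightness of $(f^\ep, \cK^\ep_{s,t} f^\ep)$ in $\UC \times \UC$ with the uniform-on-compact topology on the second coordinate. Tightness of the initial coordinate is immediate from the first assumption. For the second coordinate, conditioning on the event $\{M_\ep \le A\}$ yields a deterministic linear-growth envelope for $f^\ep$, after which standard shape bounds for AEPs (uniform in $\ep$) give the required tightness; letting $A \to \infty$ and invoking tightness of $M_\ep$ concludes.

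For the identification of subsequential limits, I would exploit the independence of $f^\ep$ from the Poisson clocks generating $\cK^\ep_{s,t}$: by conditioning on $f^\ep$, the problem reduces to the deterministic-initial-condition convergence from \cite{quastel2020convergence}. Applying Skorokhod representation so that $f^\ep \to f$ almost surely, and then combining the deterministic convergence $\cK^\ep_{s,t} g^\ep \cvgd \fh_{t-s} g$ for linear-growth $g^\ep \to g$ with the KPZ fixed point dominated convergence theorem (\Cref{P:KPZ-FP-cvge}), would yield the joint convergence $(f^\ep, \cK^\ep_{s,t} f^\ep) \cvgd (f, \fh_{t-s} f)$.

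The main obstacle, which is handled in \cite{quastel2020convergence} but requires significant technical work, is the deterministic-initial-condition KPZ fixed point convergence for general (non-simple) AEPs, where the lack of a microscopic Hopf-Cole transform makes exact-solvability arguments unavailable. The $L^2$ bound on $g_\ep$ enters through a Cauchy-Schwarz transfer: for any measurable event $\cE$, one has $\P_{g_\ep d\nu_\ep}(\cE) \le \|g_\ep\|_{L^2(\nu_\ep)} \sqrt{\P_{\nu_\ep}(\cE)}$, so any event of probability $o(\ep^{1/2})$ under the stationary measure $\nu_\ep$ has vanishing probability under the perturbed law. This is the mechanism by which fluctuation estimates obtained under the tractable stationary measure transfer to the perturbed initial condition, and the specific threshold $\ep^{1/4}$ in the $L^2$ assumption is calibrated to the stretched-exponential tails available for AEP under the Bernoulli product stationary measure.
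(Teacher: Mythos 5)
Your identification step has a genuine gap. You propose to exploit independence by "conditioning on $f^\ep$" (via Skorokhod representation) and then invoke deterministic-initial-condition KPZ fixed point convergence for general AEPs from \cite{quastel2020convergence}. No such theorem exists for general non-simple exclusion. The result actually used here, Theorem~2.2(1) of \cite{quastel2020convergence}, establishes convergence only for \emph{random} initial conditions whose Radon--Nikodym density against the Bernoulli product measure is $L^2$-controlled --- this is precisely what the hypothesis $\ep^{1/4}\|g_\ep\|_{L^2(\nu_\ep)}\to 0$ encodes. Deterministic initial data are handled in \cite{quastel2020convergence} only for ASEP (their Theorem~2.2(2), used for \Cref{thm:fUCSRW}), where a microscopic Hopf--Cole/G\"artner transform is available. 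Once you Skorokhod-couple and freeze a realization of $f^\ep$, you leave the scope of the theorem you need, so the claimed reduction does not go through. Your closing Cauchy--Schwarz remark correctly describes why \cite{quastel2020convergence} needs the $\ep^{1/4}$-$L^2$ calibration internally, but it does not rescue the deterministic reduction, which is where the argument breaks.

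The paper avoids this by conditioning on \emph{continuity sets} $U$ of the limit law with $\P[f\in U]>0$ rather than on individual realizations. Under this conditioning, the new density is $g_\ep\indic_U/\P[f^\ep\in U]$, whose $L^2(\nu_\ep)$-norm is inflated only by the bounded factor $1/\P[f^\ep\in U]\to 1/\P[f\in U]<\infty$; weak convergence of the conditioned law and tightness of $M_\ep$ similarly persist. So all three bullets of the theorem statement hold for the conditioned initial data, Theorem~2.2(1) of \cite{quastel2020convergence} applies directly, and one identifies $\P[(f,\cK_{s,t}f)\in U\times A]=\P[(f,\fh_{t-s}f)\in U\times A]$ for all such $U$ and measurable $A$, which pins down the joint law. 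The key insight is that conditioning on a positive-probability continuity set keeps the initial condition random and $L^2$-close to Bernoulli, so no new deterministic-initial-condition input is required.
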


		\begin{proof}[Proof of \Cref{T:nnnnasep-cvg}]
			The convergence of $\cK^\ep_{s, t} f^\ep  \cvgd \fh_{t-s} f$ is proven in \cite{quastel2020convergence}, see the previous footnote. For the joint convergence, let $(f, \cK_{s, t} f)$ be any joint subsequential limit in distribution of $(f^\ep, \cK^\ep_{s, t} f^\ep)$. Let $U$ be any continuity set for the law of $f$ such that $\P[f \in U] > 0$. Then $\P[f^\ep \in U] \to \P[f \in U]$ as $\ep\to 0$. Moreover, since $\P[f \in U] > 0$, if $f^\ep_U$ has the law of $f^\ep$ given $f^\ep \in U$, then $f^\ep_U$ still satisfies the three bullets above and hence the conclusion of \cite{quastel2020convergence}. Therefore for any measurable $A\subset \UC$, we have $\P[(f, \cK_{s, t} f) \in U \times A] = \P[(f, \fh_{t-s} f) \in U \times A]$, and so $(f, \cK_{s, t} f) \eqd (f, \fh_{t-s} f)$ as desired.
		\end{proof}

		We upgrade \Cref{T:nnnnasep-cvg} to directed landscape convergence.
		
		\begin{theorem}
			\label{thm:AEP}
			Take (potentially random) initial conditions $f_0, \ldots, f_k \in\UC$ and $f_0^\ep, \ldots, f_k^\ep \in \SRW_\ep$ for each $\ep>0$, such that $f_i^\ep\cvgd f_i$ for each $i\in \llbracket 0, k\rrbracket$. Suppose that $f_0^\ep \sim \nu_\ep$ and for every $i \in \llbracket 1, k\rrbracket$, the  initial conditions $f_i^\ep$ satisfy the last two bullet points of \Cref{T:nnnnasep-cvg} and that the random variables
			$$
			\sup (f_i^\ep - f_0^\ep), \ep > 0
			$$
			are tight. Then for any finite $T\subset (0, \infty)$, as $\ep \to 0$,
			$$
			\{\cK^\ep_{0, t}f_i^\ep\}_{t \in T, i \in \llbracket 1,k\rrbracket}\cvgd \{f_i \diamond \cL_{0, t}\}_{t \in T, i \in \llbracket 1,k\rrbracket},
			$$
			in the topology of uniform convergence on compact sets.
		\end{theorem}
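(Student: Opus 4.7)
The plan is to invoke \Cref{T:KPZ-fixed-point-general}, which was designed precisely for this setting where only an approximate version of monotonicity is available. The first step is to build the reference functions $B_{s,n}$ appearing in that theorem. For each $s \in \Q_+$ and $n \in \N$, take $B_{s,n}$ to be the $\UC$-scaling limit, along some diagonal subsequence $\ep_k \to 0$, of an independent sample from $\nu_{\ep_k}$ run under the $n$-th independent copy of the basic-coupled AEP dynamics from time $0$ up to time $s$. Because $\nu_\ep$ is invariant under the dynamics, each $B_{s,n}$ has the law of a two-sided Brownian motion, and triviality of the tail $\sigma$-algebra of $(B_{s,n})_{n \in \N}$ is immediate from Kolmogorov's zero-one law applied to the underlying i.i.d.\ Bernoulli particle fields. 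Then for $s \le t \in \Q_+$ and $f \in \scH_{s,n}$, we define $\cK_{s,t} f$ as a joint subsequential limit of $\cK^\ep_{s,t} \hat f^\ep$ along the same subsequence, where $\hat f^\ep \in \SRW_\ep$ is a natural approximator of $f$; joint tightness is guaranteed by \Cref{T:nnnnasep-cvg}.

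Verifying conditions~1--3 and~5 of \Cref{T:KPZ-fixed-point-general} is comparatively routine. The KPZ fixed point marginal property (condition~1) follows from \Cref{T:nnnnasep-cvg}, since each $f \in \scH_s$ satisfies the hypotheses of that theorem by virtue of its construction as a Brownian profile plus a compactly supported perturbation. Conditional independence (condition~2) follows from the independence of the basic-coupling Poisson clocks across disjoint time intervals. Comparability (condition~3) follows from the construction of $\scH_{s,n}$ together with Brownian tail bounds and \Cref{T:nnnnasep-cvg}. Shift commutativity (condition~5) is immediate from the translation invariance of the AEP height function construction.

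The main obstacle is condition~4, monotonicity. For general AEPs the basic coupling is genuinely not monotone: a jump of range greater than one can swap a particle across a previously-ordered configuration, destroying the ordering. The plan is to establish in \Cref{S:aep-mono} an approximate monotonicity statement, showing that for prelimiting ordered initial conditions $f^\ep \le g^\ep$ in $\SRW_\ep$, the excess $(\cK^\ep_{s,t} f^\ep - \cK^\ep_{s,t} g^\ep)_+$ vanishes in probability uniformly on compact sets as $\ep \to 0$. Passing to subsequential limits, this yields exact monotonicity of the limiting operators $\cK_{s,t}$.

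With all five conditions verified, \Cref{T:KPZ-fixed-point-general} produces a directed landscape $\cL$ such that $\cK_{s,t} f = f \diamond \cL_{s,t}$ for every admissible $f$. To transfer this to the given target initial conditions, we use the tightness of $\sup(f_i^\ep - f_0^\ep)$: for any $\eta > 0$ we can, with high probability, sandwich $f_i^\ep$ between two elements of $\scH_0$ whose images under $\cK_{0,t}$ differ by at most $\eta$. Applying the approximate monotonicity once more, together with the joint distributional convergence from \Cref{T:nnnnasep-cvg} and a \Cref{C:sss}-style comparison, yields the claimed joint convergence $\{\cK^\ep_{0,t} f_i^\ep\}_{t\in T, i\in\llbracket 1,k\rrbracket} \cvgd \{f_i \diamond \cL_{0,t}\}_{t\in T, i\in\llbracket 1,k\rrbracket}$ in the topology of uniform convergence on compact sets.
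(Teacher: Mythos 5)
The overall strategy you propose is the correct one, and you have correctly identified the main obstacle (approximate monotonicity) and the right framework (\Cref{T:KPZ-fixed-point-general}). However, your construction of the reference functions $B_{s,n}$ is wrong in a way that breaks the argument.

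You propose to take $B_{s,n}$ to be the time-$s$ state of an \emph{independent} copy of the AEP dynamics run from an independent stationary sample. This makes each $B_{s,n}$ a Brownian motion, and the tail $\sigma$-algebra is trivially trivial, but the $B_{s,n}$ are then independent of the noise driving the operators $\cK_{s,t}$. This kills two essential steps. First, comparability (condition~3 of \Cref{T:KPZ-fixed-point-general}) fails: for $f \in \scH_s$ and $g \in \scH_r$, both $\cK_{s,r}f$ and $g = B_{r,n'} + h'$ are locally Brownian, so if they are independent then $g - \cK_{s,r}f$ is an unbounded-variance process with zero drift and $\sup(g - \cK_{s,r}f) = +\infty$ almost surely. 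Second, and more fundamentally, the transfer step cannot work: the hypothesis that $\sup(f_i^\ep - f_0^\ep)$ is tight only gives a useful comparison against the \emph{specific} coupled stationary profile $f_0^\ep$, not against an independent one. You need to sandwich the evolution $\cK_{0,t_*}^\ep f_i^\ep$ above by $B_{t_*,1}^\ep$ plus a finite perturbation, but if $B_{t_*,1}^\ep$ is independent of $\cK_{0,t_*}^\ep f_i^\ep$ this difference is again unbounded above.

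The paper instead sets $B_{s,1}^\ep = \cK_{0,s}^\ep f_0^\ep - \cK_{0,s}^\ep f_0^\ep(0)$, i.e., the recentred time-$s$ state of the \emph{same} stationary initial condition $f_0^\ep$ under the \emph{same} Poisson clocks that drive $\cK^\ep$. This makes $B_{s,1}^\ep$ adapted to $\cF_s$ and puts it into the same monotone coupling as the $\cK_{0,s}^\ep f_i^\ep$, so that both comparability and the final sandwiching go through via \Cref{lem:nnnepmon} and \Cref{L:simple-couple}. Tail triviality is then recovered by a different device: $B_{s,n}^\ep$ for $n\ge 2$ is taken to be a random walk path conditioned to agree with $B_{s,1}^\ep$ off $[-n,n]$ and to be an independent bridge inside, and the conditional-independence form of condition~2 in \Cref{T:KPZ-fixed-point-general} is exactly what accommodates this $\cF_s$-measurable choice of reference profile. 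Your argument is missing this coupled construction and the resampling device, and without it conditions~3 and~4 and the last reduction to $f_i$ all break.
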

		
		We prove \Cref{thm:AEP} by appealing to the framework of \Cref{T:KPZ-fixed-point-general}. For this, we first need to construct prelimiting initial conditions for each of the sets $\scH_s$. Fix $\ep > 0$. We start by constructing the prelimiting versions $B_{s, n}^\ep$ of the $B_{s, n}$, which will be Brownian motions in the limit.

		For all $s \in \Q_+$, define $B_{s, 1}^\ep = \cK_{0, s}^\ep f_0^\ep - \cK_{0, s}^\ep f_0^\ep(0)$. Each function $B_{s, 1}^\ep$ is thus distributed according to the rescaled simple random walk measure $\nu_\ep$. Then, we define $B_{s, n}^\ep$, $n \ge 2$ to be a sequence of simple random walks at scale $\ep$, by letting $B_{s, n}^\ep\sim \nu_\ep$ conditional on $B_{s, n}^\ep|_{[-n, n]^c} = B_{s, 1}^\ep|_{[-n, n]^c}$.
		In other words, (conditional on $B_{s, 1}^\ep$) $B_{s,n}^\ep|_{[-n,0]}$ and $B_{s,n}^\ep|_{[0,n]}$ are independent random walk bridges at scale $\ep$. 
		
		Next, recall the set of piecewise linear functions $\scH$ defined prior to \Cref{T:KPZ-fixed-point-general}. Let $\{U_i\}_{i \in \Z}$ be a collection of i.i.d.\ uniform random variables on $[0, 1]$. For a function $h \in \scH$, we define an approximation to $B_{s, n}^\ep+h$ using the random variables $U_i$ as follows. First, since $B_{s, n}^\ep \sim \nu_\ep$ we can extract a sequence of i.i.d.\ Rademacher random variables $e = \{e_i\}_{i \in \Z}$ by letting
		$$
		e_i = \ep^{-1/2}(B_{s, n}^\ep(i\ep/2) - B_{s, n}^\ep((i-1)\ep/2)). 
		$$
		The map $B_{s, n}^\ep \mapsto e$ is bijective, so we can think of $B_{s, n}^\ep = g(e)$ for some map $g:\{\pm 1\}^\Z \to \SRW_\ep$. For $h \in \scH$ with $h(0) = 0$, we will define a sequence of random variables $v(h) = \{v(h)_i\}_{i \in \Z}$, and let $B_{s, n}^\ep[h]:=g(v(h))$.

		Let $\{q_1 < \cdots < q_k\}$ be the set of points where $h'$ does not exist, so that $h' = 0$ off of $[q_1, q_k]$ and is constant on each interval $(q_j, q_{j+1})$. For $i\ep/2 \notin (q_1, q_k]$ we set $v(h)_i = e_i$. Now, for each $j \in \{1, \dots, k-1\}$, let $s_j$ be the value of $h'$ on $(q_j, q_{j+1})$. For $i\ep/2 \in (q_j, q_{j+1}]$ we let  
		\begin{align*}
			v(h)_i=
			\begin{cases}
				&
				\max(e_i, 2 \cdot\mathbf{1}(U_i \in [0,  s_j \ep/2]) - 1), \qquad s_j \ge 0, \\
				&
				\min(e_i, 1 -2\cdot \mathbf{1}(U_i \in [0, |s_j| \ep/2])), \qquad s_j < 0.    
			\end{cases}
		\end{align*}
		It is straightforward to check (i.e.\ using the law of large numbers) that, for  $B_{s, n}^\ep[h]:=g(v(h))$, we have
		\begin{equation}
			\label{E:Bsnh}
			B_{s, n}^\ep[h] - B_{s, n}^\ep \to h
		\end{equation}
		uniformly in probability as $\ep \to 0$. For functions $h \in \scH$ with $h(0) \ne 0$, we define
		$$
		B_{s, n}^\ep[h] := B_{s, n}^\ep[h-h(0)] + 2\ep^{1/2} \lfloor \ep^{-1/2}h(0)/2 \rfloor.
		$$
		With this setup, we have the following straightforward tightness lemma.
		\begin{lemma}
			\label{L:preliminary-tight}
			
			For any fixed $s < t \in \Q_+$, $n \in \N$ and $h \in \scH$, we have that
			$$
			(B_{s, n}^\ep[h], \cK_{s, t}^\ep B_{s, n}^\ep[h] ) \cvgd (B+h, \fh_{t - s} (B+h)),
			$$
			in the uniform-on-compact topology, where $B$ is a Brownian motion (of variance $2$), and the KPZ fixed point evolution $\fh$ is independent of $B$. Moreover, if we let 
			\begin{equation}
				\label{E:Bsnh-joint-limit}
				(B_{s, n, h} + h, \cK_{s, t}(B_{s, n, h} + h))_{s<t\in \Q_+, n \in \N, h \in \scH} 
			\end{equation}
			be any subsequential limit in law of $( B_{s, n}^\ep[h],  \cK_{s, t}^\ep B_{s, n}^\ep[h] )_{s<t\in \Q_+, n \in \N, h \in \scH}$, then $B_{s, n, h} = B_{s, n, h'}$ for all $h, h'$. Henceforth we drop the subscript $h$. This collection satisfies conditions $1, 2,$ and $5$ of \Cref{T:KPZ-fixed-point-general}, along with the tail-triviality of the sequences $\{B_{s, n}\}_{n \in \N}$.
		\end{lemma}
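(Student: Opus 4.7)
The plan is to invoke \Cref{T:nnnnasep-cvg} once for each triple $(s,n,h)$ to obtain the first convergence, then extract a joint subsequential limit over the countable index set $\Q_+\times\N\times\scH$ and check the remaining structural hypotheses of \Cref{T:KPZ-fixed-point-general} together with tail triviality directly at the level of the limit.

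For the single-triple convergence, fix $s<t\in\Q_+$, $n\in\N$, $h\in\scH$. The construction makes $v(h)=(v(h)_i)_i$ a sequence of independent Bernoulli variables whose marginal biases on the $O(\ep^{-1})$ steps $i$ with $i\ep/2\in(q_1,q_k]$ are explicit linear functions of the local slope of $h$. Hence the law of $B^\ep_{s,n}[h]$ can be written as $g_\ep\,d\nu_\ep$ with density $g_\ep$ an explicit product $\prod_i(1+X_i s_{j(i)} c_\ep)$ for a small factor $c_\ep$; a direct log-expansion (or a Cameron--Martin type computation) shows that $\|g_\ep\|_{L^2(\nu_\ep)}$ is bounded uniformly in $\ep$, and in particular $\ep^{1/4}\|g_\ep\|_{L^2(\nu_\ep)}\to 0$. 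The weak convergence $g_\ep\,d\nu_\ep\to\mathrm{Law}(B+h)$ in $\UC_0$ follows from \eqref{E:Bsnh} and Donsker's theorem, and tightness of $M_\ep$ is a standard random-walk tail estimate. Applying \Cref{T:nnnnasep-cvg} yields $(B^\ep_{s,n}[h],\cK^\ep_{s,t}B^\ep_{s,n}[h])\cvgd(B+h,\fh_{t-s}(B+h))$ with $\fh$ independent of $B$. Tightness of the indexed family is then automatic, so we extract the joint limit \eqref{E:Bsnh-joint-limit}; the identity \eqref{E:Bsnh} forces $B_{s,n,h}$ to coincide with the joint limit of $B^\ep_{s,n}$ for any $h$, justifying writing $B_{s,n}$.

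It remains to verify conditions $1$, $2$, $5$ of \Cref{T:KPZ-fixed-point-general} plus tail triviality. Condition $1$ is a restatement of the single-triple convergence. For shift commutativity (condition $5$), choose $c_\ep:=2\ep^{1/2}\lfloor c/(2\ep^{1/2})\rfloor$; adding such a lattice-valued constant to a height function in $\SRW_\ep$ only shifts the anchor and leaves the basic-coupling dynamics untouched, so $\cK^\ep_{s,t}(f^\ep+c_\ep)=\cK^\ep_{s,t}f^\ep+c_\ep$ holds exactly and passes to the limit. For conditional independence (condition $2$), the prelimit randomness splits into four mutually independent blocks: the initial configuration $f^\ep_0$, the Poisson clocks on $[0,t]$, the bridge-resampling variables used for $\{B^\ep_{s,n'}\}_{s\le t,n'\ge 2}$, and the pair $(\{U_i\}_i,\text{Poisson clocks on }[t,\infty))$. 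The first three blocks generate $\cF^\ep_t$, while $\cK^\ep_{t,r}|_{\scH^\ep_{t,n}}$ is a deterministic function of $B^\ep_{t,n}$ and the fourth block; so conditional on $B^\ep_{t,n}$ the required independence is immediate, and it survives the joint subsequential limit by a standard characteristic-function argument. For tail triviality, the prelimit relation $B^\ep_{s,n}=\Phi^\ep_n(B^\ep_{s,1},\omega^\ep_n)$ with $\omega^\ep_n$ independent across $n$ passes to the limit, giving $B_{s,n}=\Phi_n(B_{s,1},\omega_n)$ where $B_{s,1}$ is a variance-$2$ Brownian motion and the $\omega_n$ are jointly independent (conditional on $B_{s,1}$) Brownian bridge sources on $[-n,n]$. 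Consequently
\[
\sigma(B_{s,n'}:n'\ge n)\subseteq \sigma\bigl(B_{s,1}|_{[-n,n]^c}\bigr)\vee\sigma(\omega_{n'}:n'\ge n),
\]
and intersecting over $n$ places the tail $\sigma$-algebra inside the join of the tail of $B_{s,1}$ at $\pm\infty$ with the tail of an independent sequence, both trivial by Kolmogorov's zero-one law; since the two factors are independent, the join is trivial as well.

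The most delicate step I expect is propagating condition $2$ through the joint weak limit: the independence structure is transparent in the prelimit but turning prelimiting conditional independence into the corresponding limiting statement requires a careful disintegration argument and continuity of conditional distributions along the subsequence. The other steps are routine: the $L^2$ control in the application of \Cref{T:nnnnasep-cvg} is a short explicit computation, shift commutativity is exact at every $\ep$, and the tail triviality follows from the clean block decomposition of the randomness in the construction.
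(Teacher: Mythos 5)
Your overall strategy matches the paper: invoke \Cref{T:nnnnasep-cvg} for each fixed triple, extract a joint subsequential limit over the countable index set, and deduce conditions $1,2,5$ plus tail triviality from the prelimiting structure. The one substantively different ingredient is your $L^2$ bound for the second bullet of \Cref{T:nnnnasep-cvg}. You compute the Radon–Nikodym derivative of $B^\ep_{s,n}[h]$ with respect to $\nu_\ep$ directly as a product $\prod_i(1+v(h)_i s_{j(i)}\ep/2)$ over the tilted steps, giving $\|g_\ep\|^2_{L^2(\nu_\ep)}=\prod_i(1+(s_{j(i)}\ep/2)^2)\to 1$. This is correct (the pairs $(e_i,U_i)$ are i.i.d.\ because $B^\ep_{s,n}\sim\nu_\ep$ marginally and the $U_i$ are independent) and arguably more elementary than the paper's argument, which instead observes that conditioning on the finitely many values at the breakpoints $Q_\ep$ renders the law symmetric-random-walk--bridge, reducing the $L^2$ bound to a $k$-dimensional binomial comparison. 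Both routes work; yours gives a cleaner quantitative conclusion.

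One imprecision worth flagging: in the conditional independence argument you assert that $\cF^\ep_t$ is generated by the first three blocks (initial configuration, Poisson clocks on $[0,t]$, bridge variables) and that $\{U_i\}$ sits entirely in the fourth block. That is not quite right for the natural prelimiting analogue of $\cF_t$, because the prelimiting quantities $\cK^\ep_{s,r}(B^\ep_{s,n}[h])$ with $s\le r\le t$ that approximate generators of $\cF_t$ themselves depend on the $U_i$, so the $U_i$ appear on both sides of the decomposition. The fix, which the paper also leaves implicit, is that the $U_i$ contribute only a vanishing perturbation in the limit (by \eqref{E:Bsnh} the limiting initial conditions $B_{s,n}+h$ no longer see them), so the limiting $\cF_t$ depends only on the landscape-type noise and the $B_{s,n}$, and the desired conditional independence given $B_{t,n}$ then holds; alternatively, one can condition on $(B^\ep_{t,n},\{U_i\})$ in the prelimit and observe that the extra conditioning on $\{U_i\}$ disappears in the limit. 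The remaining verifications (shift commutativity being exact at every $\ep$, tail triviality via the conditional-independence-given-$B_{s,1}|_{[-n,n]^c}$ structure and a Kolmogorov $0$--$1$ argument) coincide with the paper's.
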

		
		\begin{proof}
			For the first claim in the lemma, we have that $(h+ B_{s, n})^\ep \cvgd h + B$ by construction. To check the joint limit claim, it suffices to consider the case when $h(0) = 0$, since both $\fh_t$ and $\cK_{s, t}^\ep$ are shift commutative (the latter by the definition of the basic coupling for AEPs). We will apply \Cref{T:nnnnasep-cvg}. The first and third bullet points are straightforward using standard bounds on random walks, convergence of random walks to Brownian motion, and the law of large numbers to handle the augmentation by $h$. For the second bullet point, we claim that if $g_\ep d \nu_\ep$ is the law of $B_{s, n}^\ep[h]$, then $\|g_\ep d \nu_\ep\|_{L^2(\nu_\ep)}$ stays bounded as $\ep \to 0$.
			
			Consider $F_\ep = A_\ep^{-1}(B_{s, n}^\ep[h]) \in \SRW$, and let $h_\ep$ be the Radon-Nikodym derivative of its law with respect to $\nu$. Equivalently, we will show that $\|h_\ep d \nu\|_{L^2(\nu)}$ stays bounded as $\ep \to 0$.
			By construction, there is an integer set $Q_\ep = \{ q_{1, \ep}, \dots, q_{k, \ep}\}$ (up to $O(\ep)$-errors, this set is given by $2\ep^{-1}\{q_1, \dots, q_k\}$ with the $q_i$ as above) such that conditional on $F_\ep|_{Q_\ep}$, $F_\ep$ has the same law as a simple symmetric random walk path conditioned on the values on $Q_\ep$. This follows from the fact that if we condition a parameter-$p$ Bernoulli process on $\{1, \dots, n\}$ to have exactly $k$ $1$s, then under this conditioning the $k$ $1$s are uniformly distributed amongst all $k$-element subsets of $\{1, \dots, n\}$; in particular, the parameter $p$ plays no role.
			
			Therefore $\|h_\ep d \nu\|_{L^2(\nu)} = \|h_\ep' d \nu|_F \|_{L^2(\nu|_{Q_\ep})}$, where $\nu|_{Q_\ep}$ is the pushforward of $\nu$ under the projection map $f \mapsto f|_{Q_\ep}$, and $h_\ep'$ is the Radon-Nikodym derivative of $F_\ep|_{Q_\ep}$ against this law. A uniform in $\ep$ bound on $\|h_\ep' d \nu|_F \|_{L^2(\nu|_{Q_\ep})}$ can then be checked by a computation comparing the laws of binomial random variables whose means differ on the order of their standard deviations.
			
			For the claims regarding the joint limit \eqref{E:Bsnh-joint-limit}, the fact that $B_{s, n, h} = B_{s, n, h'}$ for all $h, h' \in \scH$ follows from \eqref{E:Bsnh}. Condition $1$ in \Cref{T:KPZ-fixed-point-general} (KPZ fixed point marginals) is established above. The conditional independence (condition $2$) and shift commutativity (condition $5$) are immediate from the corresponding properties in the prelimit, which follow from the construction of the basic coupling. Tail-triviality of the sequences $\{B_{s, n}\}_{n \in \N}$ is immediate from the construction of $B_{s, n}^\ep$, which implies that the collection $\{B_{s, n}\}_{n \in \N}$ consists of Brownian motions which are conditionally independent given the equalities $B_{s, n}(x) = B_{s, 1}(x), n \in \N, |x| \ge n$.
		\end{proof}
		
		The final two assumptions in \Cref{T:KPZ-fixed-point-general} require a monotonicity claim for general AEPs.
		
		For this next two lemmas, we consider AEPs where the jump distribution $p$ satisfies that $p(1)>1/2$, $p(i)=0$ for all $|i|>K$, and $|p(i)|\le K$ for all $i\le K$, for some $K\in \N$.
        Take two AEPs with the same such $p$, with height functions $(h^-_t)_{t\ge 0}$ and $(h^+_t)_{t\ge 0}$, started at time $0$ in the basic coupling. Assume that $h^-_0 \le h^+_0$. 
		For any $t, w>0$, we denote
		\[
		Y_{t, w} = \max_{x\in \llbracket -w, w\rrbracket} h^-_t(x) - h^+_t(x).
		\]
		\begin{lemma}   \label{lem:nnnepmon}

			For any $m, t, w>0$, we have $\P[Y_{t, w} > m]<C(t+w)\exp(-cm)+C\exp(-ct)$, where $C,c>0$ are constants depending only on $K$.
		\end{lemma}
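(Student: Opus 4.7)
The plan combines a finite-speed-of-propagation estimate (which accounts for the $C\exp(-ct)$ term) with a Poisson-counting bound on monotonicity-violating rings in a localized window (which accounts for the $C(t+w)\exp(-cm)$ term). Throughout let $R := \max\{|v| : p(v) > 0\}$ and $\lambda := 2R \sum_v (p(v) + p(-v))$, both finite constants depending only on $p$.

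First, I would establish an event $\mathcal{G}$ of probability at least $1 - C\exp(-ct)$ on which the height functions $h^\pm_s(x)$ for $(s, x) \in [0, t] \times \llbracket -w, w\rrbracket$ are measurable with respect to the Poisson clocks $\Pi_{n, v}$ attached to pairs with $|n| \le w + Lt$ for some $L = L(p)$. This is a standard finite-propagation statement for an AEP, provable via the observation that the length of any chain of causally linked Poisson rings reaching site $y$ from distance $D$ in time $t$ has exponential tails in $D - Lt$; a union bound over $y \in \llbracket -w, w\rrbracket$ contributes the $C\exp(-ct)$ term.

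On $\mathcal{G}$, set $\psi_s(x) := h^-_s(x) - h^+_s(x)$, so that $\psi_0 \le 0$. A direct case analysis of the four possible outcomes (jump in $h^-$ only, jump in $h^+$ only, jump in both, jump in neither) of a single ring of $\Pi_{n, v}$ shows that $\psi$ changes only at sites $y$ lying between $n$ and $n+v$, and then by exactly $+2$, $0$, or $-2$. Call a ring producing a $+2$ increment at some site $y$ a \emph{bad ring for $y$}; such a ring requires the jump to succeed in exactly one of $h^\pm$ with a specific sign depending on $\operatorname{sign}(v)$. Since all other outcomes change $\psi(y)$ by $\le 0$, one has $\max_{s \in [0, t]} \psi_s(y) \le 2 N_y$, where $N_y$ is the number of bad rings affecting $y$ in $[0, t]$. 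Crucially, $N_y$ is dominated by the total count of rings from clocks $\Pi_{n, v}$ with $|n - y| \le R$, which is an honest Poisson($\lambda t$) random variable independent of the process history. A Poisson tail bound yields $\P[N_y \ge m/2] \le C\exp(-cm)$ once $m \ge C\lambda t$, and the complementary regime $m < C\lambda t$ is absorbed into the $C\exp(-ct)$ term. Union-bounding over $y$ in the spatial window of size $O(w + Lt)$ produces $C(w + t)\exp(-cm)$; combining with $\P[\mathcal{G}^c]$ gives the claimed bound.

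The main obstacle is the cascading nature of discrepancies in general AEPs: a single early bad ring alters the occupation variables, so a subsequent ring that would have been neutral may become bad (and vice versa). This interaction is exactly what makes deterministic monotonicity fail for $R \ge 2$. The argument above sidesteps the cascade by upper-bounding $N_y$ by the total ring count in an $R$-window of $y$, which is oblivious to the cascade and Poisson by construction; the price is the spatial factor $O(w + t)$. Verifying that each ring contributes at most $+2$ to $\psi(y)$ (rather than a compounded larger jump) is the key deterministic input, and comes from case-by-case analysis of the height-function update from a single exclusion-process jump.
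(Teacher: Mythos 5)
The core of your argument is the step where you bound the number of bad rings at $y$ by the total ring count $N_y$ near $y$. This is the gap. The total ring count $N_y$ is Poisson with mean $\lambda t$, so its typical size is $\Theta(t)$, not $O(1)$. Your Poisson tail bound $\P[N_y \ge m/2] \le C\exp(-cm)$ therefore kicks in only when $m \gtrsim \lambda t$, and you acknowledge this. But your claim that the complementary regime $m < C\lambda t$ is ``absorbed into the $C\exp(-ct)$ term'' is false: for $m < C\lambda t$ we have $t > m/(C\lambda)$, so $\exp(-ct)$ is an even \emph{smaller} quantity than $\exp(-cm)$, while $\P[N_y \ge m/2]$ is close to $1$ (the mean $\lambda t$ exceeds $m/2$). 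So in the regime $\log(t+w) \lesssim m \lesssim t$ the lemma's bound $C(t+w)\exp(-cm)+C\exp(-ct)$ is meaningfully small but your argument produces only a vacuous bound. This regime is exactly where the lemma is applied in the scaling limit: $m$ of order $\log(t+w)$ with $t$ large.

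The lossy step is precisely where the paper does something much sharper. Replacing the crude count $N_y^+$ of $+2$ increments at $y$ by the total ring count near $y$ throws away the annihilation mechanism. The paper instead encodes the discrepancies between the two coupled AEPs as second-class (type-$2$ and type-$3$) particles, assigns them labels, and proves (Lemma~\ref{lem:anni}, via Lemma~\ref{lem:longanni}) that the number of ``takeovers'' suffered by any fixed discrepancy label has a \emph{geometric} tail $\hat{C}\theta^m$, uniformly in $t$. The geometric decay comes from annihilation: once two conjugate discrepancy particles are within range, they annihilate with positive probability within a bounded number of additional takeovers, so a discrepancy cannot accumulate $m$ takeovers without surviving $\Omega(m)$ independent removal chances. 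The height-function discrepancy $\psi_t(x)$ is then bounded by the number of takeovers of the relevant label (plus a bounded range correction), and the $(t+w)$ factor in the lemma comes from a union bound over the $O(t+w)$ labels that can reach the window $\llbracket -w,w\rrbracket$ in time $t$, not from a union over sites. Your observation that each ring changes $\psi(y)$ by at most $+2$, and your finite-speed-of-propagation event, are both correct and appear in the paper's argument; but without a per-label geometric decay replacing the per-site Poisson($\lambda t$) count, the exponent in $m$ cannot be obtained.
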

		
		We postpone the proof of \Cref{lem:nnnepmon} to the next section. To pass it to the limit, we need the following simple estimate on AEPs in the basic coupling.
		
		\begin{lemma}
			\label{L:simple-couple}
			Suppose that $\eta^1_t, \eta^2_t$ are two AEPs with the basic coupling started from initial conditions $\eta^1_0, \eta^1_0$ that agree off of a set $\{-n, \dots, n\}$. Then there exists $C_1, C_2 > 0$ (depending only on $K$) such that for all $t > 0$,
			$$
			\P\left[\eta^1_t(x) = \eta^2_t(x) \text{  for all } x \notin [-C_1t - n, C_1t + n]\right] \ge 1 - \exp(-C_2t).
			$$
		\end{lemma}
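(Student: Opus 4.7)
The plan is to exploit the finite propagation speed of discrepancies under the basic coupling, which is available because the jump distribution $p$ has finite range $R := \max\{|v| : p(v) > 0\} < \infty$. Let $D_s := \{x \in \Z : \eta^1_s(x) \ne \eta^2_s(x)\}$, and note that $D_0 \subset \{-n, \dots, n\}$ by hypothesis. The first observation is that $D_\cdot$ spreads only locally: a clock ring at a pair $(x, x+v)$ only alters values at $x$ and $x+v$, and if $\eta^1$ and $\eta^2$ agreed on \emph{both} of those sites just before the ring, then the same transition (either both jump or neither) occurs in both processes, so they continue to agree there. Consequently, a new discrepancy can appear at a site $y \notin D_{s-}$ only when a clock fires at a pair containing $y$ with the other endpoint already in $D_{s-}$; in particular, $D_s \subset D_{s-} \cup (D_{s-} + [-R, R])$.

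Next I will track the rightmost discrepancy $M_s := \sup D_s$ (with $\sup \emptyset := -\infty$), starting from $M_0 \le n$. By the local spread property, $M_s$ can strictly increase only when a clock $(x, x+v)$ fires with $\min(x, x+v) \le M_{s-}$ and $\max(x, x+v) > M_{s-}$, and such an increase is by at most $R$. Counting the pairs $(x, v)$ satisfying this condition for each fixed $v$, the instantaneous rate of these events, conditional on the past, is bounded pointwise by
\[
\mu := \sum_{v \in \Z} |v|\, p(v) \le R \sum_{v} p(v) < \infty,
\]
a constant depending only on $p$. Let $N_s$ count the $M$-increasing firings in $[0, s]$; then $M_s \le n + R N_s$, and $N_s$ has a predictable $\cF$-compensator bounded above by $\mu s$.

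I will then apply the standard supermartingale bound $\E[\exp(\theta N_s - \mu s(e^\theta - 1))] \le 1$, valid for every $\theta > 0$ by the pointwise rate estimate, followed by Markov's inequality and optimization in $\theta$, to obtain the Cram\'er-type estimate $\P[N_t \ge K t] \le \exp(-I(K)\, t)$ with $I(K) > 0$ for every $K > \mu$. Fixing such a $K$, setting $C_1 := R K$, we get $\P[M_t > n + C_1 t] \le \exp(-I(K) t)$. The symmetric argument applied to $-\inf D_s$ (using that the same rate bound controls leftward growth) and a union bound yield
\[
\P\bigl[\,\exists\, x \notin [-C_1 t - n,\, C_1 t + n]\;:\;\eta^1_t(x) \ne \eta^2_t(x)\bigr] \le 2 \exp(-I(K) t),
\]
and absorbing the factor $2$ into a slightly smaller $C_2$ concludes the proof. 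The only mildly delicate point is the stochastic Poisson-domination of $N_s$ from its compensator bound, but this is entirely standard (and can alternatively be obtained by direct thinning from a rate-$\mu$ marked Poisson process driving the basic coupling), so no genuine obstacle is expected.
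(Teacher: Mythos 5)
Your proof is correct and follows essentially the same strategy as the paper's: bound the speed at which the discrepancy set can spread by noting that the basic coupling only creates new disagreements at sites adjacent (within range $R$) to existing ones, dominate the number of boundary-crossing clock rings by a Poisson process of bounded rate, and apply a Poisson tail bound. The only cosmetic difference is that the paper tracks a single quantity $M_t = \sup\{m : (\eta^1_t(-m),\eta^1_t(m)) \ne (\eta^2_t(-m),\eta^2_t(m))\}$ combining both sides, whereas you treat $\sup D_s$ and $-\inf D_s$ separately and union-bound; you are also somewhat more explicit about the rate bound $\mu = \sum_v |v|p(v)$ and the exponential-compensator step, which the paper leaves as "a standard tail estimate."
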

		
		\begin{proof}
			Let $M_t = \sup \{m \in \N : (\eta^1_t(-m), \eta^1_t(m)) \ne (\eta^2_t(-m), \eta^2_t(m))\}$. Then the process $(M_t, \eta^1_t, \eta^2_t)$ is jointly Markov, and since the jump distribution $p$ has finite support, the process $M_t$ is stochastically dominated by $N_t + n$, where $N_t, t \ge 0$ is a Poisson counting process with rate $K^2$. The bound in the lemma then follows from a standard tail estimate on Poisson random variables.
		\end{proof}
		With these two lemmas, we can verify the remaining two conditions required by \Cref{T:KPZ-fixed-point-general}. 
		\begin{lemma}
			\label{L:nnnepmon-limit}
			Let all notations be as in \Cref{thm:AEP} and \Cref{L:preliminary-tight}.
			\begin{enumerate}
				\item (Comparability) Let $s \le r < t \in \Q_+$, and $n, n'\in \N$, $h, h'\in \scH$. Then for $(f, g) = (B_{s, n} + h, B_{r, n'} + h')$ from \eqref{E:Bsnh-joint-limit}, we have that
				$$
				\sup (g - \cK_{s, r} f) < \infty.
				$$
				\item (Monotonicity) In the setup of part $1$, if
				$
				g \le \cK_{s, r} f$ then $ \cK_{r, t} g \le \cK_{s, t} f.
				$
				\item For any finite set $T_*\subset \Q_+$, consider a joint subsequential limit $(\tilde \cK_{0, t} f_i)_{t \in T_*, i\in\llbracket 1,k\rrbracket}$ of $(\cK_{0, t}^\ep f^\ep_i)_{t \in T_*, i\in\llbracket 1,k\rrbracket}$ along with \eqref{E:Bsnh-joint-limit}. Then for each $i\in\llbracket 1, k\rrbracket$ and $t_*=\min T_*$, we can find a (random) sequence $\{h_m\}_{m\in \N} \in \scH$ such that 
				\begin{align*}
					&B_{t_*, 1} + h_m \cvgdown \tilde \cK_{0, t_*} f_i, \qquad &&\text{as $m\to\infty$, almost surely in $\UC$,} \\
					&\cK_{t_*, t} (B_{t_*, 1} + h_m) \ge \tilde \cK_{0, t} f_i \qquad &&\text{for each } m \in \N, t \in T_*, t>t_*.
				\end{align*}
			\end{enumerate}
			
		\end{lemma}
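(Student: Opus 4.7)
The plan is to prove all three parts by working in the prelimit, invoking the approximate-monotonicity estimate \Cref{lem:nnnepmon}, and then passing to the subsequential limit. Throughout we write $f^\ep := B_{s,n}^\ep[h]$ and $g^\ep := B_{r,n'}^\ep[h']$, and use the AEP semigroup identity $\cK^\ep_{0,r} = \cK^\ep_{s,r}\circ \cK^\ep_{0,s}$.

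For Part 1, the key observation is a clean algebraic cancellation. The AEP semigroup combined with shift commutativity gives
$$
\cK^\ep_{s,r}(B^\ep_{s,1}) = \cK^\ep_{0,r}f_0^\ep - \cK^\ep_{0,s}f_0^\ep(0),
$$
so $B^\ep_{r,1} - \cK^\ep_{s,r}(B^\ep_{s,1}) = \cK^\ep_{0,s}f_0^\ep(0) - \cK^\ep_{0,r}f_0^\ep(0)$ is a random constant independent of $x$. Since $B^\ep_{s,n}[h]$ agrees with $B^\ep_{s,1}$ outside a compact set determined by $n$ and $\supp h$, \Cref{L:simple-couple} ensures that with high probability $\cK^\ep_{s,r}(B^\ep_{s,n}[h])$ agrees with $\cK^\ep_{s,r}(B^\ep_{s,1})$ outside a window of size $O(r-s)+n+|\supp h|$; similarly $B^\ep_{r,n'}[h']$ agrees with $B^\ep_{r,1}$ outside a compact set. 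Hence $g^\ep - \cK^\ep_{s,r}f^\ep$ equals a random constant outside a bounded window, making $\sup(g^\ep - \cK^\ep_{s,r}f^\ep)$ tight as $\ep\to 0$; passing to the limit yields $\sup(g - \cK_{s,r}f) < \infty$ a.s.

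For Part 2, assume $g \le \cK_{s,r}f$ almost surely in the joint limit (Skorokhod representation). For any $\delta>0$, $w>0$ and a rational discretization $\delta_\ep \to \delta$ with $\delta_\ep/2 \in \ep^{1/2}\Z$, joint convergence gives $g^\ep \le \cK^\ep_{s,r}f^\ep + \delta_\ep$ on $[-w,w]$ with probability tending to one; combined with the Part 1 tail control, this extends to all of $\R$ at the cost of enlarging the constant. Applying \Cref{lem:nnnepmon} (whose error, after the KPZ rescaling, vanishes in probability as $\ep\to 0$ on any compact window) to the two AEPs at time $r$ started from $g^\ep$ and $\cK^\ep_{s,r}f^\ep + \delta_\ep$, then using shift commutativity and the AEP semigroup, we obtain
$$
\cK^\ep_{r,t}g^\ep \le \cK^\ep_{s,t}f^\ep + \delta_\ep + \zeta^\ep
$$
uniformly on compact subsets of $\R$, where $\zeta^\ep \to 0$ in probability. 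Passing to the limit and sending $\delta\to 0$ gives $\cK_{r,t}g \le \cK_{s,t}f$ as desired.

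For Part 3, we first apply the same reasoning as in Part 1 (with $F_i^\ep := \cK^\ep_{0,t_*}f_i^\ep$ in place of $\cK^\ep_{s,r}f^\ep$): the tightness of $\sup(f_i^\ep - f_0^\ep)$ from the hypotheses of \Cref{thm:AEP}, combined with \Cref{lem:nnnepmon} and shift commutativity, gives tightness of $\sup(F_i^\ep - B^\ep_{t_*,1})$; hence $\sup\phi<\infty$ a.s.\ for $\phi := \tilde\cK_{0,t_*}f_i - B_{t_*,1}$. We then construct $h_m \in \scH$ as a measurable selection of piecewise-linear majorants of $\phi$ with rational breakpoints at mesh $1/m$ on $[-m,m]$, extended by a rational constant exceeding $\sup\phi$ outside; the resulting $h_m$ decrease pointwise in $m$ to $\phi$, so $B_{t_*,1}+h_m\cvgdown \tilde\cK_{0,t_*}f_i$ in $\UC$. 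The strict inequality $h_m>\phi$ on $[-m,m]$ and the tail control from Part 1 ensure $B^\ep_{t_*,1}[h_m] \ge F_i^\ep - \delta$ with high probability on compact windows, after which the Part 2 argument applied to $\cK^\ep_{t_*,t}$ gives $\cK_{t_*,t}(B_{t_*,1}+h_m) \ge \tilde\cK_{0,t}f_i$ for each $t\in T_*$ with $t>t_*$. The main obstacle is in Part 3: juggling the various approximation errors (the slack between $h_m$ and $\phi$, the Skorokhod discretization, and the \Cref{lem:nnnepmon} error) while ensuring measurability of $h_m$ with respect to the joint limit.
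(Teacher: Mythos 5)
Your Part 1 has a genuine gap, and it propagates to Part 2. The issue is the scaling of the window in \Cref{L:simple-couple}. That lemma gives a disagreement window that grows \emph{linearly} in the unrescaled time: $[-C_1 t - n, C_1 t + n]$. With the KPZ rescaling $t \mapsto 2\ep^{-3/2}(r-s)$, $x \mapsto \ep x / 2$, this window has rescaled size of order $\ep^{-1/2}(r-s) + n + |\supp h|$, not $O(r-s) + n + |\supp h|$ as you claim. The disagreement window \emph{diverges} as $\ep \to 0$, so the bulk region where $\cK^\ep_{s,r}(B^\ep_{s,n}[h])$ and $\cK^\ep_{s,r}(B^\ep_{s,1})$ may differ is not compact, and "agrees up to a random constant outside a bounded window" fails. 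Your algebraic cancellation ($B^\ep_{r,1} - \cK^\ep_{s,r}B^\ep_{s,1}$ is a constant) is a nice observation and is consistent with what the paper does, but it does not control the difference inside this growing window. Without control there, tightness of $\sup(g^\ep - \cK^\ep_{s,r} f^\ep)$ does not follow. The paper's route is unavoidable here: introduce the dominance $B^\ep_{s,1} \le B^\ep_{s,n}[h+Q_\ep]$ (with tight constants $Q_\ep$) so that \Cref{lem:nnnepmon} controls the ordered difference over the growing bulk $[-\ep^{-2},\ep^{-2}]$ by $\ep^{1/2}\log(\ep^{-1})M_\ep \to 0$, and reserve \Cref{L:simple-couple} only for the far tails $|x| > \ep^{-2}$.

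Part 2 has an additional, independent issue. Even granting Part 1, the step "combined with the Part 1 tail control, this extends to all of $\R$ at the cost of enlarging the constant" does not close. Tightness of $\sup(g^\ep - \cK^\ep_{s,r}f^\ep)$ gives a bound $g^\ep \le \cK^\ep_{s,r}f^\ep + M$ for a tight but nonvanishing $M$; propagating that through \Cref{lem:nnnepmon} yields $\cK_{r,t}g \le \cK_{s,t}f + M$, not the desired inequality with zero on the right. You need the extra constant to tend to zero, and the uniform-on-compact convergence from the joint Skorokhod representation only controls the difference on compact windows, not on the divergent $\ep^{-1/2}$-window where \Cref{L:simple-couple} is silent. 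The paper's truncation device $w_m$ is the mechanism that pins the relevant supremum to a compact set in the prelimit (since $g + w_m$ decays rapidly outside $[-m-1,m+1]$), and then $S_m = \sup(g + w_m - \cK_{s,r}f) \to \sup(g - \cK_{s,r}f) \le 0$ as $m\to\infty$ recovers the constant-free inequality. Your Part 3 sketch is closer to the paper's, and you correctly flag the approximation-bookkeeping issues, but it still leans on the same tail-extension step from Part 2 that is not justified as written.
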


		\begin{proof}
			For part $1$, since $\sup(g-B_{r,1})<\infty$, it suffices to show that $\sup (B_{r, 1} - \cK_{s, r} f) < \infty$. Moreover, $B_{r, 1} = \cK_{s, r} B_{s, 1} - \cK_{s, r} B_{s, 1}(0)$ by definition, so it suffices to show that $\sup(\cK_{s, r} B_{s, 1} - \cK_{s, r} f) < \infty$.
			
			For $\ep > 0$, let $Q_\ep = 1 + \lceil \sup B^\ep_{s, 1} - B^\ep_{s, n}[h] \rceil$. The sequence $Q_\ep, \ep > 0$ is tight. Let $Q$ be a subsequential limit of $Q_\ep$ taken jointly with \eqref{E:Bsnh-joint-limit}. We will use \Cref{lem:nnnepmon} and \Cref{L:simple-couple} to show that 
			\begin{equation}
				\label{E:KsrBsr}
				\cK_{s, r} B_{s, 1} \le \cK_{s, r}(f + Q) = \cK_{s, r}f + Q,
			\end{equation}
			from which the claim follows. Here the equality above uses the shift commutativity established in \Cref{L:preliminary-tight}. Observe that $B^\ep_{s, 1} \le B^\ep_{s, n}[h + Q_\ep]$. Therefore

			\begin{align}
				\nonumber
				\sup \cK_{s, r}^\ep B_{s, 1}^\ep - \cK_{s, r}^\ep B_{s,n}^\ep[h+Q_\ep] &= N_\ep + \max_{x \in [\ep^{-2}, \ep^{-2}]} \cK_{s, r}^\ep B_{s, 1}^\ep(x) - \cK_{s, r}^\ep B_{s,n}^\ep[h+Q_\ep](x) \\
				\nonumber
				&\le N_\ep + \ep^{1/2} \log(\ep^{-1}) M_\ep,
			\end{align}
			where $M_\ep, \ep > 0$ is a tight collection of random variables and $N_\ep \cvgp 0$ with $\ep$. Here the convergence of $N_\ep$ to $0$ uses \Cref{L:simple-couple} and the tightness of $M_\ep$ uses \Cref{lem:nnnepmon}. The inequality \eqref{E:KsrBsr} follows by taking $\ep\to 0$.
			
			For part $2$, note that the above arguments actually imply the stronger claim that the collection of random variables
			$$
			(S^\ep)_{\ep>0} := (\sup (B_{r,n'}^\ep[h'] - \cK_{s, r}^\ep B_{s,n}^\ep[h]  ))_{\ep > 0}
			$$
			is tight. 
			If we take any subsequential limit $S$, jointly with \eqref{E:Bsnh-joint-limit}, then by applying \Cref{lem:nnnepmon,L:simple-couple} as before we would have that $\cK_{r, t} g \le \cK_{s, t} f + S$. This would imply part $2$ if we could show that $S = \sup (g - \cK_{s,r}f)$. However, this is not obvious and so we work around this difficulty with a truncation argument.
			
			For each $m\in\N$, define $w_m \in \scH$ by letting $w_m(x) = -1/m$ when $x \in [-m, m]$, and $w_m(x) = - m$ when $x \in [-m-1, m+1]^c$, and letting $w_m$ be linear on $[-m-1, -m]$ and $[m, m + 1]$. Then similarly, the collection of random variables
			$$
			(S^\ep_m)_{\ep>0} := (\sup (B_{r,n'}^\ep[h'+w_m] - \cK_{s, r}^\ep B_{s,n}^\ep[h]  ))_{\ep > 0}
			$$

			is also tight for fixed $m$.
			Let $(S_m)_{m\in\N}$ be a subsequential limit of $(S^\ep_m)_{m\in\N}$ as $\ep\to 0$, jointly with \eqref{E:Bsnh-joint-limit}.

			Using the tightness of $(S^\ep)_{\ep>0}$,  almost surely $S_m=\sup (g+w_m - \cK_{s,r}f)$ for all $m$ large enough, and thus $S_m \to \sup (g - \cK_{s,r}f)$ as $m \to \infty$. 
			
			Our approximations ensure that for all $m \in \N$ and any $\gamma\in \Q$, $\gamma>S_m$, we have
			\[
			B_{r,n'}^\ep[h'+w_m] \le B_{r,n'}^\ep[h'],\qquad B_{r,n'}^\ep[h'+w_m] \le \cK_{s, r}^\ep B_{s,n}^\ep[h + \gamma],
			\]
			with probability $\to 1$ as $\ep \to 0$.
			Then by using \Cref{lem:nnnepmon,L:simple-couple} as before we have that
			\begin{equation}   \label{eq:cKbd2}
				\cK_{r, t} (g+w_m) \le \cK_{r, t} g, \qquad \cK_{r, t} (g+w_m) \le \cK_{s, t} (f + \gamma).    
			\end{equation}
			
			Now, as $m \to \infty$, $g+w_m \cvgd g$ and so by \Cref{P:KPZ-FP-cvge}, $\cK_{r, t} (g+w_m) \cvgd \cK_{r, t} g$. By the first inequality in \eqref{eq:cKbd2} and \Cref{L:abstract-blomp}, this convergence must also hold in probability. 
			On the other hand, using shift commutativity of $\cK_{s, t}$, the second inequality in \eqref{eq:cKbd2} implies that $\cK_{r, t} (g+w_m) \le \cK_{s, t} f + S_m$.
			By sending $m\to\infty$ we have that
			$$
			\cK_{r, t} g \le \cK_{s, t}f + \sup(g-\cK_{s,r}f),
			$$
			which yields part $2$.
			
			For part $3$, we let $R^\ep = \sup(f_i^\ep -f_0^\ep)$ and note that $(R^\ep)_{\ep>0}$ is again tight, as assumed in \Cref{thm:AEP}. Then using \Cref{lem:nnnepmon,L:simple-couple} as in previous parts, 
			$$
			\cK_{0, t_*}^\ep f_i^\ep - B_{t_*, 1}^\ep=\cK_{0, t_*}^\ep f_i^\ep - \cK_{0, t_*}^\ep f_0^\ep + \cK_{0, t_*}^\ep f_0^\ep(0)  \le R^\ep + \cK_{0, t_*}^\ep f_0^\ep(0) + E_\ep,
			$$
			where $E_\ep \cvgp 0$ as $\ep \to 0$. 
			Note that $\cK_{0, t_*}^\ep f_0^\ep(0)$ is also tight according to \Cref{T:nnnnasep-cvg}.
			Now let $R$ be a joint subsequential limit of $R^\ep+ \cK_{0, t_*}^\ep f_0^\ep(0)$ taken with the other random variables. The above display together with the monotonicity argument from previous parts (again using \Cref{lem:nnnepmon,L:simple-couple}) implies that if $h \in \scH$ is any function with $h(x) \ge R + 1$ for all large enough $|x|$, and $B_{t_*, 1}(x) + h(x) > \tilde \cK_{0, t_*} f_i(x)$ for all $x$, then
			$$
			\cK_{t_*, t} (B_{t_*, 1} + h) \ge \tilde \cK_{0, t} f_i,
			$$
			for all $t \in T_*$, $t>t_*$. Finally, by choosing any decreasing sequence $\{h_m\}_{m\in\N}$ satisfying $B_{t_*, 1} + h_m \cvgdown \tilde \cK_{0, t_*} f_i$ yields the conclusion.  (Here the existence of such a sequence is guaranteed by the fact that both $B_{t_*, 1}$ and $\cK_{0, t_*} f_i$ are continuous, due to the fact that $\cK_{0, t_*} f_i\eqd \fh_{t_*}f_i$ by \Cref{T:nnnnasep-cvg}, and that $B_{t_*, 1}$ is a Brownian motion.)
		\end{proof}
		
		\begin{proof}[Proof of \Cref{thm:AEP}]

			Without loss of generality, below we assume that $T\subset\Q_+$ and is finite.

			Take a joint ($\ep\to 0$) subsequential limit $\{\tilde \cK_{0, t} f_i\}_{t \in \Q_+, i \in \llbracket 1,k\rrbracket}$ of $\{\cK^\ep_{0, t}f_i^\ep\}_{t \in \Q_+, i \in \llbracket 1,k\rrbracket}$, together with the random variables leading to \eqref{E:Bsnh-joint-limit}. By \Cref{L:preliminary-tight} and parts $1, 2$ of \Cref{L:nnnepmon-limit}, the collection of random variables in \eqref{E:Bsnh-joint-limit} satisfies the conditions of \Cref{T:KPZ-fixed-point-general}, and hence can be given by a directed landscape $\cL$ as in that theorem.
			
			Take any $t_*\in \Q_+$ and $t_*<\min T$.
			Now, using part 3 of \Cref{L:nnnepmon-limit} (with $T_*=T\cup\{t_*\}$), for each $i \in \llbracket 1,k\rrbracket$, we can find random sequences $\{h_m\}_{m\in \N}$ such that for any $t\in T$,
			$$
			B_{t_*, 1} + h_m \cvgdown \tilde \cK_{0, t_*} f_i, \quad (B_{t_*, 1} + h_m) \diamond \cL_{t_*, t} \ge  \tilde \cK_{0, t} f_i.
			$$
			Noting that $(B_{t_*, 1} + h_m) \diamond \cL_{t_*, t} \cvgd  \tilde \cK_{0, t} f_i$ as $m \to \infty$, we can apply \Cref{L:abstract-blomp} to conclude that $(B_{t_*, 1} + h_m) \diamond \cL_{t_*, t} \cvgp  \tilde \cK_{0, t} f_i$ as $m \to \infty$, and so
			$$
			\tilde \cK_{0, t} f_i = (\tilde \cK_{0, t_*} f_i) \diamond \cL_{t_*, t}.
			$$
			Finally, taking $t_* \to 0$ and using that $(\tilde \cK_{0, t_*} f_i) \diamond \cL_{t_*,t} \cvgp f_i \diamond \cL_{0, t}$ in this limit yields the result.
		\end{proof}
		
		\subsection{Monotonicity in general AEPs}
		\label{S:aep-mono}
		In the final subsection we prove approximate monotonicity for general AEPs in the basic coupling, \Cref{lem:nnnepmon}. The proof is based on an idea from \cite[Appendix D.3]{ACH} for the stochastic six-vertex model.
		
        Throughout this subsection, we consider AEPs with the jump distribution $p$, such that $p(1)>1/2$, $p(i)=0$ for all $|i|>K$, and $|p(i)|\le K$ for all $i\le K$, for some $K\in \N$. We use $C,c>0$ to denote constants depending only on $K$, and their values may change from line to line.
				
		The basic coupling of $(\eta^-_t)_{t\ge 0}$ and $(\eta^+_t)_{t\ge 0}$ (with height functions $(h^-_t)_{t\ge 0}$ and $(h^+_t)_{t\ge 0}$) in \Cref{lem:nnnepmon} can be encoded by a multi-type exclusion process $(\gamma_t)_{t\ge 0}$, where for each $t\ge 0$ and $x\in\Z$, we let
		\begin{equation*}
			\gamma_t(x)=\begin{cases}
				1,\quad & \text{if } \eta^-_t(x)=\eta^+_t(x)=1,\\
				0,\quad & \text{if } \eta^-_t(x)=\eta^+_t(x)=0,\\
				2,\quad & \text{if } \eta^-_t(x)=0, \; \eta^+_t(x)=1,\\
				3,\quad & \text{if } \eta^-_t(x)=1, \; \eta^+_t(x)=0.
			\end{cases}
		\end{equation*}
		We interpret the values $1$, $0$, $2$, $3$ as representing an (ordinary) particle, a hole, a type-2 particle', and a type-3 particle'.
		Each particle, type-2 particle, and type-3 particle jumps according to $p$, whenever the target location is empty.
		In addition, for any $x\in \Z$ and $v\in\Z\setminus\{0\}$ with $p(v)>0$, 
		\begin{itemize}
			\item if there is a particle at $x$, and a type-2 or type-3 particle at $x+v$, they swap with rate $p(v)$;
			\item if there is a type-2 (resp.~type-3) particle at $x$ and a type-3 (resp.~type-2) particle at $x+v$, with rate $p(v)$ the type-2 (resp.~type-3) particle moves to $x+v$ to annihilate with the type-3 (resp.~type-2) particle, i.e., after the jump there is a hole at site $x$ and an ordinary particle at $x+v$.
		\end{itemize}
		Below we use the notion of a \emph{move} to denote one operation under these dynamics, i.e. a jump, a swap, or an annihilation. 

		\begin{figure}[t]
			\begin{subfigure}[b]{\textwidth}
				\centering
				\resizebox{\textwidth}{!}{
					\begin{tikzpicture}
						\draw [->] plot [smooth] coordinates {(-19,2.4) (-18,3) (-12,3) (-11,2.4)};
						
						\draw [line width=1.2pt, opacity=0.3] (-26.5,0) -- (-3.5,0);
						\draw [fill=white] (-26,0) circle (7.5pt);
						\draw [fill=blue] (-25,0) circle (7.5pt);
						\draw [fill=red] (-24,0) circle (7.5pt);
						\draw [fill=white] (-23,0) circle (7.5pt);
						\draw [fill=black] (-22,0) circle (7.5pt);
						\draw [fill=red] (-21,0) circle (7.5pt);
						\draw [fill=white] (-20,0) circle (7.5pt);
						\draw [fill=white] (-19,0) circle (7.5pt);
						\draw [fill=black] (-18,0) circle (7.5pt);
						\draw [fill=blue] (-17,0) circle (7.5pt);
						\draw [fill=white] (-16,0) circle (7.5pt);
						\draw [fill=red] (-15,0) circle (7.5pt);
						\draw [fill=red] (-14,0) circle (7.5pt);
						\draw [fill=blue] (-13,0) circle (7.5pt);
						\draw [fill=black] (-12,0) circle (7.5pt);
						\draw [fill=red] (-11,0) circle (7.5pt);
						\draw [fill=red] (-10,0) circle (7.5pt);
						\draw [fill=black] (-9,0) circle (7.5pt);
						\draw [fill=white] (-8,0) circle (7.5pt);
						\draw [fill=blue] (-7,0) circle (7.5pt);
						\draw [fill=black] (-6,0) circle (7.5pt);
						\draw [fill=red] (-5,0) circle (7.5pt);
						\draw [fill=white] (-4,0) circle (7.5pt);

						\draw [line width=1.2pt, opacity=0.3] (-26.5,2) -- (-3.5,2);
						\draw [fill=white] (-26,2) circle (7.5pt);
						\draw [fill=blue] (-25,2) circle (7.5pt);
						\draw [fill=red] (-24,2) circle (7.5pt);
						\draw [fill=white] (-23,2) circle (7.5pt);
						\draw [fill=black] (-22,2) circle (7.5pt);
						\draw [fill=red] (-21,2) circle (7.5pt);
						\draw [fill=white] (-20,2) circle (7.5pt);
						\draw [fill=red] (-19,2) circle (7.5pt);
						\draw [fill=black] (-18,2) circle (7.5pt);
						\draw [fill=blue] (-17,2) circle (7.5pt);
						\draw [fill=white] (-16,2) circle (7.5pt);
						\draw [fill=red] (-15,2) circle (7.5pt);
						\draw [fill=red] (-14,2) circle (7.5pt);
						\draw [fill=blue] (-13,2) circle (7.5pt);
						\draw [fill=black] (-12,2) circle (7.5pt);
						\draw [fill=white] (-11,2) circle (7.5pt);
						\draw [fill=red] (-10,2) circle (7.5pt);
						\draw [fill=black] (-9,2) circle (7.5pt);
						\draw [fill=white] (-8,2) circle (7.5pt);
						\draw [fill=blue] (-7,2) circle (7.5pt);
						\draw [fill=black] (-6,2) circle (7.5pt);
						\draw [fill=red] (-5,2) circle (7.5pt);
						\draw [fill=white] (-4,2) circle (7.5pt);

						\begin{small}
							\draw [red](-24,-0.5) node[anchor=north]{$-5$};
							\draw [red](-21,-0.5) node[anchor=north]{$-1$};
							\draw [red](-15,-0.5) node[anchor=north]{$2$};
							\draw [red](-14,-0.5) node[anchor=north]{$3$};
							\draw [red](-11,-0.5) node[anchor=north]{$4$};
							\draw [red](-10,-0.5) node[anchor=north]{$8$};
							\draw [red](-5,-0.5) node[anchor=north]{$10$};
							
							\draw [blue](-25,-0.5) node[anchor=north]{$-2$};
							\draw [blue](-17,-0.5) node[anchor=north]{$0$};
							\draw [blue](-13,-0.5) node[anchor=north]{$2$};
							\draw [blue](-7,-0.5) node[anchor=north]{$3$};

							\draw [red](-24,1.5) node[anchor=north]{$-5$};
							\draw [red](-21,1.5) node[anchor=north]{$-1$};
							\draw [red](-19,1.5) node[anchor=north]{$2$};
							\draw [red](-15,1.5) node[anchor=north]{$3$};
							\draw [red](-14,1.5) node[anchor=north]{$4$};
							\draw [red](-10,1.5) node[anchor=north]{$8$};
							\draw [red](-5,1.5) node[anchor=north]{$10$};
							
							\draw [blue](-25,1.5) node[anchor=north]{$-2$};
							\draw [blue](-17,1.5) node[anchor=north]{$0$};
							\draw [blue](-13,1.5) node[anchor=north]{$2$};
							\draw [blue](-7,1.5) node[anchor=north]{$3$};
						\end{small}
						
				\end{tikzpicture}}
			\end{subfigure}
			\par\bigskip
			\begin{subfigure}[b]{\textwidth}
				\centering
				\resizebox{\textwidth}{!}{
					\begin{tikzpicture}
						
						\draw [->] plot [smooth] coordinates {(-21,2.4) (-20.5,3) (-17.5,3) (-17,2.4)};
						
						\draw [line width=1.2pt, opacity=0.3] (-26.5,0) -- (-3.5,0);
						\draw [fill=white] (-26,0) circle (7.5pt);
						\draw [fill=blue] (-25,0) circle (7.5pt);
						\draw [fill=red] (-24,0) circle (7.5pt);
						\draw [fill=white] (-23,0) circle (7.5pt);
						\draw [fill=black] (-22,0) circle (7.5pt);
						\draw [fill=white] (-21,0) circle (7.5pt);
						\draw [fill=white] (-20,0) circle (7.5pt);
						\draw [fill=red] (-19,0) circle (7.5pt);
						\draw [fill=black] (-18,0) circle (7.5pt);
						\draw [fill=black] (-17,0) circle (7.5pt);
						\draw [fill=white] (-16,0) circle (7.5pt);
						\draw [fill=red] (-15,0) circle (7.5pt);
						\draw [fill=red] (-14,0) circle (7.5pt);
						\draw [fill=blue] (-13,0) circle (7.5pt);
						\draw [fill=black] (-12,0) circle (7.5pt);
						\draw [fill=white] (-11,0) circle (7.5pt);
						\draw [fill=red] (-10,0) circle (7.5pt);
						\draw [fill=black] (-9,0) circle (7.5pt);
						\draw [fill=white] (-8,0) circle (7.5pt);
						\draw [fill=blue] (-7,0) circle (7.5pt);
						\draw [fill=black] (-6,0) circle (7.5pt);
						\draw [fill=red] (-5,0) circle (7.5pt);
						\draw [fill=white] (-4,0) circle (7.5pt);

						\draw [line width=1.2pt, opacity=0.3] (-26.5,2) -- (-3.5,2);
						\draw [fill=white] (-26,2) circle (7.5pt);
						\draw [fill=blue] (-25,2) circle (7.5pt);
						\draw [fill=red] (-24,2) circle (7.5pt);
						\draw [fill=white] (-23,2) circle (7.5pt);
						\draw [fill=black] (-22,2) circle (7.5pt);
						\draw [fill=red] (-21,2) circle (7.5pt);
						\draw [fill=white] (-20,2) circle (7.5pt);
						\draw [fill=red] (-19,2) circle (7.5pt);
						\draw [fill=black] (-18,2) circle (7.5pt);
						\draw [fill=blue] (-17,2) circle (7.5pt);
						\draw [fill=white] (-16,2) circle (7.5pt);
						\draw [fill=red] (-15,2) circle (7.5pt);
						\draw [fill=red] (-14,2) circle (7.5pt);
						\draw [fill=blue] (-13,2) circle (7.5pt);
						\draw [fill=black] (-12,2) circle (7.5pt);
						\draw [fill=white] (-11,2) circle (7.5pt);
						\draw [fill=red] (-10,2) circle (7.5pt);
						\draw [fill=black] (-9,2) circle (7.5pt);
						\draw [fill=white] (-8,2) circle (7.5pt);
						\draw [fill=blue] (-7,2) circle (7.5pt);
						\draw [fill=black] (-6,2) circle (7.5pt);
						\draw [fill=red] (-5,2) circle (7.5pt);
						\draw [fill=white] (-4,2) circle (7.5pt);

						\begin{small}
							\draw [red](-24,-0.5) node[anchor=north]{$-5$};
							\draw [red](-19,-0.5) node[anchor=north]{$-1$};
							\draw [red](-15,-0.5) node[anchor=north]{$2$};
							\draw [red](-14,-0.5) node[anchor=north]{$3$};
							\draw [red](-10,-0.5) node[anchor=north]{$4$};
							\draw [red](-5,-0.5) node[anchor=north]{$10$};
							
							\draw [blue](-25,-0.5) node[anchor=north]{$-2$};
							\draw [blue](-13,-0.5) node[anchor=north]{$0$};
							\draw [blue](-7,-0.5) node[anchor=north]{$3$};

							\draw [red](-24,1.5) node[anchor=north]{$-5$};
							\draw [red](-21,1.5) node[anchor=north]{$-1$};
							\draw [red](-19,1.5) node[anchor=north]{$2$};
							\draw [red](-15,1.5) node[anchor=north]{$3$};
							\draw [red](-14,1.5) node[anchor=north]{$4$};
							\draw [red](-10,1.5) node[anchor=north]{$8$};
							\draw [red](-5,1.5) node[anchor=north]{$10$};
							
							\draw [blue](-25,1.5) node[anchor=north]{$-2$};
							\draw [blue](-17,1.5) node[anchor=north]{$0$};
							\draw [blue](-13,1.5) node[anchor=north]{$2$};
							\draw [blue](-7,1.5) node[anchor=north]{$3$};
						\end{small}
						
					\end{tikzpicture}
				}
			\end{subfigure}
			
			\caption{Top panel: an illustration of the evolution of labels under one jump of a type-2 particle. 
				Bottom panel: an illustration of the evolution of labels under one annihilation, where we randomly choose a type-2 label ($8$) and a type-3 label ($2$) to be removed.
				(In both panels, black/red/blue/white balls denote ordinary particles/type-2 particles/type-3 particles/holes.)}  
			\label{fig:move}
		\end{figure}
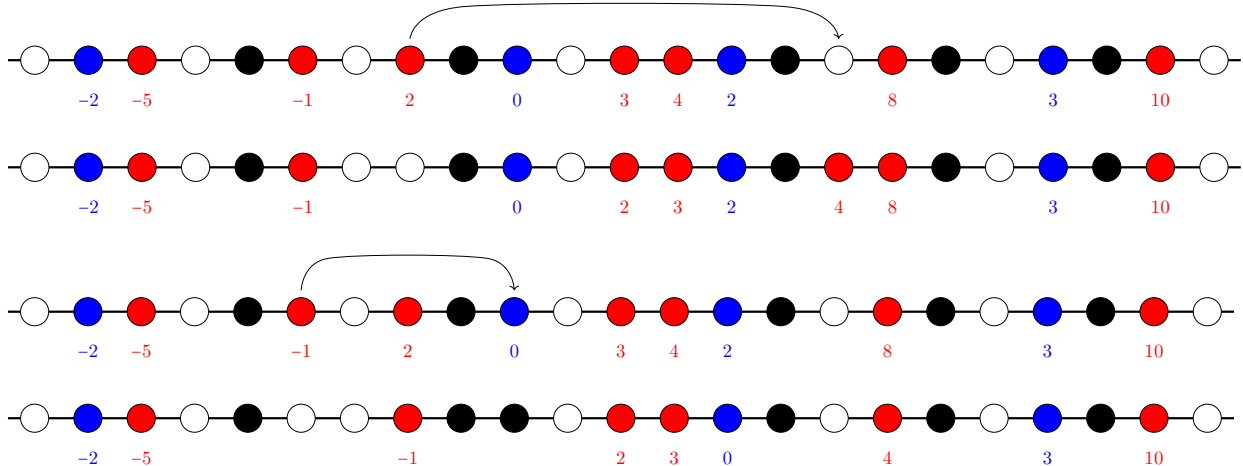

		\noindent\textbf{Labels.} For technical reasons to handle non-nearest neighbor moves, we make an important augmentation to the above dynamics, via \emph{labels}.
		
		We assign each type-2 and each type-3 particle a distinct real number, which we call its label.\footnote{As will be seen shortly, we only care about the relative ordering of the labels, rather than the precise values.} We require that labels on type-$2$ particles increase from left to right, as must labels on type-$3$ particles.
		
		Whenever a move happens, the labels evolve under the following rules:
		\begin{itemize}
			\item Under any move, labels for all but finitely many particles remain unchanged.
			\item The type-2 (resp.~type-3) labels are always increasing from left to right.
			\item The set of all type-2 (resp.~type-3) labels either remains the same, or has one label removed.
		\end{itemize}
		These rules completely determine the evolution of the labels under either a jump or swap (see the top panel of \Cref{fig:move}).
		However, when an annihilation happens, we have some freedom in choosing which pair of labels (one type-2 and one type-3) is removed. We make this choice randomly. Whenever an annihilation happens from a particle jumping from site $x$ to $x+v$, we consider all labels in $\llbracket x-K, x+K\rrbracket$ before the annihilation.
		Note that there is at least one type-2 label and at least one type-3 label among this set.
		We uniformly choose one type-2 label to remove, and uniformly choose one type-3 label to remove.
		The remaining type-2 (resp.~type-3) labels are assigned to the remaining type-2 (resp.~type-3) particles in the window $\llbracket x-K, x+K\rrbracket$, in the unique way that ensures the labels increase from left to right. (See the bottom panel of \Cref{fig:move}.)
		
		\noindent\textbf{Takeovers.}
		For any type-2 label $X$, a `takeover' for $X$ refers to the event where a type-3 label moves from its right to its left for the first time. 
		In addition, in an annihilation where another type-2 label and a type-3 label are removed, if the type-2 label was to the left of $X$ before the annihilation, and the type-3 label was to the right of $X$ before the annihilation, and has never been to the left of $X$ before, then this is also a `takeover' for $X$. This is because we think of the type-3 label first going to the left of $X$ before being removed.

		We note that multiple takeovers for a given type-2 label $X$ can happen as a consequence of one move.
		Moreover, $X$ may have a takeover when an annihilation happens within distance $R$ of $X$, due to the randomized label removal procedure at annihilation.

		For any type-3 label, we can define takeovers analogously, to be the event that a type-2 label goes from its left to its right for the first time.

		\begin{lemma}  \label{lem:anni}
			There are constants $\hat{C}>0$ and $0<\theta<1$ depending only on $K$, such that for any type-2 label $X$ and $m\in\N$, the probability that at least $m$ takeovers of $X$ happen (before $X$ is removed in an annihilation) is at most $\hat{C}\theta^m$.
		\end{lemma}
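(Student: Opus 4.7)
My plan is to prove \Cref{lem:anni} by showing that each takeover of $X$ is accompanied, with constant probability, by an annihilation event that removes $X$ itself. The strong Markov property applied at successive takeover times will then yield a geometric tail bound on the number of takeovers until removal of $X$, which is exactly the desired $\hat{C}\theta^m$ bound.

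To this end, I would classify the elementary move causing the $k$-th takeover of $X$ into two types. Type (T1): a single jump or swap physically moves a type-3 label $Z$ from the right of $X$ to the left of $X$. Type (T2): an annihilation move occurs at a site within distance $R$ of $X$, and the randomized label removal transfers some type-3 label from the right of $X$ to the left of $X$. Any takeover move must be of one of these two types by the definition of the dynamics and of takeovers.

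For (T2) takeovers the argument is direct. When an annihilation occurs at a site within distance $R$ of $X$, the removal rule selects one type-2 label uniformly at random among the at most $2R+1$ type-2 labels in the window $\llbracket x - R, x + R\rrbracket$. Since $X$ lies in this window, it is selected with probability at least $1/(2R+1)$, in which case $X$ is removed instead of a takeover occurring. For (T1) takeovers, I would inspect the configuration just before the triggering move $M$: because any elementary move has range at most $r$, the type-3 label $Z$ responsible for the takeover sits at offset $u$ from $X$ with $|u| \le r$. I would then compare the rate of $M$ against the rates of competing $X$-removing moves from the same configuration. If $p(u) + p(-u) > 0$, then $X$ and $Z$ can directly annihilate at rate $p(u) + p(-u)$, bounded below by a constant depending only on $p$, and a competing-exponentials argument shows that with probability bounded below, this annihilation fires before $M$.

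The main obstacle is the subcase of (T1) where $p(u) + p(-u) = 0$, so there is no direct annihilation rate between $X$ and $Z$. Here I would use the hypothesis that $\{v : p(v) + p(-v) > 0\}$ additively generates $\Z$ to produce a bounded-length sequence of auxiliary moves, of total length at most $r^r$, that rearranges the content of the window into a configuration in which $X$ and some type-3 within distance $r$ can directly annihilate. The specific choice $R = 10 r^r$ provides ample workspace so that the whole rearrangement can be carried out inside $\llbracket x_0 - R, x_0 + R\rrbracket$, without interference from events outside the window. Each such sequence has probability bounded below uniformly in the surrounding configuration, by a finite product of elementary-move probabilities. Putting the two cases together, there exists $\epsilon > 0$ depending only on $p$ such that, conditional on any takeover of $X$ and the history up to it, with probability at least $\epsilon$ the next local event near $X$ removes $X$. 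Iterating via the strong Markov property at successive takeover times stochastically dominates the takeover count by Geometric$(\epsilon)$, giving the conclusion with $\theta = 1 - \epsilon$.
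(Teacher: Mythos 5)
Your high-level strategy (show that each takeover is accompanied by a constant-probability removal opportunity, then iterate via strong Markov to get a geometric tail) is the same as the paper's, and your (T2) analysis — an annihilation inside the radius-$R$ window selects $X$ for removal with probability at least $1/(2R+1)$ — matches the paper's "ready-to-remove" estimate. However, there are two substantial gaps in your treatment of (T1) takeovers.

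First, the claim you invoke at the core of (T1) — that from an arbitrary configuration in which $X$ and a type-3 sit at an unsupported offset $|u| \le r$, one can construct a bounded-length sequence of legal moves inside $\llbracket x-R, x+R\rrbracket$ that culminates in an annihilation — is exactly the hard combinatorial content of the argument, and you assert it without proof. The paper devotes the entirety of \Cref{lem:longanni} to establishing a statement of this type, and does so for the carefully calibrated separation $|x-y| \in [R-2r, R-r)$ rather than $|u| \le r$: that range is chosen precisely so that the two semigroup structures $G_-$ and $G_+$ (generated by the negative and positive jumps respectively) cooperate arithmetically, and the reflection trick in the second scenario of the claim's proof has room to operate. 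Your version of the claim, with the type-3 adjacent to $X$, is a genuinely different combinatorial statement whose proof is not sketched, and the paper's stopping-time construction via $\tau_i$ exists exactly to reduce to the calibrated-distance case. Relatedly, your assertion that the rearrangement has "total length at most $r^r$" is unjustified and almost certainly wrong: the paper's bound on the number of moves comes from crude finiteness of configurations in an interval of length $\approx R = 10r^r$, which is potentially doubly exponential in $r$ (this is harmless — it is still a $p$-constant — but $r^r$ is not the right order).

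Second, your (T1) competing-exponentials step is phrased retrospectively: you inspect the configuration just before the triggering move $M$ and argue that "this annihilation fires before $M$." But you are conditioning on the fact that the $k$-th takeover occurred, so the event that the annihilation fired first has already been conditioned away. To make the argument forward-looking, you would have to stop at a genuine stopping time (the previous takeover, or time $0$), then show that from that configuration $X$ is removed with probability bounded below before the next takeover. The paper does this via the stopping time $\tau_i$ and the ready-to-remove dichotomy; note also that the paper only claims removal within $R$ \emph{more} takeovers after $\tau_i$ (not "at the next local event"), because the rearrangement itself may incidentally cause takeovers — your stronger claim that the next local event removes $X$ is both unestablished and stronger than needed.
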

		
		The above bound on takeovers is essentially reduced to the following statement on the probability of removal for a given label.
		
		\begin{lemma}  \label{lem:longanni}
			There is a constant $0<\vartheta<1$ depending only on $K$ such that the following is true.
			Take any $s\ge 0$. Conditional on the configuration $\gamma_s$, if there are sites $x, y\in \Z$ with $\gamma_s(x)=2, \gamma_s(y)=3$ and $|x-y|\le K$, 
			with probability $>\vartheta$ the type-2 label at $x$ will be removed before another takeover of it.
		\end{lemma}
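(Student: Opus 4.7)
Without loss of generality assume $y > x$, and let $d := y - x \in [R-2r, R-r)$. Write $X$ for the type-2 label originally at $x$, fix the window $W := \llbracket x - 2R,\, y + 2R \rrbracket$, and define $X_0$ to be the rightmost site in $[x, y)$ occupied by a type-2 particle under $\gamma_s$ (so $X_0 = x$ when no other type-2 lies in $(x,y)$), and $Y_0 \in (X_0, y]$ to be the leftmost site in $(X_0, y]$ occupied by a type-3 (well-defined since $\gamma_s(y) = 3$). By construction $(X_0, Y_0)$ contains only holes and ordinary particles. The plan is to exhibit an explicit event $E$ on $[s, s+1]$ with $\P[E \mid \gamma_s] \ge \vartheta = \vartheta(p) > 0$ on which $X$ is removed with zero takeovers.

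The event $E$ will be a scripted event asking that during $[s, s+1]$: (a) no Poisson clock associated with an edge crossing the boundary of $W$ fires; (b) inside $W$, only a prescribed sequence of $K_0 = K_0(p)$ clocks fires (in a prescribed order), and this sequence corresponds to a trajectory of $\gamma$ that first rearranges particles and holes inside $(X_0, Y_0]$ so as to drive $Y_0$ leftward one step at a time (via swaps with fresh ordinary particles at its left neighbor) until $Y_0$ sits at $X_0 + 1$, then finishes with the annihilating jump linking $X_0$ with $X_0 + 1$; (c) at this terminal annihilation, the uniform random label selection from $\llbracket X_0-R, X_0+R\rrbracket$ picks the label $X$ (possible since $|X_0-x| < d < R$). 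Throughout (b) the type-2 at $X_0$ is never moved and no type-3 label other than $Y_0$ is touched. A standard Poisson ordering computation gives $\P[(a) \cap (b) \mid \gamma_s] \ge c_p^{K_0} \exp(-C_p)/K_0!$ where $c_p := \min\{p(v) : p(v) > 0\}$ and $C_p$ bounds the total rate of clocks incident to $W$, while (c) contributes an independent factor of at least $1/(2R+1)$; since $|W|, K_0, c_p, C_p$ all depend only on $p$, the product is a positive $p$-dependent constant $\vartheta$.

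On $E$, no type-3 label ever crosses the position $x$ (the only type-3 that moves is $Y_0$, which stays in $(X_0, y]$ throughout, hence strictly to the right of $x$), and the only annihilation during $[s, s+1]$ is the terminal one in (b); by (c) that annihilation removes $X$ itself, so the ``another type-2 label'' clause of the straddling-annihilation case of the takeover definition is not triggered either. Hence zero takeovers of $X$ occur before $X$ is removed, certainly fewer than $R$, and the lemma follows. The main obstacle I anticipate is the uniform construction of the choreographed sequence in (b): for arbitrary admissible initial data one must produce, inside $(X_0, Y_0]$, a sequence of particle/hole rearrangements and $Y_0$-swaps of length $K_0$ depending only on $p$ that drives $Y_0$ down to $X_0 + 1$ without creating any annihilation en route and without moving $X_0$ or any other type-2/type-3 label. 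For bi-directional $p$ this is straightforward using leftward jumps of $Y_0$ directly, but for one-sided distributions (like TASEP) one must first chain rightward jumps of ordinary particles so as to pipe a particle to $Y_0$'s left neighbor before each swap. One settles the bound $K_0 \le C_p\,|W|^2$ by a reachability argument inside the finite-state Markov chain on configurations in $W$ restricted to the stratum where $X_0, Y_0$ are preserved and the interval $(X_0, Y_0)$ is modified only through particle/hole exchanges.
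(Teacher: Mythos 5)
Your overall scaffold matches the paper's: confine attention to a bounded window of $p$-dependent size, script a bounded-length sequence of Poisson clock rings that forces an annihilation and keeps everything else quiet, and then use the uniform label-removal rule at the terminal annihilation to pick out $X$, giving a probability lower bound depending only on $p$. The "zero takeovers on $E$" observation and the Poisson ordering estimate are fine as far as they go.

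However, there is a genuine gap at exactly the step you flag as the "main obstacle," and the sketch you give does not resolve it. The combinatorial claim -- that from an arbitrary $\gamma_s$ one can choreograph a $p$-uniformly bounded sequence of legal moves ending in the desired annihilation -- is the heart of the lemma, and your concrete construction fails for general $p$. First, "drive $Y_0$ leftward via swaps at its left neighbor to $X_0+1$" requires range-$1$ moves; for $p$ supported (after rescaling) on $\{2,3\}$, say, no move has range $1$, the swap at $Y_0$'s left neighbor never fires, and gap $1$ is not an annihilating gap anyway -- you would need to land at a gap $v$ with $p(v)+p(-v)>0$. Second, the constraint "without moving $X_0$" is sometimes impossible to respect: if $p$ is one-sided (positives only) and the region around and to the left of $X_0$ contains only holes, then no ordinary particle can be piped to $Y_0-v$, so $Y_0$ can never move left and the only way to close the gap is to advance the type-2 at $X_0$ rightward into holes -- which your scripted event forbids. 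Third, the "reachability argument inside the finite-state Markov chain restricted to the stratum where $X_0, Y_0$ are preserved" is a claim, not a proof; it is not a routine irreducibility fact, and indeed the restriction you impose makes it false in the cases above. The paper's proof of the corresponding claim targets a strictly weaker goal -- \emph{some} annihilation somewhere in $\llbracket x,y\rrbracket$, not the $X_0$--$Y_0$ pair specifically -- and still needs a non-trivial contradiction argument tracking the minimal-weight configuration $\gamma_*$ and the semigroups $G_\pm$ generated by the positive and negative parts of the support, using $\gcd(g_-,g_+)=1$. Nothing in your sketch replaces that; until the reachability claim is established for all admissible $p$ and without the extra constraints on which particles may move, the proof is incomplete.
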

		\begin{proof}
			Under the setup stated, 
			without loss of generality, we assume that $x<y$, i.e.\ $y-x\in\llbracket 1, K\rrbracket$. 
			We have the following combinatorial statement.
			\begin{claim}
				Starting from the configuration $\gamma_s$, one can construct a sequence of moves within $\llbracket x, y\rrbracket$, such that each move is one of 
                \begin{itemize}
                    \item[(1)] an ordinary, or type-2, or type-3 particle jumps from $z$ to $z+1$, for some $z\in \llbracket x, y-1\rrbracket$;
                    \item[(2)] a particle at $z$ swaps with a type-2 or type-3 particle at $z+1$, for some $z\in \llbracket x, y-1\rrbracket$; 
                    \item[(3)] a type-2 (resp.~type-3) particle at $z$ annihilates with a type-3 (resp.~type-2) particle at $z+1$, for some $z\in \llbracket x, y-1\rrbracket$.
                \end{itemize}
                Besides, the last move of this sequence is an annihilation.
			\end{claim}

			\begin{claimproof} 
Via jumps and swaps as stated in (1) and (2), all the particles are moved to the right and all the holes are moved to the left.
Namely, for some $x', y' \in \llbracket x, y\rrbracket$, there is a hole at every integer in $\llbracket x, x'-1\rrbracket$, and a particle at every integer in $\llbracket y'+1, y\rrbracket$.
For each $z\in \llbracket x', y'\rrbracket$, there is a type-2 or type-3 particle at $x$.
Note that since $\gamma_s(x)=2$ and $\gamma_s(y)=3$, after such moves there is at least one type-2 particle and one type-3 particle in $\llbracket x', y'\rrbracket$.
Then in particular, there is a type-2 particle next to a type-3 particle in $\llbracket x', y'\rrbracket$, and one can take an annihilation between them.
            			\end{claimproof}

			Now, the number of moves required by the claim is upper bounded by a constant depending only on $K$, since there are only finitely many configurations in $\llbracket x, y\rrbracket$. Therefore, using that $p(1)>1/2$, with positive probability depending only on $K$, this exact sequence of moves occurs, and no other move in $\llbracket x-K, y+K\rrbracket$ happens before the end of this sequence.
			When any annihilation happens (in this sequence of moves), with probability at least $1/(2K)$ the type-2 label at $x$ in $\gamma_s$ is removed.
			Also, note that no takeover of the type-2 label happens before the end of this sequence, and so the conclusion follows.\end{proof}

		We can now deduce the bound on the number of takeovers by repeatedly using the removal estimate.
		\begin{proof}[Proof of \Cref{lem:anni}]
			Below we take a type-2 label $X$ at time $0$. Let $t_0=0$. For any $i\in\N$, let $t_i$ be the time of its $i$-th takeover, and set $t_i=\infty$ if less than $i$ takeovers happen to label $X$.
			For $i\in\Z_{\ge 0}$, we then let $\tau_i$ be the first time $\ge t_i$, such that there is a type-3 particle to the right of $X$, with distance $\le K$.
			Then $\tau_i$ is a stopping time, and may equal $\infty$. (In particular, if $t_i=\infty$ we have $\tau_i=\infty$.) 
			
			For any $i\in\Z_{\ge 0}$, we note that no takeover can happen to $X$ between $t_i$ and $\tau_i$. This is because immediately before any takeover of $X$, there must be a type-3 particle to the right of $X$ and with distance $\le K$.
			Also given $\tau_i$ which is $<\infty$, by \Cref{lem:longanni}, with probability $>\vartheta$ the label $X$ will be removed before any takeover after $\tau_i$.
			Therefore, we have that $\P[t_{i+1}<\infty\mid t_i<\infty] < 1-\vartheta$.
			Thus the conclusion follows.
		\end{proof}
		Finally, we derive the approximate monotonicity of AEP, via estimating height functions using the number of takeovers.
		\begin{proof}[Proof of \Cref{lem:nnnepmon}]
			Let $C_*$ be a sufficiently large constant depending on $K$.
			Consider the following events:
			\begin{itemize}
				\item[$\cE_1$:] Every type-2 or type-3 label that enters $\llbracket -w-K, w+K\rrbracket$ between time $0$ and $t$ is contained in the set $\llbracket -C_*(t+w), C_*(t+w) \rrbracket$ at time $0$.
				\item[$\cE_2$:] For every type-2 or type-3 label that is contained in $\llbracket -C_*(t+w), C_*(t+w) \rrbracket$ at time $0$, at most $m/2$ takeovers happen (before it is removed).
			\end{itemize}
			\begin{claim}
				Under $\cE_1\cap\cE_2$, we must have $Y_{t,w}\le m+2K$. 
			\end{claim}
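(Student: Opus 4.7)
The plan is to fix $b \in \llbracket -w, w\rrbracket$ and show $d_t(b) := h^-_t(b) - h^+_t(b) \le m + 2R$. Since $h^-_0 \le h^+_0$ gives $d_0(b) \le 0$, it suffices to bound the net increase of $d_s(b)$ over $s \in [0, t]$. First I would translate the discrepancy into the multi-type picture: for any $a < b$,
\begin{equation*}
d_s(b) - d_s(a) = -2 N^{2,3}_s([a, b-1]),
\end{equation*}
where $N^{2,3}_s(I) = \#\{\text{type-2 in }I\text{ at time }s\} - \#\{\text{type-3 in }I\text{ at time }s\}$. Using $\cE_1$ to freeze the relevant labels to a finite set (those initially in $\llbracket -C_*(t+w), C_*(t+w)\rrbracket$), I can treat the multi-type process inside $\llbracket -w-3R, w+3R\rrbracket$ as a closed combinatorial system driven only by those labels, with no contamination from the exterior.

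Next I would track how $d_s(b)$ evolves through individual $\gamma$-moves. Each move changes $d_s(b)$ by $0, \pm 2$ or $\pm 4$, depending on which type crosses the threshold $b$. The key observation is that every positive increment of $d_s(b)$ corresponds to a type-3 label moving from $\{\ge b\}$ to $\{<b\}$ (or a type-2 doing the opposite, or an annihilation with the same signature); and whenever such an event involves a type-3 leaping from position $p \ge b$ to $p' < b$ with $p-p' \le r$, the type-3 passes strictly over every type-2 at position in $(p', p)$, which is by definition a takeover of each such type-2. Thus each positive increment can be charged to a takeover of a type-2 whose current position lies within $[b-R, b)$, provided such a label exists.

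I would then argue that the increments for which \emph{no} witness type-2 exists in the window $[b-R, b)$ at the moment of the event are bounded by $2R$: such ``bare'' crossings can only populate the strip $[b-R, b)$ itself (of width $R$), plus a symmetric contribution from type-2 labels crossing rightward through $b$ into the strip $[b, b+R)$; no more than $R$ type-2/type-3 particles can coexist in a strip of size $R$, so these boundary contributions total at most $2R$. The remaining increments are all charged to takeovers, and I would show that at any given time only one specific type-2 label plays the role of witness for $b$ (namely, the rightmost type-2 at position $<b$, reassigned at annihilation events): this single label's budget of $m$ takeovers (from $\cE_2$) caps the total charged contribution by $m$, yielding $d_t(b) \le m + 2R$.

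The main obstacle is formalizing the charging bijection: precisely identifying the witness type-2 label at each moment, re-assigning it correctly through annihilations (which randomize labels within windows of size $R$ and can remove the current witness), and showing that the distinct witnesses across $[0, t]$ can be amalgamated so that only $m$ takeovers, rather than $m$ times the number of witnesses, are accumulated. A secondary subtlety is that a single non-nearest-neighbor move involves positions up to $r$ apart and may increment $d_s(b)$ by $4$ (as in a type-2/type-3 swap across $b$); handling these multi-unit increments without double-counting witnesses requires careful local bookkeeping within the $R$-neighborhoods guaranteed by the definition of takeover.
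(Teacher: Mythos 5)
Your charging strategy is conceptually adjacent to the paper's (bound the positive increments of $h^-_t - h^+_t$ at $b$ by takeovers, absorb an $O(R)$ boundary error), but you have correctly identified the central obstruction in your own plan, and it is a genuine gap: fixing the position $b$ and watching $d_s(b)$ evolve forward in time forces you to hand the role of ``witness'' from one type-2 label to another (the current rightmost type-2 left of $b$ keeps changing as labels cross $b$, are annihilated, or are reshuffled in an $R$-window), and nothing in $\cE_2$ prevents the accumulated increment from scaling like $m$ times the number of distinct witnesses. The amalgamation step you flag as the ``main obstacle'' is not a bookkeeping technicality; without a new idea it simply fails.

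The paper sidesteps this by inverting the viewpoint. Instead of fixing $b$ and following time forward, it locates the \emph{last} time $s \le t$ at which a type-2 or type-3 label crosses $b$ (or an annihilation straddles $b$). After $s$, $h^-(b)-h^+(b)$ is frozen, so it suffices to control the picture at time $s-$. The label responsible for this final crossing sits at some $w(s-)$ with $|b - w(s-)| \le 2R$, which costs at most $2R$ to move the evaluation point from $b$ to $w(s-)$. One then follows the trajectory $(w(s'))_{0 \le s' < s}$ of this single label backward to time $0$ and observes that $h^-_{s'}(w(s')) - h^+_{s'}(w(s'))$ starts $\le 0$ and increases only at takeovers of this one label. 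By $\cE_1$ the label starts in $\llbracket -C_*(t+w), C_*(t+w)\rrbracket$, so $\cE_2$ caps its takeovers at $m$, giving $m + 2R$ with a unique witness fixed once and for all. This is precisely the ingredient missing from your plan: by choosing the witness via the last crossing and tracking it backward, the identity of the budgeted label never changes, and the amalgamation problem never arises. Your strip-counting argument for the $2R$ term is also replaced in the paper by the simpler observation that the single crossing move at time $s$ has range at most $O(R)$, so the two positions $b$ and $w(s-)$ are within $2R$ of each other.
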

			\begin{claimproof}
				For any $x\in\llbracket -w, w\rrbracket$, we consider the last time (before $t$) that a type-2 or type-3 label goes from $\le x-1$ to $\ge x$, or vice versa.
				This also includes the event where an annihilation happens between a label $\le x-1$ and a label $\ge x$.
				If no such time exists, then $h^-_t(x)-h^+_t(x)=h^-_0(x)-h^+_0(x)\le 0$.
				Otherwise, let $s\in (0, t]$ be the last such time. Without loss of generality, we assume that a type-2 label goes from $\le x-1$ to $\ge x$ at time $s$, or it is at $\le x-1$ right before time $s$ and annihilates with a type-3 label $\ge x$ at time $s$.
				Denote the trajectory of that type-2 label by $(w(s'))_{0\le s' < s}$.
				Then we have $h^-_t(x)-h^+_t(x)\le h^-_{s-}(w(s-))-h^+_{s-}(w(s-))+2K$.
                
				On the other hand, since $h^-_0(w(0))\le h^+_0(w(0))$, we have that for any $0\le s'<s$, $(h^-_{s'}(w(s'))-h^+_{s'}(w(s')))/2$ is at most the number of takeovers that happened to this type-2 label between time $0$ and $s'$.
				By $\cE_1$ we have that $w(0)\in \llbracket -C_*(t+w), C_*(t+w) \rrbracket$.
				By $\cE_2$ we have that at most $m$ takeovers happen to this type-2 label.
				Thus $h^-_{s-}(w(s-))-h^+_{s-}(w(s-))\le m$, further implying that $h^-_t(x)-h^+_t(x)\le m+2K$.
			\end{claimproof}
			
			It now remains to lower bound the probability of $\cE_1\cap\cE_2$.
			
			By \Cref{lem:anni}, we have $\P[\cE_2]>1-C(t+w)\theta^{m/2}$.
			As for $\cE_1$, we consider any type-2 or type-3 label that is at location $x<-C_*(t+w)$ at time $0$. We can couple it with a random walk that jumps to the right by distance $K$, with rate $K^2$, such that the label is always to the left of that random walk.
			Therefore, by taking $C_*$ large enough (depending on $K$), we have that with probability $>1-C\exp(c(w+x))$, it is to the left of $-w-K$ up to time $t$.
			By taking a union bound over $x<-C_*(t+w)$, we have that with probability $>1-C\exp(-c(t+w))$, any type-2 or type-3 label that is to the left of $-C_*(t+w)$ at time $0$ is to the left of $-w-K$ up to time $t$.
			
			Similarly, we have that with probability $>1-C\exp(-c(t+w))$, any type-2 or type-3 label that is to the right of $C_*(t+w)$ at time $0$ is to the right of $w+K$ up to time $t$.
			Thus we get that $\P[\cE_1]>1-C\exp(-ct)$, and so $\P[\cE_1\cap\cE_2]>1-C(t+w)\theta^{m/2}-C\exp(-ct)$. The conclusion follows.
		\end{proof}

		\bibliographystyle{alpha}
		\bibliography{bibliography}

	\end{document}